\documentclass[11pt,letterpaper]{amsart}

\usepackage[T1]{fontenc}
\usepackage{lmodern}
\usepackage{microtype}
\microtypesetup{expansion=true,protrusion=true}

\usepackage{amsmath,amssymb,mathtools,mathrsfs}

\usepackage{booktabs}
\usepackage{tikz-cd}

\usepackage{enumitem}
\usepackage{needspace}

\usepackage{xcolor}
\usepackage[
    colorlinks=true,
    linkcolor=blue!45!black,
    citecolor=blue!45!black,
    urlcolor=blue!55!black
]{hyperref}

\usepackage{aliascnt}
\usepackage[nameinlink,noabbrev]{cleveref}

\allowdisplaybreaks
\numberwithin{equation}{section}

\newtheorem{theorem}{Theorem}[section]

\newaliascnt{proposition}{theorem}
\newtheorem{proposition}[proposition]{Proposition}
\aliascntresetthe{proposition}

\newaliascnt{lemma}{theorem}
\newtheorem{lemma}[lemma]{Lemma}
\aliascntresetthe{lemma}

\newaliascnt{corollary}{theorem}
\newtheorem{corollary}[corollary]{Corollary}
\aliascntresetthe{corollary}

\newaliascnt{conjecture}{theorem}

\aliascntresetthe{conjecture}

\newaliascnt{question}{theorem}

\aliascntresetthe{question}

\theoremstyle{definition}

\newaliascnt{definition}{theorem}
\newtheorem{definition}[definition]{Definition}
\aliascntresetthe{definition}

\newaliascnt{example}{theorem}
\newtheorem{example}[example]{Example}
\aliascntresetthe{example}

\newaliascnt{convention}{theorem}

\aliascntresetthe{convention}

\theoremstyle{remark}

\newaliascnt{remark}{theorem}
\newtheorem{remark}[remark]{Remark}
\aliascntresetthe{remark}

\newaliascnt{warning}{theorem}

\aliascntresetthe{warning}

\newcommand{\ZZ}{\mathbb{Z}}
\newcommand{\NN}{\mathbb{Z}_{\geq 0}}
\newcommand{\QQ}{\mathbb{Q}}
\newcommand{\RR}{\mathbb{R}}
\newcommand{\CC}{\mathbb{C}}
\newcommand{\kk}{\Bbbk}
\newcommand{\PP}{\mathbb{P}}

\newcommand{\cO}{\mathcal{O}}
\newcommand{\cE}{\mathcal{E}}
\newcommand{\cF}{\mathcal{F}}
\newcommand{\cG}{\mathcal{G}}
\newcommand{\cK}{\mathcal{K}}
\newcommand{\cL}{\mathcal{L}}
\newcommand{\cM}{\mathcal{M}}
\newcommand{\cU}{\mathcal{U}}
\newcommand{\cV}{\mathcal{V}}
\newcommand{\cC}{\mathcal{C}}
\newcommand{\cW}{\mathcal{W}}

\newcommand{\cS}{\mathcal{S}}
\newcommand{\cQ}{\mathcal{Q}}

\newcommand{\RV}{\operatorname{RV}_{\kk}}
\newcommand{\RVC}{\operatorname{RV}_{\CC}}

\newcommand{\Supp}{\operatorname{Supp}}
\newcommand{\Newt}{\operatorname{Newt}}
\newcommand{\rk}{\operatorname{rk}}
\newcommand{\ctop}{c_{\mathrm{top}}}
\newcommand{\length}{\ell}
\newcommand{\Dcomp}{\mathfrak{D}}
\newcommand{\Nrm}{\mathrm{N}}
\newcommand{\Sch}{\mathfrak{S}}
\newcommand{\Groth}{\mathfrak{G}}

\newcommand{\Span}{\operatorname{span}}
\newcommand{\Sym}{\operatorname{Sym}}
\newcommand{\Proj}{\operatorname{Proj}}
\newcommand{\Fl}{\operatorname{Fl}}

\title[Chern flow and Chern moment algebras]{Chern flow and Chern moment algebras}

\author[K.-H. Nguyen-Dang]{Khai-Hoan Nguyen-Dang}
\address{
Morningside Center of Mathematics,
Chinese Academy of Sciences,
Beijing 100190, China
}
\email{khaihoann@gmail.com}

\author[Z. Wang]{Zhenpeng Wang}
\address{
Department of Mathematics,
The University of Hong Kong,
Hong Kong SAR, China
}
\email{u3011717@connect.hku.hk}

\subjclass[2020]{
Primary 14M15;
Secondary 05E14, 05E05, 14C17
}

\keywords{
Chern flow,
Bott--Samelson variety,
Schubert polynomial,
key polynomial,
Grothendieck polynomial,
Lascoux polynomial,
Demazure atom,
Lorentzian polynomial,
realizable-volume polynomial,
saturated Newton polytope
}

\hypersetup{
    pdftitle={
        Chern flow and Chern moment algebras
    },
    pdfauthor={
        Khai-Hoan Nguyen-Dang and Zhenpeng Wang
    },
    pdfsubject={
        Chern-flow geometry, Chern moment algebras,
        and Hodge structures for geometric and finite matroid sources
    },
    pdfkeywords={
        Chern flow,
        Bott--Samelson variety,
        Schubert polynomial,
        Grothendieck polynomial,
        Lascoux polynomial,
        Lorentzian polynomial,
        volume polynomial,
        Newton polytope
    }
}

\date{}

\newcommand{\cR}{\mathcal{R}}
\newcommand{\CH}{\operatorname{CH}}

\newcommand{\im}{\operatorname{im}}
\DeclareMathOperator{\Ann}{Ann}
\DeclareMathOperator{\Tor}{Tor}
\DeclareMathOperator{\coker}{coker}
\DeclareMathOperator{\cl}{cl}
\hypersetup{bookmarksopen=true,bookmarksopenlevel=1,bookmarksnumbered=true,bookmarksdepth=2}

\begin{document}

\begin{abstract}
We construct realizable-volume models over every field for the factorial
normalizations of homogeneous Lascoux, Lascoux-atom, and positive
Grothendieck packets, including their minimal homogenizations and layers.
In particular, the construction realizes the factorial normalizations of
all Schubert and key polynomials and of the minimal sign-corrected
homogeneous Grothendieck polynomials.
The normalized polynomials are Lorentzian, and the ordinary supports are
the lattice points of integral generalized polymatroids. On a
Bott--Samelson tower, row and co-row filtrations assemble the local factors
into globally generated bundles; a creation-state graph absorbs the
remaining kernel factors by Chern flow.

We also construct intrinsic algebras of joint Chern moments. Positive
inverse-Chern presentations give these algebras Hard Lefschetz and
Hodge--Riemann relations, and supply source-level Hodge completions of the
packets. For globally generated tropical toric bundles in the sense of
Kaveh--Manon, finite generating witnesses and matroid duality provide the
presentations required by Larson--Partida's theorem, without
representability. These constructions yield joint Chern-number
inequalities, nonvanishing polymatroids, and equality criteria.
\end{abstract}

\maketitle
\clearpage
\tableofcontents
\clearpage

\section{Introduction}\label{sec:introduction}
Schubert polynomials connect the geometry of flag varieties with the
combinatorics of permutations. They represent Schubert classes in
cohomology, while key polynomials, or type-$A$ Demazure characters, refine
Schur polynomials by retaining the action of a Borel subgroup
\cite{LS82,Demazure74,RS95}. Their $K$-theoretic counterparts include
Grothendieck and Lascoux polynomials; Demazure and Lascoux atoms give
further nonsymmetric refinements \cite{FK94,BSW20}. Combinatorial formulas describe their coefficients and, in the
$K$-theoretic cases, account for alternating signs between degrees.
A different question asks how the coefficients are related to one another. Do they satisfy log-concavity inequalities? Which exponent
vectors occur, and do they fill all lattice points of their convex hull?

Intersection theory suggests a common approach to these questions.
Mixed intersections of nef divisors satisfy Hodge-index inequalities,
and their nonvanishing is governed by exchange properties. Br\"and\'en
and Huh's theory of Lorentzian polynomials expresses both phenomena in
terms of a homogeneous polynomial \cite[Theorems~2.25 and~4.6]{BH20}.
This led Huh, Matherne, M\'esz\'aros, and St.~Dizier to conjecture
Lorentzianity for the factorial normalizations of Schubert and key
polynomials and for suitable homogeneous Grothendieck polynomials
\cite[Conjectures~15, 21--23]{HMMSD22}. Related conjectures of Monical,
Tokcan, and Yong concern saturated Newton polytopes for Demazure atoms
and the ordinary $K$-theoretic families \cite{MTY19}.

The geometric origin of a polynomial does not by itself give the
mixed-intersection formula needed here. In particular, representing a
Schubert class on a flag variety is different from realizing the
\emph{coefficients} of its polynomial representative as intersections
of semiample divisors on one integral variety. We construct such
realizations for complete homogeneous families, so that the resulting
inequalities compare different homogeneous components as well as
coefficients within a single component. We then study the algebra of
Chern moments underlying the construction and give a parallel Hodge
theory for globally generated tropical toric bundles.

\subsection{Volume realizations and the main polynomial theorem}
\label{subsec:main-results}\label{subsec:resolved-problems}
The normalization in the statement is dictated by the intersection
expansion. For $P=\sum_{|\alpha|=d}c_\alpha x^\alpha$, put
\[
 \Nrm(P)=\sum_{|\alpha|=d}c_\alpha x^{[\alpha]},
 \qquad x^{[\alpha]}=\frac{x^\alpha}{\alpha!}.
\]
If $D_1,\ldots,D_n$ are divisor classes on a projective $d$-fold $X$, then
\[
 \frac1{d!}\int_X\left(\sum_i x_iD_i\right)^d
 =\sum_{|\alpha|=d}\left(\int_XD^\alpha\right)x^{[\alpha]}.
\]
We write $\RV$ for nonnegative rational multiples of these polynomials
when $X$ is integral projective over $\kk$ and the $D_i$ are semiample
Cartier classes, meaning that some positive multiple is globally generated.
Membership means an actual realization, rather than a limit of realizations. The polynomial coefficients are in $\QQ$ even
when the geometric field has positive characteristic.

For an inhomogeneous positive-parameter polynomial
\[
 Q^+(x;\beta)=\sum_{k=0}^rQ_k(x)\beta^k,
\]
with each $Q_k$ homogeneous of degree $d+k$, we call
$\sum_kQ_k(x)z^{L-k}$, $L\ge r$, a \emph{homogeneous packet}.
The variable $z$ records the excess degree $k$ and places all components
in the same total degree. The least choice $L=r$ gives the minimal
packet; a larger $L$ is useful for keeping track of an operator word.

Fix $n\ge1$, and pad partitions by zeros to length $n$. Let $s_i$
interchange $x_i$ and $x_{i+1}$, and set
\[
 \partial_i f=\frac{f-s_if}{x_i-x_{i+1}},\qquad
 \pi_i f=\partial_i(x_if).
\]
The homogeneous operators used below are
\begin{align*}
 \mathscr K_i^{(z)}f&=\partial_i\bigl(x_i(x_{i+1}+z)f\bigr),&
 \overline{\mathscr K}_i^{(z)}&=\mathscr K_i^{(z)}-z\,\mathrm{id},\\
 \mathscr D_i^{(z)}f&=\partial_i\bigl((x_{i+1}+z)f\bigr).
\end{align*}
They satisfy the braid relations; in a product, the rightmost factor
acts first. With $w_0$ the longest permutation and
$\delta=(n-1,n-2,\ldots,0)$, define
\begin{equation}\label{eq:intro-packets}
 \begin{aligned}
 H_{w,\lambda}(x,z)&=\mathscr K_w^{(z)}x^\lambda,\\
 \overline H_{w,\lambda}(x,z)&=\overline{\mathscr K}_w^{(z)}x^\lambda,\\
 F_w(x,z)&=\mathscr D_{w^{-1}w_0}^{(z)}x^\delta.
 \end{aligned}
\end{equation}
These are the Lascoux, Lascoux-atom, and positive Grothendieck packets,
respectively. For the ordinary Grothendieck polynomial, write
\[
 \begin{gathered}
 G_w=\sum_{k\ge0}G_w^{(\length(w)+k)},\qquad
 A_{w,k}=(-1)^kG_w^{(\length(w)+k)},\\
 \widetilde G_w(x,z)=\sum_{k=0}^{r_w}A_{w,k}(x)z^{r_w-k},
 \end{gathered}
\]
where $r_w=\max\{k:A_{w,k}\ne0\}$ and the superscripts denote
homogeneous components. The sign convention makes $A_{w,k}$
coefficientwise nonnegative. All operator conventions are specified in
\cref{subsec:operator-conventions,sec:ordinary-packets}.

Our first main result realizes these three packets over the original
geometric field, with no restriction on its characteristic.
\begin{theorem}\label{thm:main}
For every field $\kk$, partition $\lambda$ with at most $n$ parts,
and permutation $w\in S_n$,
\[
 \Nrm_{x,z}(H_{w,\lambda}),\qquad
 \Nrm_{x,z}(\overline H_{w,\lambda}),\qquad
 \Nrm_{x,z}(F_w)\quad\text{belong to }\RV.
\]
The factorial normalizations of their nonzero minimal homogeneous
packets and all their homogeneous layers also belong to $\RV$.
In particular, this includes all key polynomials, Demazure atoms,
Schubert polynomials, the minimal sign-corrected packet
$\widetilde G_w$, and every sign-corrected component $A_{w,k}$.

Every nonzero normalized polynomial just listed is Lorentzian and has
support equal to the integral base set of a polymatroid algebraic over
$\kk$. In characteristic zero it admits a smooth integral projective
realization by semiample Cartier divisors.
The support of every ordinary Grothendieck, Lascoux, or Lascoux-atom
polynomial is $M^\natural$-convex and is the full lattice-point set of
its integral generalized-polymatroid Newton polytope.
\end{theorem}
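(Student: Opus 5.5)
The plan is to realize each packet as a Chern-moment polynomial on a Bott--Samelson tower and then pass to volume polynomials of semiample divisors. Fix a reduced word $i_1\cdots i_\ell$ for $w$ (respectively for $w^{-1}w_0$). Build the Bott--Samelson tower $Z=Z_\ell\to\cdots\to Z_1\to Z_0=\mathrm{pt}$ over $\kk$, where each $Z_j\to Z_{j-1}$ is a $\PP^1$-bundle $\PP(\cE_j)$ carrying the tautological flag data. Each operator step $\mathscr K_{i_j}^{(z)}$, $\overline{\mathscr K}_{i_j}^{(z)}$ or $\mathscr D_{i_j}^{(z)}$ will be read as a push-pull along $\PP(\cE_j)\to Z_{j-1}$ applied to a class of the form $x_{i}(x_{i+1}+z)f$ or $(x_{i+1}+z)f$. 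Here the $x_i$ are Chern roots of the line bundles in the flag, and $z$ is an auxiliary ample/globally generated class, realized by adjoining a projective-space factor of dimension $L$ so that $z^{L-k}$ tracks the excess degree. The divided difference $\partial_i$ is exactly the Gysin map of a $\PP^1$-bundle, so the packet becomes
\[
H_{w,\lambda}(x,z)=\pi_*\Bigl(\textstyle\prod_j c(\text{local factor}_j)\cdot x^\lambda\Bigr),
\]
a pushforward of a product of top Chern classes of bundles built from $x_i$, $x_{i+1}$ and $z$.

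The next step turns these local factors into globally generated bundles. Using the row filtration for the factors $x_{i+1}+z$ and the co-row filtration for the factors $x_i$, I will show that the product of local factors equals $\ctop$ of a vector bundle $\cV$ on the tower which is an extension of line bundles $\cO(\text{nef})$, hence globally generated. The factorial normalization of $\int_Z \ctop(\cV)\,(\sum x_iD_i + zD_z)^{d}$ then becomes a volume polynomial. The key input is the standard fact that if $\cV$ is globally generated of rank $r$, then $\ctop(\cV)\cap[Z]$ is represented by the zero locus $Y$ of a general section, which is integral (Bertini, after passing to $\PP(\cV)$ or using the Chern-flow lemma for arbitrary fields). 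Restricting semiample classes to $Y$ gives membership in $\RV$.

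I expect the main obstacle to be the leftover kernel factors. In the $K$-theoretic packets, $\overline{\mathscr K}$ subtracts $z\,\mathrm{id}$, and the $\mathscr D$-operators of $F_w$ produce factors that are not a priori Chern classes of globally generated bundles; these are differences, i.e.\ kernels of surjections of bundles. My first attempt is the creation-state graph. I would record along the word which roots have been "created" and absorb each kernel factor $c(\ker(\cE\twoheadrightarrow\cL))$ by Chern flow: replace $Z$ by the projectivization or degeneracy locus where the flow is realized, so that the kernel term becomes a top Chern class of a globally generated quotient on a new integral variety. Proving that this bookkeeping terminates with all factors globally generated, uniformly in the field, is the technical core.

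Once $\RV$-membership holds, the remaining claims follow by standard steps. Minimal packets and homogeneous layers are obtained by specializing or extracting coefficients in $z$, for example restricting to $z^{L-k}$ via a linear subspace of the auxiliary projective space, so they stay in $\RV$. Key polynomials, atoms, Schubert polynomials, $\widetilde G_w$ and $A_{w,k}$ are the special cases $z=0$ and the relevant layers, using the identifications in \cref{sec:ordinary-packets}. Lorentzianity of volume polynomials of nef divisors is \cite[Theorem~4.6]{BH20}. The support is the set of $\alpha$ with $D^\alpha\ne0$, and by the algebraic-matroid/Khovanskii nonvanishing criterion this is the base set of a polymatroid algebraic over $\kk$. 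In characteristic zero, resolution of singularities pulls the divisors back to a smooth realization. Finally, dehomogenizing in $z$ recovers the support of the ordinary Grothendieck, Lascoux and atom polynomials as a projection of a polymatroid base set, hence $M^\natural$-convex with saturated generalized-polymatroid Newton polytope \cite{BH20}.
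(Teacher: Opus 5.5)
There is a genuine gap. The missing idea is reciprocal (finite-complement) transport, and without it your first step fails in two ways.

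First, under the Gysin formula $\pi_*\Psi(q,s)=(\partial_i\Psi)(a,b)$, the factor $x_{i+1}+z$ inside $\mathscr K_i^{(z)}$ and $\mathscr D_i^{(z)}$ becomes $s_t+h_z$. Here $s_t$ is the tautological \emph{sub}line, of degree $-1$ on the $\PP^1$-fibres. The row filtration you invoke filters the quotient lines $q_t$ (the $x_i$ inside $\partial_i$), not these, and nothing in your plan makes $s_t+h_z$ positive.

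Second, suppose you had a correct top-Chern pushforward $\Xi=\rho_*\ctop(\cV)$. This already requires a base $B=\prod_i\PP^{m_i}$ carrying the initial roots $h_i$; a tower over a point produces numbers, not polynomials in $x$. Cutting with pulled-back base divisors then gives the box complement $\Dcomp_{\mathbf m}\Nrm(\Xi)$, not $\Nrm(\Xi)$. So a pushforward equal to $H_{w,\lambda}$ would realize the wrong polynomial.

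The paper fixes both problems at once. The map $\omega\mathcal R_{M,L}$ converts $\mathscr K_i^{(z)},\overline{\mathscr K}_i^{(z)},\mathscr D_i^{(z)}$ into $\Pi_{n-i}^{(z)},\overline\Pi_{n-i}^{(z)},\Gamma_{n-i}^{(1,z)}$. Their factors are quotient roots $q_t+h_z$ (handled by row bundles), old upper roots $r_{i+1}^{(t-1)}+h_z$ (co-row bundles), and bare kernel lines $s_t$. One realizes the reciprocal packet as a pushforward, and $\Dcomp_{\mathbf m}\Nrm(\omega\mathcal R_{M,L}P)=\omega\Nrm(P)$ returns the original normalized packet.

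Several supporting steps are also unsound. An extension of nef, or even globally generated, line bundles need not be globally generated. The row and co-row bundles are generated because the split tails $\cL_a\oplus\cdots\oplus\cL_n$ surject onto them.

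A general section of a generated bundle need not have an integral zero scheme (\cref{rem:zero-locus}), and Bertini-type arguments are unavailable over arbitrary fields. The paper instead uses the integral evaluation-kernel bundle $\PP_X(\cK^\vee)$, where $p_*H^M=c_R(\cV)$. It extracts the volume by $\partial_y^M$ using the derivative closure of $\RV$ from \cite{GHMSSW25}. That closure is also what justifies layer extraction, rather than slicing by linear subspaces.

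The kernel factors, which you leave open, are not absorbed by passing to a new variety. Chern flow is a $K^0$ identity on the same tower, and it applies only when the creation-state graph is balanced. For $F_w$, the staircase terminal classes $c_j(\cF_j)$ supply exactly the needed root reservoirs (\cref{thm:reservoir}). Without them the grouping already fails for $u=312$ (\cref{warn:direct-schubert}).

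As a minor point, key polynomials are the top layer $[z^{\length(w)}]H_{w,\lambda}$, not the specialization $z=0$.
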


The layer statements follow from the full-packet realization: for
$Q=\sum_kQ_k(x)z^{L-k}$,
\[
 \left.\partial_z^{L-k}\Nrm_{x,z}(Q)\right|_{z=0}=\Nrm_x(Q_k).
\]
Differentiation also removes common homogenizing padding. These operations
preserve actual realizability \cite[Theorem~1.3 and Remark~1.14]{GHMSSW25};
distinct excess degrees prevent cancellation in ordinary signed supports.

A small example shows why normalization is essential.
\begin{example}\label{ex:normalization-essential}
For $n=2$ and $\lambda=(2,0)$,
\begin{align*}
 H_{s_1,(2,0)}(x,y,z)&=z(x^2+xy+y^2)+xy(x+y),\\
 \Nrm(H_{s_1,(2,0)})&=\frac{z(x+y)^2+xy(x+y)}2.
\end{align*}
Differentiating the normalized packet in $z$ gives $(x+y)^2/2$,
whereas setting $z=0$ gives $xy(x+y)/2$. The unnormalized quadratic
layer $x^2+xy+y^2$ has Hessian eigenvalues $3$ and $1$ and is not
Lorentzian. Since it is the $z$-derivative of the raw packet, the raw
packet is not Lorentzian either. Normalization changes the coefficient
inequalities, but not the support.
\end{example}

\Cref{thm:main} proves the normalized assertions of
\cite[Conjectures~15, 21--23]{HMMSD22} and the support assertions of
\cite[Conjectures~3.14, 5.5--5.7]{MTY19}. For ordinary Grothendieck
polynomials it also gives the augmentation, coordinate-interval, and
generalized-polymatroid assertions in
\cite[Conjectures~1.1--1.4]{MSSD25}; see \cref{cor:ordinary-packet-support}.
Schubert and key supports were already known to be saturated and
generalized-permutahedral \cite{FinkMeszarosStDizier18}; the assertion
here concerns their coefficients and a simultaneous intersection model.
Related results treat Schubert complements \cite{FanGuoLiu24},
vexillary and fireworks Grothendieck supports
\cite{HafnerEtAl24,ChouSetiabrata25}, zero-one polynomials
\cite{ChenFanYe26}, and top-degree components
\cite{PanYu24,Yu25,Haf26}. Our support theorem includes the homogeneous
and Castelnuovo--Mumford support conjectures of \cite{MSD20};
special families of the maximal-degree case are treated in \cite{Haf26}. We do not address the
double-Schubert assertion following Conjecture~15 of \cite{HMMSD22},
the distinct double-Schubert support problem of \cite{CastilloEtAl23},
or the separate coefficient and schubitope questions in \cite{MSSD25}.

\subsection{The geometric construction}\label{subsec:geometric-strategy}
\label{subsec:canonical-row}
The divided-difference formulas on Bott--Samelson varieties are
classical \cite{Magyar98,Hudson14,Oetjen21}. Their local factors,
however, involve quotient and kernel lines that need not be globally
generated. Applying positivity separately at each step therefore does
not suffice. The construction groups these factors into globally
generated bundles before taking their top Chern classes.

The simplest instance of the kernel argument is a pair of compatible
quotients. Suppose that $\cF_j$ is generated of rank $j$, for
$j=1,2,3$, and that
\[
 0\longrightarrow\cK_2\longrightarrow\cF_2\longrightarrow\cF_1
   \longrightarrow0,\qquad
 0\longrightarrow\cK_3\longrightarrow\cF_3\longrightarrow\cF_2
   \longrightarrow0.
\]
The line bundles $\cK_2,\cK_3$ need not be generated, but Whitney's
formula gives
\begin{equation}\label{eq:intro-two-edge-flow}
 c_1(\cF_1)c_1(\cK_2)c_1(\cK_3)
 =c_2(\cF_2)c_1(\cK_3)=c_3(\cF_3).
\end{equation}
The initial top Chern factor allows the two kernel factors to be
absorbed into a generated bundle. No Chern class has been inverted.

For a general collection of such sequences, a directed edge
$e:v\to w$ represents
$0\to\cE_e\to\cF_w\to\cF_v\to0$.
If $\kappa_e$ counts its kernel factors and $r_v$ counts the supplied
vertex top Chern factors, define
\[
 b_v=r_v+\sum_{t(e)=v}\kappa_e-\sum_{s(e)=v}\kappa_e.
\]
When every $b_v$ is nonnegative, the product is the top Chern class of
$\bigoplus_v\cF_v^{\oplus b_v}$. This is the Chern-flow identity
of \cref{thm:chern-flow}. On a flag tower, vertices must retain both
prefix rank and creation time: equal ranks do not identify the
bundles or make their quotient maps composable.

The other factors are controlled by a particular reduced word. For
$u\in S_n$, the inverse-Lehmer word concatenates
$B_{n-1}\cdots B_1$, where
\[
 c_a=\#\{b>a:u^{-1}(a)>u^{-1}(b)\},\qquad
 B_a=(a,a+1,\ldots,a+c_a-1).
\]
Along each increasing row, the newly created quotient lines filter a
bundle that is an actual quotient of a generated split tail.
The old upper roots used for atoms filter a second generated bundle,
defined at the start of the row. These are the row and co-row
constructions of \cref{prop:row-bundle,prop:corow}. For the
Grothendieck packet, the staircase terminal weight balances the graph.
More generally, profiles satisfying the internal balance inequalities have
the explicit minimal terminal completion in
\cref{thm:weighted-balance}.

Reciprocal complementation in a finite exponent box, followed by variable
reversal, turns the desired packet into such a top-Chern pushforward
$\Xi(h)$ on $B=\prod_i\PP^{m_i}$. The incidence construction of
\cref{thm:incidence-dual} then realizes
\[
 \Dcomp_{\mathbf m}\Nrm(\Xi)
 =\sum_\gamma[h^\gamma]\Xi\;x^{[\mathbf m-\gamma]},
\]
which is the original factorial normalization, up to reversal.
The construction uses an integral evaluation-kernel projective bundle
and exact volume preservation, not an integral general zero locus.
Generating-kernel incidence also appears in
\cite[Proposition~6.2]{CidRuiz25}; here the row/co-row surjections and
creation-state balance identify the nonsymmetric coefficient arrays.
The related skew Schur constructions of \cite{NguyenDang26} are not proof
inputs. The example $w=2143$ in \cref{app:worked-example} follows the
packet through the tower, flow, and incidence realization.

\subsection{Chern moments and the source algebra}\label{subsec:main-hodge}
A volume model supplies numerical inequalities, but it need not be
recovered from the scalar coefficient polynomial. To retain the
intersection algebra, we work over $\CC$ with
\[
 A_X=\bigoplus_j\bigl(H^{2j}(X,\RR)\cap H^{j,j}(X,\CC)\bigr)
\]
for a smooth integral projective $d$-fold $X$.
For generated bundles $\cE_1,\ldots,\cE_m$, define the Chern-moment
functional and its quotient by
\[
 \begin{gathered}
 \Lambda_C(az^\alpha)=\int_Xa\prod_i c_{\alpha_i}(\cE_i)
       \quad(\deg a+|\alpha|=d),\\
 R_X(\cE_1,\ldots,\cE_m)=A_X[z_1,\ldots,z_m]/\Ann(\Lambda_C).
 \end{gathered}
\]
The functional is zero in other degrees, and its annihilator consists
of $p$ such that $\Lambda_C(pq)=0$ for every $q$.
The variables record Chern indices: their moments are not obtained by
replacing each $z_i$ with $c_1(\cE_i)$.

The resulting algebra has the full Lefschetz and Hodge--Riemann
structure, not only the degree-one signature that yields log-concavity.
\begin{theorem}\label{thm:main-hodge}
The algebra $R_X(\cE_1,\ldots,\cE_m)$ has Poincar\'e duality, Hard
Lefschetz, and Hodge--Riemann relations for the image of
\[
 \left\{L+\sum_i t_i z_i:L\in\mathcal K_X,\ t_i>0\right\},
\]
where $\mathcal K_X$ is the K\"ahler cone. If $r_i=\rk\cE_i$ and
$0\ne\theta=\prod_i c_{r_i}(\cE_i)$, the weighted quotient
$A_X/\Ann_{A_X}(\theta)$, with degree $[a]\mapsto\int_X\theta a$,
has the same properties for the image of $\mathcal K_X$, in formal
dimension $d-\sum_i r_i$. For the fixed coefficient algebra $A_X$ with its integration map,
the moment algebra, degree map, and labelled generators depend
only on the classes $c_a(\cE_i)\in A_X^a$, and not on the chosen
spaces of generating sections.
\end{theorem}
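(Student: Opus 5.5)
We realize the Chern-moment functional as an integration functional on an auxiliary smooth projective variety and transfer Hodge theory from there. Put $Y=\PP(\cE_1)\times_X\cdots\times_X\PP(\cE_m)$, the fibre product of the projective bundles of rank-one quotients. Let $\xi_i=c_1(\cO_{\PP(\cE_i)}(1))$. Since each $\cE_i$ is globally generated, $\xi_i$ is nef. For every ample $L$ on $X$ and every $t_i>0$, the class $\pi^*L+\sum_i t_i\xi_i$ is ample on $Y$, and the same holds for Kähler classes. Fix $N$ large and work instead with the twisted bundles $\PP(\cE_i\oplus\cO^{N})$. Then the Segre/Chern relation gives
\[
\pi_*\bigl(\xi_i^{\,r_i-1+N+a}\bigr)=s_a(\cE_i)\quad\text{(or }c_a\text{ after dualizing)},
\]
so moments of monomials in the $\xi_i$ against $A_X$ recover $\prod_i c_{\alpha_i}(\cE_i)$ up to an invertible triangular change of variables. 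Here $s$ and $c$ are mutual inverses. The cleaner variant is the Larson--Partida form: $c(\cE)=1/s(\cE^\vee)$, and one uses the Grassmannian of kernels so that $c_a$ occurs directly. We expect this variant to be the one used in the positive inverse-Chern presentations mentioned in the abstract.

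The key steps run in this order. First, we identify $R_X(\cE_1,\ldots,\cE_m)$ with the subalgebra of $A_Y$ generated by $\pi^*A_X$ and the $\xi_i$, modulo the annihilator of $\int_Y$. Concretely, $z_i\mapsto\xi_i$ after the shift, and $\Lambda_C(p)=\int_Y\tilde p\,\xi^{\mathrm{top}}$; the variables $z_i$ are matched to $c_a$ through the pushforward identity. Second, we invoke the standard principle that such a subalgebra inherits Hodge--Riemann relations. If $B\subset A_Y$ is a graded subalgebra closed under the Hodge structure (all classes here are real $(j,j)$), then $B/\Ann(\int_Y|_B)$ satisfies Poincaré duality by construction. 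It satisfies Hard Lefschetz and Hodge--Riemann for Kähler classes lying in $B^1$, provided Lefschetz on $Y$ restricts to $B$. This is the step we expect to be the main obstacle: Hodge--Riemann on $A_Y$ does not automatically descend to an arbitrary quotient of a subalgebra. We plan to handle it in two ways. First, $A_Y$ is free over $A_X$ on the monomials $\xi^\beta$ (Leray--Hirsch), so the pairing on $B$ splits compatibly. Second, since the $\Lambda_C$-pairing is the restriction of a form that is definite on primitive pieces, the $B$-primitive part embeds into the $Y$-primitive part; we will verify that the Lefschetz operator preserves $B$ and that $\Ann$ is exactly $B\cap B^{\perp}$. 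If this restriction argument fails, the fallback is the mixed Hodge--Riemann theorem of Dinh--Nguyên/Cattani for Kähler classes on $Y$, applied fibrewise.

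For the weighted quotient, we take the zero locus description. After replacing $X$ by a suitable projective bundle, $\theta=\prod_i c_{r_i}(\cE_i)$ becomes the pushforward of $\prod_i\xi_i^{r_i+N}$-type classes. Alternatively, $\int_X\theta a$ is the top-degree part of $\Lambda_C$ on $z^{(r_1,\ldots,r_m)}a$. So $A_X/\Ann(\theta)$ is the subquotient of $R_X$ obtained by fixing the $z$-degree at its maximum. The Hodge--Riemann relations with $t_i\to0$ are obtained by the Hodge-index continuity argument: the signature is constant on the open cone and survives the limit because Poincaré duality holds at the limit ($\theta\neq0$ gives nondegeneracy on the quotient by definition). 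Finally, independence of the chosen generating sections is formal, because $\Lambda_C$ is defined purely from the classes $c_a(\cE_i)$ and $\int_X$; the geometric model $Y$ enters only in the proofs. The labelled generators and the degree map are therefore determined by these classes alone.
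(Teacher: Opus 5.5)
Your proposal has two genuine gaps: the auxiliary variety produces the wrong moments, and the Hodge-theoretic transfer in your second step rests on a principle that is false in general.

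\textbf{The auxiliary variety.} It does not produce the Chern array. In the quotient convention, $\pi_*\xi_i^{\,r_i-1+N+a}$ on $\PP(\cE_i\oplus\cO^{N})$ is $h_a(\cE_i)$, the $t^a$-coefficient of $c_{-t}(\cE_i)^{-1}$, not $c_a(\cE_i)$. No change of variables repairs this. For $\cE=\cO_{\PP^2}(1)$ these moments are $1,h,h^2$, and the resulting algebra is $\RR[h]/(h^3)$ with Hilbert vector $(1,1,1)$. By contrast, $R_X(\cE)$ has $c_2(\cE)=0$, degree-one Gram matrix $\left(\begin{smallmatrix}1&1\\1&0\end{smallmatrix}\right)$, and Hilbert vector $(1,2,1)$. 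In any case the theorem concerns the marked generators $z_i$ and the cone $L+\sum_it_iz_i$, and a change of generators would move that cone. Dualizing gives the signed inverse series and destroys nefness, since $\cE_i^\vee$ is not generated.

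What works is the construction you only gesture at. Take the evaluation kernel $0\to\cK_i\to W_i\otimes\cO_X\to\cE_i\to0$, padded so that $s_i=\rk\cK_i\ge2$, and set $Z=\prod_{i,X}\PP_X(\cK_i^\vee)$. Then $\cK_i^\vee$ is generated, so $\xi_i$ is nef. The identity $c_t(\cK_i)c_t(\cE_i)=1$ gives $p_*\xi_i^{\,s_i-1+a}=h_a(\cK_i^\vee)=c_a(\cE_i)$ with no signs. In $A_Z$ the $\xi_i$ satisfy monic relations whose coefficients are the inverse Chern series.

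\textbf{The Hodge mechanism.} Even on $Z$, the identification in your second step is wrong. The subalgebra generated by $A_X$ and the $\xi_i$ is all of $A_Z$ (Leray--Hirsch). It already has Poincar\'e duality, so quotienting by the annihilator of $\int_Z$ just returns $A_Z$ in formal dimension $\dim Z$. The pushforward identity together with duality on $A_Z$ gives instead $R_X\simeq A_Z/\Ann_{A_Z}(\eta)$, with $\eta=\prod_i\xi_i^{s_i-1}$ and degree $[b]\mapsto\int_Z\eta b$. This is a quotient by the annihilator of a product of nef classes, not a subalgebra.

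Your ``standard principle'' that a subalgebra containing a K\"ahler class inherits Hard Lefschetz and Hodge--Riemann modulo its radical is false in general. Compare Example~\ref{ex:apolar-obstruction}: there the algebra generated by two divisor classes, modulo the annihilator of the degree, fails degree-two Hodge--Riemann for their sum. Your proposed embedding of primitive parts breaks for the same reason: primitivity in the quotient only gives $L^{d-2j+1}b\in\Ann$, not $L^{d-2j+1}b=0$ upstairs.

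The missing ingredient is boundary descent (Proposition~\ref{hd-thm:descent}). For $0\ne\gamma$ in the closure of the K\"ahler cone, $A/\Ann_A(\gamma)$ with degree $\deg_A(\gamma\,\cdot)$ keeps the full package in one lower dimension. Applying this to the nef factors of $\eta$ proves the first assertion. Your fallback, mixed Hodge--Riemann for K\"ahler classes, does not reach the nef $\xi_i$ without exactly this argument.

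\textbf{The terminal quotient.} The same lemma handles it: descend by $\prod_iz_i^{r_i}$. All $z_i$ then act as zero, $A_X$ surjects with kernel $\Ann_{A_X}(\theta)$, and the image of $L+\sum_it_iz_i$ is simply the image of $L$, so no limit is taken. Your continuity argument does not work as stated, because Poincar\'e duality at a boundary polarization does not give Hard Lefschetz there; already $h_1$ in $\RR[h_1,h_2]/(h_1^2,h_2^2)$ fails.

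The independence of the generating sections is, as you say, formal.
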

The proof is given in \cref{hd-thm:geometric}; intrinsicness follows from
\cref{hd-prop:intrinsic-PD}.

The proof uses the same evaluation kernels as the volume construction.
Their projective-bundle relations are the inverse Chern series, and
their tautological classes lie on the boundary of a Lefschetz cone.
Successive descent, in the form recalled in
\cite[\S4.3, Lemma~4.6]{AHL26}, identifies the resulting quotient
with the intrinsic moment algebra. The terminal one-variable inverse-Chern
quotient also appears in \cite[Proposition~3.4]{LP26}.
The contribution here is the joint marked construction, its geometric
certificates, and its compatibility with the packet sources.

This belongs to a wider study of Hodge--Riemann relations for
characteristic classes. Ross and Toma proved such relations for Schur
classes of ample vector bundles and obtained Chern-number
log-concavity \cite[Theorems~1.1 and~1.3]{RossToma23}.
Lu and Zheng develop higher-bidegree Hodge--Riemann polynomials under
specified cohomological vanishing hypotheses \cite{LuZheng25}.
Our statement instead concerns a duality quotient defined by all
Chern moments of generated bundles. It does not assert all-degree
Hodge--Riemann relations for every individual Chern class on the
original variety. For a canonical-row source,
adjoining ample divisor directions that span this quotient's degree-one
part produces a polynomial whose apolar algebra is the quotient itself; see
\cref{hd-cor:canonical-completion,cor:canonical-Hodge-channels}.
The distinction is necessary: \cref{ex:apolar-obstruction} gives an
actual volume polynomial whose smaller apolar algebra fails
Hodge--Riemann in middle degree.

\subsection{Finite matroid certificates and consequences}
\label{subsec:main-tropical}
The evaluation-kernel argument has a combinatorial counterpart.
For tropical toric bundles in the sense of Kaveh--Manon \cite{KM24},
there need not be an algebraic bundle from which to form an evaluation
sequence. Nevertheless, finitely many generating witnesses give a
weighted matroid presentation. Passing to the dual matroid produces
the inverse-Chern relations required by the projective-bundle theorem
of Larson and Partida \cite[Theorem~3.9]{LP26}.

Here is the resulting joint statement. The moment algebra is defined
as above, with the real Chow algebra of the smooth projective toric
variety as coefficient algebra.
\begin{theorem}\label{thm:main-tropical}
Let $\Sigma$ be a smooth projective fan of dimension $d$, let
$A=A^\bullet(X_\Sigma)_\RR$, and let
$\cE_1,\ldots,\cE_m$ be globally generated Kaveh--Manon bundles on
$\Sigma$. Their Chern moment algebra has Poincar\'e duality, Hard
Lefschetz, and Hodge--Riemann relations for the image of
\[
 \left\{L+\sum_i t_i z_i:L\in\operatorname{Amp}(X_\Sigma)_\RR,
                         \ t_i>0\right\}.
\]
For nef classes $H_1,\ldots,H_q\in A^1$, the polynomial
\[
 \sum_{|\alpha|+|\beta|=d}
 \deg_A\left(\prod_i c_{\alpha_i}(\cE_i)\prod_jH_j^{\beta_j}\right)
 \frac{x^\alpha y^\beta}{\alpha!\beta!}
\]
is Lorentzian, with zero allowed. In particular, for one bundle of
rank $r\le d$ and an ample class $H$, the nonnegative numbers
$a_i=\deg_A(c_i(\cE)H^{d-i})$ satisfy
$a_i^2\ge a_{i-1}a_{i+1}$ for $1\le i<r$.
No matroid representability hypothesis is required.
\end{theorem}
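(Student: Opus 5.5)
The plan is to mirror the evaluation-kernel proof of \cref{thm:main-hodge}, replacing each algebraic object by a combinatorial one that exists for Kaveh--Manon bundles.

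\emph{Step 1: finite witnesses.} A globally generated Kaveh--Manon bundle $\cE_i$ of rank $r_i$ on $\Sigma$ is given by a linear matroid (or a point of a tropical linear space) together with piecewise-linear data on the rays. Global generation lets me choose a finite set of generating witnesses $S_i$ of size $N_i$. These give a surjection from a trivial bundle $\cO^{N_i}$. At the level of the Chow ring $A$, they give a weighted matroid presentation. From this I would read off the total Chern class $c(\cE_i)$ as a matroid/tropical Chern class.

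\emph{Step 2: dual matroid.} Passing to the dual matroid gives a ``kernel'' object $\cK_i$ of rank $N_i-r_i$, again a tropical toric bundle. Its total Chern class $c(\cK_i)$ satisfies $c(\cK_i)c(\cE_i)=1$ in $A$, by the Whitney-type relation for matroid duality. This is exactly the inverse-Chern presentation required by \cite[Theorem~3.9]{LP26}. That theorem then supplies a ``projective bundle'' algebra $A[\zeta_i]/(\sum_j c_j(\cK_i)\zeta_i^{N_i-r_i-j})$, or its analogue. This algebra satisfies Poincar\'e duality, Hard Lefschetz and Hodge--Riemann for $L+t\zeta_i$ with $L$ ample and $t>0$ small. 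Its degree map is pushforward, and pushforward sends $\zeta_i^{k+N_i-1}\mapsto c_k(\cE_i)$ (the inverse Segre relation). I would iterate this once for each $i$, with each stage using the previous algebra as its coefficient algebra. This needs the LP hypotheses to be stable under the base change, and I will check that the Kähler-type cone is preserved.

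\emph{Step 3: descent to the moment algebra.} Next I identify the Poincar\'e-duality quotient generated over $A$ by $z_i\leftrightarrow\zeta_i$, suitably shifted, with $R(\cE_1,\dots,\cE_m)=A[z]/\Ann(\Lambda_C)$. I would do this by the same successive-descent lemma used for \cref{thm:main-hodge}, via \cite[Lemma~4.6]{AHL26}. The lemma says that a subalgebra generated by an ample-compatible class, cut by its annihilator with respect to the same degree map, inherits Hard Lefschetz and Hodge--Riemann for classes inside it. Moments match because $\deg(a\prod\zeta_i^{\alpha_i+N_i-1})=\deg_A(a\prod c_{\alpha_i}(\cE_i))$.

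\emph{Step 4: Lorentzian polynomial and log-concavity.} Hodge--Riemann in degree one for the cone $\{L+\sum t_iz_i\}$ gives the Lorentzian property of the volume-type polynomial $\deg(\ell^d)/d!$ on that open cone, with $\ell=\sum x_iz_i+\sum y_jH_j$. Nef classes $H_j$ are limits of ample ones, and Lorentzian polynomials form a closed set, so the polynomial is Lorentzian with zero allowed. Expanding the polynomial gives exactly the displayed multinomial polynomial. For a single bundle with $q=1$, the coefficients of $x^iy^{d-i}/(i!(d-i)!)$ are the $a_i$. The Lorentzian (ultra-log-concave in normalized form) condition on this bivariate polynomial gives $a_i^2\ge a_{i-1}a_{i+1}$, and $a_i\ge0$ by nonnegativity. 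Representability is never used, because only the matroid and its dual enter.

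The main obstacle is Step 2. I must verify that the dual matroid of a finite generating witness really defines a presentation in LP's sense, i.e.\ that the dual data is again an effective, nef-type tropical object whose Chern classes invert $c(\cE_i)$, uniformly across all cones of $\Sigma$. I would try first to do this cone by cone, using the Kaveh--Manon local splitting on each maximal cone into line bundles indexed by a basis. Then I would check compatibility on walls using the matroid exchange property.
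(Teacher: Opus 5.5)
Your skeleton matches the paper's: witness presentation, dual matroid, Larson--Partida, descent by $\eta=\prod_i\zeta_i^{s_i-1}$, degree polynomial, bivariate criterion. But the step you defer as ``the main obstacle'' is the actual content of the proof, and the cone-by-cone plan you sketch would not establish it.

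Larson--Partida's hypothesis is global. It asks for one matroid $Q$ on one labelled ground set $G$ with fixed characters $u_g$, such that $x\mapsto(\langle u_g,x\rangle)_g$ sends each maximal cone $\sigma$ into $\mathcal N^{\min}_{J_\sigma}(Q)$. So at $x\in\sigma$ the basis $J_\sigma$ must beat \emph{every} basis of $Q$, including bases built from witness labels attached to other cones. Local splittings and wall compatibility say nothing about this. You also never specify which matroid the witnesses carry: there is no space of sections, and $\mathsf M$ need not be linear.

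The missing idea is as follows. Put $\operatorname{rk}_P(S)=\operatorname{rk}_{\mathsf M}(p(S))$ on the labels $(\sigma,b)$ (restriction followed by parallel duplication, padded by zero-weight loops so that $s\ge2$). Then prove that $I_\sigma=\{(\sigma,b):b\in B_\sigma\}$ is a maximum-weight basis of $P$ at every $x\in\sigma$. This is exactly where generation is used.
\begin{itemize}
\item The ray inequalities propagate to $\langle u_{\sigma',b},x\rangle\le v_b(x)$ on all of $N_\RR$, because $v_b$ is a minimum of linear forms on each cone.
\item The circuit formula $v_e=\min_{b\in C_{B_\sigma}(e)\setminus\{e\}}v_b$ forbids improving exchanges, so $B_\sigma$ maximizes $v(x)$-weight.
\end{itemize}
Hence $\sum_{g\in H}\langle u_g,x\rangle\le\sum_{e\in p(H)}v_e(x)\le\sum_{g\in I_\sigma}\langle u_g,x\rangle$ for every basis $H$ of $P$. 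Complementation then turns maximum in $P$ into minimum in $P^*$; see \cref{km-lem:global-witness,km-lem:maximum-basis,km-prop:finite-certificate}.

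Second, the inverse relation is not a ``Whitney-type relation for matroid duality.'' There is no exact sequence of tropical bundles, and you do not need $\cK_i$ to be a bundle at all. First calibrate Larson--Partida's conventions so that their coefficient classes restrict on $\sigma$ to $e_j(u_g:g\in J_\sigma)$, with $+\zeta_i$ in the closure of the cone; this matters because monicity alone is not positivity. What then holds is the equivariant identity $K^T(t)c^T_t(\cE)=\prod_{g\in G}(1+u_gt)$. It is not $1$ equivariantly. It becomes $1$ in ordinary Chow only because its positive-degree coefficients are polynomials in global characters.

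Smaller corrections:
\begin{itemize}
\item The fibre-top exponent is $s_i-1+k$ with $s_i=N_i-r_i$, not $N_i-1+k$.
\item No iteration is needed. \cite[Theorem~3.9]{LP26} already treats several bundles jointly, on the whole open cone generated by the ample cone and the $\zeta_i$, not just for small $t$.
\item The descent in Step~3 is the quotient $B/\Ann_B(\eta)$ with degree $\deg_B(\eta\,\cdot)$, not a subalgebra.
\item The Lorentzian property in Step~4 needs the mixed forms $\deg(D^\alpha ab)$. These come from descending along products of polarizations, not from degree-one Hodge--Riemann for single classes.
\end{itemize}
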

The proof is given in \cref{km-thm:joint-Hodge,km-thm:log-concavity}.

The one-bundle assertion is \cite[Conjecture~1.7]{KM24}.
The essential comparison is between two different descriptions of
positivity: a witness basis maximizes weight in the finite matroid,
so its complement minimizes weight in the dual. The corresponding
equivariant Chern series multiply to a product of global characters;
only after passing to ordinary Chow does this become the inverse-Chern
identity. Thus the construction uses neither representability nor an
asserted exact sequence of tropical bundles. In the nonrepresentable
case it produces a Hodge algebra, not a geometric volume model.

The geometric and tropical constructions also describe Chern
nonvanishing. For a set $S$ of bundle indices, put
$\rho_C(S)=\max\{j:c_j(\bigoplus_{i\in S}\cE_i)\ne0\}$.
We prove that $\rho_C$ is a polymatroid rank and that a product
$\prod_i c_{\alpha_i}(\cE_i)$ is nonzero exactly when
$\sum_{i\in S}\alpha_i\le\rho_C(S)$ for every $S$
(\cref{nv-thm:polymatroid}). In the geometric case the associated clone
matroid is algebraic (\cref{nv-thm:clone}).
The coefficient inequalities include comparisons between excess
layers, while their equality cases are expressed as vanishing in a
descended pairing (\cref{pr-cor:Chern-rigidity}).
The auxiliary functoriality and base-change results, together with minimal
polarized completions, are collected in
\cref{sec:moment-functoriality}.

\subsection{Organization}
Part~\ref{part:volume} proves the volume theorem independently of the
moment-algebra theory. \Cref{sec:prelim} gives the realization tools;
\cref{sec:canonical,sec:flow} constructs the generated bundles; and
\cref{sec:ordinary-packets} identifies the three packets and their common
consequences, ending with $2143$.
Part~\ref{part:hodge} constructs the moment algebras and source
completions, proves the finite-matroid comparison, and treats
nonvanishing and coefficient inequalities.
The appendices contain the local reciprocal calculations, weighted and
relative extensions, flow-cone refinements, and functorial algebra.

The actual volume realizations use the generated envelopes and
the exact preservation results of \cite{GHMSSW25}; the later
Hodge theory supplies additional structure on the source
algebras and is not used to prove Theorem~\ref{thm:main}.

\part{Volume realizations}\label{part:volume}

\section{Volume polynomials and Chern-class realizations}\label{sec:prelim}
\subsection{Divided powers and discrete convexity}
Coefficient polynomials and ordinary differential operators are over
$\QQ$ (over $\RR$ for signatures), independently of the geometric
field; normalization therefore does not invert factorials in positive
characteristic. Chow groups and operational Chow rings have rational
coefficients unless stated otherwise. A generated bundle means a globally generated
vector bundle.

For $\alpha\in\NN^n$, write $|\alpha|=\sum_i\alpha_i$ and
$x^{[\alpha]}=x^\alpha/\alpha!$. The operators $\Nrm_{x,z}$ and
$\Nrm_x$ normalize all displayed variables and only the $x$-variables,
respectively. Ordinary derivatives satisfy
\begin{equation}\label{eq:divided-derivative}
 \partial^\beta x^{[\alpha]}=
 \begin{cases}x^{[\alpha-\beta]},&\beta\le\alpha,\\0,&\text{otherwise}.
 \end{cases}
\end{equation}
Thus normalized differential contractions introduce no factorials.
For a cap $\mathbf m\in\NN^n$, define on
$\RR[x]_{\le\mathbf m}=\Span_\RR\{x^{[\alpha]}:0\le\alpha\le\mathbf m\}$
the involution
\begin{equation}\label{eq:finite-complement}
 \Dcomp_{\mathbf m}(x^{[\alpha]})=x^{[\mathbf m-\alpha]}.
\end{equation}
This is the divided-power form of Poincar\'e duality for the Chow ring
$\QQ[h_1,\ldots,h_n]/(h_i^{m_i+1})_i$ of $\prod_i\PP^{m_i}$.
For $f(x,z)$ supported in $0\le\alpha\le\mathbf m$, $0\le q\le r$, put
\begin{align}
 \mathcal R_{\mathbf m,r}f&=x^{\mathbf m}z^rf(x^{-1},z^{-1}),
 \label{eq:full-reciprocal}\\
 \Dcomp_{\mathbf m,r}(x^{[\alpha]}z^{[q]})
 &=x^{[\mathbf m-\alpha]}z^{[r-q]}.
 \label{eq:full-finite-dual}
\end{align}
Then
\begin{equation}\label{eq:finite-reciprocal-normalization}
 \Dcomp_{\mathbf m,r}\Nrm_{x,z}(f)=\Nrm_{x,z}(\mathcal R_{\mathbf m,r}f),
 \qquad
 \Dcomp_{\mathbf m,r}\Nrm_{x,z}(\mathcal R_{\mathbf m,r}f)=\Nrm_{x,z}(f).
\end{equation}
Omit $r,z$ when $z$ is absent. Enlarging the cap gives
\begin{equation}\label{eq:box-enlargement-translation}
 \begin{aligned}
 \Dcomp_{\mathbf m+\nu,r+s}\Nrm_{x,z}(f)
 &=T_{\nu,s}(\Dcomp_{\mathbf m,r}\Nrm_{x,z}(f)),\\
 T_{\nu,s}(x^{[\alpha]}z^{[q]})&=x^{[\alpha+\nu]}z^{[q+s]}.
 \end{aligned}
\end{equation}
Reciprocal calculations may use Laurent polynomials; the specified
finite boxes ensure polynomial source and target expressions.

We regard coefficient polynomials as real polynomials when discussing
Lorentzianity. A complex volume model implies this numerical property by
mixed Hodge--Riemann; the geometric field in a realization is recorded
separately.
A finite set $B\subseteq\NN^n$ of constant coordinate sum is
$M$-convex if, whenever $\alpha_i>\beta_i$ for $\alpha,\beta\in B$,
there is $j$ with $\alpha_j<\beta_j$ and both
$\alpha-e_i+e_j,\beta+e_i-e_j\in B$ \cite{Murota03}.

We use the closed class of Lorentzian polynomials, including its boundary cases.
\begin{definition}\label{def:lorentzian}
In degree $d\ge2$, a homogeneous polynomial with nonnegative
coefficients is \emph{Lorentzian} if it is a coefficientwise limit of
homogeneous polynomials with strictly positive coefficients whose
coordinate derivatives of order $d-2$ have nondegenerate Hessians of
signature $(1,n-1)$. In degrees zero and one every nonnegative-coefficient
polynomial is Lorentzian. Zero is included in every degree.
\end{definition}

The support--Hessian characterization of Br\"and\'en and Huh is particularly convenient for coefficient arrays.
\begin{proposition}\label{thm:BH-criterion}
A nonzero homogeneous nonnegative-coefficient polynomial $f$ of degree
$d$ is Lorentzian exactly when $\Supp(f)$ is $M$-convex and every
$\partial_x^\beta f$, $|\beta|=d-2$, has a Hessian with at most one
positive eigenvalue.
\end{proposition}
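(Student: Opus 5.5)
We plan to deduce the criterion from Br\"and\'en--Huh \cite{BH20} by matching the closed class of \cref{def:lorentzian} with their class $\mathrm{L}^d_n$. In \cite{BH20}, $\mathrm{L}^d_n$ is \emph{defined} for $d\ge2$ as the set of homogeneous degree-$d$ polynomials with nonnegative coefficients whose support is $M$-convex and whose derivatives $\partial_x^\beta f$, $|\beta|=d-2$, have Hessians with at most one positive eigenvalue. Once the identification is made, the proposition is a tautology, so all the work is in the comparison. There are three steps.

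\textbf{Step 1: the strict classes agree.} The polynomials used in \cref{def:lorentzian} have strictly positive coefficients, and every quadratic derivative has a nondegenerate Hessian of signature $(1,n-1)$. This is exactly Br\"and\'en--Huh's class $\mathring{\mathrm L}^d_n$ of strictly Lorentzian polynomials. Strictly positive coefficients give full support, namely all of $\{|\alpha|=d\}$, which is $M$-convex. Hence $\mathring{\mathrm L}^d_n\subseteq\mathrm L^d_n$.

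\textbf{Step 2: closure.} Next I would invoke \cite[Theorem~2.25]{BH20}: $\mathrm L^d_n$ is the closure of $\mathring{\mathrm L}^d_n$ in the space of degree-$d$ forms. One inclusion uses that $\mathrm L^d_n$ is closed. That closedness is the nontrivial content: $M$-convexity of supports survives limits because of the eigenvalue condition, and the "at most one positive eigenvalue'' condition is closed by continuity of eigenvalues. The other inclusion is the density of strict Lorentzian forms in $\mathrm L^d_n$, which Br\"and\'en--Huh obtain by deforming along a semigroup of operators preserving $\mathrm L^d_n$. Combining this with Step 1 gives:
\[
\text{(nonzero $f$ Lorentzian in the sense of \cref{def:lorentzian})}\iff f\in\mathrm L^d_n,
\]
and the latter is precisely the stated support--Hessian condition.

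\textbf{Step 3: degrees $d\le1$.} Here I would check the boundary cases separately, since \cref{def:lorentzian} treats them by fiat. For $d=0$, the support of a nonzero constant is a single point. For $d=1$, the support is a set of unit vectors $e_i$, and any such set is $M$-convex: the exchange $\alpha-e_i+e_j$ with $\alpha=e_i$, $\beta=e_j$ returns $\beta$ and $\alpha$. The Hessian condition is vacuous in both cases, since no derivative of order $d-2<0$ exists. So both sides hold for every nonzero nonnegative $f$.

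The main obstacle is Step 2, specifically the density of $\mathring{\mathrm L}^d_n$ in $\mathrm L^d_n$. This requires the full machinery of \cite{BH20}, and I would cite it rather than reprove it. The remaining care is bookkeeping. First, the Hessian-nondegeneracy convention in \cref{def:lorentzian} must match that of $\mathring{\mathrm L}^d_n$; in particular, for $n=1$ signature $(1,0)$ is understood. Second, the paper's class is restricted to nonzero polynomials, whereas $\mathrm L^d_n$ as defined in \cite{BH20} contains $0$.
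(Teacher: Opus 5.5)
Your proposal is correct and matches the paper, whose proof is simply the citation of \cite[Theorem~2.25]{BH20} together with the remark that the Hessian condition is vacuous for $d\le1$; your Steps~1--3 spell out the same identification of conventions. One small correction: \cref{def:lorentzian} explicitly includes zero in every degree, so it is the proposition, not the paper's class of Lorentzian polynomials, that is restricted to nonzero $f$.
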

This is \cite[Theorem~2.25]{BH20}; the Hessian condition is vacuous
for $d\le1$. For $F=\sum_{|\alpha|=d}c_\alpha x^\alpha$ and
$f=\Nrm(F)$, these Hessians are
\begin{equation}\label{eq:coefficient-hessian}
 H_\beta(F)=(c_{\beta+e_i+e_j})_{1\le i,j\le n},
\end{equation}
with out-of-range coefficients zero.

We also need the corresponding language for supports of varying total degree.
Write $\Newt(f)=\operatorname{conv}(\Supp(f))$; $f$ has a saturated
Newton polytope (SNP) when $\Supp(f)=\Newt(f)\cap\ZZ^n$.
For finite nonempty $S\subseteq\NN^n$ and
$D\ge\max_{\alpha\in S}|\alpha|$, set
\begin{equation}\label{eq:slack-homogenization}
 \widehat S_D=\{(\alpha,D-|\alpha|):\alpha\in S\}.
\end{equation}
The set $S$ is $M^\natural$-convex if $\widehat S_D$ is $M$-convex.
Increasing $D$ merely translates the slack coordinate, so the choice
does not matter. Equivalently, $S$ is the lattice-point set of an
integral generalized polymatroid \cite{Murota03}.

Slack homogenization also controls signed specializations of packets.
\begin{lemma}\label{lem:graded-packet}
Let $P_k\in\RR[x]_{d+k}$, $0\le k\le r\le L$, and suppose
$Q=\sum_kP_k(x)z^{L-k}\ne0$ has $M$-convex support. Then
$S=\bigcup_k\Supp(P_k)$ is $M^\natural$-convex and
\begin{equation}\label{eq:SNP-dehomogenization}
 S=\operatorname{conv}(S)\cap\ZZ^n.
\end{equation}
Moreover, $\Supp(\sum_ka_kP_k)=S$ if $a_k\ne0$ on every nonzero
layer, and $\sum_kP_k(x)\beta^k$ has saturated Newton polytope in
$\ZZ^{n+1}$.
\end{lemma}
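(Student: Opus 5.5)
The plan is to reduce everything to the homogeneous support $T=\Supp(Q)\subseteq\NN^{n+1}$, which is $M$-convex by hypothesis. Since $P_k$ has degree $d+k$ and is multiplied by $z^{L-k}$, every monomial of $Q$ has total degree $d+L$. The map $\alpha\mapsto(\alpha,L-k)$ for $\alpha\in\Supp(P_k)$ is a bijection from the disjoint union of the layer supports onto $T$; the layers are disjoint because they have distinct degrees $|\alpha|=d+k$. Hence $T=\widehat S_D$ with $D=d+L$, since $D-|\alpha|=L-k$. By the definition following \eqref{eq:slack-homogenization}, this already shows that $S$ is $M^\natural$-convex.

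For \eqref{eq:SNP-dehomogenization}, I would use the standard fact \cite{Murota03} that an $M$-convex set is the set of lattice points of its convex hull, which is an integral base polytope. Let $\gamma\in\operatorname{conv}(S)\cap\ZZ^n$. Write $\gamma=\sum_j t_j\alpha_j$ as a convex combination with $\alpha_j\in S$, and set $\hat\gamma=\sum_jt_j(\alpha_j,D-|\alpha_j|)=(\gamma,D-|\gamma|)$. This is a lattice point of $\operatorname{conv}(T)$, so it lies in $T$, which gives $\gamma\in S$. The facts used here are that the slack map is affine, so it carries convex hulls to convex hulls, and that it is injective.

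For the signed specialization, the layers $P_k$ live in distinct degrees $d+k$, so no monomial can occur in two of them. Consequently $\Supp(\sum_k a_kP_k)=\bigcup_{a_k\ne0}\Supp(P_k)$. If $a_k\neq0$ whenever $P_k\neq0$, this union is exactly $S$.

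For the last claim, write $Q^+=\sum_kP_k(x)\beta^k$. Its support is $\{(\alpha,k):\alpha\in\Supp(P_k)\}\subseteq\ZZ^{n+1}$, which is the image of $T$ under the affine unimodular map $(\alpha,q)\mapsto(\alpha,L-q)$. Lattice-point saturation is preserved by affine lattice automorphisms, so $\Supp(Q^+)$ is saturated because $T$ is.

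The only step with real content is citing the correct fact that $M$-convex sets are hole-free, that is, $B=\operatorname{conv}(B)\cap\ZZ^{n}$. I would take this from Murota's theory, where $M$-convex sets are exactly the integer points of integral base polyhedra. Everything else is bookkeeping of degrees.
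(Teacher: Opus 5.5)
Your proof is correct and follows the same route as the paper. You identify $\Supp(Q)$ with the slack homogenization $\widehat S_{d+L}$, transfer Murota's hole-freeness of $M$-convex sets through the affine slack embedding, rule out cancellation in the signed sum by the distinct $x$-degrees, and transport saturation through the unimodular reflection $(a,k)\mapsto(a,L-k)$.
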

\begin{proof}
The support of $Q$ is the slack homogenization of $S$ in total degree
$d+L$. The affine lattice embedding $a\mapsto(a,d+L-|a|)$ identifies
the two convex hulls. Since an $M$-convex set contains every lattice
point of its base polytope \cite{Murota03}, $S$ is saturated.
Distinct $x$-degrees preclude cancellation in the signed sum.
The affine unimodular map $(a,k)\mapsto(a,L-k)$ identifies the
parameter support with $\Supp(Q)$.
\end{proof}

\subsection{Realizable volume polynomials}
We distinguish actual intersection realizations from their coefficientwise limits.
\begin{definition}\label{def:RV}
A homogeneous $f\in\QQ[x_1,\ldots,x_n]_d$ belongs to $\RV$ if
\begin{equation}\label{eq:RV-definition}
 f(x)=\frac q{d!}\int_X\left(\sum_ix_iD_i\right)^d
\end{equation}
for $q\in\QQ_{\ge0}$, an integral projective $d$-fold $X/\kk$, and
semiample Cartier classes $D_i$. The choice $q=0$ includes zero.
\end{definition}
For $\kk=\CC$ write $\RVC$. Its polynomials are Lorentzian by mixed
Hodge--Riemann for nef classes \cite[Theorem~4.6]{BH20}.

The exact preservation results of \cite{GHMSSW25} supply the operations
used below.
\begin{proposition}\label{prop:RV-closure}
Over every field $\kk$, variable permutations, nonnegative rational
linear substitutions, ordinary coordinate derivatives, and full
polarization preserve $\RV$. Common divided-power translation is
reversible:
\[
 \sum_\alpha c_\alpha x^{[\alpha]}\in\RV
 \quad\Longleftrightarrow\quad
 \sum_\alpha c_\alpha x^{[\alpha+\gamma]}\in\RV.
\]
In particular, for $P_k\in\QQ[x]_{d+k}$ and $0\le r\le L$, put
$Q=\sum_{k=0}^rP_kz^{L-k}$ and
$\widetilde P=\sum_{k=0}^rP_kz^{r-k}$. Then
\begin{align}
 \partial_z^{L-r}\Nrm_{x,z}(Q)
   &=\Nrm_{x,z}(\widetilde P),\label{eq:common-minimal-packet}\\
 \left.\partial_z^{L-k}\Nrm_{x,z}(Q)\right|_{z=0}
   &=\Nrm_x(P_k).\label{eq:common-layer-extraction}
\end{align}
Thus actual realizability of $\Nrm_{x,z}(Q)$ passes to the minimal
packet, every layer, and every larger homogenizing padding.
In characteristic zero every nonzero member of $\RV$ has a smooth
integral projective realization with semiample Cartier classes.
\end{proposition}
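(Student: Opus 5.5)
The plan is to treat \cref{prop:RV-closure} as a packaging of the preservation theorems of \cite{GHMSSW25}, plus two short coefficient identities proved directly. The first paragraph's closure statements come from \cite[Theorem~1.3 and Remark~1.14]{GHMSSW25}. Where a statement is not literally there, I would reduce it geometrically on a realization $f=\frac q{d!}\int_X(\sum_ix_iD_i)^d$ with $X$ integral projective over $\kk$ and $D_i$ semiample.

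\textbf{Permutations and substitutions.} Permuting variables permutes the $D_i$. A substitution $x_i=\sum_j a_{ij}y_j$ with $a_{ij}\in\QQ_{\ge0}$ replaces the divisors by $D'_j=\sum_i a_{ij}D_i$. After clearing denominators, these are nonnegative rational combinations of semiample Cartier classes. They are therefore semiample $\QQ$-Cartier, and a common multiple is absorbed into $q$ by homogeneity.

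\textbf{Derivatives and polarization.} A coordinate derivative $\partial_{x_i}f$ is $\frac{q}{(d-1)!}\int_X D_i(\sum x_jD_j)^{d-1}$. One represents a multiple of $D_i$ by a general member $Y$ of a basepoint-free linear system. Over an infinite field, Bertini makes $Y$ reduced, and one passes to its irreducible components. Each component is integral, and restrictions of semiample classes stay semiample. A nonnegative sum of $\RV$ polynomials of the same degree must then be shown to lie in $\RV$. This is exactly the nontrivial content supplied by \cite{GHMSSW25}: disjoint unions are not integral, and one needs their gluing or cone construction. Finite fields are handled by the base-change arguments there.

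\textbf{Divided-power translation.} The forward direction $f\mapsto\sum c_\alpha x^{[\alpha+\gamma]}$ is realized by $X\times\prod_i\PP^{\gamma_i}$, with $D_i$ replaced by $D_i+h_i$. Expanding the volume, only the monomials with exactly $\gamma_i$ powers of $h_i$ survive, and the multinomial coefficients combine precisely into divided powers. The reverse direction follows from \eqref{eq:divided-derivative}, since $\partial^\gamma$ undoes the translation.

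\textbf{Packet identities.} These are direct coefficient checks using \eqref{eq:divided-derivative}. Write $\Nrm_{x,z}(Q)=\sum_k\Nrm_x(P_k)\,z^{[L-k]}$, which is valid because $\Nrm$ factors across the variable blocks. Then
\begin{align*}
 \partial_z^{L-r}\Nrm_{x,z}(Q)&=\sum_k\Nrm_x(P_k)\,z^{[r-k]}=\Nrm_{x,z}(\widetilde P),\\
 \partial_z^{L-k}\Nrm_{x,z}(Q)&=\sum_{j\ge k}\Nrm_x(P_j)\,z^{[j-k]}.
\end{align*}
Setting $z=0$ in the second line leaves only the term $j=k$. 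Combining these with closure under derivatives (and the evaluation $z=0$, which is a substitution) gives passage to the minimal packet and to every layer. Larger padding is the forward translation in the $z$-variable alone, applied to the minimal packet.

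\textbf{Characteristic zero smoothness.} Take a resolution of singularities $\pi\colon\widetilde X\to X$, which is projective birational with $\widetilde X$ smooth, integral and projective. Pull back the $D_i$: pullbacks of semiample Cartier classes are semiample Cartier, and the projection formula $\int_{\widetilde X}\pi^*(\cdot)=\int_X(\cdot)$ preserves the volume polynomial.

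\textbf{Main obstacle.} The hard step is closure under derivatives over an arbitrary field while keeping a single integral variety. Both the reducibility of hyperplane sections and the Bertini failures over finite fields must be handled. Here I would rely entirely on \cite{GHMSSW25} rather than reprove it.
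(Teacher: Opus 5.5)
Your outline matches the paper's proof for substitutions, derivatives, the packet identities, and resolution of singularities. The paper's proof is itself a list of citations to \cite{GHMSSW25}: derivatives from Theorem~1.3, polarization from Proposition~4.1 with Remark~1.14, and forward translation from the cone construction. It adds the termwise identities \eqref{eq:divided-derivative} and pullback along a resolution.

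The genuine error is your construction of the forward divided-power translation. Normalized volume polynomials are multiplicative under products. So $X\times\prod_i\PP^{\gamma_i}$ with divisors $D_i+h_i$ realizes $f\cdot x^{[\gamma]}=\sum_\alpha c_\alpha\binom{\alpha+\gamma}{\gamma}x^{[\alpha+\gamma]}$, not $\sum_\alpha c_\alpha x^{[\alpha+\gamma]}$. The surviving multinomial $\frac{(d+|\gamma|)!}{d!\,\gamma!}$ leaves the denominator $\alpha!\,\gamma!$ rather than $(\alpha+\gamma)!$.

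For example, take $f=\frac12(x_1+x_2)^2$, realized on $\PP^2$, and $\gamma=e_1$. The product gives $3x^{[3e_1]}+2x^{[2e_1+e_2]}+x^{[e_1+2e_2]}$ instead of $x^{[3e_1]}+x^{[2e_1+e_2]}+x^{[e_1+2e_2]}$, and the discrepancy is not a scalar. The same error breaks your padding step: $X\times\PP^{L-r}$ produces $\sum_k\binom{L-k}{r-k}\Nrm_x(P_k)z^{[L-k]}$, not $\Nrm_{x,z}(Q)$.

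The repair is a projective-bundle form of the cone construction, adding one unit at a time.
\begin{itemize}
\item Replace $D_i$ by a globally generated multiple $ND_i$. Afterwards substitute $x_i\mapsto x_i/N$ and absorb the factor $N^{-1}$ into $q$.
\item Let $\pi\colon Y=\PP_X(\cO_X\oplus\cO_X(D_i))\to X$ in the quotient convention, and put $\xi=c_1(\cO_Y(1))$, which is globally generated.
\item One has $\pi_*1=0$ and $\pi_*\xi^{1+m}=h_m(\cO_X\oplus\cO_X(D_i))=D_i^m$.
\item Replace $D_i$ by $\xi$ and pull back the other $D_j$. Then $\int_Y\xi^a\prod_{j\ne i}\pi^*D_j^{\beta_j}=\int_XD_i^{a-1}\prod_{j\ne i}D_j^{\beta_j}$ for $a\ge1$, and it is zero for $a=0$.
\item This is exactly $\sum_\alpha c_\alpha x^{[\alpha+e_i]}$ on the integral projective $(d+1)$-fold $Y$. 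Iterate over $\gamma$.
\end{itemize}
Your reverse direction by $\partial^\gamma$ is fine.

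Two smaller points. First, your claim that a nonnegative sum of $\RV$ polynomials of equal degree ``must then be shown'' to lie in $\RV$ is false as a general principle: $x^{[2e_1]}+x^{[2e_2]}$ is not Lorentzian. This is harmless here, since you defer the derivative case to the cited theorem. Second, you never actually argue polarization. It needs its own citation, Proposition~4.1 of \cite{GHMSSW25}, not Theorem~1.3.
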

\begin{proof}
Substitutions replace divisors by nonnegative rational combinations;
clear denominators using the scalar allowed in \cref{def:RV}.
Derivatives are covered by \cite[Theorem~1.3]{GHMSSW25}; forward
translation is the cone construction after its Definition~1.2, and
ordinary differentiation reverses it. Polarization is its
Proposition~4.1, with the arbitrary-field formulations in Remark~1.14.
The extraction identities follow termwise from
\eqref{eq:divided-derivative}.
Finally, pull a realization back along a projective resolution
\cite{Hironaka64}. Semiample classes remain semiample, and the
projection formula preserves every mixed intersection.
\end{proof}

\label{subsec:algebraic-support}
An integral polymatroid rank $\rho$ on $[n]$ is algebraic over $\kk$
if fields $\kk\subseteq K_i\subseteq L$ satisfy
$\rho(S)=\operatorname{trdeg}_{\kk}K_S$, where $K_S$ is their
compositum and $K_\varnothing=\kk$. Its integral base set is
\[
 B(\rho)=\{\alpha\in\NN^n:\alpha(S)\le\rho(S)\ (S\subseteq[n]),
                       \ |\alpha|=\rho([n])\}.
\]
\begin{proposition}\label{thm:algebraic-polymatroid-consequence}
If $F\ne0$ is homogeneous and $\Nrm(F)\in\RV$, then
$\Supp(F)=B(\rho)$ for an integral polymatroid algebraic over $\kk$.
After full polarization in disjoint blocks $E_i$ dominating the
$x_i$-exponents, the support is the base set of an algebraic matroid:
\[
 \widetilde B=\{S\subseteq\bigsqcup_iE_i:
       (|S\cap E_1|,\ldots,|S\cap E_n|)\in\Supp(F)\}.
\]
\end{proposition}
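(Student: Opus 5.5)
Write $\Nrm(F)=\frac q{d!}\int_X(\sum_ix_iD_i)^d$ with $q>0$, $X$ integral projective of dimension $d$ over $\kk$, and $D_i$ semiample Cartier classes. After scaling each $D_i$ by a positive integer, which leaves the support unchanged, we may take every $D_i$ globally generated. The coefficient of $x^{[\alpha]}$ is then $q\int_XD^\alpha$, so the first step is to describe exactly when $\int_XD_1^{\alpha_1}\cdots D_n^{\alpha_n}\ne0$.

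Let $\phi_i:X\to\PP^{N_i}$ be the morphism defined by $|D_i|$ (after the scaling), so that $D_i=\phi_i^*\cO(1)$. Choose generically generated function fields: let $L=\kk(X)$ and let $K_i=\kk(\phi_i(X))\subseteq L$. For a subset $S$, the compositum $K_S$ is the function field of the image of $\phi_S=\prod_{i\in S}\phi_i$, so $\rho(S):=\operatorname{trdeg}_\kk K_S=\dim\phi_S(X)$. This is the standard algebraic polymatroid rank, and it is integral, normalized, monotone and submodular.

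The key claim is the nonvanishing criterion for mixed intersections of globally generated divisors: $\int_XD^\alpha\ne0$ if and only if $|\alpha|=d$ and $\alpha(S)\le\rho(S)$ for all $S$, where $\rho([n])=\dim X=d$ is needed for nonemptiness. The plan is to prove this by intersecting with general members of the linear systems. Choose $\alpha_i$ general hyperplane sections pulled back from $\PP^{N_i}$, for all $i$. Then $\int D^\alpha$ counts points, each with positive multiplicity, of the intersection of $X$ with the pullback of a general product of linear subspaces under $\phi=\prod\phi_i$. Equivalently, it is a degree-weighted count on the image $Y=\phi(X)\subseteq\prod\PP^{N_i}$, and it is nonzero exactly when the multidegree of $Y$ in direction $\alpha$ is positive. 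The step I expect to be the main obstacle is the positivity of multidegrees of an irreducible subvariety of a product of projective spaces. It holds exactly under the stated rank inequalities, with $\rho(S)=\dim$ of the projection to the $S$-factors. This is the Castillo--Cid-Ruiz--Li--Monta\~no--Zhang criterion, valid over arbitrary fields. If one prefers a self-contained route, the argument is an induction that cuts by one general hyperplane from a factor $i$ with $\alpha_i>0$, using Bertini for irreducibility and dimension drop of the projections (replacing $Y$ by a component). Over imperfect or finite fields, Bertini needs care, so I would pass to an infinite extension, or the algebraic closure, for generality, noting that the support and the transcendence degrees are unchanged. Since $\phi_*[X]=\deg(X/Y)[Y]$ when $\dim Y=d$, and the whole integral is zero otherwise, which also matches $\rho([n])<d$, this gives $\Supp(F)=B(\rho)$.

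For the matroid statement, polarize fully in disjoint blocks $E_i$ with $|E_i|\ge\max\alpha_i$. By \cref{prop:RV-closure} the polarized polynomial is again in $\RV$ and is multiaffine. Concretely, it is realized on the same $X$ by $|E_i|$ copies of $D_i$, each copy given an independent general choice, so its support is $\widetilde B$. Applying the first part to this realization gives $\widetilde B=B(\tilde\rho)$ with $\tilde\rho$ algebraic. A multiaffine support that is a polymatroid base set is the base set of a matroid of rank $d$, and algebraicity is inherited. Alternatively, one can directly assign to each element of $E_i$ an independent generic copy of $K_i$ inside a larger field, namely a generic conjugate over $\kk$. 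This realizes $\tilde\rho(T)=\min$-type rank as transcendence degrees, which directly exhibits the clone matroid as algebraic.
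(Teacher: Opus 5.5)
Your proof is correct in its main line. For the polymatroid assertion it takes a different route from the paper; for the matroid assertion it is the same.

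\textbf{The polymatroid assertion.} The paper does not analyse the intersection numbers. It notes that $\Nrm$ preserves support and quotes \cite[Proposition~5.4]{GHMSSW25}. You instead unpack that statement:
\begin{itemize}
\item After replacing the $D_i$ by globally generated multiples, the projection formula gives $\int_XD^\alpha=\deg(X/Y)\,\deg_\alpha(Y)$ for the image $Y$ of $X$ in $\prod_i\PP^{N_i}$, and zero if $\dim Y<d$.
\item The Castillo--Cid-Ruiz--Li--Monta\~no--Zhang positivity criterion for multidegrees then identifies the support with $B(\rho)$, where $\rho(S)=\dim\pi_S(Y)=\operatorname{trdeg}_\kk K_S$.
\end{itemize}
Your route gives an explicit rank function, namely the dimensions of the images of $X$ under the maps defined by the $D_i$. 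It also makes algebraicity over $\kk$ visible with no further work. The paper's citation buys brevity. Either way an external positivity theorem carries the weight, so your Bertini sketch is optional.

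\textbf{The matroid assertion.} Here you argue exactly as the paper does. Polarization preserves $\RV$ by \cref{prop:RV-closure}. The first part, applied to the polarized realization, gives an algebraic polymatroid whose bases are $0/1$ vectors. So singleton ranks are at most one, and it is an algebraic matroid.

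\textbf{Two asides are wrong and should be removed.}

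First, the polarization is not realized on the same $X$ by copies of $D_i$. Mixed intersection numbers depend only on the divisor classes. However generally you choose members of $|D_i|$, the copies realize $f(\sum_{e\in E_1}x_e,\ldots,\sum_{e\in E_n}x_e)$. This contains non-squarefree monomials such as $x_e^2$, so it is not multiaffine. The multiaffine polarization needs the separate construction of \cite[Proposition~4.1]{GHMSSW25}, which is what \cref{prop:RV-closure} packages. Its support equals $\widetilde B$ by the definition of polarization, not because of any realization on $X$.

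Second, giving each $e\in E_i$ an independent generic conjugate of the whole field $K_i$ does not produce the clone matroid. Singletons would have rank $\rho(\{i\})$, which may exceed one. Independent conjugation also destroys the algebraic relations between $K_i$ and $K_j$, so ranks become additive across blocks. The first part applied to the polarized realization already gives algebraicity over $\kk$, so this alternative is unnecessary.
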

\begin{proof}
Normalization preserves support, so the first assertion is
\cite[Proposition~5.4]{GHMSSW25}. Polarization preserves actual volume
and gives the displayed multiaffine support. Apply the same result;
the resulting polymatroid has singleton ranks at most one, hence is
a matroid.
\end{proof}
A linear representation over $F$ consists of subspaces $U_i$ with
$\rho(S)=\dim_F\sum_{i\in S}U_i$. Fields generated by linear forms give
an algebraic representation. The distinction between $F$ and its
extensions is retained in \cref{prop:weighted-support-charzero}.

\subsection{Operator and intersection-theoretic conventions}\label{subsec:operator-conventions}
Let $s_i$ interchange $x_i,x_{i+1}$, and set
$\partial_i f=(f-s_if)/(x_i-x_{i+1})$, $\pi_i f=\partial_i(x_if)$.
The local homogeneous operators are
\begin{align}
 \Gamma_i^{(\epsilon,z)}f&=\partial_i(x_{i+1}^\epsilon(x_i+z)f),
 &&\epsilon\in\NN,\label{eq:operator-dictionary-gamma}\\
 \Pi_i^{(z)}f&=\Gamma_i^{(0,z)}f=\partial_i((x_i+z)f),
 \label{eq:operator-dictionary-pi}\\
 \mathscr K_i^{(z)}f&=\partial_i(x_i(x_{i+1}+z)f),&
 \mathscr D_i^{(z)}f&=\partial_i((x_{i+1}+z)f).
 \label{eq:operator-dictionary-source}
\end{align}
Here $\mathscr K,\mathscr D$ are the Lascoux and positive Grothendieck
source forms, and $\Pi,\Gamma^{(1,z)}$ their no-kernel and all-kernel
reciprocal forms. Word identities use the common finite boxes in
\cref{sec:key,app:operator-conjugation}.
\begin{remark}\label{warn:derivative-notation}
In operator sections $\partial_i$ is a divided difference; ordinary
derivatives are $\partial_{x_i}$, $\partial_z$, or $\partial_x^\gamma$.
The ordinary differential-preserver results of \cite{GHMSSW25} do not
assert volume preservation by Demazure or opposite-Demazure operators.
\end{remark}
For a word $\mathbf i=(i_1,\ldots,i_\ell)$, set
\begin{equation}\label{eq:operator-order}
 T_{\mathbf i}=T_{i_1}\cdots T_{i_\ell}\quad\text{in }\operatorname{End}(\QQ[x,z]).
\end{equation}
The rightmost factor acts first, matching pushforward from the top of
the tower. With $\delta=(n-1,n-2,\ldots,0)$,
$\Sch_w=\partial_{w^{-1}w_0}x^\delta$; equivalently
$\Sch_{w_0}=x^\delta$ and $\Sch_{ws_i}=\partial_i\Sch_w$ when
$w(i)>w(i+1)$.

For a weak composition $\alpha$, let $\alpha^+$ be decreasing and
$w_\alpha$ the minimal-length place permutation taking $\alpha^+$ to
$\alpha$ (preserving the order of equal parts). Then
$\kappa_\alpha=\pi_{w_\alpha}x^{\alpha^+}$, independently of the
reduced word by braid relations \cite{Demazure74,RS95}.
The positive Grothendieck convention is
$\Groth_w^+(x;\beta)=\sum_{k\ge0}A_{w,k}(x)\beta^k$,
$A_{w,k}=(-1)^kG_w^{(\length(w)+k)}$. Its recursion uses
$\mathscr D_i^{(z)}$ from \eqref{eq:operator-dictionary-source}, as
recalled in \cref{sec:groth}.

We now fix the projective-bundle convention used in the geometric constructions.
We use quotient projectivization $\PP_X(\cE)=\Proj_X(\Sym^\bullet\cE)$,
with $p^*\cE\twoheadrightarrow\cO(1)$. For $\rk\cE=r$ and
$\xi=c_1(\cO(1))$,
\begin{equation}\label{eq:projective-bundle-push}
 p_*(\xi^{r-1+m})=h_m(\cE)\quad(m\ge0),\qquad
 \sum_{m\ge0}h_m(\cE)t^m=c_{-t}(\cE)^{-1}.
\end{equation}
Write $\int_X\theta=\deg(\theta\cap[X])$ for top-degree operational
classes, and let $K^0(X)$ denote the Grothendieck group of vector
bundles; see \cite[\S\S3.2--3.3]{Fulton98}.

For a proper $\rho:X\to B$ to a finite projective-space box, the
notation $\rho_*\ctop(\cV)$ means the unique $\Xi\in A^c(B)$ with
$\Xi\cap[B]=\rho_*(\ctop(\cV)\cap[X])$, via the smooth-target
cap-product isomorphism. Singular sources are read cycle-theoretically;
smooth quotient-flag towers use their Chow rings. Over a singular base,
projective-bundle formulas are identities on Chow homology, with
operational classes acting by cap product.

\subsection{Incidence and total Chern classes}\label{sec:incidence}

Fix an arbitrary geometric field $\kk$ and put
\[
 B=\prod_{i=1}^N\PP^{m_i}_{\kk},\qquad
 h_i=c_1(\cO_{\PP^{m_i}}(1)),\qquad \mathbf m=(m_1,\ldots,m_N).
\]
The finite dual of a top-Chern pushforward will give the desired volume
polynomial. Since $B$ is smooth, each pushed-forward class has a unique
reduced representative in the Chow box $0\le\gamma\le\mathbf m$.

\begin{proposition}\label{thm:incidence-dual}
Let $\rho:X\to B$ be a projective morphism from an integral projective
variety of dimension $d_X$.  Let $\cV$ be a globally generated vector bundle
of rank $R$ on $X$, and write
\begin{equation}\label{eq:Xi-def}
 \Xi(h):=\rho_*c_R(\cV)
 =\sum_{\gamma\le\mathbf m}\xi_\gamma h^\gamma
 \in A^c(B),
 \qquad c=\dim B-d_X+R.
\end{equation}
If $\Xi=0$, the conclusion below is understood to be the zero polynomial.
Otherwise $0\le c\le\dim B$, and
\begin{equation}\label{eq:incidence-conclusion}
 \Dcomp_{\mathbf m}\Nrm(\Xi)
 =\sum_{\gamma\le\mathbf m}\xi_\gamma x^{[\mathbf m-\gamma]}
 \in\RV.
\end{equation}
If $X$ is smooth, the incidence variety used in the proof is smooth.
If $\operatorname{char}(\kk)=0$ and $\Xi\ne0$, the polynomial in
\eqref{eq:incidence-conclusion} also admits a smooth integral projective
realization.
\end{proposition}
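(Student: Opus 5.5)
Let $\cK=\ker(H^0(X,\cV)\otimes\cO_X\twoheadrightarrow\cV)$ be the evaluation kernel, of rank $M-R$ with $M=\dim H^0$ (or a finite generating subspace). Set $Y=\PP_X(\cK^\vee)$ for the line-subbundle convention, so a point is a hyperplane of $H^0$ containing the kernel fibre. Equivalently, $Y$ is the incidence of pairs $(x,\ell)$ with $\ell$ a line in $\cV_x^\vee\subset H^{0\vee}$. Then $Y$ is integral of dimension $d_X+R-1$, smooth if $X$ is, and carries the tautological class $\eta$ pulled back from $\PP(H^0)$; this class is basepoint free. Standard projective-bundle pushforward \eqref{eq:projective-bundle-push}, applied to $\PP(\cV^\vee)$ or dually to the kernel, gives
\[
 q_*(\eta^{R-1+j})=s_j,\qquad\sum_j s_j=c(\cK)^{-1}=c(\cV).
\]
In particular $q_*\eta^{R-1}=1$ and $q_*\eta^{R-1+j}=c_j(\cV)$, with $j=R$ giving $c_R(\cV)$. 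Replacing $\cV$ by $\cV\oplus\cO^{\oplus s}$ (generated, same top Chern class up to shift) if needed, I would arrange that the top-Chern class appears as a single power of one globally generated tautological divisor.

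Next form $Z=Y\times B$ with the divisors $D_0=\eta$ and $D_i=\rho^*h_i$ pulled back through $Y\to X\to B$. Since the $h_i$ are pulled back from $B$, powers $h^\alpha$ with $\alpha\not\le\mathbf m$ vanish. To obtain the finite dual, use semiample divisors $H_i$ on the $\PP^{m_i}$ factors of an auxiliary copy of $B$, together with the graph of $\rho$. Concretely, take $W=Y\times B$ with classes $E_i=\mathrm{pr}_B^*h_i$, and cut by the pullback of the diagonal class. The diagonal of $\PP^m$ has class $\sum_{a+b=m}h^a\otimes h^b$, but it is not an intersection of semiample divisors. I would therefore instead realize it as the zero locus of a section of the generated bundle $\mathrm{pr}_1^*\cO(1)\otimes T$-type (Euler-sequence quotient $\cO(1)^{m+1}/\cO$), which is globally generated of rank $m$ with top Chern class the diagonal. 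This bundle can then be absorbed into $\cV$ by direct sum, as in the Chern-flow philosophy, and converted to a power of a tautological divisor by the same incidence step. The output is an integral variety $\widetilde Y$ whose divisor classes $\eta$ and $x_i$-classes are semiample, and whose mixed intersections $\int\eta^{N}\prod h_i^{\beta_i}$ equal $\int_B\Xi\,h^\beta=\xi_{\mathbf m-\beta}$. Expanding $\frac1{d!}\int(\sum x_iD_i)^d$ and differentiating in the $\eta$ variable, which is allowed by \cref{prop:RV-closure}, yields exactly $\sum_\gamma\xi_\gamma x^{[\mathbf m-\gamma]}$.

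The degree bookkeeping checks that $\dim\widetilde Y-N=\dim B-c$, so $\Xi\neq0$ forces $0\le c\le\dim B$. Smoothness follows because projective bundles and Euler-quotient zero loci over smooth bases are smooth. The characteristic-zero statement follows from the last sentence of \cref{prop:RV-closure}.

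The main obstacle is the middle step: producing the diagonal or Poincaré-dual pairing by globally generated data while keeping the ambient integral, without relying on a general zero locus (generic smoothness fails in positive characteristic). My first attempt will be to take a single evaluation-kernel projective bundle for $\cV\boxtimes$ (Euler quotients on $B$). Its total space is integral by construction, as a projective bundle over an integral base, and I will pass to top power of $\eta$ by projection formula only, never choosing sections.
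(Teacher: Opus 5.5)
There is a genuine gap, and it lies in the step you flagged as the main obstacle. That obstacle does not exist.

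\textbf{The finite dual comes for free.} Take the single incidence bundle $p:Z=\PP_X(\cK^\vee)\to X$, with $H=c_1(\cO_Z(1))$, $\widetilde\rho=\rho p$, and the divisors $\widetilde\rho^*h_i$. Applying the projection formula twice gives
\[
 \int_Z\widetilde\rho^*(h^\alpha)\,H^{s-1+R}
 =\int_Xc_R(\cV)\,\rho^*(h^\alpha)
 =\int_B\Xi\,h^\alpha=\xi_{\mathbf m-\alpha}.
\]
The last equality holds because $h_i^{m_i+1}=0$ and $\int_Bh^{\mathbf m}=1$, so only the term $\gamma=\mathbf m-\alpha$ of $\Xi=\sum_\gamma\xi_\gamma h^\gamma$ survives. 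The Poincar\'e-dual pairing is therefore supplied by $B$'s own intersection form. Applying $\partial_y^{s-1+R}$ at $y=0$ to the volume polynomial of $(\widetilde\rho^*h_i,H)$ on $Z$ already yields $\Dcomp_{\mathbf m}\Nrm(\Xi)$, and derivative and restriction closure finish the proof. This is exactly the paper's argument.

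You wrote this very identity, $\int\eta^Nh^\beta=\int_B\Xi h^\beta=\xi_{\mathbf m-\beta}$, at the end, but credited it to the auxiliary $\widetilde Y$. The extra copy of $B$, the graph of $\rho$, the diagonal bundle, and the second incidence step are all superfluous. The diagonal step is also misstated. The diagonal of $\PP^m\times\PP^m$ is $c_m(\operatorname{pr}_1^*\cO(1)\otimes\operatorname{pr}_2^*\cQ)$ with $\cQ=\cO^{m+1}/\cO(-1)=T_{\PP^m}(-1)$. The twist of $T_{\PP^m}=\cO(1)^{m+1}/\cO$ that you name has top class $\sum_j\binom{m+1}{j}h_1^{m-j}h_2^j$, not the diagonal.

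\textbf{The first step is misidentified.} You describe $Y$ as hyperplanes of $H^0$ containing $\cK_x$, i.e. lines in $\cV_x^\vee$, of dimension $d_X+R-1$. That is $\PP(\cV)$. On it $q_*\eta^{R-1+j}=h_j(\cV)$, the coefficients of $c_{-t}(\cV)^{-1}$, not $c_j(\cV)$. For example, for $\cO(1)^{\oplus2}$ on $\PP^2$ this gives $3h^2$ in degree two instead of $c_2=h^2$.

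The correct object is the universal zero incidence $\{(x,[\sigma]):\sigma\in W,\ \sigma(x)=0\}=\PP_X(\cK^\vee)$, in the quotient convention. Writing $s=\rk\cK$, it has relative dimension $s-1$, and $p_*H^{s-1+j}=h_j(\cK^\vee)=c_j(\cV)$ because $c_{-t}(\cK^\vee)c_t(\cV)=1$. The top Chern class is then already the single power $H^{s-1+R}$ of a generated divisor, since $\cK^\vee$ is a quotient of $W^\vee\otimes\cO_X$.

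To ensure $s>0$, enlarge $W$ by zero generators, which adds a trivial summand to $\cK$. Replacing $\cV$ by $\cV\oplus\cO^{\oplus s}$, as you suggest, would make its top Chern class vanish.

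Your dimension bookkeeping survives only by coincidence: fibre dimension and exponent shift together, so $\dim Z-(s-1+R)=d_X-R$ in either version.
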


\begin{proof}
The case $\Xi=0$ is immediate. Otherwise choose a generating sequence
\begin{equation}\label{eq:global-generation-sequence}
 0\longrightarrow\cK\longrightarrow W\otimes\cO_X
 \longrightarrow\cV\longrightarrow0,
\end{equation}
adding zero generators so that $s=\rk\cK>0$. Put
$M=\dim W-1=s+R-1$, $p:Z=\PP_X(\cK^\vee)\to X$, and
$H=c_1(\cO_Z(1))$. The dual generating sequence makes
$\cK^\vee$ globally generated, hence $H$ semiample. The projective
bundle $Z$ is integral and projective, and smooth when $X$ is smooth.
Its evaluation embedding in $X\times\PP(W^\vee)$ identifies it with
the universal zero incidence. Moreover,
\begin{equation}\label{eq:incidence-dimension}
 \dim Z=d_X+M-R,\qquad \dim Z-M=d_X-R=\dim B-c.
\end{equation}
Since $\Xi\ne0$, one has $0\le c\le\dim B$.

The identity $c_{-t}(\cK^\vee)c_t(\cV)=1$ and the projective-bundle
formula \eqref{eq:projective-bundle-push} give directly
\[
 p_*(H^M)=p_*(H^{s-1+R})=h_R(\cK^\vee)=c_R(\cV).
\]
For $\widetilde\rho=\rho p$ and $|\alpha|=d_X-R$, the projection
formula therefore yields
\begin{equation}\label{eq:incidence-coeff}
 \int_Z\widetilde\rho^*(h^\alpha)H^M
 =\int_Xc_R(\cV)\rho^*(h^\alpha)
 =\int_B\Xi(h)h^\alpha=\xi_{\mathbf m-\alpha},
\end{equation}
where coefficients outside the Chow box are zero. All displayed
divisors are semiample, so
\[
 V_Z(t,y)=\frac1{(\dim Z)!}
 \int_Z\left(\sum_i t_i\widetilde\rho^*h_i+yH\right)^{\dim Z}
 \in\RV.
\]
Consequently,
\begin{equation}\label{eq:incidence-volume-extraction}
 \left.\partial_y^M V_Z(t,y)\right|_{y=0}
 =\sum_{|\alpha|=d_X-R}\xi_{\mathbf m-\alpha}t^{[\alpha]}
 =\Dcomp_{\mathbf m}\Nrm(\Xi).
\end{equation}
Derivative and restriction closure \cite[Theorem~1.3 and Remark~1.14]{GHMSSW25}
prove actual realizability over $\kk$. Apply \cref{prop:RV-closure} for
the characteristic-zero smooth realization. The argument uses only
projective-bundle pushforward and the projection formula, so it also
applies cycle-theoretically when $X$ is singular.
\end{proof}

\begin{remark}\label{rem:zero-locus}
A general generated-bundle section need not have an integral zero
scheme: a general section of $\cO_{\PP^1}(2)$ gives two reduced points
over an algebraically closed field, though it may be integral over a
nonclosed field. We instead use the integral universal incidence bundle
$\PP_X(\cK^\vee)$ and extract a volume minor in the required degree.
\end{remark}

The evaluation-kernel construction also applies to products of
arbitrary Chern classes of globally generated bundles. The same
projective-bundle mechanism underlies the normalized Lorentzian
statement in \cite[Proposition~6.2]{CidRuiz25}. We record the
realizable-volume form needed here: combined with the exact
preservation theorem of \cite{GHMSSW25}, it gives an actual
realization over the original field.

\begin{proposition}\label{thm:total-chern-arbitrary-field}
Let $X$ be an integral projective variety of dimension $d$ over the field $\kk$, and let $\cE_1,\ldots,\cE_n$ be globally generated vector bundles on $X$, of ranks $r_1,\ldots,r_n$.  Define
\[
 T_{X,\cE_\bullet}(x_1,\ldots,x_n)
 =\sum_{\substack{\alpha\in\mathbb Z_{\geq0}^n\\|\alpha|=d}}
 \deg\!\left(\prod_{i=1}^n c_{\alpha_i}(\cE_i)\cap[X]\right)x^\alpha.
\]
If it is nonzero, $\Nrm(T_{X,\cE_\bullet})$ is a realizable-volume
polynomial over $\kk$, with a smooth integral projective realization
by semiample Cartier divisors in characteristic zero.
\end{proposition}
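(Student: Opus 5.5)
The plan is to iterate the evaluation-kernel construction from the proof of \cref{thm:incidence-dual}, once for each bundle, to build a tower of projective bundles over $X$. On this tower the mixed intersections of semiample divisors reproduce the Chern moments $\int_X\prod_i c_{\alpha_i}(\cE_i)$.

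For each $i$, choose a generating sequence $0\to\cK_i\to W_i\otimes\cO_X\to\cE_i\to0$. Pad with zero generators so that $s_i=\rk\cK_i>0$, and choose $\dim W_i$ large, say $\dim W_i-1=M_i\ge d+s_i-1$. Form the fibre product $p\colon Z=\PP_X(\cK_1^\vee)\times_X\cdots\times_X\PP_X(\cK_n^\vee)\to X$. It is an iterated projective bundle, hence integral and projective, of dimension $d+\sum_i(s_i-1)$. Let $H_i$ be the pullback of $c_1(\cO(1))$ from the $i$th factor. Since $\cK_i^\vee$ is a quotient of $W_i^\vee\otimes\cO_X$, each $H_i$ is globally generated, so in particular semiample and Cartier.

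From $c_{-t}(\cK_i^\vee)c_t(\cE_i)=1$ and \eqref{eq:projective-bundle-push}, we get $p_{i*}(H_i^{s_i-1+m})=h_m(\cK_i^\vee)=c_m(\cE_i)$ for every $m\ge0$; this includes $m>r_i$, where both sides vanish. Because the factors are independent over $X$, the projection formula applied factor by factor gives
\[
 \int_Z\prod_iH_i^{s_i-1+\alpha_i}=\int_X\prod_ic_{\alpha_i}(\cE_i)\qquad(|\alpha|=d),
\]
read cycle-theoretically on Chow homology if $X$ is singular. Consequently
\[
 V_Z(y)=\frac1{(\dim Z)!}\int_Z\Bigl(\sum_iy_iH_i\Bigr)^{\dim Z}=\sum_{|\alpha|=d}\Bigl(\int_X\textstyle\prod_ic_{\alpha_i}(\cE_i)\Bigr)y^{[\alpha+\mathbf s-\mathbf 1]},
\]
where $\mathbf s-\mathbf 1=(s_i-1)_i$. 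Every monomial appearing in $V_Z$ dominates $\mathbf s-\mathbf 1$: exponents $a_i<s_i-1$ give zero because the fibre dimension of the $i$th factor is not reached.

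The polynomial $V_Z$ lies in $\RV$ by \cref{def:RV}. Applying the reversible common divided-power translation of \cref{prop:RV-closure} with $\gamma=\mathbf s-\mathbf 1$, or equivalently the ordinary derivative $\partial_y^{\mathbf s-\mathbf 1}$, yields exactly $\Nrm(T_{X,\cE_\bullet})$. Hence $\Nrm(T_{X,\cE_\bullet})\in\RV$ over $\kk$. The characteristic-zero smooth realization then follows from the last assertion of \cref{prop:RV-closure}, applied to this nonzero member of $\RV$.

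The main point needing care is the vanishing claim: $V_Z$ must have no terms outside the translated box, so that differentiation really recovers $T$ rather than a shifted polynomial with extra terms. This follows from $p_{i*}(H_i^{k})=0$ for $k<s_i-1$, applied iteratively over the fibre product. A secondary point is checking that the iterated projection formula is valid for a singular $X$. Both are handled by working on Chow homology with operational classes, exactly as in the proof of \cref{thm:incidence-dual}.
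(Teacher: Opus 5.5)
Your proposal is correct and is essentially the paper's proof. It uses the same ingredients: the fibre product over $X$ of the evaluation-kernel bundles $\PP_X(\cK_i^\vee)$, the identity $p_{i*}(H_i^{s_i-1+m})=c_m(\cE_i)$ with lower powers vanishing (the paper justifies the iteration by compatibility of proper pushforward with flat pullback), and removal of the shift $\mathbf s-\mathbf 1$ by ordinary differentiation before applying derivative closure and the characteristic-zero clause.

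The only slip is the parenthetical choice $M_i\ge d+s_i-1$. Since $\dim W_i=s_i+r_i$, it is equivalent to $r_i\ge d$ and cannot be arranged by padding. It is never used, so simply delete it.
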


\begin{proof}
Choose generating sequences
$0\to \cK_i\to W_i\otimes\cO_X\to \cE_i\to0$, adding zero generators so
that $s_i=\rk \cK_i>0$. Let $p_i:\PP_X(\cK_i^\vee)\to X$, put
$b_i=s_i-1$, $b=(b_1,\ldots,b_n)$, and set
\[
 Y=\prod_{i,X}\PP_X(\cK_i^\vee),\qquad D=d+\sum_i b_i.
\]
Write $\xi_i$ for the tautological first Chern class on the $i$th
factor and for its pullback to $Y$. Dualizing the generating sequence
gives $W_i^\vee\otimes\cO_X\twoheadrightarrow \cK_i^\vee$; after
pullback, its composite with the tautological quotient generates
$\cO_{\PP_X(\cK_i^\vee)}(1)$. Thus the $\xi_i$ are globally generated
and $Y$, being an iterated projective bundle over $X$, is integral
and projective.

The projective-bundle formula \eqref{eq:projective-bundle-push} gives
$(p_i)_*\xi_i^{b_i+a}=h_a(\cK_i^\vee)=c_a(\cE_i)\quad(a\ge0)$,
whereas lower powers push to zero. The second equality follows from
$c_{-t}(\cK_i^\vee)c_t(\cE_i)=1$.
Proper pushforward commutes with flat pullback along the other
projective-bundle factors \cite[Tag~02RG]{Stacks}. Iterating this
identity and the projection formula therefore yields
\begin{equation}\label{pI-eq:total-chern-shifted-volume}
 \frac1{D!}\int_Y\left(\sum_i x_i\xi_i\right)^D
 =\Nrm\!\left(x^bT_{X,\cE_\bullet}(x)\right).
\end{equation}
Apply $\partial_x^b$ and \eqref{eq:divided-derivative}.
The differential-preserver theorem
\cite[Theorem~1.3 and Remark~1.14]{GHMSSW25} gives the desired
realization over $\kk$. In characteristic zero use
\cref{prop:RV-closure}. All calculations take place on Chow homology,
so singular integral $X$ and the zero polynomial are included.
\end{proof}

The packet constructions below supply particular generated bundles to
which this proposition applies. When $X$ is a complex variety, the
resulting normalized total-Chern polynomial is Lorentzian by its complex
volume realization.

\section{Canonical-row towers}\label{sec:canonical}
The local Gysin formulas are classical; the positivity argument depends
on grouping their factors before applying incidence. We construct the
tower, fix its pushforward order, and prove generation for the two
rowwise bundles that will be used by the three packet families.

\subsection{The quotient-flag tower}

Let $u\in S_n$.  For $1\le a<n$, define
\begin{equation}\label{eq:inverse-lehmer}
 c_a=c_a(u)
 :=\#\{b>a:u^{-1}(a)>u^{-1}(b)\},
\end{equation}
and let
\begin{equation}\label{eq:canonical-block}
 B_a=(a,a+1,\ldots,a+c_a-1).
\end{equation}
Empty blocks are omitted.  The canonical-row word is
\begin{equation}\label{eq:canonical-word}
 \mathbf r(u)=B_{n-1}B_{n-2}\cdots B_1.
\end{equation}

The row order is chosen to give a reduced expression with the required unchanged-prefix property.
\begin{lemma}\label{lem:canonical-reduced}
The word $\mathbf r(u)$ is a reduced expression for $u$.
\end{lemma}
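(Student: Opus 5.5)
The plan is to separate the claim into a length count and an identity of permutations. First I check that the word $\mathbf r(u)$ has length exactly $\length(u)$. By \eqref{eq:inverse-lehmer}, $c_a$ is the number of inversions of $u^{-1}$ whose smaller value-index is $a$. That is, $(c_1,\ldots,c_{n-1})$ is the Lehmer code of $u^{-1}$. Hence
\[
 \sum_a c_a=\length(u^{-1})=\length(u),
\]
and $\mathbf r(u)$ has $\sum_a|B_a|=\length(u)$ letters. Any word of length $\length(u)$ whose product is $u$ is reduced. So everything reduces to proving that the product $s_{\mathbf r(u)}$, taken in the order \eqref{eq:operator-order}, equals $u$.

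For the identity I would argue by downward induction on $a$. Each block $B_a$ multiplies to
\[
 \sigma_a=s_as_{a+1}\cdots s_{a+c_a-1},
\]
which is the cycle acting on positions $a,\ldots,a+c_a$ and moving position $a+c_a$ to position $a$, with the intermediate ones shifted by one. Set
\[
 u_a=\sigma_{n-1}\sigma_{n-2}\cdots\sigma_a,
\]
so that $u_1=s_{\mathbf r(u)}$. The inductive claim is that $u_a$ fixes $1,\ldots,a-1$. On $\{a,\ldots,n\}$, the claim is that $u_a^{-1}$ has the same relative order as $u^{-1}$ restricted to $\{a,\ldots,n\}$, that is, to the values $u^{-1}(a),\ldots,u^{-1}(n)$ standardized. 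Equivalently, $u_a$ is the permutation obtained from $u$ by deleting the letters $1,\ldots,a-1$ in one-line notation for $u^{-1}$ and restandardizing.

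The induction step is where the code enters. Among $a,\ldots,n$, the element $a$ must be inserted so that exactly $c_a$ of the larger elements $b>a$ sit to its left in $u^{-1}$, i.e.\ satisfy $u^{-1}(b)<u^{-1}(a)$. Right multiplication by the cycle $\sigma_a$ performs precisely this insertion at relative position $a+c_a$. Only $c_a$, which depends on the relative order of $u^{-1}(a),\ldots,u^{-1}(n)$, is used here, so the standardizations are compatible. At $a=1$ this gives $u_1=u$.

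The main obstacle I expect is purely conventional: deciding whether left or right multiplication, and $u$ versus $u^{-1}$, matches the convention that the rightmost factor acts first. I would fix this first on the check $n=2$, $u=s_1$, where $c_1=1$ and $\mathbf r(u)=(1)$. I would then check $n=3$ with $u=231$ and $u=312$, where the two choices of convention give different words, and orient the insertion step accordingly before writing the general induction.
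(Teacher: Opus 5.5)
Your proposal is correct and is essentially the paper's argument: the word length is the inversion number of $u^{-1}$ (whose Lehmer code is $(c_a)$), and in one-line notation with right multiplication the block $B_a$ carries the value $a$ rightward past exactly the $c_a$ larger values that precede it in $u$, so downward induction on $a$ yields $u$. The only slip is verbal: in your ``equivalently'' sentence and in ``sit to its left in $u^{-1}$'' the one-line notation should be that of $u$ (delete the values $1,\ldots,a-1$ from $u$ and prepend $1,\ldots,a-1$), while your primary invariant on $u_a^{-1}$ and the condition $u^{-1}(b)<u^{-1}(a)$ are stated correctly.
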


\begin{proof}
Use one-line notation and right multiplication, so $s_i$ exchanges
positions $i,i+1$. Induct downwards on $a$. Before row $a$, the larger
values occupy positions $a+1,\ldots,n$ in their relative order in $u$,
and $a$ remains in position $a$. Of those larger values, exactly the
first $c_a$ precede $a$ in $u$. Multiplication by
$s_a\cdots s_{a+c_a-1}$ moves $a$ across that initial segment without
changing their relative order. This proves the induction and produces
$u$. The length is $\sum_ac_a=\length(u)$, the inversion number of
$u^{-1}$, so the word is reduced.
\end{proof}

For example, $c_2=c_1=2$ for $u=3412$, and
$\mathbf r(3412)=(2,3\mid1,2)$. The bar separates the rows and has no
algebraic effect. Its creation-state graph is displayed in
\cref{ex:creation-state-3412}.

The tower is built over a product of projective spaces whose dimensions
record the exponent bounds. Fix nonnegative integers
$m_1,\ldots,m_n,m_z$, and put
\begin{equation}\label{eq:base-B}
 B=\prod_{i=1}^n\PP^{m_i}\times\PP^{m_z}.
\end{equation}
Let
$\cL_i=\operatorname{pr}_i^*\cO_{\PP^{m_i}}(1), \qquad \cM=\operatorname{pr}_z^*\cO_{\PP^{m_z}}(1)$,
and write
\[
 h_i=c_1(\cL_i),
 \qquad h_z=c_1(\cM),
 \qquad
 \cE=\bigoplus_{i=1}^n\cL_i.
\]
The initial quotient flag is
\begin{equation}\label{eq:initial-quotient-flag}
 \cF_j^{(0)}=\cL_1\oplus\cdots\oplus\cL_j,
 \qquad
 \cF_{j+1}^{(0)}\twoheadrightarrow \cF_j^{(0)}.
\end{equation}
We set $\cF_0^{(0)}=0$ and $\cF_n^{(0)}=\cE$.  At every later stage the same
notation denotes a genuine quotient flag
$\cF_n\twoheadrightarrow\cdots\twoheadrightarrow \cF_0$.

Write $\mathbf r(u)=(i_1,\ldots,i_\ell)$.  Starting with $X_0=B$, construct inductively $X_t\to X_{t-1}$ as follows.  Pull all previously defined bundles to $X_{t-1}$ and put
\begin{equation}\label{eq:Kt-def}
 \cK_t=\ker\bigl(\cF_{i_t+1}^{(t-1)}\longrightarrow \cF_{i_t-1}^{(t-1)}\bigr).
\end{equation}
The map in \eqref{eq:Kt-def} is surjective between vector bundles of ranks $i_t+1$ and $i_t-1$, so $\cK_t$ is a rank-two vector bundle.  Let
\begin{equation}\label{eq:Ptower}
 \pi_t:X_t=\PP_{X_{t-1}}(\cK_t)\longrightarrow X_{t-1}
\end{equation}
with the quotient convention, and write its tautological sequence as
\begin{equation}\label{eq:tautological-SQ}
 0\longrightarrow \cS_t\longrightarrow\pi_t^*\cK_t
 \longrightarrow \cQ_t\longrightarrow0.
\end{equation}
Define the new rank-$i_t$ quotient by
\begin{equation}\label{eq:new-Fi}
 \cF_{i_t}^{(t)}=\pi_t^*\cF_{i_t+1}^{(t-1)}/\cS_t,
\end{equation}
and keep all other prefix bundles unchanged after pullback.  Since $\cS_t$ is
a line subbundle of $\pi_t^*\cK_t$, the quotient in \eqref{eq:new-Fi} is
locally free.  The local modification is summarized by the commutative
diagram with exact rows
\[
\begin{tikzcd}[column sep=3.1em]
0 \arrow[r]
 & \pi_t^*\cK_t \arrow[r,hook] \arrow[d,two heads]
 & \pi_t^*\cF_{i_t+1}^{(t-1)} \arrow[r,two heads]
     \arrow[d,two heads]
 & \pi_t^*\cF_{i_t-1}^{(t-1)} \arrow[r] \arrow[d,equal]
 & 0 \\
0 \arrow[r]
 & \cQ_t \arrow[r,hook]
 & \cF_{i_t}^{(t)} \arrow[r,two heads]
 & \pi_t^*\cF_{i_t-1}^{(t-1)} \arrow[r]
 & 0.
\end{tikzcd}
\]
The two nontrivial vertical maps are the quotients by $\cS_t$.  Thus the maps to and
from the unchanged neighbouring quotients make the new collection a quotient
flag, and there are exact sequences
\begin{equation}\label{eq:S-exact}
 0\longrightarrow \cS_t\longrightarrow \pi_t^*\cF_{i_t+1}^{(t-1)}
 \longrightarrow \cF_{i_t}^{(t)}\longrightarrow0
\end{equation}
and
\begin{equation}\label{eq:Q-exact}
 0\longrightarrow \cQ_t\longrightarrow \cF_{i_t}^{(t)}
 \longrightarrow \pi_t^*\cF_{i_t-1}^{(t-1)}\longrightarrow0.
\end{equation}
Every time-indexed $\cF_j^{(t)}$ is a quotient of the pullback of $\cE$,
hence is globally generated.  The final tower
$\rho:X_u=X_\ell\longrightarrow B$
is a smooth irreducible projective variety.

\begin{remark}
\label{rem:relative-bott-samelson}
Let $\Fl_B(\cE)$ be the relative full quotient-flag bundle, and let
$\Fl_{B,i}(\cE)$ be the partial flag bundle obtained by omitting the
rank-$i$ quotient.  The split flag \eqref{eq:initial-quotient-flag} gives a section
$f_0:B\to\Fl_B(\cE)$.  Inductively, if
$f_{t-1}:X_{t-1}\to\Fl_B(\cE)$ is the current quotient flag and
$i=i_t$, then $X_t$ fits into the Cartesian square
\[
\begin{tikzcd}[column sep=4.5em,row sep=2.4em]
 X_t \arrow[r,"f_t"] \arrow[d,"\pi_t"']
   & \Fl_B(\cE) \arrow[d,"\varpi_i"] \\
 X_{t-1} \arrow[r,"\varpi_i\circ f_{t-1}"']
   & \Fl_{B,i}(\cE),
\end{tikzcd}
\]
where $\varpi_i$ forgets rank $i$. Its fibre parametrizes the
intermediate quotients by $\PP(\ker(\cF_{i+1}\to \cF_{i-1}))$, with
universal quotient \eqref{eq:new-Fi}. Thus each geometric fibre of
$X_u\to B$ is the usual Bott--Samelson variety \cite{Magyar98}.
We retain the actual extensions \eqref{eq:old-ranktwo-extension},
without splitting them or replacing the tower by a toric Bott model
as in \cite{JeongKimLee25}. This interpretation is not a proof input.
\end{remark}

Put
$s_t=c_1(\cS_t), \qquad q_t=c_1(\cQ_t)$.

For each stage $t$ and $1\le j\le n$, let
\begin{equation}\label{eq:successive-line-bundles}
 \cR_j^{(t)}:=\ker\bigl(\cF_j^{(t)}\longrightarrow \cF_{j-1}^{(t)}\bigr),
 \qquad r_j^{(t)}:=c_1(\cR_j^{(t)}).
\end{equation}
These are actual line bundles, not merely formal roots.  If the $t$th
modification has colour $i=i_t$, then \eqref{eq:S-exact} and
\eqref{eq:Q-exact} identify
\begin{equation}\label{eq:line-root-update}
 \cR_i^{(t)}=\cQ_t,
 \qquad \cR_{i+1}^{(t)}=\cS_t,
 \qquad \cR_j^{(t)}=\pi_t^*\cR_j^{(t-1)}\quad(j\ne i,i+1).
\end{equation}
Before the modification, the rank-two bundle $\cK_t$ is itself an extension of
the two successive old quotient lines:
\begin{equation}\label{eq:old-ranktwo-extension}
 0\longrightarrow \cR_{i+1}^{(t-1)}\longrightarrow \cK_t
 \longrightarrow \cR_i^{(t-1)}\longrightarrow0.
\end{equation}
Indeed, the map $\cK_t\to \cR_i^{(t-1)}$ is induced by
$\cF_{i+1}^{(t-1)}\twoheadrightarrow \cF_i^{(t-1)}$; its kernel is
$\cR_{i+1}^{(t-1)}$, and local lifting proves surjectivity.  Thus the two
labelled Chern roots of $\cK_t$ on $X_{t-1}$ are $r_i^{(t-1)}$ and
$r_{i+1}^{(t-1)}$; after pullback to $X_t$, the tautological modification
replaces the ordered pair by $(q_t,s_t)$.

For $0\le a\le b\le\ell$, write
\begin{equation}\label{eq:tower-projections}
 p_{b,a}=\pi_{a+1}\circ\cdots\circ\pi_b:X_b\longrightarrow X_a,
 \qquad p_{a,a}=\mathrm{id}_{X_a}.
\end{equation}
A class defined on $X_a$ is pulled to $X_b$ by $p_{b,a}^*$; pullback
symbols are suppressed only when no stage ambiguity can arise.

\subsection{Gysin formulas and operator words}

We use $\Gamma_i^{(\epsilon,z)}$ from
\eqref{eq:operator-dictionary-gamma}, with $\epsilon\in\NN$ and
$z=h_z$ after evaluation.

The rank-two projective-bundle formula gives both the local identity and
its time-indexed form on the tower.
\begin{lemma}\label{lem:gysin}
Let $\cK$ have rank two and formal Chern roots $a,b$, and write
$0\to\cS\to\pi^*\cK\to\cQ\to0$ on $\PP(\cK)$ in the quotient
convention, with $s=c_1(\cS)$ and $q=c_1(\cQ)$. Then
\begin{equation}\label{eq:ranktwo-localization}
 \pi_*\Psi(q,s)=\frac{\Psi(a,b)-\Psi(b,a)}{a-b}.
\end{equation}
In particular, for every $\epsilon\in\NN$,
\begin{equation}\label{eq:gysin-identity}
 \pi_*\bigl(s^\epsilon(q+h_z)\Phi(q,s)\bigr)
 =\left.\partial_i\bigl(x_{i+1}^\epsilon(x_i+h_z)
              \Phi(x_i,x_{i+1})\bigr)\right|_{(x_i,x_{i+1})=(a,b)}.
\end{equation}
On the quotient-flag tower, writing
$\Phi(r^{(t)},h_z)=\Phi(r_1^{(t)},\ldots,r_n^{(t)},h_z)$, this becomes
\begin{equation}\label{eq:one-step-intertwining}
 (\pi_t)_*\!\left[s_t^\epsilon(q_t+h_z)\Phi(r^{(t)},h_z)\right]
 =\bigl(\Gamma_{i_t}^{(\epsilon,z)}\Phi\bigr)(r^{(t-1)},h_z).
\end{equation}
\end{lemma}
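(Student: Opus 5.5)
We prove the three displays in order, reducing each one to the previous.

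For \eqref{eq:ranktwo-localization}, we first use that both sides are additive in $\Psi$ and compatible with multiplication by classes pulled back from the base. Writing $\Psi(q,s)$ in terms of $q$ and $e=q+s=c_1(\cK)$ uses the relation $s=c_1(\pi^*\cK)-q$ from the tautological sequence. So it suffices to treat $\Psi=q^m\,\phi(a+b,ab)$, where $\phi$ is symmetric and comes from the base. On the left, $\pi_*(q^m)=h_{m-1}(\cK)$ by \eqref{eq:projective-bundle-push} with $r=2$, since $q=c_1(\cO(1))$ in the quotient convention. The power $q^0$ pushes to zero. On the right, symmetric factors pass through the divided difference, and
\[
\frac{a^m-b^m}{a-b}=h_{m-1}(a,b).
\]
Under the splitting principle this is $h_{m-1}(\cK)$, because $c_{-t}(\cK)^{-1}=\bigl((1-at)(1-bt)\bigr)^{-1}$. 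Both sides therefore agree.

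There is one point needing care. Reading $\Psi(a,b)$ requires formal Chern roots, and the divided difference of a polynomial is a symmetric polynomial, hence a genuine polynomial in the Chern classes of $\cK$. So the right side is well defined. To make the argument honest, I would pass to the splitting construction, or simply note that both sides are $\ZZ[c_1,c_2]$-linear in the reduced form above. The reduction of $\Psi(q,s)$ to powers of $q$ with symmetric coefficients uses $s=e-q$. Consistency with the right side then needs $\Psi(a,b)$ reduced via $b=(a+b)-a$, and this substitution commutes with the divided difference since $a+b$ is symmetric. This bookkeeping is the main obstacle. It is routine once we note that antisymmetrization kills symmetric multiples in the correct way.

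For \eqref{eq:gysin-identity}, we apply \eqref{eq:ranktwo-localization} to $\Psi(q,s)=s^\epsilon(q+h_z)\Phi(q,s)$, where $h_z$ is pulled back from the base and hence constant for $\pi_*$. The right side of \eqref{eq:ranktwo-localization} is by definition $\partial_i$ applied to $x_{i+1}^\epsilon(x_i+h_z)\Phi(x_i,x_{i+1})$, evaluated at $(x_i,x_{i+1})=(a,b)$. Here the ordering $(q,s)\leftrightarrow(x_i,x_{i+1})\leftrightarrow(a,b)$ matches $s_i$ swapping $x_i,x_{i+1}$.

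For \eqref{eq:one-step-intertwining}, we take $\cK=\cK_t$ on $X_{t-1}$ with $i=i_t$. By \eqref{eq:old-ranktwo-extension}, its ordered roots are $a=r_i^{(t-1)}$ and $b=r_{i+1}^{(t-1)}$; these are honest first Chern classes of line bundles, so no splitting is needed. By \eqref{eq:line-root-update}, $r^{(t)}$ equals $r^{(t-1)}$ pulled back in all slots except $i,i+1$, where it is $(q_t,s_t)$. Treating the other roots and $h_z$ as base classes, the projection formula lets us apply \eqref{eq:gysin-identity} slotwise. The result is exactly $\Gamma_{i_t}^{(\epsilon,z)}\Phi$ from \eqref{eq:operator-dictionary-gamma}, evaluated at $(r^{(t-1)},h_z)$.
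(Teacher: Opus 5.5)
Your proposal is correct and follows essentially the same route as the paper. You eliminate $s$ via $s=\pi^*c_1(\cK)-q$, use $\pi_*1=0$ and $\pi_*q^m=h_{m-1}(\cK)=(a^m-b^m)/(a-b)$ from the projective-bundle formula, and note that the symmetric right-hand side is a polynomial in $c_1(\cK),c_2(\cK)$. You then obtain \eqref{eq:gysin-identity} by substitution and the tower form from \eqref{eq:old-ranktwo-extension} and \eqref{eq:line-root-update}. Your extra bookkeeping, reducing $\Psi(a,b)$ via $b=(a+b)-a$, is a correct but more verbose spelling-out of the paper's one-line reduction.
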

\begin{proof}
Since $s=\pi^*c_1(\cK)-q$, it suffices to check polynomials in $q$.
The projective-bundle formula gives $\pi_*1=0$ and, for $m\ge1$,
\[
 \pi_*q^m=h_{m-1}(\cK)
       =\sum_{j=0}^{m-1}a^{m-1-j}b^j=\frac{a^m-b^m}{a-b}.
\]
This proves \eqref{eq:ranktwo-localization}; its symmetric right-hand
side is a polynomial in $c_1(\cK),c_2(\cK)$, so no splitting is
assumed. Substitution proves \eqref{eq:gysin-identity}.
For \eqref{eq:one-step-intertwining}, the updated root pair is
$(q_t,s_t)$ by \eqref{eq:line-root-update}, the old pair consists of
the two roots of $\cK_t$, and all other variables are pulled back.
\end{proof}

The following projection-formula argument will be used for quotient,
weighted, and atom factors. It fixes the word order once for all three
constructions. Write
$\operatorname{ev}_t(\Phi)=\Phi(r^{(t)},h_z)$.

\begin{lemma}\label{lem:time-indexed-pushforward}
On the quotient-flag tower, let $A_t\in A^*(X_t)$ and let $T_t$ be
linear operators on $\QQ[x,z]$ such that, for every polynomial $\Phi$,
\[
 (\pi_t)_*(A_t\operatorname{ev}_t(\Phi))
       =\operatorname{ev}_{t-1}(T_t\Phi).
\]
For a polynomial $\Phi$, put
$\Phi_t=T_{t+1}\cdots T_\ell\Phi$, with $\Phi_\ell=\Phi$.
Then, for $0\le t\le\ell$,
\begin{align}
 &(p_{\ell,t})_*\!\left[
  \left(\prod_{a=1}^{\ell}p_{\ell,a}^*A_a\right)
            \operatorname{ev}_\ell(\Phi)\right]\notag\\
 &\qquad=
  \left(\prod_{a=1}^{t}p_{t,a}^*A_a\right)
             \operatorname{ev}_t(\Phi_t).
 \label{eq:master-descending-induction}
\end{align}
In particular, at $t=0$ the output is
$\operatorname{ev}_0(T_1\cdots T_\ell\Phi)$.
\end{lemma}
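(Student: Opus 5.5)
The plan is a descending induction on $t$, from $t=\ell$ down to $t=0$, using only the hypothesis on each $T_t$, the projection formula, and functoriality of proper pushforward. At $t=\ell$ there is nothing to prove: $p_{\ell,\ell}=\mathrm{id}$, $\Phi_\ell=\Phi$, and both sides of \eqref{eq:master-descending-induction} coincide.

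For the inductive step, assume the identity at level $t$ with $1\le t\le\ell$. Since $p_{\ell,t-1}=\pi_t\circ p_{\ell,t}$, functoriality gives
\[
 (p_{\ell,t-1})_*=(\pi_t)_*(p_{\ell,t})_*.
\]
Apply this to the left side and use the induction hypothesis. The argument of $(\pi_t)_*$ is then
\[
 \Bigl(\prod_{a=1}^{t-1}p_{t,a}^*A_a\Bigr)\cdot A_t\operatorname{ev}_t(\Phi_t).
\]
For $a\le t-1$ we have $p_{t,a}^*A_a=\pi_t^*p_{t-1,a}^*A_a$, and $p_{t,t}^*A_t=A_t$. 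The first factor is therefore a pullback from $X_{t-1}$.

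Next apply the projection formula for the smooth projective morphism $\pi_t$, which is flat and proper, so it holds in the Chow rings of the smooth tower stages. It moves this pulled-back factor outside $(\pi_t)_*$. The hypothesis applied to $\Phi_t$ then gives
\[
 (\pi_t)_*\bigl(A_t\operatorname{ev}_t(\Phi_t)\bigr)=\operatorname{ev}_{t-1}(T_t\Phi_t)=\operatorname{ev}_{t-1}(\Phi_{t-1}).
\]
This yields the identity at level $t-1$.

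At $t=0$ the product over $a$ is empty, so the output is $\operatorname{ev}_0(T_1\cdots T_\ell\Phi)$, which is the order convention \eqref{eq:operator-order}.

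I do not expect a genuine obstacle. The point needing care is bookkeeping: pullbacks commute with products and with the evaluation maps, so that $\pi_t^*\operatorname{ev}_{t-1}$ of a polynomial in the untouched roots agrees with $\operatorname{ev}_t$, by \eqref{eq:line-root-update}. We also need the identity $p_{t,a}=p_{t-1,a}\circ\pi_t$. Moreover, since the projection formula is applied only with classes pulled back from $X_{t-1}$, no relation beyond the stated hypothesis on $T_t$ is required.
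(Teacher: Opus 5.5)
Your proposal is correct and matches the paper's proof: a descending induction from the tautological case $t=\ell$, writing $(p_{\ell,t-1})_*=(\pi_t)_*(p_{\ell,t})_*$, using the projection formula to pull the factors with $a<t$ (which are pulled back from $X_{t-1}$) outside $(\pi_t)_*$, and applying the local hypothesis to $\Phi_t$ to get $\operatorname{ev}_{t-1}(\Phi_{t-1})$. Your closing remark comparing $\pi_t^*\operatorname{ev}_{t-1}$ with $\operatorname{ev}_t$ is harmless but not needed, since the hypothesis is applied directly to $A_t\operatorname{ev}_t(\Phi_t)$.
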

\begin{proof}
For $t=\ell$ the identity is tautological. Assuming it at $t\ge1$,
every factor with $a<t$ is pulled back from $X_{t-1}$. The projection
formula moves those factors outside $(\pi_t)_*$, and the assumed
local identity replaces $A_t\operatorname{ev}_t(\Phi_t)$ by
$\operatorname{ev}_{t-1}(T_t\Phi_t)
=\operatorname{ev}_{t-1}(\Phi_{t-1})$.
Composition of pushforwards proves the identity at $t-1$.
\end{proof}

Let $\mu=(\mu_1,\ldots,\mu_n)$ be a partition, padded by zero, and put
\begin{equation}\label{eq:dj-def}
 d_j=\mu_j-\mu_{j+1},
 \qquad \mu_{n+1}=0.
\end{equation}
Let $\epsilon=(\epsilon_1,\ldots,\epsilon_\ell)\in\NN^\ell$ and use
\eqref{eq:operator-order}:
\begin{equation}\label{eq:P-operator-word}
 P_{u,\epsilon,\mu}(x,z)
 :=\Gamma_{i_1}^{(\epsilon_1,z)}\cdots
 \Gamma_{i_\ell}^{(\epsilon_\ell,z)}x^\mu.
\end{equation}
Thus $\Gamma_{i_\ell}$ acts first.  Since
$\Gamma_i^{(\epsilon,z)}$ raises total degree by $\epsilon$,
\begin{equation}\label{eq:P-degree}
 P_{u,\epsilon,\mu}\in\QQ[x,z]_{|\mu|+\sum_{t=1}^{\ell}\epsilon_t}.
\end{equation}

The terminal monomial is a product of top Chern classes of prefix bundles, so the local formulas give a single Chow identity.
\begin{proposition}\label{prop:operator-chow}
On the canonical-row tower $X_u$,
\begin{equation}\label{eq:operator-chow}
 P_{u,\epsilon,\mu}(h,h_z)
 =\rho_*\left[
 \prod_{t=1}^\ell p_{\ell,t}^*(q_t+h_z)
 \prod_{t=1}^{\ell}p_{\ell,t}^*s_t^{\epsilon_t}
 \prod_{j=1}^n c_j(\cF_j^{(\ell)})^{d_j}
 \right].
\end{equation}
\end{proposition}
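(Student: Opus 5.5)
The plan is to apply \cref{lem:time-indexed-pushforward} with $A_t=(q_t+h_z)s_t^{\epsilon_t}$ and $T_t=\Gamma_{i_t}^{(\epsilon_t,z)}$, taking as terminal polynomial $\Phi=x^\mu$. Two things then need to be checked. First, the local hypothesis of the lemma must hold. Second, the terminal product of top Chern classes must equal $\operatorname{ev}_\ell(x^\mu)$ on $X_\ell$. Once both are in place, the $t=0$ case of \eqref{eq:master-descending-induction} gives
\[
\rho_*[\cdots]=\operatorname{ev}_0(T_1\cdots T_\ell x^\mu).
\]
This equals $P_{u,\epsilon,\mu}(r^{(0)},h_z)$ by \eqref{eq:P-operator-word}. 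On $X_0=B$ the split flag \eqref{eq:initial-quotient-flag} gives $\cR_j^{(0)}=\cL_j$, so $r_j^{(0)}=h_j$, and the right side is $P_{u,\epsilon,\mu}(h,h_z)$.

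The local hypothesis is exactly \eqref{eq:one-step-intertwining} from \cref{lem:gysin}. By \eqref{eq:line-root-update}, $s_t$ and $q_t$ are the updated roots $r_{i_t+1}^{(t)}$ and $r_{i_t}^{(t)}$, so for every polynomial $\Phi$
\[
(\pi_t)_*\bigl(s_t^{\epsilon_t}(q_t+h_z)\operatorname{ev}_t(\Phi)\bigr)=\operatorname{ev}_{t-1}\bigl(\Gamma_{i_t}^{(\epsilon_t,z)}\Phi\bigr).
\]
All other variables are pulled back, so no further work is needed here. The pullbacks $p_{\ell,t}^*$ in \eqref{eq:operator-chow} match the product $\prod_a p_{\ell,a}^*A_a$ in the lemma.

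The terminal identification uses the quotient flag $\cF_n^{(\ell)}\twoheadrightarrow\cdots\twoheadrightarrow\cF_1^{(\ell)}$. Its successive kernels are the actual line bundles $\cR_j^{(\ell)}$ of \eqref{eq:successive-line-bundles}. Whitney's formula then gives $c_j(\cF_j^{(\ell)})=\prod_{k\le j}r_k^{(\ell)}$. Hence
\[
\prod_j c_j(\cF_j^{(\ell)})^{d_j}=\prod_k\bigl(r_k^{(\ell)}\bigr)^{\sum_{j\ge k}d_j}=\prod_k\bigl(r_k^{(\ell)}\bigr)^{\mu_k},
\]
since $\sum_{j\ge k}d_j=\mu_k$ telescopes by \eqref{eq:dj-def}. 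This is $\operatorname{ev}_\ell(x^\mu)$.

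I expect no step to be a serious obstacle. The main care point is bookkeeping: one has to verify that the time-indexed roots $r^{(t)}$ used in \cref{lem:gysin} are the same variables that \cref{lem:time-indexed-pushforward} evaluates at each stage. One also has to verify that the word order makes $\Gamma_{i_\ell}$ act first, matching pushforward from the top of the tower. This is ensured by \eqref{eq:operator-order}.
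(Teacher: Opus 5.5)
Your proposal is correct and matches the paper's proof essentially step for step. The paper likewise identifies $x^\mu(r^{(\ell)})=\prod_j c_j(\cF_j^{(\ell)})^{d_j}$ through the successive root lines of the final flag, then applies \cref{lem:time-indexed-pushforward} with $A_t=s_t^{\epsilon_t}(q_t+h_z)$, $T_t=\Gamma_{i_t}^{(\epsilon_t,z)}$ and $\Phi=x^\mu$, using \cref{lem:gysin} for the local hypothesis and the initial roots $h_1,\ldots,h_n$ at $t=0$.
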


\begin{proof}
Since the final root lines are the successive quotients of the flag,
\begin{equation}\label{eq:terminal-monomial}
 x^\mu(r^{(\ell)})
 =\prod_{j=1}^n(r_1^{(\ell)}\cdots r_j^{(\ell)})^{d_j}
 =\prod_{j=1}^nc_j(\cF_j^{(\ell)})^{d_j}.
\end{equation}
Apply \cref{lem:time-indexed-pushforward} with
$A_t=s_t^{\epsilon_t}(q_t+h_z)$,
$T_t=\Gamma_{i_t}^{(\epsilon_t,z)}$, and $\Phi=x^\mu$.
Its local hypothesis is \cref{lem:gysin}.
At $t=0$, the initial roots are $h_1,\ldots,h_n$; the lemma and
\eqref{eq:terminal-monomial} give \eqref{eq:operator-chow}.
\end{proof}

\subsection{Generated row and co-row bundles}

Fix a nonempty canonical row
$B_a=(a,a+1,\ldots,a+c_a-1)$.
Put $e(a,t)=\sum_{r>a}c_r+t$ for $0\le t\le c_a$.
For $t\ge1$, the quotient line created at this global stage is
$\cQ_{a,t}:=\cQ_{e(a,t)}$, with $q_{a,t}=c_1(\cQ_{a,t})$.
On $X_{e(a,t)}$, immediately after the first $t$ modifications of the row,
define
\begin{equation}\label{eq:Ga-t}
 \cG_{a,t}=\ker\bigl(\cF_{a+t-1}^{(e(a,t))}
                    \longrightarrow \cF_{a-1}^{(e(a,t))}\bigr),
 \qquad \cG_{a,0}=0.
\end{equation}
In the filtration below, the earlier bundle $\cG_{a,t-1}$ is pulled back
to $X_{e(a,t)}$. Each completed bundle $\cG_a=\cG_{a,c_a}$ is thereafter
kept fixed under pullback; it is not redefined using later modified
prefixes.

Compatible quotient maps supply both the row filtration and the surjection that proves global generation.
\begin{proposition}\label{prop:row-bundle}
For $1\le t\le c_a$, there are exact sequences
\begin{equation}\label{eq:row-filtration-exact}
 0\longrightarrow \cQ_{a,t}\longrightarrow \cG_{a,t}
 \longrightarrow \cG_{a,t-1}\longrightarrow0.
\end{equation}
At the end of the row, the bundle $\cG_a:=\cG_{a,c_a}$ is globally generated; more precisely, there is a natural surjection
\begin{equation}\label{eq:row-surjection}
 \cL_a\oplus\cdots\oplus\cL_n\twoheadrightarrow \cG_a.
\end{equation}
Moreover,
\begin{equation}\label{eq:row-topchern}
 c_{c_a}(\cG_a\otimes\cM)
 =\prod_{t=1}^{c_a}(q_{a,t}+h_z).
\end{equation}
\end{proposition}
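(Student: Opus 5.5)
The plan is to prove all three assertions from the quotient-flag structure of the tower and the snake lemma. The key input is bookkeeping of which prefix bundles the stages of row $B_a$ modify. Stage $e(a,t)$ has colour $i=a+t-1$, so it modifies only $\cF_{a+t-1}$. Earlier rows $B_b$ with $b>a$ use only colours $\ge b>a$.

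\textbf{Unchanged prefixes.} Consequently $\cF_{a-1}$ is never modified up to the end of row $a$; it is the pullback of $\cF_{a-1}^{(0)}=\cL_1\oplus\cdots\oplus\cL_{a-1}$. Likewise, at stage $e(a,t)$ the bundle $\cF_{a+t-2}$ was last modified at stage $e(a,t-1)$ (or is initial when $t=1$), and it is unchanged by the $t$th step. Hence the pullback of $\cG_{a,t-1}$ to $X_{e(a,t)}$ is exactly $\ker(\cF_{a+t-2}^{(e(a,t))}\to\cF_{a-1}^{(e(a,t))})$, with the current quotient-flag maps. I will check this carefully, since the compatibility of maps after pullback is the only subtle point.

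\textbf{Row filtration.} Take the current quotient flag $\cF_{a+t-1}\twoheadrightarrow\cF_{a+t-2}\twoheadrightarrow\cF_{a-1}$ at stage $e(a,t)$ and apply the snake lemma to the two kernels over $\cF_{a-1}$. This gives that $\cG_{a,t}\to\cG_{a,t-1}$ is surjective, with kernel $\ker(\cF_{a+t-1}\to\cF_{a+t-2})=\cR_{a+t-1}^{(e(a,t))}$. By \eqref{eq:line-root-update} this kernel equals $\cQ_{e(a,t)}=\cQ_{a,t}$, which gives \eqref{eq:row-filtration-exact}.

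\textbf{Row surjection and top Chern class.} The bundle $\cF_n=\cE$ is never modified (colours are $<n$), and each current $\cF_j$ is a quotient of $\cE$ through the flag. The composite $\cE\to\cF_{a-1}$ is the pullback of the initial split projection. The reason is that each modification replaces $\cF_i$ by a quotient through which the old composite $\cF_{i+1}\to\cF_{i-1}$ factors, as in the commutative diagram after \eqref{eq:new-Fi}. Its kernel is therefore $\cL_a\oplus\cdots\oplus\cL_n$. Applying the snake lemma to $\cE\twoheadrightarrow\cF_{a+t-1}$ over $\cF_{a-1}$ gives a surjection $\cL_a\oplus\cdots\oplus\cL_n\twoheadrightarrow\cG_{a,t}$ for every $t$, in particular \eqref{eq:row-surjection}. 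Since the $\cL_j$ are globally generated, so is $\cG_a$.

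Finally, tensor the filtration \eqref{eq:row-filtration-exact} with $\cM$. This gives a filtration of $\cG_a\otimes\cM$ (pulled to $X_\ell$) whose successive line quotients are the $\cQ_{a,t}\otimes\cM$, and $\rk\cG_a=c_a$. Whitney's formula then yields $c(\cG_a\otimes\cM)=\prod_t(1+q_{a,t}+h_z)$, whose degree-$c_a$ part is \eqref{eq:row-topchern}.

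The main obstacle is the identification step in the first paragraph. One must confirm that the composite maps in the evolving flag agree with pulled-back earlier maps, so that "the earlier $\cG_{a,t-1}$ pulled back" really is the kernel used in the snake lemma. I would handle this by induction on stages using the commutative diagram after \eqref{eq:new-Fi}.
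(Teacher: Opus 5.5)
Your proposal is correct and follows essentially the same route as the paper. Taking kernels of the compatible quotient maps to the unchanged split quotient $\cF_{a-1}$ gives the row filtration; your snake-lemma argument is the paper's ``local lifting''. Restricting $\cE\twoheadrightarrow\cF_{a+c_a-1}$ to the kernel $\cL_a\oplus\cdots\oplus\cL_n$ gives generation, and Whitney's formula after tensoring with $\cM$ gives the top Chern class. Your extra bookkeeping, checking that the pulled-back $\cG_{a,t-1}$ is the relevant kernel at stage $e(a,t)$, makes explicit what the paper states in passing.
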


\begin{proof}
At the $t$th step of row $B_a$, \eqref{eq:Q-exact} is
$0\to\cQ_{a,t}\to\cF_{a+t-1}\to\cF_{a+t-2}\to0$.
Taking kernels of the compatible maps to the unchanged $\cF_{a-1}$
gives \eqref{eq:row-filtration-exact}; local lifting proves its
surjectivity. Earlier bundles are pulled back to this stage.

All preceding row starts exceed $a$, so the quotient
$\cE\to\cF_{a-1}=\bigoplus_{i<a}\cL_i$ is the original split quotient.
At row completion, $\cE\twoheadrightarrow\cF_{a+c_a-1}$ is compatible
with it. Restriction to its kernel gives the surjection
$\bigoplus_{i\ge a}\cL_i\twoheadrightarrow\cG_a$: a local lift in
$\cE$ of a section of $\cG_a$ necessarily maps to zero in $\cF_{a-1}$.
This proves global generation. Tensoring the row filtration by the
generated line bundle $\cM$ and applying Whitney's formula gives
\eqref{eq:row-topchern}. The completed row is subsequently kept fixed
under pullback.
\end{proof}

\begin{remark}\label{warn:individual-Q}
The line $\cQ_t$ is a subbundle of the globally generated bundle $\cF_{i_t}^{(t)}$ in \eqref{eq:Q-exact}; it need not itself be globally generated.  Positivity enters only after all quotient roots in one canonical row are assembled into $\cG_a$.
\end{remark}

\label{subsec:corow-bundles}
Atoms use the upper root present before a modification, rather than the
quotient root created by it. These old upper roots are controlled by a
second generated bundle, defined at the start of each row.

Write
$\mathbf r(u)=B_{n-1}B_{n-2}\cdots B_1, \qquad B_a=(a,a+1,\ldots,a+c_a-1)$.
For a nonempty row $B_a$, let
\[
 b_a=1+\sum_{r>a}c_r,
 \qquad Y_a=X_{b_a-1},
\]
so that $Y_a$ is the tower immediately before row $B_a$ is processed.
Write
\[
 \cF_j^{\langle a\rangle}=\cF_j^{(b_a-1)},
 \qquad
 \cR_j^{\langle a\rangle}
 =\ker(\cF_j^{\langle a\rangle}\to \cF_{j-1}^{\langle a\rangle}).
\]

\begin{definition}\label{def:corow}
For $0\le t\le c_a$, define on $Y_a$
\begin{equation}\label{eq:corow-def}
 \cC_{a,t}:=
 \ker\bigl(\cF_{a+t}^{\langle a\rangle}
 \longrightarrow \cF_a^{\langle a\rangle}\bigr),
 \qquad \cC_{a,0}=0,
\end{equation}
and put $\cC_a=\cC_{a,c_a}$.
\end{definition}

The analogous kernel filtration at row start is generated by the smaller split tail.
\begin{proposition}\label{prop:corow}
For $1\le t\le c_a$, there is an exact sequence
\begin{equation}\label{eq:corow-filtration}
 0\longrightarrow \cR_{a+t}^{\langle a\rangle}
 \longrightarrow \cC_{a,t}
 \longrightarrow \cC_{a,t-1}\longrightarrow0.
\end{equation}
Moreover, there is a natural surjection
\begin{equation}\label{eq:corow-surjection}
 \cL_{a+1}\oplus\cdots\oplus\cL_n
 \twoheadrightarrow \cC_{a,t}.
\end{equation}
Thus every $\cC_{a,t}$ is globally generated, and
\begin{equation}\label{eq:corow-chern}
 c_{c_a}(\cC_a\otimes\cM)
 =\prod_{t=1}^{c_a}
 \bigl(c_1(\cR_{a+t}^{\langle a\rangle})+h_z\bigr).
\end{equation}
\end{proposition}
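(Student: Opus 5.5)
The plan is to prove the three assertions in the same order as in \cref{prop:row-bundle}, all on the fixed stage $Y_a=X_{b_a-1}$, so that only kernels of one quotient flag are involved. First, for $1\le t\le c_a$, the maps $\cF_{a+t}^{\langle a\rangle}\twoheadrightarrow\cF_{a+t-1}^{\langle a\rangle}\twoheadrightarrow\cF_a^{\langle a\rangle}$ compose to the map defining $\cC_{a,t}$. Restricting the first surjection to kernels over $\cF_a^{\langle a\rangle}$ gives a map $\cC_{a,t}\to\cC_{a,t-1}$. Its kernel is $\ker(\cF_{a+t}^{\langle a\rangle}\to\cF_{a+t-1}^{\langle a\rangle})=\cR_{a+t}^{\langle a\rangle}$. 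It is surjective by local lifting: a local section of $\cC_{a,t-1}$ lifts to $\cF_{a+t}^{\langle a\rangle}$, and any such lift already maps to zero in $\cF_a^{\langle a\rangle}$ by compatibility. For $t=1$ this reads $\cC_{a,1}=\cR_{a+1}^{\langle a\rangle}$, consistent with $\cC_{a,0}=0$. This gives \eqref{eq:corow-filtration}.

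Next comes the key structural input for the surjection \eqref{eq:corow-surjection}. Every modification performed before stage $b_a$ belongs to a row $B_r$ with $r>a$, so its colour $i\ge r\ge a+1$. Such a modification alters only $\cF_i$ with $i\ge a+1$ and leaves all $\cF_j$ with $j\le a$ pulled back. Hence, by induction on the stage, $\cF_a^{\langle a\rangle}=\cL_1\oplus\cdots\oplus\cL_a$ is the original split quotient, and the quotient $\cE\twoheadrightarrow\cF_a^{\langle a\rangle}$ is the split projection. I expect this bookkeeping to be the only real point needing care: one must check that the composite $\cE\twoheadrightarrow\cF_{a+t}^{\langle a\rangle}\twoheadrightarrow\cF_a^{\langle a\rangle}$ equals the split projection, and not merely that the two bundles are isomorphic. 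This holds because each modification replaces $\cF_i$ by a quotient of $\pi_t^*\cF_{i+1}$ compatible with the maps to $\cF_{i-1}$, as in the diagram after \eqref{eq:new-Fi}. Thus all the structure maps are induced from $\cE$ and are compatible.

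Given this, restrict $\cE\twoheadrightarrow\cF_{a+t}^{\langle a\rangle}$ to the kernel $\cL_{a+1}\oplus\cdots\oplus\cL_n$ of the split projection; its image lies in $\cC_{a,t}$. The map onto $\cC_{a,t}$ is surjective: a local section of $\cC_{a,t}$ lifts locally to $\cE$, and that lift maps to zero in $\cF_a^{\langle a\rangle}$, hence lies in the split tail. This is the same argument as in \cref{prop:row-bundle}. Since the $\cL_i$ are globally generated, so is $\cC_{a,t}$.

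Finally, tensor the filtration \eqref{eq:corow-filtration} by $\cM$. This gives exact sequences $0\to\cR_{a+t}^{\langle a\rangle}\otimes\cM\to\cC_{a,t}\otimes\cM\to\cC_{a,t-1}\otimes\cM\to0$, and $\rk\cC_{a,t}=t$. By induction on $t$, Whitney's formula for top Chern classes gives $c_t(\cC_{a,t}\otimes\cM)=c_{t-1}(\cC_{a,t-1}\otimes\cM)\,(c_1(\cR_{a+t}^{\langle a\rangle})+h_z)$. Taking $t=c_a$ yields \eqref{eq:corow-chern}.
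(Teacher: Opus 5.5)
Your proposal is correct and follows the paper's proof step for step. The steps are: the kernel filtration with local lifting; the observation that earlier rows have colours $\ge a+1$, so $\cE\to\cF_a^{\langle a\rangle}$ is still the split projection; restriction of $\cE\twoheadrightarrow\cF_{a+t}^{\langle a\rangle}$ to the split tail; and Whitney's formula after twisting by $\cM$. You also check explicitly that the composite $\cE\to\cF_{a+t}^{\langle a\rangle}\to\cF_a^{\langle a\rangle}$ is the split projection itself, not merely isomorphic to it. The paper leaves this point implicit, and you handle it correctly.
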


\begin{proof}
Taking kernels in the compatible quotient maps
$\cF_{a+t}^{\langle a\rangle}\twoheadrightarrow
 \cF_{a+t-1}^{\langle a\rangle}\twoheadrightarrow
 \cF_a^{\langle a\rangle}$
gives \eqref{eq:corow-filtration}, with surjectivity by local lifting.
All preceding rows start strictly above $a$, so
$\cE\to\cF_a^{\langle a\rangle}=\bigoplus_{i\le a}\cL_i$
is the original split quotient. Restricting the compatible surjection
$\cE\twoheadrightarrow\cF_{a+t}^{\langle a\rangle}$ to its kernel
therefore gives \eqref{eq:corow-surjection}, by the same lifting
argument as in \cref{prop:row-bundle}. Hence each $\cC_{a,t}$ is
generated. Tensor the filtration by $\cM$ and apply Whitney's formula
to obtain \eqref{eq:corow-chern}.
\end{proof}

To apply this filtration to atoms, we must identify its factors with the roots present at the correct geometric stage.
\begin{lemma}\label{lem:old-upper}
At the $t$th stage inside row $B_a$, whose global stage is
$e=b_a+t-1$ and whose colour is $i_e=a+t-1$, one has
\begin{equation}\label{eq:old-upper-static}
 \cR_{i_e+1}^{(e-1)}
 =p_{e-1,b_a-1}^*\cR_{a+t}^{\langle a\rangle},
\end{equation}
where $p_{e-1,b_a-1}:X_{e-1}\to X_{b_a-1}$ is the tower projection.
\end{lemma}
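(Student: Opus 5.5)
Within row $B_a$ only the modifications of this row act between $Y_a=X_{b_a-1}$ and $X_{e-1}$, so I would prove \eqref{eq:old-upper-static} by induction on $t$, following how the root lines change stage by stage. The stages of row $B_a$ before global stage $e=b_a+t-1$ are $b_a,\ldots,e-1$. Their colours are $a,a+1,\ldots,a+t-2$. By \eqref{eq:line-root-update}, the modification of colour $i$ changes only $\cR_i$ and $\cR_{i+1}$, and pulls back every other successive quotient line. The claim is that none of these earlier modifications touches the index $a+t=i_e+1$. This holds because the largest index changed by any of them is $(a+t-2)+1=a+t-1<a+t$.

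Concretely, I set $j=a+t$ and show that $\cR_j^{(b_a-1+s)}=p_{b_a-1+s,\,b_a-1}^*\cR_j^{\langle a\rangle}$ for $0\le s\le t-1$, by induction on $s$. For $s=0$ the statement is the definition of $\cR_j^{\langle a\rangle}$, since $\cF_j^{\langle a\rangle}=\cF_j^{(b_a-1)}$. For the step from $s-1$ to $s$, the stage $b_a-1+s$ has colour $a+s-1$. We have $j\notin\{a+s-1,a+s\}$ because $s\le t-1$ gives $a+s\le a+t-1<j$. So \eqref{eq:line-root-update} gives $\cR_j^{(b_a-1+s)}=\pi_{b_a-1+s}^*\cR_j^{(b_a-2+s)}$. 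Composing the pullbacks with the projections $p$ of \eqref{eq:tower-projections} finishes the induction. Taking $s=t-1$ gives stage $e-1$, which is exactly \eqref{eq:old-upper-static}.

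The only care needed is that \eqref{eq:line-root-update} is an equality of actual line bundles, not only of Chern classes. It is: the prefix bundles $\cF_j$ and $\cF_{j-1}$ are left unchanged apart from pullback, so their kernel is the pullback of the old kernel, since pullback of vector bundles is exact. When $j=n$ one uses $\cF_n=\cE$ throughout, and the same argument applies. I expect no real obstacle here. The main point to check is the index bookkeeping: the colours occurring in a canonical row are consecutive and increasing, so they stay strictly below $a+t-1$ until the $t$th step.
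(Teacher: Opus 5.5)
Your proof is correct and follows the same route as the paper's: the earlier stages of the row have colours $a,\ldots,a+t-2$, each colour-$c$ modification alters only the root lines of indices $c$ and $c+1\le a+t-1$, so the line of index $a+t=i_e+1$ is only pulled back. Your explicit induction on $s$, together with your observation that \eqref{eq:line-root-update} identifies actual line bundles (including the case $j=n$), simply spells out the paper's one-line bookkeeping.
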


\begin{proof}
Before stage $e$, the modifications already performed inside the current
row have colours $a,a+1,\ldots,a+t-2$.  A colour-$c$ modification changes
only the successive root lines with indices $c$ and $c+1$.  None of these
stages changes the root of index $a+t=i_e+1$, proving
\eqref{eq:old-upper-static}.
\end{proof}

\section{Chern flow and generated packet bundles}\label{sec:flow}

The row and co-row bundles of \cref{sec:canonical} account for the
quotient and old upper factors. Kernel factors require terminal top
Chern classes. Each exact-sequence edge contributes
$[\cF_{t(e)}]-[\cF_{s(e)}]=[\cE_e]$ in $K^0$; collecting these
contributions gives a generated direct sum whenever the resulting
vertex multiplicities are nonnegative. We first formulate this identity,
then apply it to the creation states of the tower.

\subsection{Chern flow and exact sequences}

\begin{proposition}
\label{thm:chern-flow}
Let $Y$ be a separated scheme of finite type over a field, assume all
bundles below have constant rank, and let $\Gamma$ be a finite directed multigraph.  Each
vertex $v$ carries a globally generated vector bundle $\cF_v$.  For every
ordinary edge $e:v\to w$, fix a short exact sequence
\begin{equation}\label{eq:flow-exact}
 0\longrightarrow \cE_e\longrightarrow \cF_w\longrightarrow \cF_v
 \longrightarrow0
\end{equation}
and an integral multiplicity $\kappa_e\in\NN$.  Fix also root multiplicities
$r_v\in\NN$, and define the weighted divergence
\begin{equation}\label{eq:vertex-divergence}
 b_v:=r_v+\sum_{e:\,t(e)=v}\kappa_e
          -\sum_{e:\,s(e)=v}\kappa_e.
\end{equation}
Assume $b_v\ge0$ for every vertex.  Then there is an equality in $K^0(Y)$,
\begin{equation}\label{eq:flow-K0}
 \left[\bigoplus_v\cF_v^{\oplus b_v}\right]
 =
 \left[
 \bigoplus_v\cF_v^{\oplus r_v}
 \oplus
 \bigoplus_e\cE_e^{\oplus\kappa_e}
 \right].
\end{equation}
Consequently the total Chern classes agree:
\begin{equation}\label{eq:flow-total-chern}
 c\!\left(\bigoplus_v\cF_v^{\oplus b_v}\right)
 =
 \prod_vc(\cF_v)^{r_v}
 \prod_ec(\cE_e)^{\kappa_e}.
\end{equation}
Taking the component in the common top degree gives
\begin{equation}\label{eq:flow-topchern}
 \ctop\!\left(\bigoplus_v\cF_v^{\oplus b_v}\right)
 =
 \left(\prod_v c_{\rk \cF_v}(\cF_v)^{r_v}\right)
 \left(\prod_e c_{\rk \cE_e}(\cE_e)^{\kappa_e}\right).
\end{equation}
The bundle on the left is globally generated and is determined canonically
by the vertexwise divergence.  Moreover,
\begin{equation}\label{eq:divergence-rank}
 \sum_v b_v\rk \cF_v
 =
 \sum_v r_v\rk \cF_v+\sum_e\kappa_e\rk \cE_e.
\end{equation}
No acyclicity assumption is required.
\end{proposition}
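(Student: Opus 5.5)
The identity \eqref{eq:flow-K0} will follow directly from additivity of classes in $K^0(Y)$, and everything else is then routine. Each exact sequence \eqref{eq:flow-exact} gives $[\cE_e]=[\cF_{t(e)}]-[\cF_{s(e)}]$ in $K^0(Y)$. Multiplying by $\kappa_e$ and summing over all edges, the right side of \eqref{eq:flow-K0} becomes
\[
 \sum_v r_v[\cF_v]+\sum_e\kappa_e\bigl([\cF_{t(e)}]-[\cF_{s(e)}]\bigr).
\]
Regrouping by vertex, each $v$ receives
\[
 r_v+\sum_{t(e)=v}\kappa_e-\sum_{s(e)=v}\kappa_e=b_v
\]
copies of $[\cF_v]$. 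Because $b_v\ge0$, this combination is the honest class of the direct sum $\bigoplus_v\cF_v^{\oplus b_v}$, not merely a virtual one. No ordering of the vertices enters the regrouping, so no acyclicity assumption is needed.

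The total Chern class is multiplicative on short exact sequences and on direct sums, so it factors through $K^0(Y)$. Applying it to \eqref{eq:flow-K0} gives \eqref{eq:flow-total-chern}. Taking ranks, which are additive, in \eqref{eq:flow-K0} gives \eqref{eq:divergence-rank} under the constant-rank hypothesis.

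For \eqref{eq:flow-topchern}, let $N$ be the common rank from \eqref{eq:divergence-rank}. The total Chern class of a bundle of rank $N$ vanishes above degree $N$, and its degree-$N$ part is $\ctop$. On the right side of \eqref{eq:flow-total-chern}, each factor $c(\cF_v)$ or $c(\cE_e)$ has degrees at most its rank. The only way to reach degree $N$ is therefore to take the top piece of every factor, which gives \eqref{eq:flow-topchern}.

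It remains to address global generation and canonicity. A direct sum of globally generated bundles is globally generated. The bundle $\bigoplus_v\cF_v^{\oplus b_v}$ depends only on the integers $b_v$, hence only on the divergence, and not on any choice of splitting of the edge sequences.

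The only subtle point is the setting. On a general separated finite-type scheme, Chern classes are operational classes, and I would cite \cite[\S\S3.2--3.3]{Fulton98} for the Whitney formula for exact sequences of vector bundles in that setting. This guarantees that the total Chern class is well defined on $K^0(Y)$ without any smoothness hypothesis.
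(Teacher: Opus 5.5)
Your proposal is correct and follows the same route as the paper. You sum the relations $[\cE_e]=[\cF_{t(e)}]-[\cF_{s(e)}]$ with multiplicities to get \eqref{eq:flow-K0} and \eqref{eq:divergence-rank}, apply the multiplicative total Chern class, and read off the top degree by counting ranks. The one point the paper states explicitly and you leave implicit is behind your phrase ``factors through $K^0(Y)$'': it needs $c_t$ to take values in the group $1+tA^*(Y)[[t]]$, where a series with constant term one is invertible.
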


\begin{proof}
Summing $[\cE_e]=[\cF_{t(e)}]-[\cF_{s(e)}]$ with multiplicities gives
\[
 \sum_v b_v[\cF_v]=\sum_vr_v[\cF_v]+\sum_e\kappa_e[\cE_e],
\]
which is \eqref{eq:flow-K0}; its ranks give
\eqref{eq:divergence-rank}. Apply the multiplicative operation
$c_t([\cE]-[\cF])=c_t(\cE)c_t(\cF)^{-1}$ in $1+tA^*(Y)[[t]]$
\cite[\S3.2]{Fulton98}, interpreted operationally on Chow homology.
The inverse exists since the constant term is one. This proves
\eqref{eq:flow-total-chern}. Both sides have equal rank, so its top
component takes the top class from every factor, giving
\eqref{eq:flow-topchern}. Nonnegative $b_v$ make the left-hand bundle
an actual generated direct sum.
\end{proof}

For line kernels of multiplicity one, \eqref{eq:flow-topchern} reads
$\prod_e c_1(\cE_e)\prod_v c_{\rk\cF_v}(\cF_v)^{r_v}
=\ctop(\bigoplus_v\cF_v^{\oplus b_v})$.
The identity requires no path decomposition. Exterior-power identities,
pathwise exact sequences, and the flow cone are treated in
\cref{subsec:flow-cone-geometry}; the divergence bundle is fixed,
whereas a path matching need not be canonical.

\begin{remark}\label{warn:time-indices}
Vertices are actual bundles, not ranks. Distinct rank-$j$ flag states
may be nonisomorphic; collapsing their creation times can spuriously
compose unrelated exact sequences. Every graph below retains those times.
\end{remark}

\subsection{Creation states and balance conditions}

The graph must remember when a prefix bundle is actually created, while
identifying the passive pullback copies that occur between two creations.
This is encoded as follows.

\begin{definition}\label{def:creation-equivalence}
For a word $\mathbf i=(i_1,\ldots,i_\ell)$ and
$j\in\{1,\ldots,n\}$, declare
\[
 (j,s)\sim(j,t)
 \quad\Longleftrightarrow\quad
 i_q\ne j\text{ for every }
 \min\{s,t\}<q\le\max\{s,t\}.
\]
Write $\sigma_j(t)=[j,t]$ for the equivalence class.  Its unique
creation-time representative is
\[
 \bigl(j,\tau_j(t)\bigr),\qquad
 \tau_j(t)=\max\bigl(\{0\}\cup\{q\le t:i_q=j\}\bigr).
\]
On the quotient-flag tower of \cref{sec:canonical}, if
$s\le t$ and $(j,s)\sim(j,t)$, then
$\cF_j^{(t)}$ is canonically the pullback of $\cF_j^{(s)}$.  Hence the pullback
to the final tower of the bundle represented by $\sigma_j(t)$ is
independent of the representative.
\end{definition}

\begin{definition}\label{def:creation-state-graph}
Let $\mathbf i=(i_1,\ldots,i_\ell)$ be a word in
$\{1,\ldots,n-1\}$, let
$\epsilon=(\epsilon_1,\ldots,\epsilon_\ell)\in\NN^\ell$, and let
$\mathbf d=(d_1,\ldots,d_n)\in\NN^n$.  For $1\le j\le n$, put
\begin{equation}\label{eq:creation-times}
\begin{gathered}
 T_j=\{0\}\cup\{t\in[\ell]:i_t=j\},\\
 \tau_j(t)=\max\bigl(T_j\cap\{0,1,\ldots,t\}\bigr),\qquad
 \lambda_j=\tau_j(\ell).
\end{gathered}
\end{equation}
The vertex $v_{j,r}$, for $r\in T_j$, is the rank-$j$ state created at
time $r$; in particular, $v_{j,0}$ is the initial state.  At stage $t$ the
current rank-$j$ state is $v_{j,\tau_j(t)}$.  There is deliberately no
separate vertex for a passive pullback at a time not belonging to $T_j$.

The rooted directed multigraph
$\mathfrak C(\mathbf i,\epsilon,\mathbf d)$ has vertex set
$\{0\}\sqcup\{v_{j,r}:1\le j\le n,\ r\in T_j\}$.
At stage $t$ it has $\epsilon_t$ parallel copies of the ordinary edge
\begin{equation}\label{eq:abstract-flow-edge}
 e_t:v_{i_t,t}\longrightarrow
 v_{i_t+1,\tau_{i_t+1}(t-1)}.
\end{equation}
For every $j$ it has $d_j$ root edges
\begin{equation}\label{eq:abstract-root-edges}
 e_{j,a}:0\longrightarrow v_{j,\lambda_j}
 \qquad(1\le a\le d_j).
\end{equation}
Degrees are counted with edge multiplicity. The graph is
\emph{balanced} if, at every non-root vertex, the total incoming
multiplicity (including root edges) is at least the outgoing ordinary
multiplicity.
\end{definition}

Every ordinary edge in \eqref{eq:abstract-flow-edge} increases the rank by
one.  Thus $\mathfrak C(\mathbf i,\epsilon,\mathbf d)$ is acyclic and the
root has no incoming edge.  The creation-state convention is exactly what
makes two consecutive edges correspond to composable exact sequences.

\begin{example}
\label{ex:creation-state-3412}
For $u=3412$, the canonical word is
$\mathbf r(u)=(2,3\mid1,2)$.  Thus
$T_1=\{0,3\},\qquad T_2=\{0,1,4\},\qquad T_3=\{0,2\},\qquad T_4=\{0\}$.
With the all-kernel profile and staircase terminal multiplicities
$\mathbf d=(1,1,1,0)$, the nonisolated part of the rooted graph is
\[
\begin{tikzcd}[column sep=3.8em,row sep=2.8em]
 & v_{1,3} \arrow[r,"e_3"]
 & v_{2,1} \arrow[r,"e_1"]
 & v_{3,0} \\
 0 \arrow[ur,dashed,"r_1"] \arrow[r,dashed,"r_2"']
   \arrow[rr,bend left=22,dashed,"r_3"]
 & v_{2,4} \arrow[r,"e_4"']
 & v_{3,2} \arrow[r,"e_2"']
 & v_{4,0}.
\end{tikzcd}
\]
Solid edges represent exact sequences; dashed edges record terminal
top-Chern factors. The two rank-two states, like the two rank-three
states, must remain distinct to preserve composability.
\Cref{app:worked-example} follows this graph through the packet and
incidence construction.
\end{example}

We now determine the balance condition needed in the Grothendieck
application.  Let $\mathbf r(u)=(i_1,\ldots,i_\ell)$ be the canonical-row
word defined in \cref{sec:canonical}, and take $\epsilon_t=1$ for every
$t$.  For $1\le j<n$, let $m_j$ be the number of occurrences of colour
$j$ and put
\[
 L_j=
 \begin{cases}
 \max\{t:i_t=j\},&m_j>0,\\
 -\infty,&m_j=0,
 \end{cases}
 \qquad L_0=-\infty.
\]
Define
\begin{equation}\label{eq:epsilon-reservoir}
 \varepsilon_j(u)=
 \mathbf 1_{\{m_j>0\text{ and }L_j>L_{j-1}\}}.
\end{equation}

\begin{lemma}\label{thm:reservoir}
The graph $\mathfrak C(\mathbf r(u),\mathbf 1,\mathbf d)$ is balanced
at every non-root vertex if and only if
\begin{equation}\label{eq:reservoir-criterion}
 d_j\ge\varepsilon_j(u)
 \qquad(1\le j<n).
\end{equation}
No condition is required in rank $n$.
\end{lemma}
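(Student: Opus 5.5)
The plan is to read off the in- and out-degrees of every non-root vertex of $\mathfrak C(\mathbf r(u),\mathbf 1,\mathbf d)$ directly from \cref{def:creation-state-graph}, and then to use the row structure of the canonical word to show that only the terminal states can fail balance.

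\emph{Degrees.} Since $\epsilon_t=1$, each stage $t$ contributes exactly one ordinary edge $e_t$. This edge leaves $v_{i_t,t}$. Hence a vertex $v_{j,r}$ has outgoing multiplicity $1$ if $r>0$ (so that $i_r=j$), and $0$ if $r=0$. In particular, initial states and all rank-$n$ states are automatically balanced, because colour $n$ never occurs; this gives the clause ``no condition in rank $n$''.

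The incoming ordinary edges of $v_{j,r}$ are the $e_t$ with $i_t=j-1$ and $\tau_j(t-1)=r$. Equivalently, they are the occurrences of colour $j-1$ at positions $t$ with $r<t<r'$, where $r'$ is the next occurrence of colour $j$ after $r$ (or $r'=\ell+1$ if there is none). In addition, $v_{j,r}$ receives $d_j$ root edges when $r=\lambda_j$. So balance at $v_{j,r}$ with $r>0$ says: either colour $j-1$ occurs strictly between $r$ and $r'$, or $r=L_j$ and $d_j\ge1$.

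\emph{Non-terminal occurrences are automatic.} I use the shape $\mathbf r(u)=B_{n-1}\cdots B_1$, where each $B_a=(a,\dots,a+c_a-1)$ is increasing and consecutive. Thus colour $j$ occurs at most once per row, namely in the rows $B_a$ with $a\le j\le a+c_a-1$, and rows are traversed in decreasing $a$.

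Take two consecutive occurrences of $j$, lying in rows $a'>a$. Then $a<a'\le j$, so $a\le j-1$. Inside $B_a$ the entry $j$ is therefore preceded by the entry $j-1$. That $j-1$ comes after the whole row $B_{a'}$ and before the second occurrence of $j$. Hence every non-last created state $v_{j,r}$ has at least one incoming ordinary edge.

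For $j=1$ this step is vacuous, because colour $1$ occurs only in row $B_1$, hence at most once.

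\emph{The terminal state.} It remains to treat $v_{j,L_j}$ when $m_j>0$. It is balanced without root edges exactly when colour $j-1$ occurs after position $L_j$, that is, when $L_{j-1}>L_j$ (with the convention $L_0=-\infty$). The two last positions are distinct since the colours differ. So the condition at this vertex is $d_j\ge1$ precisely when $m_j>0$ and $L_j>L_{j-1}$, which is $d_j\ge\varepsilon_j(u)$. If $m_j=0$ there is no created rank-$j$ state and $\varepsilon_j=0$.

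Combining the three cases gives both directions of \eqref{eq:reservoir-criterion}. The only point needing care is the second step: matching $\tau_j(t-1)=r$ with ``strictly between consecutive occurrences'' and checking the row inequality $a\le j-1$. I expect this to be the main, though modest, obstacle; the rest is bookkeeping.
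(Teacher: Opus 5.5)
Your proposal is correct and follows the paper's proof essentially step for step: initial and rank-$n$ states have no outgoing edge, a non-final created rank-$j$ state is fed by the colour-$(j-1)$ entry immediately preceding the next occurrence of $j$ (which lies in a later row with strictly smaller start), and the final state $v_{j,L_j}$ needs a root edge exactly when $L_j>L_{j-1}$. Your description of the incoming edges as the colour-$(j-1)$ steps strictly between consecutive occurrences of $j$ is the paper's bookkeeping $\tau_j(t_{j,k+1}-2)=t_{j,k}$, written out explicitly.
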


\begin{proof}
Write $T_j=\{0=t_{j,0}<\cdots<t_{j,m_j}\}$.
Initial states have no outgoing edge; each noninitial state has exactly
one, created at its own colour-$j$ step. A next occurrence $t_{j,k+1}$
of $j$ lies in a later row, whose strictly smaller start forces colour
$j-1$ immediately before it. Since no intervening step creates rank $j$,
$\tau_j(t_{j,k+1}-2)=t_{j,k}$, so that colour-$(j-1)$ edge enters the
previous state. Every nonfinal noninitial state is thus balanced.
The final state at $L_j$, when $m_j>0$, receives an ordinary edge
exactly when a colour-$(j-1)$ step occurs after $L_j$, equivalently
$L_{j-1}>L_j$. Otherwise its sole outgoing edge requires $d_j\ge1$.
This is \eqref{eq:reservoir-criterion}. If $m_j=0$ the state is initial,
and rank $n$ has no outgoing edge.
\end{proof}

In particular, the staircase differences $d_j=1$ for $j<n$ always
suffice, since $\varepsilon_j(u)\in\{0,1\}$.

\subsection{Top-Chern realizations of balanced packets}

We now combine the row bundles with the creation-state flow on the final tower.  For a
vertex $v_{j,r}$, with $r\in T_j$, define
\begin{equation}\label{eq:vertex-bundle}
 \widehat{\cF}_{j,r}:=p_{\ell,r}^*\cF_j^{(r)}.
\end{equation}
This is the pullback to $X_u$ of the rank-$j$ bundle $\cF_j^{(r)}$ on $X_r$.
For a creation-state vertex $v=v_{j,r}$, abbreviate
$\widehat{\cF}_v:=\widehat{\cF}_{j,r}$.  If $r=\tau_j(t)$, then the pullback of
the current bundle $\cF_j^{(t)}$ to $X_u$ is canonically isomorphic to
$\widehat{\cF}_{j,r}$; passive pullbacks therefore do not create additional
vertices.

For every stage $t$, pull \eqref{eq:S-exact} to $X_u$.  Independently of
any profile, this gives the literal exact sequence
\begin{equation}\label{eq:realized-flow-edge}
 0\longrightarrow \cS_t\longrightarrow
 \widehat{\cF}_{i_t+1,\tau_{i_t+1}(t-1)}
 \longrightarrow \widehat{\cF}_{i_t,t}\longrightarrow0.
\end{equation}
In $\mathfrak C(\mathbf r(u),\epsilon,\mathbf d)$ this sequence is
used with multiplicity $\epsilon_t$, contributing $s_t^{\epsilon_t}$.
The $d_j$ root edges at $v_{j,\lambda_j}$ contribute
$c_j(\widehat{\cF}_{j,\lambda_j})^{d_j}
=c_j(\cF_j^{(\ell)})^{d_j}$. Thus every edge is realized by the
literal exact sequence required by \cref{thm:chern-flow}, and all
vertex bundles are globally generated.

\begin{theorem}\label{thm:canonical-packet}
Let $u\in S_n$, write $\mathbf r(u)=(i_1,\ldots,i_\ell)$, and let
$\mu=(\mu_1\ge\cdots\ge\mu_n\ge0)$ be a partition. Put
$\mu_{n+1}=0$ and $d_j=\mu_j-\mu_{j+1}$ for $1\le j\le n$.
For $\epsilon\in\NN^\ell$, assume that
$\mathfrak C(\mathbf r(u),\epsilon,\mathbf d)$ is balanced.
There is a globally generated vector bundle $\cV_{u,\epsilon,\mu}$
on $X_u$ such that
\begin{equation}\label{eq:canonical-topchern-push}
 \rho_*\ctop(\cV_{u,\epsilon,\mu})=P_{u,\epsilon,\mu}(h,h_z).
\end{equation}
For every finite box $\mathbf m=(m_1,\ldots,m_n,m_z)$ containing
the support, consequently,
\begin{equation}\label{eq:canonical-finite-dual-RV}
 \Dcomp_{\mathbf m}\Nrm_{x,z}(P_{u,\epsilon,\mu})\in\RV.
\end{equation}
This applies in particular to $\epsilon=\mathbf0$ and any partition
$\mu$, and to $\epsilon=\mathbf1$ with $\mu=\delta$.
\end{theorem}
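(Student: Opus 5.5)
The plan is to combine \cref{prop:operator-chow}, the row bundles of \cref{prop:row-bundle}, and the Chern-flow identity of \cref{thm:chern-flow}, and then apply \cref{thm:incidence-dual}. By \cref{prop:operator-chow},
\[
 P_{u,\epsilon,\mu}(h,h_z)=\rho_*\Bigl[\prod_t(q_t+h_z)\prod_t s_t^{\epsilon_t}\prod_j c_j(\cF_j^{(\ell)})^{d_j}\Bigr],
\]
with all classes pulled back to $X_u$. I would treat the three groups of factors separately.

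The quotient factors are handled row by row. The canonical word is the concatenation $B_{n-1}\cdots B_1$, so the stages $t$ are partitioned into the rows. For each row, \eqref{eq:row-topchern} gives
\[
 \prod_{t\in B_a}(q_t+h_z)=c_{c_a}(\cG_a\otimes\cM).
\]
Here $\cG_a$ is generated by \eqref{eq:row-surjection} and $\cM$ is generated, so $\cG_a\otimes\cM$ is globally generated.

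For the remaining factors, I would use the creation-state graph $\mathfrak C(\mathbf r(u),\epsilon,\mathbf d)$.
\begin{itemize}
\item Vertex bundles: take $\widehat\cF_{j,r}$ from \eqref{eq:vertex-bundle}. These are generated as quotients of the pullback of $\cE$.
\item Edge sequences: use the literal exact sequences \eqref{eq:realized-flow-edge} with kernels $\cS_t$ and multiplicities $\kappa_{e_t}=\epsilon_t$.
\item Root multiplicities: set $r_{v_{j,\lambda_j}}=d_j$ and all other $r_v=0$.
\item The auxiliary root vertex $0$: drop it, since its root edges are exactly encoded by $r_v$.
\end{itemize}
The balance hypothesis says precisely that $b_v\ge0$ at every vertex. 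Then \eqref{eq:flow-topchern} gives
\[
 \prod_t s_t^{\epsilon_t}\prod_j c_j(\cF_j^{(\ell)})^{d_j}=\ctop\Bigl(\bigoplus_v\widehat\cF_v^{\oplus b_v}\Bigr),
\]
using $c_j(\widehat\cF_{j,\lambda_j})=c_j(\cF_j^{(\ell)})$.

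I would then set
\[
 \cV_{u,\epsilon,\mu}=\bigoplus_a(\cG_a\otimes\cM)\oplus\bigoplus_v\widehat\cF_v^{\oplus b_v}.
\]
This bundle is globally generated. Its top Chern class is the product of the two top classes by Whitney's formula, since the top class of a direct sum is the product of the top classes. This proves \eqref{eq:canonical-topchern-push}.

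For \eqref{eq:canonical-finite-dual-RV}, fix a box $\mathbf m$ containing the support and build the tower over $B=\prod\PP^{m_i}\times\PP^{m_z}$. In the Chow ring of $B$, $P(h,h_z)$ is then the reduced representative $\Xi$, because no monomial of $P$ is killed by the relations $h_i^{m_i+1}=0$. The tower $X_u$ is integral and smooth, so \cref{thm:incidence-dual} applies to $\rho$ and $\cV$ and gives $\Dcomp_{\mathbf m}\Nrm_{x,z}(P_{u,\epsilon,\mu})\in\RV$. This holds including the zero case.

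For the final sentence of the theorem:
\begin{itemize}
\item If $\epsilon=\mathbf0$, there are no ordinary edges, so $b_v=r_v\ge0$ trivially.
\item If $\epsilon=\mathbf1$ and $\mu=\delta$, then $d_j=1$ for $j<n$, and balance follows from \cref{thm:reservoir} together with $\varepsilon_j(u)\le1$.
\end{itemize}

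The main obstacle is checking that the abstract graph data really match the geometry. Concretely, the edge \eqref{eq:abstract-flow-edge} must have target $\widehat\cF_{i_t+1,\tau_{i_t+1}(t-1)}$ and source $\widehat\cF_{i_t,t}$, matching \eqref{eq:realized-flow-edge}. This relies on the passive-pullback identification of \cref{def:creation-equivalence}. I would also verify that the vertex $0$ can be discarded without affecting the divergences at the other vertices.
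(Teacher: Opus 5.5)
Your proposal is correct and follows the paper's proof essentially step for step. You build the same envelope $\cV=\bigl(\bigoplus_v\widehat\cF_v^{\oplus b_v}\bigr)\oplus\bigoplus_a(\cG_a\otimes\cM)$, treating root edges as the vertex supplies $r_v$ in \cref{thm:chern-flow}, then apply \cref{prop:operator-chow} and \cref{thm:incidence-dual} over the box-dependent base, and handle the two special cases by trivial balance and \cref{thm:reservoir}. The paper additionally records the rank and codimension bookkeeping, but that is only a consistency check and is not needed for the statement.
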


\begin{proof}
For each creation-state vertex $v$, let $r_v$ be its root-edge
multiplicity and put
$b_v=r_v+\operatorname{indeg}_{\rm ord}(v) -\operatorname{outdeg}_{\rm ord}(v)$.
Balance says exactly that $b_v\ge0$.  Applying \cref{thm:chern-flow} to the
realized edge sequences \eqref{eq:realized-flow-edge} gives
\[
 \prod_{t=1}^{\ell}s_t^{\epsilon_t}
 \prod_{j=1}^n c_j(\cF_j^{(\ell)})^{d_j}
 =\ctop(\cU_{u,\epsilon,\mu}),
 \qquad
 \cU_{u,\epsilon,\mu}:=\bigoplus_v\widehat{\cF}_v^{\oplus b_v}.
\]
This globally generated bundle is canonical: it is independent of every
choice of path matching.  By \cref{prop:row-bundle},
\[
 \prod_{t=1}^\ell(q_t+h_z)
 =\prod_{a:c_a>0}c_{c_a}(\cG_a\otimes\cM),
\]
and each $\cG_a\otimes\cM$ is globally generated.  Set
\begin{equation}\label{eq:V-canonical}
 \cV_{u,\epsilon,\mu}
 =\cU_{u,\epsilon,\mu}
 \oplus\bigoplus_{a:c_a>0}(\cG_a\otimes\cM).
\end{equation}
Its top Chern class is exactly the integrand in
\eqref{eq:operator-chow}.  The pushforward identity
\eqref{eq:canonical-topchern-push} follows from
\cref{prop:operator-chow}.

The divergence rank identity \eqref{eq:divergence-rank} gives
\[
 \rk \cU_{u,\epsilon,\mu}
 =\sum_{j=1}^n j d_j+\sum_{t=1}^\ell\epsilon_t
 =|\mu|+\sum_{t=1}^\ell\epsilon_t.
\]
The row bundles have total rank $\sum_ac_a=\ell$, and hence
\begin{equation}\label{eq:rank-codimension-bookkeeping}
 \rk \cV_{u,\epsilon,\mu}
 =\ell+|\mu|+\sum_t\epsilon_t,
 \qquad \dim(X_u/B)=\ell.
\end{equation}
Thus pushforward lowers the codimension by $\ell$ and produces a class of
degree $|\mu|+\sum_t\epsilon_t$, in agreement with
\eqref{eq:P-degree}.

Finally, fix a finite box containing the support of
$P_{u,\epsilon,\mu}$ and choose the corresponding product of projective
spaces as the base $B$.  Because the support lies in the Chow box, the
polynomial $P_{u,\epsilon,\mu}(h,h_z)$ is already its unique reduced
representative in $A^*(B)$.  The construction above therefore gives the
required top-Chern pushforward of that class, and
\cref{thm:incidence-dual} yields \eqref{eq:canonical-finite-dual-RV}.
For $\epsilon=\mathbf0$ balance is automatic; for
$\epsilon=\mathbf1$ and $\mu=\delta$ it follows from
\cref{thm:reservoir}, since $d_j=1$ for $j<n$.
\end{proof}

The cap $\mathbf m$ includes the $z$-coordinate. By
\eqref{eq:rank-codimension-bookkeeping}, the nonzero finite dual has degree
$|\mathbf m|-|\mu|-\sum_t\epsilon_t=\dim X_u-\rk\cV_{u,\epsilon,\mu}$:
this is the dimension left after the top-Chern insertion.

\section{Ordinary \texorpdfstring{type-$A$}{type-A} polynomial packets}\label{sec:ordinary-packets}
\label{sec:ordinary-type-A-packets}
The top-Chern models of \cref{sec:canonical,sec:flow} realize finite
complements. We now identify those complements with the normalized
source packets. The three local correspondences are
\[
\begin{array}{c|c|c}
\text{source operator}&\text{reciprocal operator}&\text{generated factors}\\ \hline
\mathscr K_i^{(z)}&\Pi_{n-i}^{(z)}&\text{quotient rows}\\
\overline{\mathscr K}_i^{(z)}&\overline\Pi_{n-i}^{(z)}&\text{co-rows}\\
\mathscr D_i^{(z)}&\Gamma_{n-i}^{(1,z)}&\text{rows and rooted flow}
\end{array}
\]
Here $\overline\Pi_i^{(z)}=(x_{i+1}+z)\partial_i$.
The reciprocal correspondence includes variable reversal and matching finite
caps; the precise identities are proved below. After these three
conversions, \cref{subsec:packet-extraction} treats their layer and
support consequences together. No general preservation theorem for
finite complementation is used.

\subsection{Homogeneous Lascoux packets and key polynomials}\label{sec:key}

We now apply the quotient-root part of the canonical-row construction.  No
kernel-root factors occur in this section, so the flow condition is automatic.

For $1\le i<n$, define
\begin{equation}\label{eq:K-operator-key-section}
 \mathscr K_i^{(z)}
 :=z\pi_i+x_ix_{i+1}\partial_i
 =\partial_i\bigl(x_i(x_{i+1}+z)\,\cdot\,\bigr).
\end{equation}
The second equality follows because $x_ix_{i+1}$ is $s_i$-invariant and
$\partial_i(x_if)=\pi_i f$.  If
\[
 \overline\pi_i^{(\beta)}f
 :=\partial_i\bigl(x_i(1+\beta x_{i+1})f\bigr),
 \qquad
 \mathfrak d_i^{(\beta)}f
 :=\partial_i\bigl((1+\beta x_{i+1})f\bigr),
\]
then
\[
 \mathscr K_i^{(z)}=z\,\overline\pi_i^{(z^{-1})},
 \qquad
 \mathscr D_i^{(z)}=z\,\mathfrak d_i^{(z^{-1})}.
\]
Both operator families $\overline\pi_i^{(\beta)}$ and
$\mathfrak d_i^{(\beta)}$ satisfy the Coxeter braid and
distant-commutation relations; for the Demazure--Lascoux family see
\cite[\S2.2, equations (2.2a)--(2.2c)]{BSW20}, and for the Grothendieck
family see \cite{LS82,FK94}. Their homogeneous forms
$\mathscr K_i^{(z)}$ and $\mathscr D_i^{(z)}$ therefore satisfy the same
relations.  In particular, their products along a reduced word are
well-defined.

A common exponent bound is needed to transport the operator identities through finite reciprocals.
\begin{lemma}\label{lem:finite-box}
Let $M,r\ge0$, and suppose that every monomial of $f(x,z)$ has
$x_j$-exponent at most $M$ for every $j$ and $z$-exponent at most $r$.
Then every monomial of $\mathscr K_i^{(z)}f$ and
$\mathscr D_i^{(z)}f$ has the same $x$-exponent bounds and $z$-exponent at
most $r+1$.
\end{lemma}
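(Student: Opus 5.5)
The plan is to reduce the statement to the action of a single divided difference on a single monomial, since both operators are linear. Write
\[
 \mathscr K_i^{(z)}f=\partial_i\bigl(x_i(x_{i+1}+z)f\bigr),\qquad
 \mathscr D_i^{(z)}f=\partial_i\bigl((x_{i+1}+z)f\bigr).
\]
Because $z$ is $s_i$-invariant, $\partial_i$ is $\QQ[z]$-linear. Hence multiplication by $x_{i+1}+z$ raises the $z$-exponent by at most one, and $\partial_i$ leaves it unchanged. This settles the $z$-bound $r+1$ immediately. Variables $x_j$ with $j\ne i,i+1$ are symmetric scalars for $\partial_i$, so their exponents are untouched. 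It therefore suffices to track the exponents of $x_i$ and $x_{i+1}$.

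For these, I would use the explicit formula for $\partial_i$ on monomials. For $a\ge b$,
\[
 \partial_i(x_i^ax_{i+1}^b)=\sum_{k=b}^{a-1}x_i^{k}x_{i+1}^{a+b-1-k},
\]
and for $a<b$ it is the negative of the analogous sum, with exponents ranging over $a\le k\le b-1$. In either case every output monomial has both exponents at most $\max(a,b)-1$.

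Now take a monomial $x_i^ax_{i+1}^b$ of $f$ with $a,b\le M$.

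For $\mathscr K_i$, the $z$-coefficient terms are $x_i^{a+1}x_{i+1}^{b+1}$ and $x_i^{a+1}x_{i+1}^{b}$. Their exponents are at most $M+1$, so after $\partial_i$ every exponent is at most $M$. (Alternatively, the $z$-free part is $x_ix_{i+1}\partial_i f$, and $\partial_i$ lowers the maximum by one while $x_ix_{i+1}$ raises it back.)

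For $\mathscr D_i$, the input monomials are $x_i^ax_{i+1}^{b+1}$ and $x_i^ax_{i+1}^b$, with maximal exponent at most $M+1$. After $\partial_i$ the exponents are at most $M$.

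The only subtle point is that cancellation could occur between terms. This is harmless, because we only claim an upper bound on the support, and cancellation can only remove monomials.

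The main, though mild, obstacle is stating the monomial formula correctly in the case $a<b$. I would handle it by antisymmetry, $\partial_i(s_ig)=-\partial_i g$, which reduces that case to $a\ge b$ with the exponents swapped. The maximal-exponent bound is symmetric under the swap, so the conclusion is unaffected.
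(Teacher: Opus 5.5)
Your proof is correct and follows essentially the same route as the paper: expand each operator on a monomial $x_i^px_{i+1}^qz^b$, apply the explicit monomial formula for $\partial_i$, whose output monomials have both exponents at most $\max\{p,q\}-1$, and read off the $x$-bound $M$ term by term, with the $z$-exponent raised by at most one. Your remarks that $\partial_i$ is $\QQ[z]$-linear and leaves the spectator variables untouched make explicit the bookkeeping the paper leaves implicit.
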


\begin{proof}
For $p,q\ge0$ one has
\begin{equation}\label{eq:monomial-divided-difference}
 \partial_i(x_i^px_{i+1}^q)=
 \begin{cases}
 \displaystyle\sum_{a=0}^{p-q-1}x_i^{p-1-a}x_{i+1}^{q+a},&p>q,\\[2mm]
 0,&p=q,\\[1mm]
 \displaystyle-\sum_{a=0}^{q-p-1}x_i^{p+a}x_{i+1}^{q-1-a},&p<q.
 \end{cases}
\end{equation}
Now
\[
 \mathscr K_i^{(z)}(x_i^px_{i+1}^qz^b)
 =z^b\partial_i(x_i^{p+1}x_{i+1}^{q+1})
  +z^{b+1}\partial_i(x_i^{p+1}x_{i+1}^{q}),
\]
and
\[
 \mathscr D_i^{(z)}(x_i^px_{i+1}^qz^b)
 =z^b\partial_i(x_i^px_{i+1}^{q+1})
  +z^{b+1}\partial_i(x_i^px_{i+1}^{q}).
\]
Formula \eqref{eq:monomial-divided-difference} shows term by term that no
$x_i$- or $x_{i+1}$-exponent exceeds $\max\{p,q\}\le M$; the other
variables are unchanged, and the $z$-exponent increases by at most one.
\end{proof}

For $M,r\ge0$, let
\begin{equation}\label{eq:reciprocal-Mr}
 \mathcal R_{M,r}f(x,z)
 =(x_1\cdots x_n)^M z^r
 f(x_1^{-1},\ldots,x_n^{-1},z^{-1}),
\end{equation}
and let $\omega$ be the variable-reversal involution
\begin{equation}\label{eq:omega-reversal}
 (\omega f)(x_1,\ldots,x_n,z)=f(x_n,\ldots,x_1,z).
\end{equation}
When $f$ is supported in the box $(M,\ldots,M;r)$, the operator
$\mathcal R_{M,r}$ is an involution.

Define also
\begin{equation}\label{eq:Pi-operator}
 \Pi_i^{(z)}f=\partial_i\bigl((x_i+z)f\bigr)
 =f+(x_{i+1}+z)\partial_i f.
\end{equation}

The following common transport statement treats all three families. Set
$\overline{\mathscr K}_i^{(z)}=\mathscr K_i^{(z)}-z\,\mathrm{id}$ and
$\overline\Pi_i^{(z)}=(x_{i+1}+z)\partial_i$.
\begin{lemma}\label{lem:reciprocal-transport}
On the indicated finite boxes, the local identities are
\begin{align}
 \mathcal R_{M,r+1}\mathscr K_i^{(z)}\mathcal R_{M,r}
   &=1-(x_i+z)\partial_i,\label{eq:local-lascoux-conjugation}\\
 \mathcal R_{M,r+1}\overline{\mathscr K}_i^{(z)}\mathcal R_{M,r}
   &=-(x_i+z)\partial_i,\label{eq:local-atom-reciprocal}\\
 \mathcal R_{M,r+1}\mathscr D_i^{(z)}\mathcal R_{M,r}
   &=-\mathscr K_i^{(z)}.\label{eq:local-D-conjugation}
\end{align}
Reversal sends these right-hand sides, respectively, to
\begin{align}
 \omega\bigl(1-(x_i+z)\partial_i\bigr)\omega
   &=\Pi_{n-i}^{(z)},\label{eq:local-lascoux-reversal}\\
 \omega\bigl(-(x_i+z)\partial_i\bigr)\omega
   &=\overline\Pi_{n-i}^{(z)},\label{eq:atom-reversal}\\
 \omega(-\mathscr K_i^{(z)})\omega
   &=\Gamma_{n-i}^{(1,z)}.\label{eq:local-D-reversal}
\end{align}
For a word $(i_1,\ldots,i_s)$ in any one source family $T_i$, let
$T_i^\vee=\mathcal R_{M,q+1}T_i\mathcal R_{M,q}$, the corresponding
right-hand side above, independent of $q$. Then
\begin{equation}\label{eq:capped-word}
 \mathcal R_{M,r+s}T_{i_1}\cdots T_{i_s}\mathcal R_{M,r}
       =T_{i_1}^\vee\cdots T_{i_s}^\vee.
\end{equation}
The reciprocal families $\Pi_i^{(z)},\overline\Pi_i^{(z)}$, and
$\Gamma_i^{(1,z)}$ satisfy the braid and distant-commutation relations.
\end{lemma}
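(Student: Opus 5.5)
The plan is to reduce everything to a computation of how $\mathcal R_{M,r}$ conjugates the basic pieces: multiplication by $x_i$, by $x_{i+1}$ and by $z$, and the divided difference $\partial_i$.

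\emph{Elementary conjugations.} Since $\mathcal R$ inverts all variables and multiplies by $(x_1\cdots x_n)^Mz^r$, the rules are as follows.
\begin{itemize}
\item Conjugating multiplication by $x_j$ gives multiplication by $x_j^{-1}$, with the box shifted.
\item Conjugating $z$ gives $z^{-1}$, with $r\mapsto r+1$.
\item Because $(x_ix_{i+1})^M$ is $s_i$-invariant, a direct computation gives
\[
 \partial_i\bigl(f(x^{-1})\bigr)
 =\frac{f(x^{-1})-f(s_ix^{-1})}{x_i-x_{i+1}}
 =-x_ix_{i+1}\,(\partial_if)(x^{-1}).
\]
Hence the $\mathcal R$-conjugate of $\partial_i$ is $-x_ix_{i+1}\partial_i$, up to the box bookkeeping in the $x$-cap, which is unchanged by \cref{lem:finite-box}.
\end{itemize}

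\emph{The three local identities.} With these rules the conjugates follow by substitution.
\begin{itemize}
\item Write $\mathscr K_i^{(z)}=z\pi_i+x_ix_{i+1}\partial_i=z+z\,x_{i+1}\partial_i+x_ix_{i+1}\partial_i$. Conjugating sends $z$ to $z^{-1}$, each $x$ to its inverse, and $\partial_i$ to $-x_ix_{i+1}\partial_i$. The overall factor $z^{r+1}$ against $z^r$ multiplies by $z$. The result is $1-(x_i+z)\partial_i$, which is \eqref{eq:local-lascoux-conjugation}.
\item Subtracting $z\,\mathrm{id}$ conjugates to subtracting $1$, giving \eqref{eq:local-atom-reciprocal}.
\item Write $\mathscr D_i^{(z)}=z+(x_{i+1}+z)\partial_i$ and conjugate in the same way. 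The result is $-z\pi_i-x_ix_{i+1}\partial_i=-\mathscr K_i^{(z)}$, which is \eqref{eq:local-D-conjugation}.
\end{itemize}
The step I expect to need most care is the sign and ordering bookkeeping in these substitutions, for example whether $x_i$ or $x_{i+1}$ survives after the inversion. I would check each identity on monomials using \eqref{eq:monomial-divided-difference}.

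\emph{Reversal identities.} The involution $\omega$ sends $x_i\mapsto x_{n+1-i}$, so it conjugates $s_i$ to $s_{n-i}$. Since $x_i-x_{i+1}$ becomes $x_{n+1-i}-x_{n-i}$, it conjugates $\partial_i$ to $-\partial_{n-i}$.
\begin{itemize}
\item $1-(x_i+z)\partial_i$ becomes $1+(x_{n-i+1}+z)\partial_{n-i}$, which is $\Pi_{n-i}^{(z)}$ by \eqref{eq:Pi-operator}.
\item The atom case gives $\overline\Pi_{n-i}^{(z)}$.
\item $-\mathscr K_i^{(z)}=-\partial_i\bigl(x_i(x_{i+1}+z)\,\cdot\,\bigr)$ becomes $\partial_{n-i}\bigl(x_{n-i+1}(x_{n-i}+z)\,\cdot\,\bigr)$, which is $\Gamma_{n-i}^{(1,z)}$ by \eqref{eq:operator-dictionary-gamma}.
\end{itemize}

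\emph{Words and braid relations.} The capped word identity \eqref{eq:capped-word} follows by telescoping: insert $\mathcal R_{M,r+k}\mathcal R_{M,r+k}=\mathrm{id}$ between consecutive factors. This is legitimate because \cref{lem:finite-box} keeps each intermediate polynomial in the box $(M,\ldots,M;r+k)$, where $\mathcal R$ is an involution. Independence of $q$ is visible from the explicit right-hand sides. Finally, the source families satisfy the braid and distant-commutation relations, as recalled from \cite{BSW20,LS82,FK94}. Conjugation by the invertible maps $\mathcal R$ and $\omega$ transports these relations to $\Pi$, $\overline\Pi$ and $\Gamma^{(1,z)}$ on every finite box. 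Since every polynomial lies in some box, the relations hold globally.
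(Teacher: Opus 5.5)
Your route is the paper's: conjugate the elementary pieces, use $\omega\partial_i\omega=-\partial_{n-i}$ for reversal, telescope the caps for words, and transport the Coxeter relations. The paper's elementary rules are \cref{lem:reciprocal-partial} and the mixed-cap multiplication rules. They agree with your ``same cap, then multiply by $z$'' bookkeeping, since $\mathcal R_{M,r+1}=z\,\mathcal R_{M,r}$ on Laurent polynomials. For the braid transport, both sides of each Coxeter relation have equal length, so the same outer caps apply. Your $\mathscr K$, atom, reversal, word, and braid arguments are correct.

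The $\mathscr D$ step, however, fails as written. Your expansion $\mathscr D_i^{(z)}=z+(x_{i+1}+z)\partial_i$ is false. It is not even homogeneous, whereas $\mathscr D_i^{(z)}$ preserves degree. Apply the Leibniz rule $\partial_i(gf)=(\partial_ig)f+(s_ig)\partial_if$ with $g=x_{i+1}+z$, $\partial_ig=-1$, and $s_ig=x_i+z$. This gives
\[
 \mathscr D_i^{(z)}=-\mathrm{id}+(x_i+z)\partial_i .
\]
Conjugating your formula by your own rules yields $1-x_i(x_{i+1}+z)\partial_i$, which is not $-\mathscr K_i^{(z)}$. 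So the asserted identity does not follow from the computation you describe. With the correct expansion one gets
\[
 z\bigl[-1+(x_i^{-1}+z^{-1})(-x_ix_{i+1}\partial_i)\bigr]
 =-z-x_{i+1}(x_i+z)\partial_i=-\mathscr K_i^{(z)},
\]
which is the computation in \cref{prop:local-reciprocal-identities}.

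There is also a smaller slip. The displayed equality $\partial_i\bigl(f(x^{-1})\bigr)=-x_ix_{i+1}(\partial_if)(x^{-1})$ is wrong: the factor should be $-(x_ix_{i+1})^{-1}$, as the test case $f=x_i$ shows. Substituting $x\mapsto x^{-1}$ afterwards then gives your conclusion $\mathcal R_{M,r}\partial_i\mathcal R_{M,r}=-x_ix_{i+1}\partial_i$, which is correct. With these two corrections your argument proves the lemma.
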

\begin{proof}
The local $\mathscr K$ and $\mathscr D$ calculations are given in
\cref{app:operator-conjugation}. The atom operator has the same finite-box
bound as $\mathscr K_i^{(z)}$, since it differs by $z\,\mathrm{id}$.
Its reciprocal identity follows by subtracting
$\mathcal R_{M,r+1}z\mathcal R_{M,r}=1$ from the first identity.
Reversal uses $\omega\partial_i\omega=-\partial_{n-i}$ and
$\omega(x_i)=x_{n+1-i}$. For the word identity insert matching caps:
\[
 \mathcal R_{M,r+s}T_{i_1}\cdots T_{i_s}\mathcal R_{M,r}
 =\prod_{a=1}^s
  \bigl(\mathcal R_{M,r+s-a+1}T_{i_a}\mathcal R_{M,r+s-a}\bigr).
\]
Thus the word order is unchanged. Each Coxeter relation compares words
of equal length, so the same outer caps transport the source braid
relations. For atoms these follow from the braid relations for
$\varpi_i^{(\beta)}-\mathrm{id}$
\cite[\S2.2, discussion following equations (2.2)]{BSW20}.
\end{proof}

Let $\lambda=(\lambda_1,\ldots,\lambda_n)$ be a partition, put
$M=\lambda_1$, and let $w\in S_n$.  Define
\begin{equation}\label{eq:H-w-lambda}
 H_{w,\lambda}(x,z)=\mathscr K_w^{(z)}x^\lambda.
\end{equation}
By \cref{lem:finite-box}, every monomial of $H_{w,\lambda}$ lies in the box
$(M,\ldots,M;\length(w))$.  Put
\begin{equation}\label{eq:mu-reciprocal-key}
 \mu=(M-\lambda_n,M-\lambda_{n-1},\ldots,M-\lambda_1),
 \qquad u=w_0ww_0.
\end{equation}
The partition $\mu$ is dominant.

The complemented dominant monomial is therefore the terminal weight of the reciprocal word.
\begin{proposition}\label{prop:lascoux-reciprocal}
One has
\begin{equation}\label{eq:lascoux-reciprocal-normal-form}
 \omega\mathcal R_{M,\length(w)}H_{w,\lambda}
 =\Pi_u^{(z)}x^\mu.
\end{equation}
Here $\Pi_u^{(z)}$ may be computed along any reduced expression for $u$; in
particular, it may be computed along the canonical-row word $\mathbf r(u)$.
\end{proposition}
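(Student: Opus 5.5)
We would prove the identity by transporting the whole operator word through the finite reciprocal and then through reversal, using \cref{lem:reciprocal-transport}. Fix a reduced word $(i_1,\ldots,i_\ell)$ for $w$, with $\ell=\length(w)$, so that $H_{w,\lambda}=\mathscr K_{i_1}^{(z)}\cdots\mathscr K_{i_\ell}^{(z)}x^\lambda$.

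\textbf{Step 1: the box condition.} The monomial $x^\lambda$ lies in the box $(M,\ldots,M;0)$ with $M=\lambda_1$. By \cref{lem:finite-box}, after $a$ operators the partial product lies in $(M,\ldots,M;a)$. These are exactly the matching caps required in \eqref{eq:capped-word}.

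\textbf{Step 2: transport through the reciprocal.} Since $\mathcal R_{M,0}$ is an involution on the box $(M,\ldots,M;0)$, we may write $x^\lambda=\mathcal R_{M,0}\mathcal R_{M,0}x^\lambda$. Then \eqref{eq:capped-word} and \eqref{eq:local-lascoux-conjugation} give
\[
 \mathcal R_{M,\ell}H_{w,\lambda}
 =\prod_{a=1}^{\ell}\bigl(1-(x_{i_a}+z)\partial_{i_a}\bigr)\,\mathcal R_{M,0}x^\lambda ,
\]
where $\mathcal R_{M,0}x^\lambda=x^{(M-\lambda_1,\ldots,M-\lambda_n)}$.

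\textbf{Step 3: transport through reversal.} Conjugate by $\omega$. Since $\omega^2=\mathrm{id}$, we can insert $\omega\omega$ between consecutive factors. By \eqref{eq:local-lascoux-reversal}, each factor becomes $\Pi_{n-i_a}^{(z)}$. Reversal of the monomial gives $\omega x^{(M-\lambda_1,\ldots,M-\lambda_n)}=x^{(M-\lambda_n,\ldots,M-\lambda_1)}=x^\mu$. Altogether,
\[
 \omega\mathcal R_{M,\ell}H_{w,\lambda}=\Pi^{(z)}_{n-i_1}\cdots\Pi^{(z)}_{n-i_\ell}x^\mu .
\]
The word $(n-i_1,\ldots,n-i_\ell)$ is a reduced word for $w_0ww_0=u$, because conjugation by $w_0$ sends $s_i$ to $s_{n-i}$ and preserves length.

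\textbf{Step 4: independence of the reduced word.} By the last assertion of \cref{lem:reciprocal-transport}, the operators $\Pi_i^{(z)}$ satisfy the braid and distant-commutation relations. Matsumoto's theorem then says that the product along any reduced word for $u$ gives the same operator. In particular we may use the canonical-row word $\mathbf r(u)$ from \cref{lem:canonical-reduced}.

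The only genuine content is the local conjugation identity \eqref{eq:local-lascoux-conjugation} and the braid relations for $\Pi_i^{(z)}$. Both are already supplied by \cref{lem:reciprocal-transport}, whose local calculation is deferred to the appendix. The main point needing care is Step 1: every intermediate polynomial must stay inside the capped boxes. Otherwise the reciprocal would produce Laurent terms and the insertion of $\mathcal R\mathcal R=1$ between factors would fail. \cref{lem:finite-box} settles this, since each $\mathscr K_i^{(z)}$ raises the $z$-cap by exactly one and preserves the $x$-cap $M$.
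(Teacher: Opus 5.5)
Your proposal is correct and follows the paper's proof: you transport a reduced word for $w$ through the matching caps via \eqref{eq:capped-word} with initial $z$-cap zero, then reverse via \eqref{eq:local-lascoux-reversal}, which turns each factor into $\Pi_{n-i_a}^{(z)}$ and $\omega\mathcal R_{M,0}x^\lambda$ into $x^\mu$, and finally use the braid relations from \cref{lem:reciprocal-transport} to pass to $\mathbf r(u)$. A minor aside: $\mathcal R_{M,r}$ is an involution on Laurent polynomials whatever the support, so inserting $\mathcal R\mathcal R$ is always legitimate, and the box condition is needed only to keep the intermediate expressions polynomial.
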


\begin{proof}
Apply \eqref{eq:capped-word} with $T_i=\mathscr K_i^{(z)}$ to a reduced word
$w=s_{i_1}\cdots s_{i_\ell}$, with initial $z$-cap zero.
Conjugating by $\omega$ changes each local factor to
$\Pi_{n-i_a}^{(z)}$ by \eqref{eq:local-lascoux-reversal}.
The resulting word represents $u=w_0ww_0$, and
$\omega\mathcal R_{M,0}x^\lambda=x^\mu$.
\Cref{lem:reciprocal-transport} permits replacement by $\mathbf r(u)$.
\end{proof}

The reciprocal formula and the generated quotient rows now give the Lascoux volume realization.
\begin{proposition}\label{thm:lascoux-RV}
For every partition $\lambda$ and every $w\in S_n$,
\begin{equation}\label{eq:lascoux-RV}
 \Nrm_{x,z}\bigl(H_{w,\lambda}(x,z)\bigr)\in\RV.
\end{equation}
If $\operatorname{char}(\kk)=0$ and $H_{w,\lambda}$ is nonzero, its factorial
normalization admits a smooth integral projective realization.
\end{proposition}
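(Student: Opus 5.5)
The plan is to reduce to \cref{thm:canonical-packet} with $\epsilon=\mathbf 0$, by way of \cref{prop:lascoux-reciprocal} and the finite-complement identity \eqref{eq:finite-reciprocal-normalization}. Put $M=\lambda_1$, $\ell=\length(w)$, $u=w_0ww_0$, and let $\mu$ be as in \eqref{eq:mu-reciprocal-key}. Then $\length(u)=\ell$, and the canonical-row word $\mathbf r(u)$ has length $\ell$ by \cref{lem:canonical-reduced}. By \cref{lem:finite-box}, $H_{w,\lambda}$ is supported in the box $\mathbf m=(M,\ldots,M;\ell)$. Hence $\mathcal R_{M,\ell}H_{w,\lambda}$ is a genuine polynomial supported in the same box, and so is its reversal $\omega\mathcal R_{M,\ell}H_{w,\lambda}=\Pi_u^{(z)}x^\mu$.

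Next, observe that $\Pi_i^{(z)}=\Gamma_i^{(0,z)}$. It follows that $\Pi_{\mathbf r(u)}^{(z)}x^\mu=P_{u,\mathbf 0,\mu}$ in the notation of \eqref{eq:P-operator-word}. \Cref{thm:canonical-packet} with $\epsilon=\mathbf 0$ applies, since balance is automatic there. Using the box $\mathbf m$, which contains the support, it gives $\Dcomp_{\mathbf m}\Nrm_{x,z}(P_{u,\mathbf 0,\mu})\in\RV$.

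It remains to move back through the reversal and the reciprocal. Reversal $\omega$ permutes the $x$-variables and fixes $z$. It commutes with $\Nrm_{x,z}$, and it commutes with $\Dcomp_{\mathbf m}$ because the cap is constant in the $x$-coordinates. So by \eqref{eq:finite-reciprocal-normalization},
\[
 \Dcomp_{\mathbf m}\Nrm_{x,z}(P_{u,\mathbf 0,\mu})
 =\omega\,\Dcomp_{\mathbf m}\Nrm_{x,z}(\mathcal R_{M,\ell}H_{w,\lambda})
 =\omega\,\Nrm_{x,z}(H_{w,\lambda}).
\]
Variable permutations preserve $\RV$ by \cref{prop:RV-closure}, so $\Nrm_{x,z}(H_{w,\lambda})\in\RV$. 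The only point needing care is that the reciprocal is taken in exactly the box used for the incidence dual; in particular, the $z$-cap must be $\ell$, matching $m_z$. This holds because the transport identity \eqref{eq:capped-word} starts at $z$-cap $0$ and adds one per letter. It is also consistent with the degree count after \cref{thm:canonical-packet}: the dual has degree $n M+\ell-|\mu|=|\lambda|+\ell$, which is the degree of $H_{w,\lambda}$.

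For the characteristic-zero statement, suppose $H_{w,\lambda}\ne0$. Then the pushed-forward class $\Xi=P_{u,\mathbf 0,\mu}(h,h_z)$ is nonzero, since $\mathcal R$ and $\omega$ are injective on the box. \Cref{thm:incidence-dual} therefore supplies a smooth integral projective realization of the dual. Because $X_u$ is smooth, the incidence variety is smooth as well, and composing with the variable permutation $\omega$ keeps the same variety and divisors. Alternatively, one can simply invoke the final clause of \cref{prop:RV-closure}.
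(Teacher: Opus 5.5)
Your proposal is correct and follows the paper's proof essentially step for step. The paper applies \cref{thm:canonical-packet} with $\epsilon=\mathbf 0$ via $\Pi_i^{(z)}=\Gamma_i^{(0,z)}$, then uses the reciprocal normal form of \cref{prop:lascoux-reciprocal}, the finite-complement identity in the box $(M,\ldots,M;\ell)$, and closure of $\RV$ under variable reversal. For characteristic zero the paper simply invokes the final clause of \cref{prop:RV-closure}, your second option. That is the cleaner justification: the dual polynomial is extracted from the incidence volume by a derivative and a restriction, so it is not literally realized on the smooth incidence variety itself.
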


\begin{proof}
Apply \cref{thm:canonical-packet} to the canonical-row word of $u$ in
\eqref{eq:mu-reciprocal-key}, with terminal partition $\mu$ and
$\epsilon_t=0$ at every step.  By \cref{thm:canonical-packet}, the graph is
balanced.  Since $\Gamma_i^{(0,z)}=\Pi_i^{(z)}$, we obtain
$\Dcomp_{(M,\ldots,M;\ell)} \Nrm\bigl(\Pi_u^{(z)}x^\mu\bigr)\in\RV, \qquad \ell=\length(w)$.
By \cref{prop:lascoux-reciprocal} and the coefficientwise identity
\begin{equation}\label{eq:lascoux-finite-dual-identity}
 \Dcomp_{(M,\ldots,M;\ell)}
 \Nrm\bigl(\omega\mathcal R_{M,\ell}H_{w,\lambda}\bigr)
 =\omega\Nrm(H_{w,\lambda}),
\end{equation}
the right-hand side is a realizable-volume polynomial.  Variable reversal preserves
$\RV$, proving \eqref{eq:lascoux-RV}.  Smooth realization follows from
\cref{prop:RV-closure}.
\end{proof}

Let $\alpha$ be a weak composition, put $\lambda=\alpha^+$, and set
$w=w_\alpha$, where $w_\alpha$ is the minimal sorting permutation fixed
above.  Define
\begin{equation}\label{eq:positive-lascoux}
 \Omega_\alpha^+(x;\beta)
 :=\overline\pi_w^{(\beta)}x^\lambda
 =\sum_{k=0}^{r_\alpha}\Omega_{\alpha,k}(x)\beta^k.
\end{equation}
Here
$r_\alpha:=\deg_\beta\Omega_\alpha^+(x;\beta) =\max\{k:\Omega_{\alpha,k}\ne0\}$.
The constant coefficient is the nonzero key polynomial
$\Omega_{\alpha,0}=\kappa_\alpha$, so this maximum is well defined.
Every $\Omega_{\alpha,k}$ is homogeneous of degree $|\lambda|+k$.
Define the ordinary Lascoux polynomial by
\begin{equation}\label{eq:ordinary-lascoux}
 \Omega_\alpha(x):=\Omega_\alpha^+(x;-1).
\end{equation}
This is the convention of Monical--Tokcan--Yong: their local operator is
$\tau_i(f)=\partial_i(x_i(1-x_{i+1})f)$
\cite[Conjecture~5.6]{MTY19}.  Since
$\mathscr K_i^{(z)}=z\overline\pi_i^{(z^{-1})}$,
\begin{equation}\label{eq:H-lascoux-dehomogenization}
 H_{w,\lambda}(x,z)
 =z^{\length(w)}\Omega_\alpha^+(x;z^{-1})
 =\sum_{k=0}^{r_\alpha}\Omega_{\alpha,k}(x)
 z^{\length(w)-k}.
\end{equation}
In particular, $r_\alpha\le\length(w)$.  Put
\begin{equation}\label{eq:minimal-lascoux-packet}
 \widetilde\Omega_\alpha(x,z)
 :=\sum_{k=0}^{r_\alpha}\Omega_{\alpha,k}(x)z^{r_\alpha-k}.
\end{equation}

\subsection{Homogeneous Lascoux atoms and Demazure atoms}\label{sec:atoms}

The reciprocal Lascoux packet uses the quotient roots produced by a row.
For atoms the local reciprocal factor uses instead the \emph{old upper
root} present immediately before each modification.  These roots admit a
parallel rowwise positivity construction.

Recall the Lascoux operator
$\varpi_i^{(\beta)}f :=\partial_i\bigl(x_i(1+\beta x_{i+1})f\bigr) =\overline\pi_i^{(\beta)}f$
from \cref{sec:key}, and define its atom counterpart by
$\bar\varpi_i^{(\beta)}:=\varpi_i^{(\beta)}-\mathrm{id}$.
The operators $\bar\varpi_i^{(\beta)}$ satisfy the Coxeter braid and
distant-commutation relations and give the standard Lascoux atoms
\cite[\S2.2, discussion following equations (2.2)]{BSW20}.
Define the homogeneous atom operators
\begin{equation}\label{eq:atom-operators}
 \overline{\mathscr K}_i^{(z)}
 :=\mathscr K_i^{(z)}-z\,\mathrm{id}
 =z\bar\varpi_i^{(z^{-1})},
 \qquad
 \overline\Pi_i^{(z)}
 :=\Pi_i^{(z)}-\mathrm{id}
 =(x_{i+1}+z)\partial_i.
\end{equation}
Their products along reduced words are well defined. Subtracting
$z\,\mathrm{id}$ from $\mathscr K_i^{(z)}$ does not change the finite-box
bound in \cref{lem:finite-box}.

Let $\lambda=(\lambda_1\ge\cdots\ge\lambda_n)$, put $M=\lambda_1$, and
let $w\in S_n$ have length $\ell$.  Set
\begin{equation}\label{eq:atom-packet}
 \overline H_{w,\lambda}(x,z)
 :=\overline{\mathscr K}_w^{(z)}x^\lambda,
\end{equation}
and define
\begin{equation}\label{eq:atom-mu-u}
 \mu=(M-\lambda_n,M-\lambda_{n-1},\ldots,M-\lambda_1),
 \qquad u=w_0ww_0.
\end{equation}
Then $\overline H_{w,\lambda}$ is supported in
$(M,\ldots,M;\ell)$.

The local atom identity gives the following complete reciprocal expression.
\begin{proposition}\label{prop:atom-normal}
One has
\begin{equation}\label{eq:atom-normal}
 \omega\mathcal R_{M,\ell}\overline H_{w,\lambda}
 =\overline\Pi_u^{(z)}x^\mu.
\end{equation}
The right-hand side may be evaluated along the canonical-row word
$\mathbf r(u)$.
\end{proposition}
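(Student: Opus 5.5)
The plan is to copy the proof of \cref{prop:lascoux-reciprocal}, replacing the Lascoux family by the atom family $\overline{\mathscr K}_i^{(z)}$. Fix a reduced word $w=s_{i_1}\cdots s_{i_\ell}$. By the remark after \eqref{eq:atom-operators}, each $\overline{\mathscr K}_i^{(z)}$ obeys the finite-box bound of \cref{lem:finite-box}. Starting from $x^\lambda$, which lies in the box $(M,\ldots,M;0)$, the partial products therefore stay in the boxes $(M,\ldots,M;q)$ with $q$ equal to the number of operators applied so far. This is exactly the hypothesis needed for the capped word identity \eqref{eq:capped-word}, taken with initial $z$-cap $r=0$ and $s=\ell$. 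It gives
\[
 \mathcal R_{M,\ell}\,\overline{\mathscr K}_{i_1}^{(z)}\cdots
 \overline{\mathscr K}_{i_\ell}^{(z)}\,\mathcal R_{M,0}
 =\prod_{a=1}^{\ell}\bigl(-(x_{i_a}+z)\partial_{i_a}\bigr),
\]
using \eqref{eq:local-atom-reciprocal} for each local factor. The operator order is preserved.

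Next I insert $\omega^2=\mathrm{id}$ between consecutive factors and apply \eqref{eq:atom-reversal}. Each factor becomes $\overline\Pi_{n-i_a}^{(z)}$, so
\[
 \omega\mathcal R_{M,\ell}\overline H_{w,\lambda}
 =\overline\Pi_{n-i_1}^{(z)}\cdots\overline\Pi_{n-i_\ell}^{(z)}\,
 \omega\mathcal R_{M,0}x^\lambda .
\]
Here I use $\overline H_{w,\lambda}=\overline{\mathscr K}_w^{(z)}\mathcal R_{M,0}\mathcal R_{M,0}x^\lambda$, which is valid because $\mathcal R_{M,0}$ is an involution on the box $(M,\ldots,M;0)$. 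Directly, $\mathcal R_{M,0}x^\lambda=\prod_j x_j^{M-\lambda_j}$. Reversal sends this to $\prod_j x_{n+1-j}^{M-\lambda_j}=x^\mu$, with $\mu$ as in \eqref{eq:atom-mu-u}.

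The word $(n-i_1,\ldots,n-i_\ell)$ is a reduced word for $w_0ww_0=u$, because conjugation by $w_0$ sends $s_i$ to $s_{n-i}$ and preserves length. \Cref{lem:reciprocal-transport} states that the $\overline\Pi_i^{(z)}$ satisfy the braid and distant-commutation relations. Hence the product is independent of the reduced word, by Matsumoto's theorem, and equals $\overline\Pi_u^{(z)}x^\mu$. In particular it may be evaluated along $\mathbf r(u)$, which is reduced by \cref{lem:canonical-reduced}.

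The main point needing care is the bookkeeping of caps. The identity \eqref{eq:capped-word} requires every intermediate polynomial to lie in the stated box, and the local identity \eqref{eq:local-atom-reciprocal} was obtained by subtracting $\mathcal R_{M,r+1}z\mathcal R_{M,r}=1$. I will check that this subtraction is legitimate on the box $(M,\ldots,M;r)$. It is: multiplication by $z$ raises the $z$-exponent by at most one, so it maps the box $(M,\ldots,M;r)$ into $(M,\ldots,M;r+1)$, and on monomials the identity is immediate. Nothing beyond the cited lemmas is needed.
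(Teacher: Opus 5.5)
Your proposal is correct and takes the same approach as the paper. You insert matching reciprocal caps along a reduced word for $w$, use the local atom conjugation and the reversal $\omega$ to turn each factor into $\overline\Pi_{n-i_a}^{(z)}$, note that $\omega\mathcal R_{M,0}x^\lambda=x^\mu$, and use the transported braid relations to pass to $\mathbf r(u)$; you just spell out the finite-box bookkeeping more explicitly than the paper does.
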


\begin{proof}
Insert matching reciprocal caps along a reduced word for $w$ and
apply \cref{lem:reciprocal-transport}. Reversal cancels the local minus
sign and changes the word to one for $u=w_0ww_0$, while
$\omega\mathcal R_{M,0}x^\lambda=x^\mu$. Braid independence, transported
by the same caps on equal-length words, permits the canonical-row word.
\end{proof}

The atom factor is pulled back from the preceding stage, so the local
formula and its iteration can be stated together.
\begin{proposition}\label{prop:atom-chow}
At a stage $t$ of colour $i=i_t$,
\begin{equation}\label{eq:atom-gysin}
 (\pi_t)_*\!\left[
 \pi_t^*(r_{i+1}^{(t-1)}+h_z)\Phi(r^{(t)},h_z)\right]
 =\bigl(\overline\Pi_i^{(z)}\Phi\bigr)(r^{(t-1)},h_z).
\end{equation}
For $\mathbf r(u)=(i_1,\ldots,i_\ell)$ and
$d_j=\mu_j-\mu_{j+1}$, consequently,
\begin{equation}\label{eq:atom-chow}
 \overline\Pi_u^{(z)}x^\mu(h,h_z)
 =\rho_*\!\left[
  \prod_{t=1}^{\ell}p_{\ell,t-1}^*(r_{i_t+1}^{(t-1)}+h_z)
  \prod_{j=1}^n c_j(\cF_j^{(\ell)})^{d_j}\right].
\end{equation}
\end{proposition}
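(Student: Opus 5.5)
The plan is to mirror the proof of \cref{prop:operator-chow}, replacing the quotient factor $q_t+h_z$ by the pulled-back old upper root. The local identity \eqref{eq:atom-gysin} comes from the projection formula combined with the rank-two localization \eqref{eq:ranktwo-localization}. The global identity \eqref{eq:atom-chow} then follows from \cref{lem:time-indexed-pushforward} and the terminal-monomial identity \eqref{eq:terminal-monomial}.

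\emph{Local step.} Fix a stage $t$ with colour $i=i_t$. The class $r_{i+1}^{(t-1)}+h_z$ lives on $X_{t-1}$, so the projection formula gives
\[
 (\pi_t)_*\bigl[\pi_t^*(r_{i+1}^{(t-1)}+h_z)\,\Phi(r^{(t)},h_z)\bigr]
 =(r_{i+1}^{(t-1)}+h_z)\,(\pi_t)_*\Phi(r^{(t)},h_z).
\]
By \eqref{eq:line-root-update}, the roots $r_j^{(t)}$ with $j\ne i,i+1$ are pulled back from $X_{t-1}$, and the pair $(r_i^{(t)},r_{i+1}^{(t)})$ equals $(q_t,s_t)$. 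Viewing $\Phi$ as a polynomial in $(q,s)$ with coefficients pulled back from $X_{t-1}$, \eqref{eq:ranktwo-localization} gives
\[
 (\pi_t)_*\Phi(r^{(t)},h_z)=(\partial_i\Phi)(r^{(t-1)},h_z).
\]
Here the Chern roots of $\cK_t$ are the actual line classes $r_i^{(t-1)}$ and $r_{i+1}^{(t-1)}$, by \eqref{eq:old-ranktwo-extension}.

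The expression $\partial_i\Phi$ is symmetric in $x_i,x_{i+1}$, so it is a polynomial in $c_1(\cK_t)$, $c_2(\cK_t)$ and the remaining roots, and no splitting is needed. The extra factor $x_{i+1}+z$ in $\overline\Pi_i^{(z)}=(x_{i+1}+z)\partial_i$ is then evaluated literally at the genuine line class $r_{i+1}^{(t-1)}$, which is defined on $X_{t-1}$. Multiplying gives $(\overline\Pi_i^{(z)}\Phi)(r^{(t-1)},h_z)$, which is \eqref{eq:atom-gysin}.

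\emph{Iteration.} Apply \cref{lem:time-indexed-pushforward} with
\[
 A_t=\pi_t^*(r_{i_t+1}^{(t-1)}+h_z),\qquad T_t=\overline\Pi_{i_t}^{(z)},\qquad \Phi=x^\mu.
\]
Its hypothesis is exactly \eqref{eq:atom-gysin}. Since $p_{\ell,t}^*\pi_t^*=p_{\ell,t-1}^*$, the product $\prod_t p_{\ell,t}^*A_t$ is the first product in \eqref{eq:atom-chow}. The terminal evaluation $x^\mu(r^{(\ell)})=\prod_j c_j(\cF_j^{(\ell)})^{d_j}$ is \eqref{eq:terminal-monomial}, and at $t=0$ the roots are $h_1,\ldots,h_n$. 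The lemma therefore yields
\[
 \rho_*[\cdots]=\operatorname{ev}_0\bigl(\overline\Pi_{i_1}^{(z)}\cdots\overline\Pi_{i_\ell}^{(z)}x^\mu\bigr).
\]
Because $\overline\Pi^{(z)}$ satisfies the braid relations (\cref{lem:reciprocal-transport}) and $\mathbf r(u)$ is reduced (\cref{lem:canonical-reduced}), this equals $\overline\Pi_u^{(z)}x^\mu(h,h_z)$.

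The only delicate point is the local step: checking that evaluating the non-symmetric factor $x_{i+1}$ is legitimate. It is, because $r_{i+1}^{(t-1)}$ is the first Chern class of an honest line bundle on $X_{t-1}$, and \cref{lem:old-upper} identifies it with the pullback from the row start. The remaining steps are bookkeeping.
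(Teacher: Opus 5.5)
Your proposal is correct and follows the paper's proof essentially step for step. The local identity comes from the projection formula followed by the rank-two localization \eqref{eq:ranktwo-localization}, and the global identity from \cref{lem:time-indexed-pushforward} with $A_t=\pi_t^*(r_{i_t+1}^{(t-1)}+h_z)$, $T_t=\overline\Pi_{i_t}^{(z)}$, the terminal monomial \eqref{eq:terminal-monomial}, and $p_{\ell,t}^*\pi_t^*=p_{\ell,t-1}^*$.

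One small point: the appeal to \cref{lem:old-upper} is unnecessary here. The factor is already the first Chern class of an honest line bundle on $X_{t-1}$, and that lemma is only used later, in \cref{thm:atom-top-chern}.
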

\begin{proof}
The projection formula moves the prefactor in
\eqref{eq:atom-gysin} outside the pushforward; then
\eqref{eq:ranktwo-localization} gives
$(r_{i+1}^{(t-1)}+h_z)\partial_i\Phi$.
Apply \cref{lem:time-indexed-pushforward} with this pulled-back
factor, $T_t=\overline\Pi_{i_t}^{(z)}$, and $\Phi=x^\mu$.
The terminal monomial is \eqref{eq:terminal-monomial}, and
$p_{\ell,t}^*\pi_t^*=p_{\ell,t-1}^*$ retains its historical factor.
\end{proof}

Let $\widehat{\cC}_a$ denote the pullback of $\cC_a$ from $Y_a$ to $X_u$.

The co-row filtrations assemble these historical factors into the required generated bundle.
\begin{proposition}\label{thm:atom-top-chern}
The bundle
\begin{equation}\label{eq:atom-bundle}
 \cW_{u,\mu}^{\mathrm{at}}
 :=\bigoplus_{a:c_a>0}(\widehat{\cC}_a\otimes\cM)
 \oplus\bigoplus_{j=1}^n(\cF_j^{(\ell)})^{\oplus d_j}
\end{equation}
is globally generated and satisfies
\begin{equation}\label{eq:atom-top-chern}
 \rho_*\ctop(\cW_{u,\mu}^{\mathrm{at}})
 =\overline\Pi_u^{(z)}x^\mu(h,h_z).
\end{equation}
Its rank is $\ell+|\mu|$.
\end{proposition}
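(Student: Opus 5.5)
The plan is to read off the integrand in \eqref{eq:atom-chow} as a top Chern class and check that the bundle carrying it is globally generated. The integrand has two kinds of factors. The first is the product over stages of the historical factors $p_{\ell,t-1}^*(r_{i_t+1}^{(t-1)}+h_z)$. The second is the terminal product $\prod_j c_j(\cF_j^{(\ell)})^{d_j}$.

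The terminal product is immediate. By Whitney's formula and additivity of top Chern classes over direct sums, it equals $\ctop\bigl(\bigoplus_j(\cF_j^{(\ell)})^{\oplus d_j}\bigr)$. Each $\cF_j^{(\ell)}$ is a quotient of the pullback of $\cE$, so this summand is globally generated.

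For the historical factors, I group the stages by canonical rows. Fix a nonempty row $B_a$, so its $t$th stage has global index $e=b_a+t-1$ and colour $a+t-1$. \Cref{lem:old-upper} identifies $\cR_{i_e+1}^{(e-1)}$ with the pullback of $\cR_{a+t}^{\langle a\rangle}$ from $Y_a$. Hence, after pulling back to $X_u$ along the compatible projections $p_{\ell,e-1}^*p_{e-1,b_a-1}^*=p_{\ell,b_a-1}^*$, the factors of row $B_a$ become $\prod_{t=1}^{c_a}p_{\ell,b_a-1}^*\bigl(c_1(\cR_{a+t}^{\langle a\rangle})+h_z\bigr)$. By \eqref{eq:corow-chern} this equals $c_{c_a}(\widehat\cC_a\otimes\cM)$, since pullback commutes with Chern classes and $\cM$ is pulled back from $B$. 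By \cref{lem:canonical-reduced} every stage lies in exactly one row, so the full historical product is $\prod_{a:c_a>0}\ctop(\widehat\cC_a\otimes\cM)$.

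Multiplying the two pieces gives $\ctop(\cW^{\mathrm{at}}_{u,\mu})$ equal to the integrand of \eqref{eq:atom-chow}. Applying $\rho_*$ then yields \eqref{eq:atom-top-chern}. No flow or kernel factors appear, so \cref{thm:chern-flow} is not needed here.

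For global generation: $\cC_a$ is generated on $Y_a$ by \cref{prop:corow}, pullbacks of generated bundles are generated, and $\cM$ is a generated line bundle, so $\widehat\cC_a\otimes\cM$ is generated. The rank count is $\sum_a c_a+\sum_j j\,d_j$. The first sum is $\ell$ by \cref{lem:canonical-reduced}, and the second is $\sum_j\mu_j=|\mu|$ by telescoping $d_j=\mu_j-\mu_{j+1}$. This gives rank $\ell+|\mu|$.

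The only delicate point is the stage bookkeeping. The historical factor at stage $e$ lives on $X_{e-1}$, while the co-row bundle lives on $Y_a=X_{b_a-1}$, which precedes every stage of the row. One must confirm that \cref{lem:old-upper} applies uniformly for $1\le t\le c_a$, including $t=1$ where $e-1=b_a-1$, and that all the pullbacks land on the same class on $X_u$. Both follow from the composition of tower projections \eqref{eq:tower-projections}.
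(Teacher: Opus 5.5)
Your proposal is correct and matches the paper's proof essentially step for step. You group the historical factors by canonical rows, use \cref{lem:old-upper} to identify them with the pulled-back co-row filtration factors so that \cref{prop:corow} gives $c_{c_a}(\widehat{\cC}_a\otimes\cM)$, treat the terminal factors as top Chern classes of the generated final prefix bundles, and finish with multiplicativity over direct sums and the count $\sum_a c_a+\sum_j jd_j=\ell+|\mu|$; your additional pullback and global-generation bookkeeping simply makes explicit what the paper leaves implicit.
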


\begin{proof}
By \cref{lem:old-upper}, the old upper roots appearing in
\eqref{eq:atom-chow} during row $B_a$ are the pullbacks of
$\cR_{a+1}^{\langle a\rangle},\ldots,
\cR_{a+c_a}^{\langle a\rangle}$.  Hence \cref{prop:corow} gives
\[
 \prod_{t=b_a}^{b_a+c_a-1}(r_{i_t+1}^{(t-1)}+h_z)
 =c_{c_a}(\widehat{\cC}_a\otimes\cM).
\]
The terminal factors are top Chern classes of globally generated final
prefix bundles.  Multiplicativity under direct sum proves
\eqref{eq:atom-top-chern}.  Finally,
\[
 \rk\cW_{u,\mu}^{\mathrm{at}}
 =\sum_ac_a+\sum_jjd_j=\ell+|\mu|.
\]
\end{proof}

Incidence and finite complementation then give the normalized atom packet.
\begin{proposition}\label{thm:atom-volume}
For every partition $\lambda$ and every $w\in S_n$,
\begin{equation}\label{eq:atom-rvc}
 \Nrm_{x,z}\bigl(\overline H_{w,\lambda}(x,z)\bigr)\in\RV.
\end{equation}
If $\operatorname{char}(\kk)=0$ and $\overline H_{w,\lambda}$ is nonzero, its
factorial normalization admits a smooth integral projective realization.
\end{proposition}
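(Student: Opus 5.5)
The proof will mirror that of \cref{thm:lascoux-RV}, with the co-row bundle of \cref{thm:atom-top-chern} in place of the row/flow bundle of \cref{thm:canonical-packet}. Set $M=\lambda_1$, $\ell=\length(w)$, and take $\mu$ and $u=w_0ww_0$ as in \eqref{eq:atom-mu-u}. We know that $\overline H_{w,\lambda}$ is supported in the box $(M,\ldots,M;\ell)$. By \cref{prop:atom-normal} we also know that $\omega\mathcal R_{M,\ell}\overline H_{w,\lambda}$ is supported there, since $\mathcal R_{M,\ell}$ and $\omega$ preserve the box, and that it equals $\overline\Pi_u^{(z)}x^\mu$.

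First, take the base $B=\prod_{i=1}^n\PP^{M}\times\PP^{\ell}$, so that $m_i=M$ and $m_z=\ell$, and build the canonical-row tower $\rho\colon X_u\to B$. The polynomial $\overline\Pi_u^{(z)}x^\mu(h,h_z)$ then lies in the Chow box and is its own reduced representative in $A^*(B)$. By \cref{thm:atom-top-chern} it equals $\rho_*\ctop(\cW^{\mathrm{at}}_{u,\mu})$, where $\cW^{\mathrm{at}}_{u,\mu}$ is a globally generated bundle on the smooth integral projective tower $X_u$. We then apply \cref{thm:incidence-dual} to obtain
\[
 \Dcomp_{(M,\ldots,M;\ell)}\Nrm_{x,z}\bigl(\overline\Pi_u^{(z)}x^\mu\bigr)\in\RV .
\]
If the pushforward is zero, then $\overline H_{w,\lambda}=0$ and the claim is trivial.

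Second, we convert this finite dual back to the source. By \eqref{eq:finite-reciprocal-normalization}, with cap $\mathbf m=(M,\ldots,M)$ and $r=\ell$, we have $\Dcomp_{\mathbf m,\ell}\Nrm_{x,z}(\mathcal R_{M,\ell}f)=\Nrm_{x,z}(f)$ for $f$ supported in the box. The reversal $\omega$ commutes with both $\Nrm$ and $\Dcomp$, because the box is symmetric in the $x$-variables. Therefore
\[
 \Dcomp\Nrm\bigl(\omega\mathcal R_{M,\ell}\overline H_{w,\lambda}\bigr)=\omega\Nrm\bigl(\overline H_{w,\lambda}\bigr),
\]
which is exactly the analogue of \eqref{eq:lascoux-finite-dual-identity}. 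Variable permutation preserves $\RV$ by \cref{prop:RV-closure}, and this gives \eqref{eq:atom-rvc}.

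Finally, for the smooth realization in characteristic zero, we use the last clause of \cref{prop:RV-closure}. Alternatively, since $X_u$ is smooth, the smoothness clause of \cref{thm:incidence-dual} applies directly.

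The only point needing care is the support bookkeeping. The $z$-cap must be exactly $\ell$ on both sides so that $\mathcal R_{M,\ell}$ is an involution. This holds because each $\overline{\mathscr K}_i^{(z)}$ raises the $z$-degree by at most one and does not enlarge the $x$-exponent bound (\cref{lem:finite-box}). The rest is formal.
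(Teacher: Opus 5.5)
Your proposal is correct and follows the paper's proof: Proposition~\ref{thm:atom-top-chern} with Proposition~\ref{thm:incidence-dual} realizes the normalized finite complement of $\overline\Pi_u^{(z)}x^\mu$ in the box $(M,\ldots,M;\ell)$. Proposition~\ref{prop:atom-normal} identifies that complement with $\omega\Nrm_{x,z}(\overline H_{w,\lambda})$, and variable reversal together with Proposition~\ref{prop:RV-closure} finishes the argument. Your extra support bookkeeping simply makes explicit the details the paper leaves implicit. For the smooth realization, cite the characteristic-zero clause of Proposition~\ref{thm:incidence-dual} (or Proposition~\ref{prop:RV-closure}), not the clause about the incidence variety being smooth, since the derivative extraction does not preserve that particular variety.
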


\begin{proof}
By \cref{thm:atom-top-chern,thm:incidence-dual}, the normalized
finite complement of $\overline\Pi_u^{(z)}x^\mu$ in the box
$(M,\ldots,M;\ell)$ lies in $\RV$.
By \cref{prop:atom-normal} this complement is
$\omega\Nrm_{x,z}(\overline H_{w,\lambda})$.
Variable reversal preserves $\RV$, and \cref{prop:RV-closure} gives
smooth realization in characteristic zero.
\end{proof}

Let $\alpha$ be a weak composition, put $\lambda=\alpha^+$, and set
$w=w_\alpha$, using the same minimal sorting permutation.  Define
\begin{equation}\label{eq:positive-atom}
 \overline L_\alpha^+(x;\beta)
 :=\bar\varpi_w^{(\beta)}x^\lambda
 =\sum_{k=0}^{r_\alpha^{\mathrm{at}}}
 \overline L_{\alpha,k}(x)\beta^k.
\end{equation}
Every $\overline L_{\alpha,k}$ is homogeneous of degree $|\lambda|+k$, and
\begin{equation}\label{eq:atom-packet-expansion}
 \overline H_{w,\lambda}(x,z)
 =z^{\length(w)}\overline L_\alpha^+(x;z^{-1})
 =\sum_k\overline L_{\alpha,k}(x)z^{\length(w)-k}.
\end{equation}
The ordinary Lascoux atom is
$\overline L_\alpha(x)=\overline L_\alpha^+(x;-1)$.
Its constant $\beta$-coefficient is the nonzero Demazure atom
$\mathcal A_\alpha$ \cite[\S2.2]{BSW20}. Thus the maximal excess
\[
 r_\alpha^{\mathrm{at}}:=\deg_\beta\overline L_\alpha^+(x;\beta)
   =\max\{k:\overline L_{\alpha,k}\ne0\}
\]
is well defined. Each atom operator is affine in $\beta$, so
$0\le r_\alpha^{\mathrm{at}}\le\length(w)$. Set
$r=r_\alpha^{\mathrm{at}}$, regard layers outside $0\le k\le r$ as zero,
and define the minimal homogeneous atom packet by
\begin{equation}\label{eq:minimal-atom-packet}
 \widetilde{\overline L}_\alpha(x,z)
 =\sum_{k=0}^r\overline L_{\alpha,k}(x)z^{r-k}.
\end{equation}

\subsection{Positive Grothendieck packets and Schubert polynomials}\label{sec:groth}

The Schubert and Grothendieck cases require the full rooted-flow mechanism.
The staircase terminal weight supplies exactly one root reservoir in every
nontrivial rank.

Recall the operators
\begin{equation}\label{eq:beta-groth-operator}
 \mathfrak d_i^{(\beta)}f
 =\partial_i\bigl((1+\beta x_{i+1})f\bigr),
\end{equation}
which satisfy the braid and distant-commutation relations
\cite{LS82,FK94}.  With
$\delta=(n-1,n-2,\ldots,1,0)$ and $v=w^{-1}w_0$, define
\begin{equation}\label{eq:positive-beta-groth}
 \Groth_w^+(x;\beta)
 :=\mathfrak d_v^{(\beta)}x^\delta.
\end{equation}
This is the positive-parameter Grothendieck polynomial.  It has an expansion
\begin{equation}\label{eq:positive-groth-expansion}
 \Groth_w^+(x;\beta)=\sum_{k=0}^{r_w}A_{w,k}(x)\beta^k,
 \qquad A_{w,k}=(-1)^kG_w^{(\length(w)+k)},
\end{equation}
with nonnegative integral coefficients. This is the coefficient sign
alternation in the pipe-dream expansion of Grothendieck polynomials
\cite{FK94}; compare the conventions in \cite[\S5.1]{MTY19}.

Put
\begin{equation}\label{eq:L-and-Fw}
 L=\length(v)=\length(w_0)-\length(w),
 \qquad
 F_w(x,z)=z^L\Groth_w^+(x;z^{-1}).
\end{equation}
Since
$z\,\mathfrak d_i^{(z^{-1})} =\partial_i\bigl((x_{i+1}+z)\,\cdot\,\bigr) =\mathscr D_i^{(z)}$,
we have the homogeneous operator formula
\begin{equation}\label{eq:Fw-D-word}
 F_w(x,z)=\mathscr D_v^{(z)}x^\delta.
\end{equation}
Every term of $F_w$ has total degree $\length(w_0)$.  Moreover,
\cref{lem:finite-box} shows that $F_w$ is supported in the finite box
$(n-1,\ldots,n-1;L)$.  Finally,
\begin{equation}\label{eq:Fw-layers}
 F_w(x,z)=\sum_{k=0}^{r_w}A_{w,k}(x)z^{L-k}.
\end{equation}
In particular, $r_w\le L$.

Define
\begin{equation}\label{eq:Gamma-groth}
 \Gamma_i^{(z)}f
 :=\partial_i\bigl(x_{i+1}(x_i+z)f\bigr)
 =\Gamma_i^{(1,z)}f.
\end{equation}

Let $M=n-1$ and put
\begin{equation}\label{eq:u-groth}
 u=w_0vw_0.
\end{equation}

The staircase is fixed by complemented reversal, so the terminal weight remains unchanged.
\begin{proposition}\label{prop:groth-reciprocal}
One has
\begin{equation}\label{eq:groth-reciprocal-normal-form}
 \omega\mathcal R_{M,L}F_w=\Gamma_u^{(z)}x^\delta.
\end{equation}
The operator on the right may be evaluated along the canonical-row word of
$u$.
\end{proposition}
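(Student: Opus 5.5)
The proof follows the pattern of \cref{prop:lascoux-reciprocal}, now using the $\mathscr D$ family of \cref{lem:reciprocal-transport}. Choose a reduced word $v=s_{i_1}\cdots s_{i_L}$. By \eqref{eq:Fw-D-word}, $F_w=\mathscr D_{i_1}^{(z)}\cdots\mathscr D_{i_L}^{(z)}x^\delta$. The source $x^\delta$ lies in the box $(M,\ldots,M;0)$ with $M=n-1$. \Cref{lem:finite-box} keeps every partial product inside $(M,\ldots,M;s)$ after $s$ steps, so the matching-cap factorization applies. Applying \eqref{eq:capped-word} with $T_i=\mathscr D_i^{(z)}$ and initial cap $r=0$ gives
\[
 \mathcal R_{M,L}F_w
 =(-\mathscr K_{i_1}^{(z)})\cdots(-\mathscr K_{i_L}^{(z)})\,
 \mathcal R_{M,0}x^\delta,
\]
using \eqref{eq:local-D-conjugation} at each step.

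Next conjugate by $\omega$. Since $\omega$ is an involution, inserting $\omega\omega$ between consecutive factors and applying \eqref{eq:local-D-reversal} turns each $\omega(-\mathscr K_{i_a}^{(z)})\omega$ into $\Gamma_{n-i_a}^{(1,z)}=\Gamma_{n-i_a}^{(z)}$. The resulting word $(n-i_1,\ldots,n-i_L)$ is a reduced word for $w_0vw_0=u$, because conjugation by $w_0$ sends $s_i$ to $s_{n-i}$ and preserves length.

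It remains to compute the terminal weight. We have $\mathcal R_{M,0}x^\delta=\prod_j x_j^{M-\delta_j}=x_1^0x_2^1\cdots x_n^{n-1}$, and reversal turns this back into $x^\delta$. Hence $\omega\mathcal R_{M,L}F_w=\Gamma_u^{(z)}x^\delta$ along the word just obtained. \Cref{lem:reciprocal-transport} states that the $\Gamma_i^{(1,z)}$ satisfy the braid and distant-commutation relations. By Matsumoto's theorem the product is therefore independent of the reduced expression, and in particular may be evaluated along $\mathbf r(u)$, which is reduced by \cref{lem:canonical-reduced}.

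The main obstacle is the local conjugation \eqref{eq:local-D-conjugation} together with its box bookkeeping, which is deferred to the appendix and assumed here. The delicate point is that the caps must increase in $z$ by exactly one per step while the $x$-cap stays $M$. This holds because \cref{lem:finite-box} gives the bound at every intermediate stage, so no monomial leaves the box where $\mathcal R$ is an involution.
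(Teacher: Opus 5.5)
Your proposal is correct and follows the paper's proof: apply \eqref{eq:capped-word} to the $\mathscr D$ family along a reduced word for $v$ with initial cap zero, then conjugate by $\omega$ via \eqref{eq:local-D-reversal} to obtain a reduced word for $u=w_0vw_0$. Then note $\omega\mathcal R_{M,0}x^\delta=x^\delta$ and use the braid relations of \cref{lem:reciprocal-transport} to pass to $\mathbf r(u)$; your extra bookkeeping (the box bound from \cref{lem:finite-box}, the explicit terminal-weight computation, and Matsumoto's theorem) only spells out steps the paper leaves implicit.
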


\begin{proof}
Apply \eqref{eq:capped-word} with $T_i=\mathscr D_i^{(z)}$ and initial $z$-cap zero to a reduced
word for $v$. By \eqref{eq:local-D-reversal}, conjugation by $\omega$
changes each factor $-\mathscr K_i^{(z)}$ to $\Gamma_{n-i}^{(z)}$.
The transformed word represents $u=w_0vw_0$, and
$\omega\mathcal R_{M,0}x^\delta=x^\delta$.
Use \cref{lem:reciprocal-transport} to choose its canonical-row word.
\end{proof}

The staircase reservoir criterion now supplies the missing positivity in the full Grothendieck construction.
\begin{proposition}\label{thm:groth-packet}
For every $w\in S_n$,
\begin{equation}\label{eq:groth-packet-RV}
 \Nrm_{x,z}(F_w)\in\RV.
\end{equation}
If $\operatorname{char}(\kk)=0$ and $F_w$ is nonzero, its factorial
normalization admits a smooth integral projective realization.
\end{proposition}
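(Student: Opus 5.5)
The plan mirrors the proof of \cref{thm:lascoux-RV}, now with all-kernel profile $\epsilon=\mathbf 1$ and staircase terminal weight.

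\emph{Step 1: realize the reciprocal packet.} Set $M=n-1$, $L=\length(v)$ with $v=w^{-1}w_0$, and $u=w_0vw_0$; then $\length(u)=L$. Apply \cref{thm:canonical-packet} to the canonical-row word $\mathbf r(u)$ with $\epsilon=\mathbf1$ and $\mu=\delta$. The balance hypothesis holds by \cref{thm:reservoir}: every $d_j=1$ for $j<n$ and $\varepsilon_j(u)\le1$. This yields a globally generated bundle $\cV_{u,\mathbf1,\delta}$ on $X_u$ with
\[
 \rho_*\ctop(\cV_{u,\mathbf1,\delta})=P_{u,\mathbf1,\delta}(h,h_z).
\]
Since $\Gamma_i^{(1,z)}=\Gamma_i^{(z)}$, the operator word defining $P_{u,\mathbf1,\delta}$ is $\Gamma_u^{(z)}x^\delta$. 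This word is evaluated along $\mathbf r(u)$, which \cref{prop:groth-reciprocal} permits.

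\emph{Step 2: choose the box.} Take the box $\mathbf m=(M,\ldots,M;L)$. By \cref{lem:finite-box}, $F_w$ is supported in this box. Hence its reversed reciprocal $\omega\mathcal R_{M,L}F_w=\Gamma_u^{(z)}x^\delta$ is also supported there, because $\mathcal R_{M,L}$ and $\omega$ preserve the box. Therefore \eqref{eq:canonical-finite-dual-RV} gives
\[
 \Dcomp_{\mathbf m}\Nrm_{x,z}\bigl(\Gamma_u^{(z)}x^\delta\bigr)\in\RV.
\]

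\emph{Step 3: undo the complement.} By \cref{prop:groth-reciprocal} and \eqref{eq:finite-reciprocal-normalization},
\[
 \Dcomp_{\mathbf m}\Nrm_{x,z}\bigl(\omega\mathcal R_{M,L}F_w\bigr)=\omega\Nrm_{x,z}(F_w),
\]
using that $\omega$ commutes with $\Nrm$ and with $\Dcomp$ for a symmetric cap. Variable permutation preserves $\RV$ by \cref{prop:RV-closure}, which gives \eqref{eq:groth-packet-RV}. If $F_w\ne0$, then $\Xi\ne0$, and the characteristic-zero smooth realization follows from \cref{thm:incidence-dual} or \cref{prop:RV-closure}.

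\emph{Main obstacle.} The step needing the most care is the identification in Step 1. One must confirm that the canonical-row bundles $\cG_a\otimes\cM$ together with the flow bundle $\cU$ reproduce exactly the integrand $\prod_t(q_t+h_z)s_t\prod_jc_j(\cF_j^{(\ell)})$. This means checking that the staircase roots sit at the final creation states $v_{j,\lambda_j}$, as the reservoir lemma assumes. Once that matches \cref{prop:operator-chow} with all $\epsilon_t=1$, the rest is bookkeeping.
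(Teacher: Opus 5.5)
Your proposal is correct and follows the paper's proof essentially step for step. You apply \cref{thm:canonical-packet} to $\mathbf r(u)$ with $\epsilon=\mathbf 1$ and $\mu=\delta$, with balance supplied by \cref{thm:reservoir}; you then invert the finite complement in the box $(M,\ldots,M;L)$ via \cref{prop:groth-reciprocal} and undo the reversal $\omega$. The identification you flag as the main obstacle is already discharged inside the proof of \cref{thm:canonical-packet}, so nothing further needs checking here: that proof places the terminal roots at the final states $v_{j,\lambda_j}$ and matches the row and flow bundles to the integrand of \cref{prop:operator-chow}.
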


\begin{proof}
Use \cref{thm:canonical-packet} for the canonical-row word of $u$,
with $\epsilon_t=1$ and $\mu=\delta$.
The terminal differences $d_j=1$ for $1\le j<n$ satisfy the exact
reservoir criterion of \cref{thm:reservoir}.
Thus the normalized finite complement of $\Gamma_u^{(z)}x^\delta$
in the box $(M,\ldots,M;L)$ lies in $\RV$. By
\cref{prop:groth-reciprocal},
\begin{equation}\label{eq:groth-finite-dual-identity}
 \Dcomp_{(M,\ldots,M;L)}
 \Nrm\bigl(\omega\mathcal R_{M,L}F_w\bigr)
 =\omega\Nrm(F_w).
\end{equation}
Variable reversal proves the assertion; \cref{prop:RV-closure}
gives its characteristic-zero refinement.
\end{proof}

Let
\begin{equation}\label{eq:rw-def}
 r_w=\max\{k:A_{w,k}\ne0\}.
\end{equation}
The minimal sign-corrected homogeneous Grothendieck polynomial is
\begin{equation}\label{eq:tilde-G-def}
 \widetilde G_w(x,z)
 =z^{r_w}\Groth_w^+(x;z^{-1})
 =\sum_{k=0}^{r_w}A_{w,k}(x)z^{r_w-k}.
\end{equation}
By \eqref{eq:L-and-Fw},
\begin{equation}\label{eq:F-vs-tildeG}
 F_w=z^{L-r_w}\widetilde G_w.
\end{equation}
The common extraction formulas below remove this padding in divided-power
coordinates.

Recall that the ordinary Grothendieck polynomial is
\begin{equation}\label{eq:ordinary-groth-sign-layers}
 G_w(x)=\sum_{k=0}^{r_w}(-1)^kA_{w,k}(x),
\end{equation}
where $A_{w,k}$ is homogeneous of degree $\length(w)+k$.

\begin{remark}\label{warn:direct-schubert}
One cannot delete the common factors $q_t+h_z$ first and then attempt to
organize the remaining $q_ts_t$ factors into unrooted Chern paths.  The
smallest obstruction occurs for $u=312$, whose canonical word is $(2,1)$:
the proposed grouping produces a non-root-started path from the initial
rank-$1$ state toward the initial rank-$3$ state, while the integrand contains
no initial factor $c_1(\cF_1^{(0)})$.  The valid proof of
\cref{cor:packet-layers} first treats the full positive Grothendieck packet,
where the staircase terminal Chern factors supply the root reservoirs, and
then extracts the highest $z$-layer.
\end{remark}

\subsection{Common layer extraction and ordinary supports}
\label{subsec:packet-extraction}
All three conversions are now complete. Their remaining consequences
use the same divided-power identities, with the following dictionary.
For the first two rows, $\lambda=\alpha^+$ and $w=w_\alpha$.
\begin{center}
\small
\begin{tabular}{@{}llll@{}}
\toprule
Packet $Q$ & Layer $P_k$ & Initial degree $d$ & Padding $L$\\
\midrule
$H_{w,\lambda}$ & $\Omega_{\alpha,k}$ & $|\lambda|$ & $\length(w)$\\
$\overline H_{w,\lambda}$ & $\overline L_{\alpha,k}$ & $|\lambda|$ & $\length(w)$\\
$F_w$ & $A_{w,k}$ & $\length(w)$ & $\length(w_0)-\length(w)$\\
\bottomrule
\end{tabular}
\end{center}
In each row,
\[
 Q(x,z)=\sum_{k=0}^r P_k(x)z^{L-k},\qquad
 P_k\in\QQ[x]_{d+k},\qquad
 r=\max\{k:P_k\ne0\}\le L.
\]
For arbitrary $w$ and $\lambda$, the same expansion applies to a nonzero
$H_{w,\lambda}$ or $\overline H_{w,\lambda}$ by defining
$P_k=[z^{L-k}]Q$, $d=|\lambda|$, and $L=\length(w)$.
Zero packets are already included in the realization statements and
require no maximal-excess convention. Put
$\widetilde P=\sum_k P_kz^{r-k}$.

The highest homogenizing terms recover the classical operators:
\begin{equation}\label{eq:classical-highest-layers}
 \begin{gathered}
 [z^{\length(w)}]H_{w,\lambda}=\pi_wx^\lambda,\qquad
 [z^{\length(w)}]\overline H_{w,\lambda}=(\pi-\mathrm{id})_wx^\lambda,\\
 [z^L]F_w=\Sch_w,\qquad L=\length(w_0)-\length(w).
 \end{gathered}
\end{equation}
Here $(\pi-\mathrm{id})_w$ is the reduced-word product of atom
operators. For $\lambda=\alpha^+$ and $w=w_\alpha$, the first two
right-hand sides are $\kappa_\alpha$ and $\mathcal A_\alpha$.

\begin{corollary}\label{cor:packet-layers}
For every nonzero packet above, its normalized minimal packet and all
normalized layers belong to $\RV$ for every field $\kk$. The same is
true for every larger homogenizing padding. In particular,
\begin{equation}\label{eq:classical-volume}
 \Nrm(\kappa_\alpha),\qquad \Nrm(\mathcal A_\alpha),\qquad
 \Nrm(\Sch_w)\quad\text{belong to }\RV.
\end{equation}
Every nonzero normalized polynomial so obtained is Lorentzian and has
support equal to the integral base set of a polymatroid algebraic over
$\kk$. Its full polarization has the bases of an algebraic matroid as
support. In characteristic zero it has a smooth integral projective
realization.
\end{corollary}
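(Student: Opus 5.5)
The plan is to deduce everything from the three full-packet realizations already proved, namely \cref{thm:lascoux-RV}, \cref{thm:atom-volume} and \cref{thm:groth-packet}. These give $\Nrm_{x,z}(Q)\in\RV$ over every field $\kk$ for $Q=H_{w,\lambda}$, $\overline H_{w,\lambda}$ and $F_w$. The remaining work consists of the closure operations of \cref{prop:RV-closure}, followed by the support and Lorentzian consequences. For a nonzero packet write $Q=\sum_{k=0}^rP_kz^{L-k}$ as in the dictionary table, with $r\le L$.

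First apply \eqref{eq:common-minimal-packet}. Taking $\partial_z^{L-r}$ of $\Nrm_{x,z}(Q)$ gives $\Nrm_{x,z}(\widetilde P)$, and ordinary derivatives preserve $\RV$, so the minimal packet lies in $\RV$. For each layer, \eqref{eq:common-layer-extraction} writes $\Nrm_x(P_k)$ as $\partial_z^{L-k}$ followed by setting $z=0$. Setting $z=0$ is a nonnegative linear substitution, so each layer lies in $\RV$. For a larger padding $L'>L$, note that $\Nrm_{x,z}(z^{L'-L}Q)$ is the common divided-power translate of $\Nrm_{x,z}(Q)$ by $z^{[L'-L]}$. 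Reversible translation in \cref{prop:RV-closure} then handles it.

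The classical cases \eqref{eq:classical-volume} are the top layers identified in \eqref{eq:classical-highest-layers}. These are $\kappa_\alpha$ and $\mathcal A_\alpha$ for $\lambda=\alpha^+$ and $w=w_\alpha$, and $\Sch_w$ as the $z^L$ layer of $F_w$. The coefficient of $z^{\length(w)}$ is the $k=0$ layer, which is nonzero, so it is covered by the layer statement. The same layer statement with $P_k=A_{w,k}$ gives every sign-corrected component, and the minimal packet $\widetilde G_w$ is the minimal-packet case of $F_w$. The source packets are meant to be extracted before any sign specialization. This is the point of \cref{warn:direct-schubert}, and we follow that order.

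For the consequences, suppose $f=\Nrm(F)\neq0$ lies in $\RV$. \Cref{thm:algebraic-polymatroid-consequence} gives $\Supp(F)=B(\rho)$ for a polymatroid $\rho$ algebraic over $\kk$, together with the algebraic-matroid statement after full polarization. The smooth characteristic-zero realization is the last sentence of \cref{prop:RV-closure}.

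The main obstacle is Lorentzianity, because $\RV$ is defined over an arbitrary $\kk$, whereas mixed Hodge--Riemann needs a complex model. I would handle this in two steps. First, the coefficients of $f$ are determined purely combinatorially by the operator words, so they do not depend on $\kk$. Second, the construction over $\kk=\CC$ lands in $\RVC$, and \cite[Theorem~4.6]{BH20} then makes it Lorentzian. Alternatively, one can check \cref{thm:BH-criterion} directly. $M$-convexity follows from the polymatroid base set, but the Hessian condition would still need the complex model. I therefore expect the independence of the construction from $\kk$, noted in the realization statements, to be the step requiring care.
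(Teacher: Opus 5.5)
Your proposal is correct and matches the paper's proof. Like the paper, you apply \eqref{eq:common-minimal-packet}, \eqref{eq:common-layer-extraction} and the translation closure of \cref{prop:RV-closure} to \cref{thm:lascoux-RV,thm:atom-volume,thm:groth-packet}, read off $\kappa_\alpha$, $\mathcal A_\alpha$ and $\Sch_w$ as the highest $z$-layers, and use \cref{thm:algebraic-polymatroid-consequence} for supports and polarization. The Lorentzian step you flag is not a real obstacle: the full-packet theorems hold for every field, so the same rational polynomial already lies in $\RVC$, and the paper simply takes that complex realization and applies \cite[Theorem~4.6]{BH20}.
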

\begin{proof}
Apply \eqref{eq:common-minimal-packet} and
\eqref{eq:common-layer-extraction} to
\cref{thm:lascoux-RV,thm:atom-volume,thm:groth-packet}.
The expansions above identify the ordinary layers; for arbitrary
$w,\lambda$, homogeneity and \cref{lem:finite-box} give the same
extraction. Taking the $z\pi_i$, $z(\pi_i-\mathrm{id})$, and
$z\partial_i$ terms proves \eqref{eq:classical-highest-layers} and
hence \eqref{eq:classical-volume}. Apply \cref{prop:RV-closure}
for padding and smooth realization, and
\cref{thm:algebraic-polymatroid-consequence} for supports and
polarization. Choosing the complex realization gives Lorentzianity
\cite[Theorem~4.6]{BH20}.
\end{proof}

Straight flagged Schur polynomials provide a further classical application.
For straight flagged Schur shapes, the empty shape gives $1$.
Otherwise, let $\lambda$ have $r$ nonempty rows and weakly increasing
flag $\mathbf b$ with $b_i\ge i$. Choose $n\ge\max\{r,b_r\}$,
pad $\lambda$ by zeros, and put $b_i=n$ for $i>r$.
The correspondence preceding \cite[Theorem~14.1]{PostnikovStanley09}
gives a $312$-avoiding $w\in S_n$ with
$s_\lambda(\mathbf b;x)=\pi_wx^\lambda$. Thus
\cref{cor:packet-layers} realizes its factorial normalization; see also
\cite{RS95}. This does not extend the assertion to arbitrary flagged
skew shapes or diagram-indexed modules.

\begin{remark}\label{rem:schubert-volume-covolume}
Huh records realizable covolume for $\Sch_w(\partial)$ over every
field \cite{Huh26}. Our \cref{cor:packet-layers} instead realizes
$\Nrm(\Sch_w)$ as volume. These are distinct assertions, not formal
renamings of one another.
\end{remark}

The same homogeneous support controls the ordinary signed polynomial.
\begin{corollary}\label{cor:ordinary-packet-support}
Let $P$ be an ordinary Lascoux polynomial, Lascoux atom, or Grothendieck
polynomial above, let $B=\Supp(P)$, and let $D$ be its maximal total
degree. Then
\[
 \widehat B_D=\{(\alpha,D-|\alpha|):\alpha\in B\}
\]
is $M$-convex. Hence $B$ is $M^\natural$-convex and
\begin{equation}\label{eq:ordinary-packet-support}
 B=\operatorname{conv}(B)\cap\ZZ^n,
\end{equation}
where $\operatorname{conv}(B)$ is an integral generalized polymatroid.
Each homogeneous layer is the full lattice-point set of its integral
constant-sum section. Moreover, every nonmaximal-degree $\alpha\in B$
has $\alpha+e_i\in B$ for some $i$, and
$\alpha\le\beta\le\gamma$ with integral $\beta$ and
$\alpha,\gamma\in B$ implies $\beta\in B$.
The positive-parameter polynomial $\sum_kP_k(x)\beta^k$ also has
saturated Newton polytope in $\ZZ^{n+1}$.
\end{corollary}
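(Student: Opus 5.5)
The plan is to reduce everything to \cref{lem:graded-packet}, applied to the minimal homogeneous packet $\widetilde P=\sum_{k=0}^rP_kz^{r-k}$ from the dictionary of \cref{subsec:packet-extraction}.

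First, I identify $P$ with a signed specialization of the packet layers. For the ordinary Lascoux polynomial, \eqref{eq:ordinary-lascoux} gives $\Omega_\alpha=\sum_k(-1)^k\Omega_{\alpha,k}$. The Lascoux atom is handled the same way via $\overline L_\alpha=\overline L^+_\alpha(x;-1)$. For the Grothendieck polynomial, \eqref{eq:ordinary-groth-sign-layers} gives the same form. In every case the coefficients are $a_k=(-1)^k\neq0$.

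By \cref{cor:packet-layers}, $\Nrm_{x,z}(\widetilde P)\in\RV$ and is nonzero, because the $k=0$ layer is a key polynomial, a Demazure atom, or a Schubert polynomial. \Cref{thm:algebraic-polymatroid-consequence} then identifies $\Supp(\widetilde P)$ with the base set of an integral polymatroid. Base sets of integral polymatroids are $M$-convex, so $\widetilde P$ satisfies the hypothesis of \cref{lem:graded-packet} with $L=r$.

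That lemma gives several conclusions at once:
\begin{itemize}
\item $S=\bigcup_k\Supp(P_k)$ is $M^\natural$-convex and saturated.
\item $\Supp(\sum_k a_kP_k)=S$, so $B=S$.
\item The positive-parameter polynomial has SNP in $\ZZ^{n+1}$.
\end{itemize}
Since $r$ is the largest index with $P_r\ne0$ and $\deg P_k=d+k$, the maximal total degree of $B$ is $D=d+r$. Hence $\Supp(\widetilde P)=\{(\alpha,D-|\alpha|)\}=\widehat B_D$, which is exactly the claimed $M$-convexity. Murota's equivalence then gives the generalized-polymatroid description of $\operatorname{conv}(B)$.

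It remains to check the finer claims.

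\emph{Layers.} Each homogeneous layer $\Supp(P_k)$ is the intersection of the $M$-convex set $\widehat B_D$ with the hyperplane where the slack coordinate equals $r-k$. A constant-coordinate slice of an $M$-convex set is again $M$-convex, being a face of the base polytope. Therefore each layer is the full lattice-point set of its section. Alternatively, \cref{cor:packet-layers} applied to $P_k$ directly gives the same conclusion.

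\emph{Augmentation.} Take $\alpha\in B$ with $|\alpha|<D$, and pick any $\gamma\in B$ with $|\gamma|=D$. In $\widehat B_D$ the slack coordinate of $\hat\alpha$ exceeds that of $\hat\gamma$. The exchange axiom at that coordinate produces an $i\le n$ with $\hat\alpha-e_{\rm slack}+e_i\in\widehat B_D$, that is, $\alpha+e_i\in B$.

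\emph{Interval convexity.} This is the step I expect to need the most care. Suppose $\alpha\le\beta\le\gamma$ with $\alpha,\gamma\in B$. I would argue through the generalized-polymatroid inequality description of $\operatorname{conv}(B)$, as a set of the form
\[
\{x:\ p(S)\le x(S)\le b(S)\text{ for all }S\}.
\]
The upper inequalities $\beta(S)\le b(S)$ are inherited from $\beta\le\gamma$. The lower inequalities $\beta(S)\ge p(S)$ are inherited from $\alpha\le\beta$, since both bounds are monotone in each coordinate. Hence $\beta\in\operatorname{conv}(B)\cap\ZZ^n=B$ by saturation.
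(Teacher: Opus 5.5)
Your argument is essentially the paper's proof. Both apply \cref{lem:graded-packet} to the minimal packet and use the fact that layers of distinct $x$-degree cannot cancel. Both get augmentation by exchange against a maximal-degree lift, and interval convexity from the paramodular inequality description of $\operatorname{conv}(B)$.

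One justification is wrong, although the claim it supports is true. For $0<k<r$, the hyperplane where the slack coordinate equals $r-k$ passes through the relative interior of $\operatorname{conv}(\widehat B_D)$, so the slice is not a face. The three parts of the layer claim should instead be justified as follows:
\begin{itemize}
\item The slice's $M$-convexity follows directly from the exchange axiom: the exchange index can never be the slack coordinate, since both points share that value.
\item The lattice-point assertion follows from the saturation of $B$.
\item Integrality of the section $\operatorname{conv}(B)\cap\{|x|=d+k\}$ needs the fact the paper cites from \cite{Murota03}, that constant-sum sections of integral generalized polymatroids are integral base polytopes.
\end{itemize}
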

\begin{proof}
The normalized minimal packet has $M$-convex support by
\cref{cor:packet-layers}. Its support is $\widehat B_D$, because the
layer determines total $x$-degree and distinct layers cannot cancel in
$\sum_k(-1)^kP_k$. Apply \cref{lem:graded-packet} for saturation and
the parameter assertion. Constant-sum sections of integral generalized
polymatroids are integral base polytopes \cite{Murota03}.
Exchange in the slack coordinate against a maximal-degree lift gives
augmentation. Finally, in a presentation
$\operatorname{conv}(B)=\{x:p(S)\le x(S)\le b(S)\ (S\subseteq[n])\}$,
coordinatewise betweenness gives
$p(S)\le\alpha(S)\le\beta(S)\le\gamma(S)\le b(S)$ for every $S$;
saturation then gives $\beta\in B$.
\end{proof}
Together with the Demazure-atom case of \cref{cor:packet-layers}, this
proves \cite[Conjectures~3.14 and~5.5--5.7]{MTY19}. For Grothendieck
polynomials, the saturation, augmentation, interval, and polytope
assertions give \cite[Conjectures~1.1--1.4]{MSSD25}.
The full packet also compares coefficients in different excess layers;
these inequalities are recorded in \cref{subsec:hodge-consequences}.

\begin{proof}[Proof of \cref{thm:main}]
The full-packet realizations are
\cref{thm:lascoux-RV,thm:atom-volume,thm:groth-packet}.
Their layer, algebraicity, smoothness, and Lorentzian consequences are
\cref{cor:packet-layers}; their ordinary support consequences are
\cref{cor:ordinary-packet-support}.
\end{proof}

\subsection{The example \texorpdfstring{$w=2143$}{w=2143}}
\label{app:worked-example}
For $w=2143\in S_4$, let $\delta=(3,2,1,0)$. Then
$v=w^{-1}w_0=u=w_0vw_0=3412$ has inverse-Lehmer word
$\mathbf r(u)=(2,3\mid1,2)$. Over any field, this example connects
reciprocity, the tower, flow, and incidence.

With rightmost operators acting first,
\begin{align}
 F_{2143}
 &=\mathscr D_2^{(z)}\mathscr D_3^{(z)}
   \mathscr D_1^{(z)}\mathscr D_2^{(z)}x^\delta\notag\\
 &=x_1z^2\bigl[x_1x_2x_3+z(x_1x_2+x_1x_3+x_2x_3)
             +z^2(x_1+x_2+x_3)\bigr].
 \label{eq:worked-source-packet}
\end{align}
For $\mathcal R_{3,4}f=(x_1x_2x_3x_4)^3z^4f(x^{-1},z^{-1})$,
\cref{app:operator-conjugation} gives
\begin{align}
 P:=\omega\mathcal R_{3,4}F_{2143}
 &=\Gamma_2^{(1,z)}\Gamma_3^{(1,z)}
   \Gamma_1^{(1,z)}\Gamma_2^{(1,z)}x^\delta\notag\\
 &=x_1^3x_2^2x_3^2x_4
 \bigl[x_2x_3+x_2x_4+x_3x_4+z(x_2+x_3+x_4)+z^2\bigr].
 \label{eq:worked-reciprocal-packet}
\end{align}
Their degrees are six and ten. Finite complementation gives
\begin{equation}\label{eq:worked-finite-complement}
 \Dcomp_{(3,3,3,3;4)}\Nrm_{x,z}(P)
 =\omega\Nrm_{x,z}(F_{2143}).
\end{equation}

The same calculation can be followed on the quotient-flag tower.
Use $B=(\PP^3)^4\times\PP^4$ and the four tower modifications of
colours $(2,3,1,2)$. Thus $\rho:X_u\to B$ has
$\dim B=16$ and $\dim X_u=20$. The complete rows, not their
individual quotient lines, give generated bundles satisfying
\begin{align}
 (q_1+h_z)(q_2+h_z)&=c_2(\cG_2\otimes\cM),\notag\\
 (q_3+h_z)(q_4+h_z)&=c_2(\cG_1\otimes\cM).
 \label{eq:worked-row-bundles}
\end{align}
For $\epsilon=(1,1,1,1)$ and $\mu=\delta$, the terminal differences
are $(1,1,1,0)$. The creation states of
\cref{ex:creation-state-3412} give exact sequences
$0\to \cS_t\to \cF_{w_t}\to \cF_{v_t}\to0$, where
$\cF_{v_{j,t}}=\widehat{\cF}_{j,t}$ and
\[
 \begin{aligned}
 (w_1,v_1)&=(v_{3,0},v_{2,1}), & (w_2,v_2)&=(v_{4,0},v_{3,2}),\\
 (w_3,v_3)&=(v_{2,1},v_{1,3}), & (w_4,v_4)&=(v_{3,2},v_{2,4}).
 \end{aligned}
\]
Only $v_{3,0},v_{3,2},v_{4,0}$ have nonzero divergence, each equal to
one. Hence $\cU=\widehat{\cF}_{3,0}\oplus\widehat{\cF}_{3,2}
\oplus\widehat{\cF}_{4,0}$ has rank ten and
\begin{equation}\label{eq:worked-flow-identity}
 s_1s_2s_3s_4\,
 c_1(\widehat{\cF}_{1,3})c_2(\widehat{\cF}_{2,4})c_3(\widehat{\cF}_{3,2})
 =c_{10}(\cU).
\end{equation}
The generated bundle
$\cV=(\cG_2\otimes\cM)\oplus(\cG_1\otimes\cM)\oplus\cU$ has rank fourteen.
The four rank-two pushforwards yield
\begin{equation}\label{eq:worked-topchern-push}
 \rho_*c_{14}(\cV)=P(h_1,h_2,h_3,h_4,h_z),
 \qquad 14-4=10=\deg P.
\end{equation}

It remains to extract the six-dimensional volume polynomial from the incidence construction.
Choose a generating presentation $W\otimes\cO_{X_u}\twoheadrightarrow \cV$
with positive-rank kernel $\cK$, adding a trivial summand to $W$ if needed.
Put $M_W=\dim\PP(W^\vee)$, $p:Z=\PP_{X_u}(\cK^\vee)\to X_u$,
$\widetilde\rho=\rho p$, and $H=c_1(\cO_Z(1))$.
Then $Z$ is integral, $\dim Z=M_W+6$, and
\cref{thm:incidence-dual} gives
\begin{align}
 &\left.\partial_y^{M_W}\frac1{(\dim Z)!}
 \int_Z\left(\sum_{i=1}^4x_i\widetilde\rho^*h_i
             +z\widetilde\rho^*h_z+yH\right)^{\dim Z}
 \right|_{y=0}\notag\\
 &\hspace{20mm}=\Dcomp_{(3,3,3,3;4)}\Nrm_{x,z}(P)
 =\omega\Nrm_{x,z}(F_{2143}).
 \label{eq:worked-incidence-minor}
\end{align}
The divisors are semiample, so derivative, specialization, and variable
reversal closure give $\Nrm_{x,z}(F_{2143})\in\RV$ in degree six.
Its three layers are
\begin{align*}
 [z^4]F_{2143}&=x_1(x_1+x_2+x_3)=\Sch_{2143},\\
 [z^3]F_{2143}&=x_1(x_1x_2+x_1x_3+x_2x_3)=A_{2143,1},\\
 [z^2]F_{2143}&=x_1^2x_2x_3=A_{2143,2}.
\end{align*}
Divided-power differentiation extracts their factorial normalizations
without additional factorials.

\part{Chern moments and Hodge--Riemann relations}\label{part:hodge}

\section{Chern moment algebras and Hodge--Riemann relations}\label{reorg:hodge}
The volume constructions record scalar intersection numbers. We now
retain their coefficient algebra and the Chern classes themselves.
The intrinsic moment quotient requires only Poincar\'e duality;
positivity enters through an inverse-Chern projective-bundle
presentation. We separate these algebraic and positive inputs before
constructing the geometric certificate.

\subsection{Lefschetz descent}\label{sec:moment-algebras}

All algebras here are over $\RR$, graded by codimension, with ordinary
commutative multiplication. A Poincar\'e-duality algebra of formal
dimension $d$ is a finite-dimensional graded algebra
$A=\bigoplus_{j=0}^dA^j$, with $A^0=\RR$, a degree isomorphism
$\deg_A:A^d\to\RR$, and perfect pairings
$(a,b)\mapsto\deg_A(ab)$ between complementary degrees. Extend degree
by zero elsewhere.

\begin{definition}\label{hd-def:package}
An open nonempty convex cone $\mathcal K_A\subseteq A^1$ is a
\emph{Lefschetz cone} if, for $L\in\mathcal K_A$ and $0\le j\le d/2$,
$L^{d-2j}:A^j\to A^{d-j}$ is an isomorphism and
$(-1)^j\deg_A(abL^{d-2j})$ is positive definite on
$\ker(L^{d-2j+1}:A^j\to A^{d-j+1})$.
We call this the K\"ahler package. Classes in
$\overline{\mathcal K_A}$ are \emph{boundary polarizations}; this
terminology does not assert geometric nefness without a realization.
\end{definition}

We use the classical boundary descent lemma in the formulation
of \cite[\S4.3, Lemma~4.6]{AHL26}. The iterated statement follows
by successive quotients.
\begin{proposition}\label{hd-thm:descent}
Let $(A,\deg_A,\mathcal K_A)$ have the K\"ahler package of formal
dimension $d$. For $0\ne\gamma\in\overline{\mathcal K_A}$, the quotient
$A_\gamma=A/\Ann_A(\gamma)$, with degree
$\deg_\gamma([a])=\deg_A(\gamma a)$, has the induced K\"ahler package
and formal dimension $d-1$. More generally, for
$0\ne\eta=\gamma_1\cdots\gamma_p$ with
$\gamma_i\in\overline{\mathcal K_A}$,
\[
 A_\eta=A/\Ann_A(\eta),\qquad
 \deg_\eta([a])=\deg_A(\eta a)
\]
has formal dimension $d-p$ and the package for the image of
$\mathcal K_A$. For $p=0$, take $\eta=1$ and $A_\eta=A$.
\end{proposition}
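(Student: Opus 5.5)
The plan is to prove the single-step statement for $\gamma$ first and then induct on $p$. Poincaré duality of $A_\gamma$ is formal. If $[a]\in A_\gamma^j$ pairs to zero with every $[b]\in A_\gamma^{d-1-j}$, then $\deg_A(\gamma ab)=0$ for all $b$. Perfectness of the pairing on $A$ then gives $\gamma a=0$, so $[a]=0$. Hence $\deg_\gamma$ is well defined and perfect in formal dimension $d-1$, and $A_\gamma^0=\RR$ because $\gamma\ne0$.

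For Hard Lefschetz and Hodge--Riemann in degree $j\le(d-1)/2$, fix $L\in\mathcal K_A$ and work with $\gamma_\varepsilon=\gamma+\varepsilon L\in\mathcal K_A$, using openness and convexity of the cone. The key input is the mixed form of the package on $A$. I plan to use it in the form that, for $L$ in $\mathcal K_A$, the form $(-1)^j\deg_A(ab\,\gamma L^{d-1-2j})$ is definite on the $L$-primitive part relative to the multiplication by $\gamma L^{d-1-2j}$. It is semidefinite in the limit, with kernel exactly $\Ann_A(\gamma)\cap A^j$ modulo the appropriate primitive decomposition. Here I would follow the argument of \cite[\S4.3, Lemma~4.6]{AHL26}. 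One first proves Hard Lefschetz for $L^{d-1-2j}$ on $A_\gamma$: if $\gamma L^{d-1-2j}a=0$, decompose $a$ by the Lefschetz decomposition of $L$ on $A$, and use the Hodge--Riemann form of $A$ paired against $\gamma$. By continuity from $\gamma_\varepsilon$, this form is semidefinite. Its isotropic vectors are then forced into $\Ann(\gamma)$. Hodge--Riemann on the $A_\gamma$-primitive part follows by the same limiting signature count: the signature is constant along $\gamma_\varepsilon\to\gamma$, and nondegeneracy on $A_\gamma$ prevents a jump.

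The main obstacle is this nondegeneracy-in-the-limit step. Semidefiniteness survives limits, but definiteness need not. The quotient by $\Ann_A(\gamma)$ is exactly what removes the null directions. I would verify carefully that the radical of the limiting form on $A^j$ equals $\Ann_A(\gamma)\cap A^j$, using perfectness of $A$ and Hard Lefschetz for $L$. Since the paper cites \cite{AHL26} for this lemma, I would invoke it rather than reprove it, checking only that their hypotheses are the ones in Definition~\ref{hd-def:package}.

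For the iterated statement, set $\eta_k=\gamma_1\cdots\gamma_k$ and argue by induction. The image of $\overline{\mathcal K_A}$ in $A_{\eta_{k}}$ lies in the closure of the image of $\mathcal K_A$, which is the Lefschetz cone there. If $\eta_{k+1}\ne0$, then $[\gamma_{k+1}]\ne0$ in $A_{\eta_k}$. Moreover, $(A_{\eta_k})_{[\gamma_{k+1}]}=A/\Ann_A(\eta_{k+1})$ with degree $\deg_A(\eta_{k+1}\,\cdot)$, because $\eta_k\gamma_{k+1}a=0$ iff $[\gamma_{k+1}a]=0$ in $A_{\eta_k}$. Applying the single-step case $p$ times then gives formal dimension $d-p$ and the package for the image of $\mathcal K_A$.
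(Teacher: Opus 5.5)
Your proposal follows the paper's proof: the one-step case is taken from \cite[\S4.3, Lemma~4.6]{AHL26}, with duality and $A_\gamma^0=\RR$ checked directly, and the iteration uses the same identification $(A/\Ann_A\eta_k)/\Ann(\bar\gamma_{k+1})\simeq A/\Ann_A(\eta_{k+1})$, nonvanishing of the intermediate products, and preservation of boundary membership under projection. Your heuristic sketch of the one-step lemma relies on a mixed Hodge--Riemann statement that is not part of Definition~\ref{hd-def:package}, so the citation is what carries that step. You also leave implicit one detail the paper states: the image of $\mathcal K_A$ in $A_\gamma^1$ is open because $A^1\to A_\gamma^1$ is surjective.
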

\begin{proof}
The one-step assertion is \cite[\S4.3, Lemma~4.6, p.~26]{AHL26}, for
$M=A$ and $Q(a,b)=\deg_A(ab)$. Its pairing is
$\deg_A(\gamma ab)$; $\gamma\ne0$ gives degree-zero part $\RR$,
and the degree-one surjection makes the image cone open. Iterate using
$(A/\Ann_A\eta')/\Ann(\bar\gamma)\simeq A/\Ann_A(\eta'\gamma)$.
A nonzero final product makes every intermediate descent nonzero, and
projection preserves boundary membership. The empty product gives the
identity.
\end{proof}
Descent gives both the mixed signatures and the scalar degree polynomials.
\begin{corollary}\label{hd-cor:degree-polynomial}
Let $B$ have the K\"ahler package of formal dimension $n$.
Top products of boundary polarizations have nonnegative degree. For
$n\ge2$ and a product $\eta$ of $n-2$ boundary polarizations, the form
$(a,b)\mapsto\deg_B(\eta ab)$ on $B^1$ has at most one positive
eigenvalue. Consequently, for $D_1,\ldots,D_q\in\overline{\mathcal K_B}$,
\[
 V_D(x)=\frac1{n!}\deg_B\left(\Bigl(\sum_jx_jD_j\Bigr)^n\right)
\]
is Lorentzian, with zero allowed.
\end{corollary}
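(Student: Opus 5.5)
The plan is to reduce all three assertions to \cref{hd-thm:descent} applied to $B$, followed by the Hodge--Riemann relation in degree one or zero of the descended algebra.

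For nonnegativity of top products, let $\eta=\gamma_1\cdots\gamma_n$ with $\gamma_i\in\overline{\mathcal K_B}$. If $\gamma_1\cdots\gamma_{n-1}=0$ the degree is zero. Otherwise descend along these $n-1$ factors to a package of formal dimension one. In dimension one the Hodge--Riemann condition with $j=0$ says that $\deg(L)>0$ for every $L$ in the image cone. The image of $\gamma_n$ lies in the closure of that cone, because projection preserves boundary membership. Continuity of the degree then gives $\deg_B(\gamma_1\cdots\gamma_n)\ge0$.

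For the signature statement, let $\eta$ be a product of $n-2$ boundary polarizations. If $\eta=0$ the form vanishes and the claim holds. Otherwise descend to $B_\eta$, of formal dimension two. The form $\deg_B(\eta ab)$ on $B^1$ is the pullback, along the surjection $B^1\to B_\eta^1$, of the intersection form $\deg_\eta(\bar a\bar b)$. So it suffices to check that form. Fix $L$ in the image Kähler cone.
\begin{itemize}
\item Hard Lefschetz for $j=0$ gives $\deg_\eta(L^2)>0$.
\item Hodge--Riemann for $j=1$ makes the form negative definite on $\ker(L\colon B_\eta^1\to B_\eta^2)$, which is the $\deg_\eta$-orthogonal complement of $L$.
\end{itemize}
Hence the form on $B_\eta^1$ has exactly one positive eigenvalue, and its pullback has at most one.

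For Lorentzianity, first take every $D_j$ in the open cone $\mathcal K_B$. Then for $|\beta|=n-2$, the Hessian of $\partial^\beta V_D$ is the matrix $\bigl(\deg_B(D^\beta D_iD_k)\bigr)_{i,k}$. This is the Gram matrix of the form above restricted to the span of the $D_j$, with $\eta=D^\beta$, so it has at most one positive eigenvalue. Every coefficient is positive: descend along $D^\beta D_i$ (a nonzero product by Hard Lefschetz) and evaluate $\deg(L)>0$. The Hessians are nondegenerate of the right signature after a small generic perturbation, or one can instead invoke \cref{thm:BH-criterion}, since full support is trivially $M$-convex. Either way $V_D$ is Lorentzian. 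For general boundary polarizations, approximate $D_j$ by $D_j+\varepsilon L$ with $L\in\mathcal K_B$. The coefficients converge, and by \cref{def:lorentzian} the Lorentzian class is closed under coefficientwise limits, zero included. The cases $n\le1$ follow from nonnegativity alone.

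The main obstacle I anticipate is the bookkeeping when $\eta$ vanishes or descent degenerates. Passing through the open cone first and then taking limits is the cleanest way to avoid tracking which partial products are nonzero.
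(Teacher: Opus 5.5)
Your proposal is correct and follows essentially the same route as the paper. Both arguments use successive descent to formal dimension one or two, pull back the degree-one Hodge--Riemann signature along $B^1\to B_\eta^1$, and then apply \cref{thm:BH-criterion} to interior $D_j$ (positive coefficients, full support) followed by closedness as $D_j+\varepsilon L\to D_j$. The differences are minor: you descend directly along boundary factors for nonnegativity where the paper descends along interior ones and approximates, and the strict positivity of $\deg_\eta(L^2)$ and of the interior products comes from degree-zero Hodge--Riemann or from Hard Lefschetz applied successively in the descended algebras, not from Hard Lefschetz in $B$ alone, which only gives nonvanishing.
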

\begin{proof}
For interior factors, successive descent ends in a degree-zero polarized
algebra with positive degree of $1$; degree-zero Hodge--Riemann at
each step ensures nonvanishing. Approximation by
$\gamma+\varepsilon L$, $L\in\mathcal K_B$, gives boundary
nonnegativity. If $\eta\ne0$, descent to $B/\Ann_B(\eta)$ gives a
surface pairing of signature $(1,\dim B_\eta^1-1)$, and pullback
cannot increase the positive index; if $\eta=0$, the form is zero.
For $n\ge2$, replace each $D_j$ by $D_j+\varepsilon L$. The degree
polynomial then has positive coefficients and full support, and each
quadratic derivative Hessian restricts one of the preceding forms.
Apply \cite[Theorem~2.25]{BH20} and closedness as
$\varepsilon\downarrow0$. Degrees zero and one use nonnegativity.
\end{proof}

\subsection{The intrinsic Chern moment algebra}
Fix a Poincar\'e-duality algebra $A$ of formal dimension $d$ and arrays
$C_i=(C_{i,0},\ldots,C_{i,d})$ with $C_{i,a}\in A^a$ and
$C_{i,0}=1$. Set $C_{i,a}=0$ for $a>d$ and $P=A[z_1,\ldots,z_m]$,
$\deg z_i=1$. For homogeneous $a\in A^j$, define
\begin{equation}\label{hd-eq:moment}
 \Lambda_C(az^\alpha)=
 \begin{cases}
 \deg_A(a\prod_iC_{i,\alpha_i}),&j+|\alpha|=d,\\
 0,&j+|\alpha|\ne d,
 \end{cases}
\end{equation}
and extend linearly. Put
\begin{equation}\label{hd-eq:intrinsic}
 I_C=\{p\in P:\Lambda_C(pq)=0\text{ for every }q\in P\},\qquad
 R_C=P/I_C.
\end{equation}
This multiplication-annihilator is not merely $\ker\Lambda_C$ and need
not be radical.

Duality and marked uniqueness are formal consequences of the moment
functional, before any positivity assumption.
\begin{proposition}\label{hd-prop:intrinsic-PD}
The ideal $I_C$ is homogeneous and contains $P^{>d}$. The quotient
$R_C$ is a Poincar\'e-duality algebra of formal dimension $d$, with
degree $\deg_{R_C}[p]=\Lambda_C(p)$ and an injective natural map
$A\to R_C$.
If a graded duality algebra $R$ of formal dimension $d$ is generated
over $A$ by marked $w_i\in R^1$ and has moments
$\deg_R(aw^\alpha)=\deg_A(a\prod_iC_{i,\alpha_i})$, then there is a
unique graded, degree-preserving isomorphism $R_C\simeq R$ fixing
$A$ and sending $z_i\mapsto w_i$.
\end{proposition}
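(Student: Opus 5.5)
The plan is to argue entirely with the bilinear pairing $(p,q)\mapsto\Lambda_C(pq)$ on $P=A[z_1,\ldots,z_m]$, treating $R_C$ as the quotient of $P$ by the radical of this form, in the standard Macaulay-inverse-system manner.

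\emph{Homogeneity and truncation.} Since $\Lambda_C$ vanishes off total degree $d$, write $p=\sum_j p_j$ in homogeneous components. If $p\in I_C$, then for homogeneous $q$ of degree $k$ we get $\Lambda_C(pq)=\Lambda_C(p_{d-k}q)$. Hence each $p_j$ annihilates every homogeneous $q$, and so $I_C$ is homogeneous. If $\deg p>d$, then $pq$ has degree $>d$ for every $q$, so $P^{>d}\subseteq I_C$. In particular $R_C=\bigoplus_{j\le d}R_C^j$. Each $P^j$ is finite-dimensional, since $A$ is finite-dimensional and there are finitely many monomials of each degree, so $R_C$ is finite-dimensional.

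\emph{Duality.} The map $\deg_{R_C}[p]=\Lambda_C(p)$ is well defined, since $I_C\subseteq\ker\Lambda_C$ (take $q=1$). For complementary degrees $j+k=d$, suppose $[p]\in R_C^j$ pairs to zero with all of $R_C^k$. Then $\Lambda_C(pq)=0$ for all $q\in P^k$, and automatically for $q$ of any other degree, so $p\in I_C$. This shows the pairing is nondegenerate on the left, and by symmetry on both sides. Consequently $\dim R_C^j=\dim R_C^{d-j}$ and the pairings are perfect.

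It remains to check $R_C^0=\RR$ and that $\deg_{R_C}$ is an isomorphism on $R_C^d$. Perfectness gives $R_C^d\cong(R_C^0)^\vee$. The space $R_C^0$ is the image of $A^0=\RR$, and it is nonzero because $\Lambda_C(\omega)=\deg_A(\omega)\ne0$ for a nonzero $\omega\in A^d$. So $R_C^0=\RR$ and $R_C^d$ is one-dimensional with $\deg_{R_C}$ nonzero on it.

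\emph{Injectivity of $A\to R_C$.} Let $a\in A^j$ lie in $I_C$. Taking $q=b\in A^{d-j}$ gives $\deg_A(ab)=\Lambda_C(ab)=0$, using $C_{i,0}=1$. Poincaré duality of $A$ then forces $a=0$.

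\emph{Marked uniqueness.} Define the $A$-algebra map $\phi:P\to R$ by $z_i\mapsto w_i$. It is surjective by the generation hypothesis and graded. By the moment hypothesis and linearity, $\deg_R\phi(p)=\Lambda_C(p)$ for every $p$; in degrees other than $d$ both sides vanish. Hence
\[
\ker\phi\subseteq\{p:\deg_R(\phi(p)\phi(q))=0\ \forall q\}=I_C .
\]
Conversely, if $p\in I_C$, then $\deg_R(\phi(p)r)=0$ for all $r\in R$, because $\phi$ is surjective. Perfectness of the pairing in $R$ then gives $\phi(p)=0$. Thus $\ker\phi=I_C$, and $\phi$ induces a graded, degree-preserving isomorphism $R_C\simeq R$ fixing $A$ and sending $z_i\mapsto w_i$. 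Uniqueness holds because any such map is determined on the generators $A$ and the $z_i$.

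The only delicate point is bookkeeping. One must check that the homogeneous-component argument really shows annihilation by all $q$, not only by homogeneous ones; this is immediate by linearity. One must also note that $A$ is embedded in $P$ in degree zero in the $z$-variables. Beyond that, no step is a real obstacle.
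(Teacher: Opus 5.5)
Your proposal is correct and follows essentially the same route as the paper: homogeneity of $I_C$, perfect complementary pairings read off directly from the annihilator definition, one-dimensionality of degrees $0$ and $d$ via $\Lambda_C|_{A^d}=\deg_A\ne0$, injectivity of $A\to R_C$ from duality in $A$, and identification of the kernel of the surjection $P\to R$ with $I_C$ through the perfect pairing in $R$. The only difference is that you write out the multiplication-annihilator reconstruction that the paper cites as a known result, so your version is self-contained.
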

\begin{proof}
Homogeneity gives $P^{>d}\subseteq I_C$ and finite dimension.
A nonzero $[p]\in R_C^j$ has $\Lambda_C(pq)\ne0$ for some
$q\in P^{d-j}$, giving perfect complementary pairings. Since
$\Lambda_C|_{A^d}=\deg_A\ne0$, degrees zero and $d$ are one dimensional.
For $a\ne0$ in $A$, duality in $A$ supplies a nonzero pairing, proving
injectivity. For the final assertion the generator surjection
$P\to R$ has functional $\Lambda_C$; perfect pairings in $R$ identify
its kernel with $I_C$. Generators force uniqueness. This is
multiplication-annihilator reconstruction
\cite[Theorems~7.15--7.16]{HKM24}, with even regrading.
\end{proof}

Choose $s_i\ge1$ and $k_{i,j}\in A^j$ with
\begin{equation}\label{hd-eq:inverse}
 K_i(t)C_i(t)=1,\qquad
 K_i(t)=1+\sum_{j=1}^{s_i}k_{i,j}t^j,\qquad
 C_i(t)=\sum_{a=0}^dC_{i,a}t^a.
\end{equation}
The identity is in $A[t]$, equivalently through degree $d$ since
$A^{>d}=0$. Such finite inverses always exist algebraically after zero
padding; positivity is additional. Define
\begin{equation}\label{hd-eq:projective-ring}
 B=\frac{A[\xi_1,\ldots,\xi_m]}
 {(\xi_i^{s_i}+k_{i,1}\xi_i^{s_i-1}+\cdots+k_{i,s_i})_{i=1}^m},
 \qquad \eta=\prod_i\xi_i^{s_i-1}.
\end{equation}
The monic relations give the $A$-basis $\xi^\alpha$, $0\le\alpha_i<s_i$.
Let $\pi_*$ extract the coefficient of $\eta$ and set
$\deg_B=\deg_A\circ\pi_*$ in degree $D=d+\sum_i(s_i-1)$.

The inverse Chern series is exactly the recurrence for extraction of the top relative monomial.
\begin{lemma}\label{hd-lem:recurrence}
The algebra $B$ has Poincar\'e duality, and
\begin{equation}\label{hd-eq:pushforward}
 \pi_*\left(\prod_i\xi_i^{s_i-1+\alpha_i}\right)
 =\prod_iC_{i,\alpha_i}\qquad(\alpha_i\ge0),
\end{equation}
with no alternating signs or factorial factors.
\end{lemma}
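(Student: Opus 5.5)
The proof reduces to the one-variable case, since $B$ is a tensor product over $A$ of the algebras $B_i=A[\xi_i]/(f_i(\xi_i))$, where $f_i(\xi)=\xi^{s_i}+k_{i,1}\xi^{s_i-1}+\cdots+k_{i,s_i}$. The extraction map $\pi_*$ factors as the composite of the coefficient extractions of $\xi_i^{s_i-1}$. Indeed, the $A$-basis $\xi^\alpha$, $0\le\alpha_i<s_i$, is the product of the one-variable bases. Reducing a monomial $\prod_i\xi_i^{N_i}$ uses only the $i$th relation in the $i$th variable, and these reductions commute. So \eqref{hd-eq:pushforward} follows once we prove, for a single monic $f$ of degree $s$ with coefficients $k_j$, that the coefficient $\pi_*(\xi^{s-1+a})$ equals $C_a$ for all $a\ge0$.

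For the one-variable identity, set $e_N=\pi_*(\xi^N)$. Then $e_N=0$ for $N<s-1$ and $e_{s-1}=1$. The relation $\xi^{N}=-\sum_{j=1}^{s}k_j\xi^{N-j}$ for $N\ge s$ gives the linear recurrence
\[
 e_N+\sum_{j=1}^s k_je_{N-j}=0\qquad(N\ge s).
\]
Put $g(t)=\sum_{a\ge0}e_{s-1+a}t^a$. Together with the initial values, the recurrence says precisely that $K(t)g(t)=1$: the $t^0$ coefficient is $e_{s-1}=1$, and for $a\ge1$ the $t^a$ coefficient is $\sum_{j=0}^{\min(a,s)}k_je_{s-1+a-j}$, which vanishes by the recurrence, since the terms with $j>a$ would involve $e_N$ with $N<s-1$, which is zero. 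Because $K(t)C(t)=1$ by \eqref{hd-eq:inverse} and $K$ has constant term $1$ and is hence invertible in $A[[t]]$, we get $g=C$. This is the identity, with no signs or factorials, as claimed. This is the formal version of \eqref{eq:projective-bundle-push}.

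For Poincaré duality of $B$, note that $B$ is a free $A$-module with the stated basis. Define the pairing $\deg_B(xy)=\deg_A(\pi_*(xy))$. Grading the basis by total $\xi$-degree, I would show that the matrix of $\pi_*(\xi^\alpha\xi^\beta)$ is block unitriangular. If $\alpha_i+\beta_i<s_i-1$ for some $i$, then $\pi_*(\xi^\alpha\xi^\beta)=0$, because the $i$th factor is a basis element below the top. If $\alpha_i+\beta_i=s_i-1$ for all $i$, the value is $1$. Consequently, for $\beta$ complementary to $\alpha$ the product is $\prod_i C_{i,\alpha_i+\beta_i-s_i+1}$, which equals $1$ on the antidiagonal $\beta=\mathbf s-\mathbf 1-\alpha$ and otherwise, by the formula just proved, is a positive-degree element of $A$ when the exponents exceed the top. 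Ordering the basis by $|\alpha|$, the $A$-bilinear form $\pi_*(xy)$ is therefore unimodular over $A$, namely triangular with identity antidiagonal blocks.

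Composing with the perfect pairing of $A$ then gives perfect pairings on $B$ in complementary degrees $D=d+\sum_i(s_i-1)$. Explicitly, I would write $x=\sum_\alpha a_\alpha\xi^\alpha$ and take the minimal $|\alpha|$ with $a_\alpha\ne0$. I would then pair against $b\,\xi^{\mathbf s-\mathbf 1-\alpha}$ with $b\in A$ chosen by duality in $A$, and induct using the triangularity. The degree-zero and top-degree parts are one-dimensional because $B^0=A^0$ and $B^D=A^d\eta$.

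I expect the main obstacle to be the triangularity bookkeeping in the duality step: making precise the ordering in which the lower-order correction terms from the non-antidiagonal $C$'s vanish after $\deg_A$ pairing. The pushforward identity itself is routine.
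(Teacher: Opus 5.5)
Your derivation of \eqref{hd-eq:pushforward} is the paper's argument: the recurrence from the monic relation, $K(t)g(t)=1$, and independent extraction in the separate variables. Your duality strategy is also the paper's. The Gram matrix over $A$ is zero below the antidiagonal with ones on it, hence invertible over $A$, and one then composes with duality on $A$. The only difference is that you treat all variables at once rather than tensoring the one-variable matrices. That abstract step is correct.

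The explicit nondegeneracy check you propose, however, runs in the wrong direction and fails as written. With $C_{i,a}=0$ for $a<0$, one has $\pi_*(\xi^{\alpha'}\xi^{\mathbf s-\mathbf 1-\alpha})=\prod_iC_{i,\alpha'_i-\alpha_i}$, which vanishes unless $\alpha'\ge\alpha$ componentwise. Suppose you take $\alpha$ with $|\alpha|$ \emph{minimal} and pair $x$ against $b\,\xi^{\mathbf s-\mathbf 1-\alpha}$. The result is $\deg_A\bigl(b\,(a_\alpha+\sum_{\alpha'>\alpha}a_{\alpha'}\prod_iC_{i,\alpha'_i-\alpha_i})\bigr)$. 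Every correction term lies in $A^d$, the same degree as $a_\alpha b$, so no degree argument removes them, and they can cancel.

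Here is a concrete failure. Take $A=\RR[h]/(h^2)$ with $\deg_A h=1$, $C(t)=1+ht$, and $s=2$, so $B=A[\xi]/(\xi^2-h\xi)$. Let $x=h-\xi\in B^1$. Your recipe pairs $x$ with $\xi$, but $x\xi=h\xi-\xi^2=0$ in $B$.

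The fix is to choose $\alpha$ with $a_\alpha\ne0$ and $|\alpha|$ \emph{maximal}. Then no other $\alpha'$ in the support satisfies $\alpha'\ge\alpha$, so every cross term is already zero in $A$. The pairing reduces to $\deg_A(a_\alpha b)$, which duality in $A$ makes nonzero for a suitable $b\in A^{d-j+|\alpha|}$. In the example, $x\cdot h=-h\xi$, which has degree $-1$.

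No induction is needed. Nondegeneracy in every degree, together with symmetry of the pairing, gives $\dim B^j=\dim B^{D-j}$ and hence perfect pairings.
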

\begin{proof}
In one variable, $p_a=\pi_*(\xi^{s-1+a})$ satisfies $p_0=1$,
$p_a=0$ for $a<0$, and
$p_a+\sum_{j=1}^{\min(a,s)}k_jp_{a-j}=0$ for $a\ge1$, by the monic
relation. Thus $\sum_ap_at^a=K(t)^{-1}=C(t)$; independent extraction
proves the joint formula. In the basis $1,\xi,\ldots,\xi^{s-1}$, the $(a,b)$-entry of
the $\pi_*(uv)$ matrix is zero for $a+b<s-1$ and one for
$a+b=s-1$, hence the matrix is invertible after reversing columns. Tensoring these perfect
$A$-pairings and composing with duality on $A$ gives duality on $B$,
in complementary degrees $j,D-j$.
\end{proof}

This recurrence identifies the intrinsic algebra with a weighted quotient of every such presentation.
\begin{proposition}\label{hd-thm:quotient}
The assignment $z_i\mapsto\xi_i$ induces
\begin{equation}\label{hd-eq:quotient}
 R_C\simeq B/\Ann_B(\eta),\qquad
 \deg_{R_C}[b]=\deg_B(\eta b).
\end{equation}
This marked graded algebra and degree do not depend on the auxiliary
ranks or projective-bundle presentation.
\end{proposition}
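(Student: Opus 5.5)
The approach is to identify $B/\Ann_B(\eta)$ as a marked graded duality algebra, generated over $A$ by the images of $\xi_i$, whose moments are exactly those of $\Lambda_C$. The uniqueness part of \cref{hd-prop:intrinsic-PD} then supplies the isomorphism and its independence from the presentation.

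First I check that $B_\eta:=B/\Ann_B(\eta)$ with $\deg_\eta([b])=\deg_B(\eta b)$ is a Poincar\'e-duality algebra of formal dimension $D-\sum_i(s_i-1)=d$. This is purely formal. $B$ has Poincar\'e duality by \cref{hd-lem:recurrence}. The pairing $([a],[b])\mapsto\deg_B(\eta ab)$ is well defined on $B_\eta$, and it is nondegenerate by the definition of the annihilator: if $\deg_B(\eta ab)=0$ for all $b$, duality in $B$ forces $\eta a=0$. The pairing is also homogeneous of the right degree. The top degree $B_\eta^d$ is one-dimensional and $B_\eta^0=\RR$, because $\deg_B(\eta\cdot\eta')\ne0$ for suitable $\eta'$. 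This uses $\eta\ne0$, which holds since $\eta$ is an $A$-basis element. Generation over $A$ by the classes $[\xi_i]$ is immediate, since $B$ itself is generated over $A$ by the $\xi_i$.

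The key step is the moment computation. For $a\in A^j$ with $j+|\alpha|=d$,
\[
 \deg_\eta([a\xi^\alpha])=\deg_A\Bigl(a\,\pi_*\Bigl(\prod_i\xi_i^{s_i-1+\alpha_i}\Bigr)\Bigr)=\deg_A\Bigl(a\prod_iC_{i,\alpha_i}\Bigr).
\]
The first equality is the $A$-linearity of $\pi_*$, which holds because $\pi_*$ is coefficient extraction in the $A$-basis $\xi^\alpha$ and $A$-multiplication respects this basis. The second equality is \eqref{hd-eq:pushforward}. In other degrees both sides vanish. Hence $(B_\eta,[\xi_i])$ satisfies the hypotheses of \cref{hd-prop:intrinsic-PD}, which yields a unique graded degree-preserving isomorphism $R_C\simeq B_\eta$ fixing $A$ with $z_i\mapsto[\xi_i]$.

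Independence follows at once. Any two choices of ranks $s_i$ and inverse series $K_i$ give marked algebras isomorphic to $R_C$, so they are isomorphic to each other by a unique marked isomorphism, and $R_C$ depends only on the arrays $C_i$.

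The only delicate point I anticipate is checking that the target really is a duality algebra in the sense required by \cref{hd-prop:intrinsic-PD}. In particular, $A\to B_\eta$ must be compatible with the degree maps, which follows from $\pi_*(\eta)=1$. Alternatively, one can bypass \cref{hd-prop:intrinsic-PD} entirely. The surjection $P\to B_\eta$ has kernel $\{p:\deg_B(\eta\,p(\xi)\,q(\xi))=0\ \forall q\}=I_C$, by the moment identity and the fact that $P$ surjects onto $B$.
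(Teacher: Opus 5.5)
Your proposal is correct and takes essentially the paper's route. The paper proves the isomorphism by the direct kernel computation in your last sentence: \cref{hd-lem:recurrence} gives $\Lambda_C(p)=\deg_B(\eta\phi(p))$ for the surjection $\phi:P\to B$, and duality in $B$ then identifies $\ker(P\to B/\Ann_B\eta)$ with $I_C$; independence is deduced from the intrinsic definition of $R_C$.

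Your main line through the marked-uniqueness clause of \cref{hd-prop:intrinsic-PD} is the same argument repackaged, since that clause is itself proved by this kernel identification. On the direct route, your separate check that $B/\Ann_B(\eta)$ is a duality algebra is not needed, because the isomorphism transports that structure from $R_C$.
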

\begin{proof}
For the surjection $\phi:P\to B$, \cref{hd-lem:recurrence} gives
$\Lambda_C(p)=\deg_B(\eta\phi(p))$. Hence $p\in I_C$ exactly when
$\deg_B(\eta\phi(p)b)=0$ for all $b\in B$, equivalently
$\eta\phi(p)=0$ by duality. This proves the isomorphism and, by the
intrinsic definition, independence of the presentation.
\end{proof}

To use Lefschetz descent, we impose a positivity condition on the presentation rather than on the formal recurrence alone.
\begin{definition}\label{hd-def:positive-certificate}
A \emph{positive inverse-Chern certificate} is a presentation
\eqref{hd-eq:projective-ring} with the K\"ahler package on an open
convex cone $\mathcal K_B$, such that every $\xi_i$ lies in its
closure. Specified base directions used in degree polynomials must
also lie in this closure.
\end{definition}

The quotient presentation now transports the Hodge--Riemann relations to the Chern moments.
\begin{proposition}\label{hd-thm:moment-Hodge}
A positive inverse-Chern certificate gives $R_C$ the K\"ahler package
of formal dimension $d$, for the image of its cone, with the $z_i$ as
boundary polarizations.
\end{proposition}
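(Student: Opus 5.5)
The plan is to combine the quotient identification of \cref{hd-thm:quotient} with the descent statement \cref{hd-thm:descent}. Start with a positive inverse-Chern certificate: $B$ carries the K\"ahler package of formal dimension $D=d+\sum_i(s_i-1)$ on an open convex cone $\mathcal K_B$, and each $\xi_i$ lies in $\overline{\mathcal K_B}$. The element $\eta=\prod_i\xi_i^{s_i-1}$ is therefore a product of $p=\sum_i(s_i-1)$ boundary polarizations. This is exactly the input \cref{hd-thm:descent} requires.

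The first step is to check that $\eta\ne0$. By \cref{hd-lem:recurrence}, $\pi_*(\eta\cdot a)=a$ for $a\in A$. Taking $a\in A^d$ with $\deg_A(a)\ne0$ then gives $\deg_B(\eta a)\ne0$, so $\eta\ne0$.

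Next, \cref{hd-thm:descent} shows that $B_\eta=B/\Ann_B(\eta)$, with degree $\deg_B(\eta\,\cdot)$, has the K\"ahler package of formal dimension $D-p=d$ for the image of $\mathcal K_B$. \cref{hd-thm:quotient} identifies $B_\eta$ with $R_C$ as a marked graded algebra with the same degree map, sending $z_i\mapsto\xi_i$. Transporting the package along this isomorphism gives $R_C$ the K\"ahler package for the image cone.

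It remains to see that the $z_i$ are boundary polarizations. Each $\xi_i$ lies in $\overline{\mathcal K_B}$, and the projection $B^1\to B_\eta^1$ is linear and continuous. It therefore sends $\overline{\mathcal K_B}$ into the closure of the image cone, and the image of $\xi_i$ is $z_i$. The image cone is open because the degree-one projection is a surjective linear map; the proof of \cref{hd-thm:descent} already records this. It is nonempty and convex.

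I expect the main obstacle to be bookkeeping rather than mathematics. One has to confirm that the degree map produced by iterated descent, namely $\deg_B(\eta\,\cdot)$ on the quotient by $\Ann_B(\eta)$, agrees exactly with $\Lambda_C$ under the identification. One also has to confirm that the formal dimension is $d$, not $D$. Both checks reduce to the pushforward formula \eqref{hd-eq:pushforward}, which gives $\deg_B(\eta\,\phi(q))=\Lambda_C(q)$ for every $q\in P$.
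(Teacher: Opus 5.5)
Your proposal is correct and follows the same route as the paper: show $\eta\ne0$, descend along the $s_i-1$ copies of each boundary class $\xi_i$ by \cref{hd-thm:descent} to formal dimension $D-\sum_i(s_i-1)=d$, and then identify the descended algebra and its degree with $R_C$ by \cref{hd-thm:quotient}. The differences are minor: you prove $\eta\ne0$ from the pushforward formula, whereas the paper observes that $\eta$ is a basis monomial, and you spell out by continuity why the images of the $\xi_i$, namely the $z_i$, lie in the closure of the image cone.
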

\begin{proof}
The fibre-top basis monomial $\eta$ is nonzero. Descend by $s_i-1$
copies of each $\xi_i$ using \cref{hd-thm:descent}, then identify the
quotient and degree by \cref{hd-thm:quotient}. Its dimension is
$D-\sum_i(s_i-1)=d$.
\end{proof}

The resulting degree polynomial records joint Chern and divisor intersections.
\begin{corollary}\label{hd-cor:mixed-Chern}
If $C$ has a positive certificate and $H_1,\ldots,H_q\in A^1$ are
specified boundary directions, then
\begin{equation}\label{hd-eq:joint-polynomial}
 \sum_{|\alpha|+|\beta|=d}
 \deg_A\left(\prod_iC_{i,\alpha_i}\prod_jH_j^{\beta_j}\right)
 \frac{x^\alpha y^\beta}{\alpha!\,\beta!}
\end{equation}
is Lorentzian, or zero.
\end{corollary}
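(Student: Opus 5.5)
The plan is to pass to the certificate algebra $B$ and identify the polynomial \eqref{hd-eq:joint-polynomial} with a shifted divided-power volume polynomial of boundary polarizations on $B$. Then \cref{hd-cor:degree-polynomial} gives Lorentzianity, and ordinary differentiation removes the shift. This mirrors the proof of \cref{thm:total-chern-arbitrary-field}, with the geometric projective bundle replaced by the formal presentation \eqref{hd-eq:projective-ring}.

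First, I will fix a positive inverse-Chern certificate with ranks $s_i$. The $\xi_i$ lie in $\overline{\mathcal K_B}$ by definition. The images of the $H_j$ in $B$ are specified boundary directions, so they lie there too by \cref{hd-def:positive-certificate}. Put $b_i=s_i-1$ and $D=d+\sum_ib_i$, and consider
\[
 V(x,y)=\frac1{D!}\deg_B\Bigl(\bigl(\textstyle\sum_ix_i\xi_i+\sum_jy_jH_j\bigr)^D\Bigr).
\]
By \cref{hd-cor:degree-polynomial}, applied to $B$ of formal dimension $D$, the polynomial $V$ is Lorentzian, with zero allowed.

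Second, I will expand $V$ in divided powers. Its coefficient of $x^{[\gamma]}y^{[\beta]}$, with $|\gamma|+|\beta|=D$, is $\deg_A\pi_*(\xi^\gamma H^\beta)$. Since $H^\beta$ comes from $A$, the projection property $\pi_*(a\,b)=a\,\pi_*(b)$ for $a\in A$ moves it outside the extraction. This property holds because $\pi_*$ is $A$-linear coefficient extraction in the $A$-basis $\xi^\alpha$. By \cref{hd-lem:recurrence}, $\pi_*(\xi^\gamma)=\prod_iC_{i,\gamma_i-b_i}$, which vanishes unless $\gamma\ge b$. This needs checking for exponents with some $\gamma_i<b_i$: the corresponding monomials are basis elements other than $\eta$, or reduce to combinations of them, so their $\eta$-coefficient is zero. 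The recurrence covers these cases through $p_a=0$ for $a<0$, extracted independently in each variable. Hence
\[
 V=\sum_{|\alpha|+|\beta|=d}\deg_A\Bigl(\prod_iC_{i,\alpha_i}H^\beta\Bigr)x^{[\alpha+b]}y^{[\beta]}.
\]
Applying $\partial_x^b$ and using \eqref{eq:divided-derivative} yields exactly \eqref{hd-eq:joint-polynomial}.

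Third, I will use that coordinate derivatives of Lorentzian polynomials are Lorentzian \cite{BH20}, with zero allowed. This is the step I expect to need care, since \cref{prop:RV-closure} concerns $\RV$ rather than abstract Lorentzian classes. If I prefer to avoid citing closure under differentiation, an alternative is available. Write the Hessians of \eqref{hd-eq:joint-polynomial} directly as Hessians of $V$ of order $D-2$ that include $\partial_x^b$. Then check $M$-convexity of the support via the perturbation $\xi_i+\varepsilon L$ used in the proof of \cref{hd-cor:degree-polynomial}, and pass to the limit.
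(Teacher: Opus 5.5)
Your argument is correct, but you run the descent on the polynomial side, whereas the paper runs it on the algebra side.

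The paper's proof is one line. By \cref{hd-thm:moment-Hodge}, $R_C$ already carries the K\"ahler package in formal dimension $d$, with the $z_i$ as boundary polarizations. The specified $H_j$ are boundary polarizations there as well, because membership in the closure of $\mathcal K_B$ passes to the image cone. Moreover, $\deg_{R_C}(H^\beta z^\alpha)=\Lambda_C(H^\beta z^\alpha)$ by definition. So \eqref{hd-eq:joint-polynomial} is literally the degree polynomial of $(z_i,H_j)$ in $R_C$, and \cref{hd-cor:degree-polynomial} applies.

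You instead apply \cref{hd-cor:degree-polynomial} on the certificate algebra $B$ in formal dimension $D$, and then differentiate. The derivative $\partial_x^bV=\frac1{d!}\deg_B\bigl(\eta\,(\sum_ix_i\xi_i+\sum_jy_jH_j)^d\bigr)$ is the degree polynomial of $B/\Ann_B(\eta)$, and \cref{hd-lem:recurrence} computes its coefficients.

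The two routes buy different things. Yours avoids the intrinsic algebra and the identification \cref{hd-thm:quotient}, and it exactly parallels \cref{thm:total-chern-arbitrary-field}. The paper's is shorter given the preceding results. It also exhibits the polynomial as the degree polynomial of an algebra with the full Hodge package, which is what the later apolar completions use.

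The step you were worried about follows directly from \cref{def:lorentzian}. A coordinate derivative of a degree-$k$ polynomial with strictly positive coefficients again has strictly positive coefficients. Its derivatives of order $k-3$ are derivatives of order $k-2$ of the original polynomial. Differentiation also commutes with coefficientwise limits, and degrees at most one need only nonnegativity. So your fallback Hessian and $M$-convexity argument is unnecessary.
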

\begin{proof}
This is the degree polynomial of $z_i,H_j$ in $R_C$; apply
\cref{hd-cor:degree-polynomial}.
\end{proof}

Assume $C_{i,a}=0$ for $a>r_i$; put $R=\sum_i r_i$ and
$\theta=\prod_iC_{i,r_i}$.
At the top Chern index of every bundle, all relative directions disappear and only a weighted base algebra remains.
\begin{corollary}\label{hd-thm:terminal}
If $C$ has a positive certificate and $\theta\ne0$, then
$A/\Ann_A(\theta)$, with degree $[a]\mapsto\deg_A(\theta a)$,
has the K\"ahler package of formal dimension $d-R$. Its cone is
obtained by projecting the certificate cone, with the relative
generators acting as zero.
\end{corollary}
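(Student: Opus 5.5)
The plan is to descend the positive certificate $B$ all the way, by the full top product of each relative generator. We form $\eta'=\prod_i\xi_i^{s_i-1+r_i}$ in $B$. By \cref{hd-lem:recurrence} we have $\pi_*(\eta' a)=\theta a$ for $a\in A$, so $\deg_B(\eta' a)=\deg_A(\theta a)$. Since $\theta\ne0$, Poincar\'e duality in $A$ gives some $a$ with $\deg_A(\theta a)\ne0$, and hence $\eta'\ne0$. Each $\xi_i$ lies in $\overline{\mathcal K_B}$. Therefore \cref{hd-thm:descent}, applied with $p=\sum_i(s_i-1+r_i)$ factors, shows that $B_{\eta'}=B/\Ann_B(\eta')$ with degree $\deg_B(\eta'\,\cdot)$ has the K\"ahler package. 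Its formal dimension is $D-\sum_i(s_i-1)-R=d-R$, and the package holds for the image of $\mathcal K_B$.

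It remains to identify $B_{\eta'}$ with $A/\Ann_A(\theta)$ and to show that the $\xi_i$ act as zero. First, each $\xi_i$ becomes zero in $B_{\eta'}$. For every $b\in B$, $\deg_B(\eta'\xi_i b)$ is a sum of terms $\pi_*(\prod_j\xi_j^{s_j-1+\alpha_j})$ with $\alpha_i\ge r_i+1$. By the recurrence, each such term is a product containing $C_{i,\alpha_i}=0$. Duality in $B$ then gives $\eta'\xi_i=0$. Consequently $B_{\eta'}$ is generated by the image of $A$, because $B$ is generated over $A$ by the $\xi_i$. Next we compute the kernel of $A\to B_{\eta'}$. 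An element $a$ lies in it exactly when $\eta' a\,b=0$ for all $b$. Since $\eta'\xi_i=0$, it suffices to test $b\in A$, and the condition becomes $\deg_A(\theta ab)=0$ for all $b\in A$. This is the same as $a\in\Ann_A(\theta)$, again by duality in $A$. Hence $A/\Ann_A(\theta)\simeq B_{\eta'}$, and the degrees match by the displayed pushforward. The cone is the image of $\mathcal K_B$, which is the projection of the certificate cone in which the relative generators act as zero.

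The main obstacle is the bookkeeping in the vanishing step. We must check that $\pi_*$ of a monomial with some exponent below $s_j-1$ is zero, so that no stray terms survive, and that the multi-index extraction in \cref{hd-lem:recurrence} is valid for arbitrary exponents. Both facts follow from the one-variable recurrence with $p_a=0$ for $a<0$. There is also a point to note about the image cone: it must be open in $A_\theta^1$. The degree-one map from $B^1$ is surjective, and $B^1=A^1\oplus\bigoplus\RR\xi_i$ maps onto $A^1/\Ann$ because the $\xi_i$ map to zero. So openness is inherited, as in the proof of \cref{hd-thm:descent}.
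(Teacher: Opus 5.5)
Your argument is correct and is essentially the paper's proof. The paper starts from the package already established on $R_C\simeq B/\Ann_B(\eta)$ and descends it by $\zeta=\prod_i z_i^{r_i}$; it kills the $z_i$ by out-of-range moments and identifies the kernel of $A\to R_C/\Ann(\zeta)$ with $\Ann_A\theta$ by duality. You perform the same steps in a single descent of $B$ by $\eta'=\eta\prod_i\xi_i^{r_i}$, which is equivalent by the iterated-descent identity in \cref{hd-thm:descent}.

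Your worry about monomials with some exponent below $s_j-1$ is moot. Every monomial in $\eta'\xi_i b$ already has $\xi_j$-exponent at least $s_j-1+r_j$.
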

\begin{proof}
The out-of-range moments give $z_i^{r_i+1}=0$: each such
monomial pairs to zero with every element of $A[z]$. For
$\zeta=\prod_i z_i^{r_i}$, pairings against $a z^\beta$ vanish
when $\beta\ne0$ and equal $\deg_A(\theta a)$ when $\beta=0$.
Duality on $A$ therefore gives
$\zeta\ne0\Longleftrightarrow\theta\ne0$. Descend by the factors of
$\zeta$. All $z_i$ act as zero in $R_C/\Ann(\zeta)$, so $A$
surjects onto it. Its kernel consists of $a$ with
$\deg_A(\theta ab)=0$ for every $b\in A$, namely $\Ann_A\theta$.
The degree is as stated, and nonvanishing implies $R\le d$.
\end{proof}

\subsection{Geometric certificates and apolar reconstruction}\label{sec:geometric-lefschetz}

For a smooth integral complex projective $d$-fold $X$, use
$A_X^j=H^{2j}(X,\RR)\cap H^{j,j}(X,\CC)$, integration, and the
K\"ahler cone in $A_X^1$. Classical Lefschetz decomposition restricts
to these real diagonal Hodge classes, giving perfect pairings and the
K\"ahler package. We do not assume duality for the Chow ring of an
arbitrary smooth projective variety.

Generating sections provide a positive certificate for actual Chern arrays.
\begin{proposition}\label{hd-thm:geometric}
For globally generated bundles $\cE_1,\ldots,\cE_m$ on $X$, the moment
algebra $R_X(\cE_1,\ldots,\cE_m)$ of $C_{i,a}=c_a(\cE_i)$ has the
K\"ahler package for the image of
\[
 \left\{L+\sum_i t_i z_i:L\in\mathcal K_X,\ t_i>0\right\}.
\]
Its marked algebra and degree depend only on the Chern arrays.
If $r_i=\rk\cE_i$ and $\theta=\prod_i c_{r_i}(\cE_i)\ne0$, then
$A_X/\Ann_{A_X}(\theta)$, with degree
$[a]\mapsto\int_X\theta a$, has the package in formal dimension
$d-\sum_i r_i$ for the image of $\mathcal K_X$.
\end{proposition}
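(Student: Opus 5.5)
The plan is to build a positive inverse-Chern certificate in the sense of \cref{hd-def:positive-certificate} from the evaluation kernels. We then apply \cref{hd-thm:moment-Hodge} for the main assertion and \cref{hd-thm:terminal} for the terminal quotient. Intrinsicness is already contained in \cref{hd-prop:intrinsic-PD,hd-thm:quotient}, since $R_C$ is defined from the arrays $c_a(\cE_i)$ and $\deg_{A_X}$ alone.

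\textbf{Construction.} For each $i$ choose a generating sequence $0\to\cK_i\to W_i\otimes\cO_X\to\cE_i\to0$, padding with zero generators so that $s_i=\rk\cK_i\ge1$. Exactly as in \cref{thm:total-chern-arbitrary-field}, put $Y=\prod_{i,X}\PP_X(\cK_i^\vee)$, and let $\xi_i$ be the tautological classes. Then $Y$ is a smooth integral complex projective variety of dimension $D=d+\sum_i(s_i-1)$, and each $\xi_i$ is globally generated, hence nef.

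By the projective-bundle theorem (Leray--Hirsch on cohomology, compatible with Hodge type), $A_Y$ is the algebra $B$ of \eqref{hd-eq:projective-ring}. It is generated over $A_X$ with monic relations $\sum_j c_j(\cK_i^\vee)\xi_i^{s_i-j}=0$, so the relation coefficients are $k_{i,j}=c_j(\cK_i^\vee)$. The identity $c_{-t}(\cK_i^\vee)c_t(\cE_i)=1$ supplies \eqref{hd-eq:inverse} up to the sign convention in $t$. The step I expect to need the most care is matching these signs with the recurrence of \cref{hd-lem:recurrence}. The cleanest check is to compare pushforwards. By \eqref{eq:projective-bundle-push}, $p_*\xi_i^{s_i-1+a}=h_a(\cK_i^\vee)=c_a(\cE_i)$, which is exactly \eqref{hd-eq:pushforward}. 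Since the monic relations and this recurrence determine one another, the presentation agrees with the formal one. The integration map on $Y$ is then $\deg_{A_X}\circ\pi_*$, as required.

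\textbf{Positivity.} Since $Y$ is smooth projective, $A_Y$ has the K\"ahler package for its K\"ahler cone. The $\xi_i$ are nef, so they lie in the closure of that cone, and this gives the certificate. To identify the image cone, note that for $L$ ample on $X$ and $t_i>0$, the class $L+\sum_i t_i\xi_i$ is ample on $Y$. Indeed, $\xi_i$ is relatively ample, so $NL+\sum_i\xi_i$ is ample for $N\gg0$. To remove the dependence on $N$, I would argue as follows: the cone of such classes is open and convex, the package is open and closed along connected families of classes in the ample cone, and nef plus relatively ample plus pullback of Kähler is Kähler by the standard fibration criterion. Hence these classes form a subcone of $\mathcal K_Y$, and its image under descent is the displayed cone.

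\textbf{Conclusion.} Apply \cref{hd-thm:moment-Hodge}; by \cref{hd-thm:quotient}, $z_i$ maps to the image of $\xi_i$. For the weighted statement, $c_a(\cE_i)=0$ for $a>r_i$, so \cref{hd-thm:terminal} applies with $\theta=\prod_i c_{r_i}(\cE_i)$. The projected cone contains the image of $\mathcal K_X$, and the formal dimension is $d-\sum_i r_i$.
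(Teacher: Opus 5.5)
Your proposal is correct and follows essentially the same route as the paper. It uses the evaluation-kernel projective bundles $\PP_X(\cK_i^\vee)$, the pushforward identity $p_*\xi_i^{s_i-1+a}=c_a(\cE_i)$ to pin down the monic presentation with $K_iC_i=1$, and nef tautological classes together with pulled-back K\"ahler classes as the certificate, followed by the descent, quotient, intrinsicness and terminal results.

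Two small corrections are needed.

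First, on the quotient projectivization the relation is $\xi_i^{s_i}+c_1(\cK_i)\xi_i^{s_i-1}+\cdots+c_{s_i}(\cK_i)=0$. So $k_{i,j}=c_j(\cK_i)=(-1)^jc_j(\cK_i^\vee)$, not $c_j(\cK_i^\vee)$. This is exactly what your own pushforward check forces.

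Second, the continuity remark in your positivity step is unnecessary: Hodge--Riemann holds for every K\"ahler class on $Y$. All that is needed is that $p^*L+\sum_it_i\xi_i$ is K\"ahler. The paper checks this directly by representing each $\xi_i$ with a pulled-back Fubini--Study form, which is semipositive and positive on vertical tangent vectors.
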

\begin{proof}
Choose $W_i\otimes\cO_X\twoheadrightarrow \cE_i$ with locally free
kernel $\cK_i$, adding zero generators so $s_i=\rk \cK_i\ge2$.
Dualization makes $\cK_i^\vee$ generated. For the quotient projective
bundles, form
$p:Z=\prod_{i,X}\PP_X(\cK_i^\vee)\to X$ and
$\xi_i=c_1(\cO_i(1))$. This is smooth, integral, and projective.
The projective-bundle formula, compatible with Hodge types, gives
\begin{equation}\label{hd-eq:geometric-relation}
 A_Z=A_X[\xi_1,\ldots,\xi_m]/
 (\xi_i^{s_i}+c_1(\cK_i)\xi_i^{s_i-1}+\cdots+c_{s_i}(\cK_i))_i.
\end{equation}
Indeed $c_j(\cK_i^\vee)=(-1)^jc_j(\cK_i)$ cancels the alternating signs.
Furthermore,
\begin{equation}\label{hd-eq:geometric-series}
 c_t(\cK_i)c_t(\cE_i)=1,\qquad
 \sum_{a\ge0}h_a(\cK_i^\vee)t^a=c_{-t}(\cK_i^\vee)^{-1}=c_t(\cE_i),
\end{equation}
so $p_{i*}(\xi_i^{s_i-1+a})=c_a(\cE_i)$ without signs or factorials.

The evaluation quotient embeds each projective bundle in
$X\times\PP(W_i^\vee)$. A pulled-back Fubini--Study form represents
$\xi_i$, is semipositive, and is positive in its relative tangent
directions. Thus $p^*\omega+\sum_it_i\omega_i$, for
$[\omega]=L\in\mathcal K_X$ and $t_i>0$, is positive: the base detects
nonvertical vectors and a projective factor detects every nonzero
vertical vector. These classes form an open cone in $A_Z^1$ since
$s_i\ge2$ makes the relative directions independent. Its closure
contains the $\xi_i$ and base nef pullbacks. This is a positive
certificate; apply \cref{hd-thm:moment-Hodge}.
Intrinsicness follows from \cref{hd-prop:intrinsic-PD,hd-thm:quotient}.
The terminal assertion follows from \cref{hd-thm:terminal}; all
relative classes then act as zero, leaving the base cone.
\end{proof}
The arbitrary-field total-Chern realization is instead
\cref{thm:total-chern-arbitrary-field}; it does not use complex cohomology.

A scalar degree polynomial recovers the full duality algebra only if
its chosen directions generate that algebra.
\begin{proposition}\label{hd-thm:apolar}
Let $H$ be a graded Poincar\'e-duality algebra of formal dimension $D$,
generated by $H^1$, and choose $\ell_1,\ldots,\ell_N$ spanning $H^1$.
For $V_H(y)=\deg_H((\sum_i y_i\ell_i)^D)/D!$, there is a graded,
degree-preserving isomorphism
\[
 \RR[\partial_{y_1},\ldots,\partial_{y_N}]/\Ann(V_H)\simeq H,
 \qquad \partial_{y_i}\longmapsto\ell_i.
\]
\end{proposition}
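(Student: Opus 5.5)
We use the standard Macaulay inverse-system argument. Define $\phi:\RR[\partial_{y_1},\ldots,\partial_{y_N}]\to H$ by $\partial_{y_i}\mapsto\ell_i$. It is a graded algebra map, and it is surjective because $H$ is generated by $H^1$ and the $\ell_i$ span $H^1$. It then suffices to show that $\ker\phi=\Ann(V_H)$. Once this holds, $\phi$ descends to a graded isomorphism, and the degree is preserved because the top-degree evaluations agree.

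The key computation is a differentiation identity. For a homogeneous $p$ of degree $k\le D$,
\[
 p(\partial_y)V_H(y)=\frac1{(D-k)!}\deg_H\Bigl(\phi(p)\Bigl(\sum_iy_i\ell_i\Bigr)^{D-k}\Bigr).
\]
It suffices to check this on monomials $\partial_y^\alpha$ by linearity. Each application of $\partial_{y_i}$ to $\bigl(\sum y_j\ell_j\bigr)^m/m!$ produces $\ell_i\bigl(\sum y_j\ell_j\bigr)^{m-1}/(m-1)!$, because $H$ is commutative. For $k>D$ both sides vanish, since $p(\partial)V_H=0$ by degree and $\phi(p)\in H^{>D}=0$.

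The inclusion $\ker\phi\subseteq\Ann(V_H)$ is immediate from the identity. For the reverse inclusion, note that $\Ann(V_H)$ and $\ker\phi$ are homogeneous ideals, since $V_H$ is homogeneous; so we may take $p$ homogeneous of degree $k\le D$. Suppose $p(\partial)V_H=0$. The identity then says that the polynomial $\deg_H\bigl(\phi(p)(\sum y_i\ell_i)^{D-k}\bigr)$ vanishes identically in $y$. Polarizing, or equivalently extracting coefficients, shows that $\deg_H\bigl(\phi(p)\ell_{i_1}\cdots\ell_{i_{D-k}}\bigr)=0$ for all choices of indices.

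The one substantive step, and the place where generation by $H^1$ is used a second time, is that such products of $D-k$ of the $\ell_i$ span $H^{D-k}$. Hence $\phi(p)$ pairs to zero with all of $H^{D-k}$. Perfectness of the Poincaré pairing in complementary degrees then forces $\phi(p)=0$.

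Finally, $\phi$ descends to an isomorphism $\RR[\partial]/\Ann(V_H)\simeq H$. This map preserves degree: for $k=D$ the identity reads $p(\partial)V_H=\deg_H\phi(p)$, so the scalar socle evaluation on the apolar side matches $\deg_H$.
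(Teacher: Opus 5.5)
Your argument is correct and is essentially the paper's proof. The paper simply invokes the annihilator-reconstruction theorem \cite[Theorem~9.3]{HKM24} for the map $\RR^N\to H^1$, $e_i\mapsto\ell_i$, and notes that the $1/D!$ normalization gives $\partial^\alpha V_H=\deg_H(\ell^\alpha)$ for $|\alpha|=D$; you instead write out the Macaulay inverse-system proof of that cited result directly, via your differentiation identity, the spanning of $H^{D-k}$ by degree-$(D-k)$ monomials in the $\ell_i$, and perfectness of the pairing.
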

\begin{proof}
Apply \cite[Theorem~9.3]{HKM24} to $\RR^N\to H^1$, $e_i\mapsto\ell_i$;
compare \cite[Theorem~1.1]{Kaveh11}. The factor $1/D!$ leaves the
annihilator unchanged and gives
$\partial^\alpha V_H=\deg_H(\ell^\alpha)$ for $|\alpha|=D$,
so the degree functional is preserved.
\end{proof}

In particular, if $A_X$ is degree-one-generated and
$\theta=c_R(\cU)\ne0$ for a generated bundle, adjoining K\"ahler
directions spanning $(A_X/\Ann\theta)^1$ to any displayed nef
directions gives a scalar polynomial whose apolar algebra is
$A_X/\Ann\theta$. Likewise, K\"ahler classes spanning $A_X^1$,
together with the $z_i$, recover the full joint moment algebra from
its degree polynomial. Both assertions follow from
\cref{hd-thm:apolar,hd-thm:geometric}; the open image of the
K\"ahler cone supplies the spanning directions.
These completions do not imply higher Hodge--Riemann for smaller
specializations, as \cref{ex:apolar-obstruction} shows. The packet
sources and their ample integral completions are treated explicitly
in \cref{sec:source-completions}.

An example makes the distinction between a Chern direction and a divisor class explicit.
For $X=\PP^2$, $h=c_1(\cO(1))$, and $\cE=\cO(1)^{\oplus2}$,
$A_X=\RR[h]/(h^3)$, $\deg(h^2)=1$, and $c_t(\cE)=1+2ht+h^2t^2$ give
\begin{equation}\label{hd-eq:P2-example}
 R_X(\cE)=\RR[h,z]/(h^3,hz-2h^2,z^2-h^2).
\end{equation}
The displayed quotient has Hilbert vector $(1,2,1)$ and pairing
$\left(\begin{smallmatrix}1&2\\2&1\end{smallmatrix}\right)$ in degree
one, of determinant $-3$. Its perfect pairing and defining moments
identify it with $R_X(\cE)$. For $L=h+z$, the primitive class $h-z$
has square $-2$. The inverse series is $1-2ht+3h^2t^2$; thus the
relations $\xi^s-2h\xi^{s-1}+3h^2\xi^{s-2}=0$, $s\ge2$, recover
the same moments algebraically, without asserting positive evaluation
kernels of every rank $s$.

In contrast, $\xi^2+h\xi=0$ has inverse array $1,-h,h^2$, giving
$x^2/2-xy+y^2/2$. The negative coefficient rules out a certificate
with both base and relative directions boundary-polarized. Monicity
alone is not positivity.

\subsection{Chern-flow networks}\label{sec:Hodge-applications}
\label{subsec:flow-Hodge}
The geometric certificate applies to the generated envelope, even when
the individual kernel bundles are not generated. The same argument
works for actual integral relations in $K^0(X)$.
\begin{proposition}\label{ec-thm:relations}
Let $X$ be integral projective, let $\cF_v,\cK_e$ be actual bundles,
and suppose $T=(t_{ve})\in\ZZ^{V\times E}$ satisfies
$[\cK_e]=\sum_vt_{ve}[\cF_v]$ in $K^0(X)$.
For $a,b\in\NN^V$, $\kappa\in\NN^E$, and $b=a+T\kappa$, put
$\cU=\bigoplus_v\cF_v^{\oplus b_v}$. Then
\begin{equation}\label{ec-eq:Crelation}
 c_t(\cU)=\prod_vc_t(\cF_v)^{a_v}
                  \prod_ec_t(\cK_e)^{\kappa_e}.
\end{equation}
For $R=\rk\cU$, its top coefficient is
\begin{equation}\label{hd-eq:network-top}
 \Theta=\prod_vc_{\rk\cF_v}(\cF_v)^{a_v}
                 \prod_ec_{\rk\cK_e}(\cK_e)^{\kappa_e}.
\end{equation}
If $\cU$ is generated, it supplies an incidence and total-Chern
realization. If also $X$ is smooth complex projective, its moment
algebra has the K\"ahler package; when $\Theta\ne0$, so does
$A_X/\Ann_{A_X}(\Theta)$, with degree $[a]\mapsto\int_X\Theta a$
and formal dimension $\dim X-R$.
\end{proposition}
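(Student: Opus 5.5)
The plan follows the proof of \cref{thm:chern-flow} almost verbatim, replacing exact sequences by the given $K^0$ relations.

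\textbf{Total Chern class.} From $b=a+T\kappa$,
\[
 \sum_v b_v[\cF_v]=\sum_v a_v[\cF_v]+\sum_e\kappa_e\sum_v t_{ve}[\cF_v]
 =\sum_v a_v[\cF_v]+\sum_e\kappa_e[\cK_e]
\]
in $K^0(X)$. The left side is $[\cU]$; here $b\in\NN^V$ ensures that $\cU$ is an actual bundle. The total Chern class $c_t$ is multiplicative from $K^0(X)$ into the unit group $1+tA^*(X)[[t]]$, as in \cite[\S3.2]{Fulton98}, acting operationally on Chow homology. Applying it to this identity gives \eqref{ec-eq:Crelation}. No signs in $T$ matter, because only $a,\kappa\ge0$ appear on the right-hand side and negative entries of $T$ have been absorbed into $b$.

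\textbf{Top coefficient.} Taking ranks of the same $K^0$ identity gives
\[
 R=\sum_v a_v\rk\cF_v+\sum_e\kappa_e\rk\cK_e .
\]
Each factor on the right of \eqref{ec-eq:Crelation} is a polynomial in $t$ whose degree is at most the rank of the corresponding bundle. So the coefficient of $t^R$ can only arise by taking the top coefficient from every factor, which yields \eqref{hd-eq:network-top}. This is the same degree count as in \eqref{eq:flow-topchern}.

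\textbf{Realizations and Hodge consequences.} Suppose $\cU$ is generated. Then $\Theta=c_R(\cU)$, so \cref{thm:incidence-dual}, applied with any projective map $\rho$ to a product of projective spaces (for instance the structure map), supplies the incidence realization. \Cref{thm:total-chern-arbitrary-field}, applied to $\cU$ together with any auxiliary generated bundles, supplies the total-Chern realization.

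If moreover $X$ is smooth complex projective, \cref{hd-thm:geometric} gives the moment algebra of the generated bundle $\cU$, or of any generated family containing it, the K\"ahler package. Its terminal statement, with the single bundle $\cU$ of rank $R$ and $\theta=c_R(\cU)=\Theta\ne0$, gives the package for $A_X/\Ann_{A_X}(\Theta)$ in formal dimension $\dim X-R$ with the stated degree.

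\textbf{Main point to check.} The only real care needed is the identification $c_R(\cU)=\Theta$, not merely an equality up to lower-order terms. This follows from the rank bookkeeping above. Everything else is a direct invocation of earlier results.
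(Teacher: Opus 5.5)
Your proposal is correct and follows the paper's proof essentially step for step. You sum the $K^0$ relations using $b=a+T\kappa$, apply the multiplicative total Chern operation, and use rank additivity to identify the degree-$R$ coefficient with the product of top classes; you then invoke \cref{thm:incidence-dual,thm:total-chern-arbitrary-field,hd-thm:geometric}, including the terminal statement for the single bundle $\cU$, for the realizations and the Hodge consequences.
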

\begin{proof}
Summation gives
$[\cU]=\sum_va_v[\cF_v]+\sum_e\kappa_e[\cK_e]$.
Apply the total Chern operation as in \cref{thm:chern-flow}.
Equality of ranks makes degree $R$ the product of the top classes
of the actual bundles. The remaining assertions follow from
\cref{thm:incidence-dual,thm:total-chern-arbitrary-field,hd-thm:geometric}.
\end{proof}
For a graph, the edge sequences give
$[\cK_e]=[\cF_{t(e)}]-[\cF_{s(e)}]$ and $T=B_\Gamma$,
with incoming sign positive. Thus the proposition applies to every
generated Chern-flow envelope, independently of path decomposition.
Equalities of ranks alone do not establish these $K^0$ relations.

\begin{remark}
There is one common Chern parameter in
\eqref{ec-eq:Crelation}.  It cannot be replaced by independent
parameters on local factors without another positive certificate.
For the evaluation sequence
$0\to\cO_{\PP^1}(-1)\to\cO_{\PP^1}^{\oplus2}
\to\cO_{\PP^1}(1)\to0$, the common-parameter product is $1$.
With separate parameters it is
$(1+uh)(1-vh)=1+(u-v)h$, whose integral is $u-v$.
The graph supplies an exact positive envelope; it is not itself a Hodge
structure and does not make each local factor positive.
\end{remark}

\subsection{Canonical-row source completions}\label{sec:source-completions}
Work over $\CC$, with the real diagonal Hodge algebra $A_X$ and
K\"ahler cone of \cref{sec:geometric-lefschetz}. Adding spanning source
directions completes the coefficient polynomial without changing it
on the original variables. The polynomial depends on those directions;
the weighted algebra for fixed $(X,\cV)$ does not.

\begin{corollary}
\label{hd-cor:canonical-completion}
Let $\rho:X\to B=\prod_{i=1}^n\PP^{m_i}_{\CC}$ be a canonical-row
tower with generated packet bundle $\cV$ of rank $R$.
Assume $\Xi=\rho_*c_R(\cV)\ne0$ and put
$\theta=c_R(\cV)$, $D=\dim X-R$, and $D_i=\rho^*h_i$. Then
\[
 f(x)=\Dcomp_{\mathbf m}\Nrm(\Xi)
 =\frac1{D!}\int_X\theta\left(\sum_i x_iD_i\right)^D.
\]
There are ample integral classes $L_1,\ldots,L_q$ such that
\[
 \widetilde f(x,y)=\frac1{D!}\int_X\theta
       \left(\sum_i x_iD_i+\sum_jy_jL_j\right)^D
\]
satisfies $\widetilde f(x,0)=f(x)$, belongs to $\RVC$, and has
$A_{\widetilde f}\simeq A_X/\Ann_{A_X}(\theta)$ as degree algebras,
with $\partial_{x_i}\mapsto[D_i]$ and $\partial_{y_j}\mapsto[L_j]$.
Thus $\widetilde f$ is Lorentzian and its apolar algebra has the full
K\"ahler package. For $D=0$, no extra directions are needed.
\end{corollary}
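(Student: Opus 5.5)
The proof has three parts: the volume formula for $f$, the choice of ample completion directions, and the apolar identification.

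\emph{Volume formula.} I would rerun the argument of \cref{thm:incidence-dual} through the projection formula. For $|\alpha|=D$, \eqref{eq:incidence-coeff} gives
\[
\int_X\theta\,\rho^*(h^\alpha)=\int_B\Xi\,h^\alpha=\xi_{\mathbf m-\alpha}.
\]
Summing with divided-power coefficients gives $\frac1{D!}\int_X\theta(\sum_ix_iD_i)^D=\Dcomp_{\mathbf m}\Nrm(\Xi)$. This identity holds in $A_X$ as well, since cycle classes map compatibly to cohomology on the smooth complex tower $X$.

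\emph{Choice of $L_j$.} Since $\theta\ne0$, I apply \cref{hd-thm:geometric} with the single generated bundle $\cV$ (equivalently, \cref{hd-thm:terminal}). This gives $A_\theta:=A_X/\Ann_{A_X}(\theta)$ the K\"ahler package in formal dimension $D$, for the image of $\mathcal K_X$. That image is open in $A_\theta^1$, and ample rational classes are dense in $\mathcal K_X$. So I can choose ample integral $L_1,\ldots,L_q$ whose images span $A_\theta^1$, for example a basis of ample classes near a fixed K\"ahler class.

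Then $\widetilde f(x,0)=f$ is immediate. Membership in $\RVC$ follows by putting the $L_j$ into the incidence construction. On $Z=\PP_X(\cK^\vee)$ for a generating presentation of $\cV$, the classes $p^*D_i$, $p^*L_j$ and $H$ are semiample. Then \eqref{eq:incidence-volume-extraction}, with $p_*H^M=\theta$, gives
\[
\left.\partial_y^M V_Z\right|_{y=0}=\widetilde f,
\]
which lies in $\RVC$ by \cref{prop:RV-closure}. Lorentzianity follows from \cite[Theorem~4.6]{BH20}, or from \cref{hd-cor:degree-polynomial} applied in $A_\theta$.

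\emph{Apolar identification; main obstacle.} The step I expect to be hardest is applying \cref{hd-thm:apolar} to $H=A_\theta$ with spanning directions $[D_i],[L_j]$. This requires $A_\theta$ to be generated by its degree-one part. For a canonical-row tower I would argue that $A_X$ is the Chow ring, realized as an iterated projective-bundle ring over the base $\prod\PP^{m_i}$. The base is generated by the $h_i$, and each rank-two step $\PP(\cK_t)$ adjoins a single first Chern class $q_t$ satisfying a quadratic relation. So $A_X$ is generated by $A_X^1$, and hence so is its quotient $A_\theta$.

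With that, \cref{hd-thm:apolar} gives $\RR[\partial_x,\partial_y]/\Ann(\widetilde f)\simeq A_\theta$, with $\partial_{x_i}\mapsto[D_i]$ and $\partial_{y_j}\mapsto[L_j]$, and the degrees match by the $1/D!$ normalization. The full K\"ahler package then transfers from $A_\theta$. For $D=0$, $A_\theta=\RR$ and $f$ is the constant $\int_X\theta$, so no completion directions are needed.
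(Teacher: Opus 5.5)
Your proposal is correct and follows the paper's proof essentially step for step: the finite-box/projection-formula identity for $f$, ample integral directions whose images span $(A_X/\Ann_{A_X}(\theta))^1$, \cref{hd-thm:geometric} together with \cref{hd-thm:apolar} using divisor generation of $A_X$ from the projective-bundle formula, and the incidence bundle of $\cV$ for actual volume. The one difference is how you pick the $L_j$: you use density of rational ample classes, whereas the paper adds large multiples of a fixed ample integral class to integral spanning classes. Both choices need $A_X^1$ to be spanned by algebraic divisor classes, and since you prove that only in your last step, move that observation before the choice of $L_j$.
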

\begin{proof}
The projective-bundle formula makes $A_X$ divisor-generated with only
diagonal Hodge types. Finite-box pairing gives the formula for $f$
and detects $\theta\ne0$. Append ample integral classes to the
displayed divisors so that their combined images span
$(A_X/\Ann_{A_X}(\theta))^1$. To do so, add large multiples of a fixed
ample integral class to integral spanning classes and include the fixed
ample class.
Then \cref{hd-thm:geometric,hd-thm:apolar} identify the
apolar quotient and degree. The classes $D_i,L_j$ are boundary
polarizations, so \cref{hd-cor:degree-polynomial} gives Lorentzianity;
specialization gives $f$. For actual volume, retain these semiample
classes on the incidence projective bundle of $\cV$ and use
\eqref{eq:incidence-volume-extraction}, followed by derivative and
restriction closure, also in degree zero.
\end{proof}
Reciprocity identifies $f$, up to variable reversal, with the normalized
ordinary packet; no second normalization is applied. The apolar assertion
is for the completion, not every smaller specialization.

Differentiation in the homogenizing variable has an interpretation on the same source.
\begin{corollary}
\label{cor:canonical-Hodge-channels}
For $P=H_{w,\lambda}$, $\overline H_{w,\lambda}$, or $F_w$, use its
canonical source, with base divisors relabelled by reciprocal reversal:
\begin{equation}\label{eq:packet-source-Hodge}
 \begin{gathered}
 f(x,z)=\Nrm_{x,z}(P)
 =\frac1{D!}\int_X\theta\left(\sum_i x_iD_i+zH_z\right)^D,\\
 \theta=c_{\rk\cV}(\cV),\qquad D=\dim X-\rk\cV.
 \end{gathered}
\end{equation}
For $0\le q\le D$ with $\theta_q=\theta H_z^q\ne0$,
$\mathcal H_q=A_X/\Ann_{A_X}(\theta_q)$, with degree
$[a]\mapsto\int_X\theta_q a$, has the K\"ahler package in formal
dimension $D-q$ for the image of the K\"ahler cone of $X$.
Its displayed divisor polynomials are
\begin{align}
 \partial_z^q f(x,z)
 &=\frac1{(D-q)!}\int_X\theta_q
      \left(\sum_i x_iD_i+zH_z\right)^{D-q},
 \label{eq:packet-Hodge-derivative}\\
 \left.\partial_z^q f(x,z)\right|_{z=0}
 &=\frac1{(D-q)!}\int_X\theta_q
      \left(\sum_i x_iD_i\right)^{D-q}.
 \label{eq:packet-Hodge-layer}
\end{align}
A nonzero polynomial in either display ensures $\theta_q\ne0$.
Every nonzero layer and minimal packet therefore has a source-level
Hodge completion with ample integral extra directions and apolar algebra
$\mathcal H_q$.
\end{corollary}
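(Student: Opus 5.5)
The plan is to apply \cref{hd-cor:canonical-completion} and the terminal descent to the canonical source of each packet, after absorbing the homogenizing variable into the base.

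\textbf{Setting up the source.} For $H_{w,\lambda}$, $\overline H_{w,\lambda}$ and $F_w$, the sources are built in \cref{thm:lascoux-RV,thm:atom-volume,thm:groth-packet}. Each gives a canonical-row tower $\rho:X\to B=\prod_i\PP^{m_i}\times\PP^{m_z}$ with a generated bundle $\cV$: namely $\cV_{u,\epsilon,\mu}$, or $\cW^{\mathrm{at}}_{u,\mu}$ in the atom case. Put $D_i=\rho^*h_i$ and $H_z=\rho^*h_z$. Relabel the base divisors by the reversal $\omega$. Then \cref{thm:incidence-dual}, or more directly the finite-box pairing \eqref{eq:incidence-coeff}, together with the reciprocal identities \eqref{eq:lascoux-finite-dual-identity}, \cref{prop:atom-normal} and \eqref{eq:groth-finite-dual-identity}, gives
\[
f(x,z)=\Nrm_{x,z}(P)=\frac1{D!}\int_X\theta\Bigl(\sum_ix_iD_i+zH_z\Bigr)^D .
\]
This is exactly the displayed formula in \cref{hd-cor:canonical-completion}, with $h_z$ treated as one more base factor. $X$ is a smooth complex projective tower of projective bundles, so $A_X$ is divisor-generated with diagonal Hodge types.

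\textbf{The two displays.} Both follow from termwise differentiation of the $D$th power. Each $\partial_z$ brings down a factor $H_z$ and changes $1/D!$ into $1/(D-1)!$; after $q$ steps this gives \eqref{eq:packet-Hodge-derivative}. Setting $z=0$ gives \eqref{eq:packet-Hodge-layer}. If either polynomial is nonzero, then $\int_X\theta_q(\cdot)\ne0$, hence $\theta_q\ne0$.

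\textbf{Hodge structure of $\mathcal H_q$.} First apply \cref{hd-thm:geometric} to $\cV$. Since $\theta\ne0$, the quotient $A_X/\Ann(\theta)$ has the K\"ahler package in formal dimension $D$ for the image of $\mathcal K_X$. The class $H_z$ is nef, being a pullback of a hyperplane class, so its image is a boundary polarization. Then \cref{hd-thm:descent} with $\eta=H_z^q$ applies, since $\theta H_z^q\ne0$ means the image of $\eta$ is nonzero. It gives the package on
\[
(A_X/\Ann\theta)/\Ann(\bar H_z^q)\simeq A_X/\Ann_{A_X}(\theta H_z^q)=\mathcal H_q ,
\]
with the stated degree and formal dimension $D-q$.

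An alternative is to realize $\theta_q$ directly as a top Chern class, using $\cV\oplus\cM^{\oplus q}$. This bundle is generated and has top Chern class $\theta H_z^q$, so \cref{hd-thm:geometric} applies at once. I prefer this route, since it avoids checking descent compatibility.

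\textbf{Completion.} For minimal packets and layers, \cref{prop:RV-closure} shows they are $\partial_z^{L-r}f$ and $\partial_z^{L-k}f|_{z=0}$, so they fall under $q=L-r$ or $q=L-k$. Repeat the proof of \cref{hd-cor:canonical-completion} with $\theta$ replaced by $\theta_q$, equivalently with the bundle $\cV\oplus\cM^{\oplus q}$. Adjoin ample integral classes, formed as large multiples of a fixed ample class added to integral spanning classes, so that they span $\mathcal H_q^1$. Then \cref{hd-thm:apolar} identifies the apolar algebra with $\mathcal H_q$.

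\textbf{Main obstacle.} The hardest step is bookkeeping rather than a new idea. One must confirm that the reciprocal reversal really identifies the source integral with $\Nrm_{x,z}(P)$ in the original variables, with $h_z$ preserved. This requires that $\omega$ fixes $z$ and that the cap $m_z$ is the same cap used in \cref{thm:canonical-packet}.
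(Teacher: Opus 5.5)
Your proposal is correct and follows essentially the paper's own proof. The paper uses the same steps: finite-box pairing with reciprocity for the source formula (reversal fixes $H_z$), termwise differentiation for the two displays, the generated bundle $\cV\oplus\cM^{\oplus q}$ with top Chern class $\theta_q$ fed into \cref{hd-thm:geometric}, the ample spanning construction of \cref{hd-cor:canonical-completion} with \cref{hd-thm:apolar}, and \cref{prop:RV-closure} for layers and padding removal; your alternative route, descending from $A_X/\Ann_{A_X}(\theta)$ by the nef class $H_z$ via \cref{hd-thm:descent}, is also valid and yields the same quotient $\mathcal H_q$.
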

\begin{proof}
Reciprocity and finite-box pairing give \eqref{eq:packet-source-Hodge};
reversal leaves $H_z$ fixed. Differentiate its intersection expansion
to obtain both formulas and their factorials. If $\theta_q=0$, both
polynomials vanish. The pulled-back hyperplane line $\cL_z$ is generated,
and $\theta_q=c_{\mathrm{top}}(\cV\oplus\cL_z^{\oplus q})$.
Apply \cref{hd-thm:geometric} and the ample spanning construction
of \cref{hd-cor:canonical-completion}. Since $A_X$ is divisor-generated,
the resulting apolar algebra is $\mathcal H_q$; the same incidence
argument gives an actual complex volume realization.
Finally, \cref{prop:RV-closure} identifies layers and padding
removal with these derivatives and restrictions.
\end{proof}
For Lascoux and atom specializations, $q=\length(w)-k$ extracts layer
$k$, including the key or Demazure atom at $q=\length(w)$.
For Grothendieck, $q=L-k$ extracts $\Nrm(A_{w,k})$, with Schubert at
$q=L$. Padding removal uses $q=\length(w)-r_\alpha$,
$\length(w)-r_\alpha^{\rm at}$, or $L-r_w$, respectively, without
setting $z=0$.

This source construction also proves Lorentzianity through
\cref{hd-cor:degree-polynomial}; arbitrary-field realizability uses
incidence. Canonicity remains relative to the source and finite cap.

\subsection{Some remarks and obstructions}\label{sec:scope-obstructions}\label{sec:limits}
Volume realizations determine intersection numbers, not the cohomology
of a source or the higher Hodge structure of the polynomial's apolar
algebra. The source-level completions of
\cref{sec:moment-algebras,sec:geometric-lefschetz} require generation
hypotheses; the following examples explain these distinctions.

\subsubsection{Apolar duality and a middle-degree obstruction}
For $0\ne f\in\RR[x_1,\ldots,x_n]_d$, write
\[
 A_f=\RR[\partial_{x_1},\ldots,\partial_{x_n}]
       /\operatorname{Ann}(f),
 \qquad \deg[D]=Df\quad(\deg D=d),
\]
where the operators are ordinary partial derivatives.
When $f$ has rational coefficients, this is the scalar extension to
$\RR$ of its rational apolar algebra. It has Poincar\'e duality: a
nonzero $Df$ has a coefficient detected by a complementary monomial
derivative. For $L\in A_f^1$, $0\le j\le d/2$, and a basis $(e_a)$
of $A_f^j$, the matrix
$M_j(L)=(\deg(e_ae_bL^{d-2j}))_{a,b}$ represents the Lefschetz map
under Poincar\'e duality. Hard Lefschetz requires its nonsingularity;
Hodge--Riemann further requires $(-1)^jM_j(L)$ to be positive definite
on $P_L^j=\ker(L^{d-2j+1}:A_f^j\to A_f^{d-j+1})$.

\begin{example}\label{ex:apolar-obstruction}
Consider the realizable-volume polynomial
\[
 \begin{gathered}
 f(x,y)=\frac{x^4+8x^3y+12x^2y^2+8xy^3+y^4}{24},\\
 f=\sum_{i=0}^4c_i x^{[4-i]}y^{[i]},\qquad(c_i)=(1,2,2,2,1).
 \end{gathered}
\]
Indeed, for $\Delta=\operatorname{conv}(0,e_1,e_2,e_3,e_4)$, take
$P=2\Delta$ and
\[
 Q=\operatorname{conv}(0,4e_1,2e_2,2e_3,e_4).
\]
The simplex formula \cite[Lemma~36]{Huh12}, normalized by
$\operatorname{MV}(\Delta^4)=1$, gives mixed volumes
$(16,32,32,32,16)$. On the split projective toric variety of the
normal fan of $P+Q$, the corresponding generated Cartier divisors satisfy
\[
 f(x,y)=\frac1{16\cdot4!}\int_X(xD_P+yD_Q)^4\in\RV.
\]
This realization works over every field. Over $\RR$, put
$L=\partial_x+\partial_y$ and
$A=4\partial_x^2-7\partial_x\partial_y+4\partial_y^2$.
Then $(LA)f=0$, while the degree-two pairing is
\[
 \begin{gathered}
 C_2=\begin{pmatrix}1&2&2\\2&2&2\\2&2&1\end{pmatrix},\qquad\det C_2=2,\\
 \deg(A^2)=(4,-7,4)C_2(4,-7,4)^{\mathsf T}=-30.
 \end{gathered}
\]
Thus $[A]\ne0$ is primitive and violates the positive degree-two
Hodge--Riemann sign. Nevertheless, the quadratic derivative Hessians
have determinants $-2,0,-2$, consistent with Lorentzianity. The apolar
quotient is not the cohomology algebra of the realizing variety.

In fact, Hard Lefschetz holds for this $L$. The apolar Hilbert
vector is $(1,2,3,2,1)$, one has $\deg(L^4)=30$, and the
Lefschetz pairing on $A_f^1$ has matrix
\[
 \begin{pmatrix}7&8\\8&7\end{pmatrix},
 \qquad \det=-15.
\]
Thus the obstruction is specifically a failure of the
middle-degree Hodge--Riemann sign, not a failure of
Hard Lefschetz.
\end{example}

\section{Finite matroid certificates and tropical Chern inequalities}\label{reorg:tropical}
Finite generating witnesses replace geometric evaluation kernels in
this section. The comparison requires neither representability nor
exact sequences of tropical bundles.

\subsection{Local Chern data and projective-bundle relations}\label{sec:finite-matroid-certificates}

Let $N$ be a lattice, $M=N^\vee$, and $\Sigma$ a smooth complete
projective fan of dimension $d$. The algebra
$A=A^\bullet(X_\Sigma)_\RR$ is the ring of piecewise polynomials
modulo global linear characters, graded by codimension. Its degree and
cone $\mathcal K_A=\operatorname{Amp}(X_\Sigma)_\RR$ give the
K\"ahler package.

A rank-$r$ Kaveh--Manon bundle $\cE$ has a finite loopless rank-$r$
matroid $\mathsf M$ and an integral piecewise-linear map into its
Bergman fan, linear in an apartment on each cone. Use the
minimum-on-circuits convention. For coordinate functions $v_e$, an
adapted basis $B_\sigma$, and characters $u_{\sigma,b}\in M$,
\begin{align}
 v_b(x)&=\langle u_{\sigma,b},x\rangle
 &&(b\in B_\sigma,\ x\in\sigma),\label{km-eq:apartment-basis}\\
 v_e(x)&=\min_{b\in C_{B_\sigma}(e)\setminus\{e\}}v_b(x)
 &&(e\notin B_\sigma,\ x\in\sigma),\label{km-eq:apartment-circuit}\\
 c_t^T(\cE)|_\sigma&=\prod_{b\in B_\sigma}(1+u_{\sigma,b}t).
 &&\label{km-eq:chern-local}
\end{align}
Here $C_B(e)$ is the fundamental circuit and $T$ means equivariant
\cite[Proposition~3.7 and Definitions~4.1, 5.3]{KM24}.
Global generation means that on every maximal cone a basis can be
chosen with
\begin{equation}\label{km-eq:ray-witness}
 \langle u_{\sigma,b},v_\rho\rangle\le v_b(v_\rho)
 \quad(\rho\in\Sigma(1),\ b\in B_\sigma),
\end{equation}
where $v_\rho$ is the primitive ray generator
\cite[Lemma~6.2, Definition~6.4 and Theorem~6.5]{KM24}.
For a basis $J$ of a matroid $Q$ on $G$, fix the sign by
\begin{equation}\label{km-eq:min-cone}
 \mathcal N_J^{\min}(Q)=
 \{w\in\RR^G:\textstyle\sum_{g\in J}w_g\le\sum_{g\in J'}w_g
                \text{ for every basis }J'\}.
\end{equation}

We first compare the normal-cone and Chern-root conventions with those in \cite{LP26,BEST23}.
\begin{lemma}\label{km-lem:sign}
In Larson--Partida's relation convention \cite{LP26}, coefficient
classes on $\mathcal N_J^{\min}(Q)$ are represented by
$k_j(w)=e_j(w_g:g\in J)$. Pullback by
$w_g=\langle u_g,x\rangle$ gives coefficients of
$\prod_{g\in J}(1+u_gt)$; the positive relative class remains $+\xi$.
\end{lemma}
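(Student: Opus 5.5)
The plan is to unwind both sign conventions and check that they agree; nothing beyond definitions is needed. First I will recall Larson--Partida's setup \cite{LP26}. For a matroid $Q$ on $G$ with Bergman-type fan, a projective-bundle relation is imposed on the cone attached to a basis $J$. On that cone the coefficient classes are piecewise polynomials, and the relation reads
\[
 \xi^{s}+k_1\xi^{s-1}+\cdots+k_s=0.
\]
The task is to identify which cone ($\mathcal N_J^{\min}$ or its negative) carries which elementary symmetric functions. I will use the convention of \eqref{km-eq:min-cone}: $J$ minimizes $\sum_{g\in J}w_g$. I will then compare with the tautological relation on a realizable model.

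The first step is the realizable check. Take $Q$ represented by vectors and form the associated toric vector bundle. On a cone where $J$ is weight-minimizing, the corresponding sub-bundle splits as $\bigoplus_{g\in J}$ of characters of weight $w_g$. By \eqref{hd-eq:geometric-relation}, the relation then has coefficients $c_j$ of that split bundle, namely $e_j(w_g:g\in J)$, and carries no alternating sign. This matches the positive convention used in \cref{hd-lem:recurrence}, with $\xi$ the positive relative class. Since Larson--Partida's relation is defined combinatorially by the same piecewise formula on each cone, the identity $k_j=e_j(w_g)$ on $\mathcal N_J^{\min}(Q)$ extends from the realizable case to all matroids. The reason is that the formula is local to the cone and independent of representability.

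The second step is the pullback. Substituting $w_g=\langle u_g,x\rangle$ turns $e_j(w_g:g\in J)$ into the degree-$j$ coefficient of
\[
 \prod_{g\in J}(1+u_gt).
\]
This is a formal identity of elementary symmetric functions, and it matches \eqref{km-eq:chern-local}.

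The main obstacle is the orientation bookkeeping. Under the minimum-on-circuits convention, Bergman fans are sometimes written with the max-normal cone, which would replace $w$ by $-w$ and $\xi$ by $-\xi$. Those two sign changes would give $e_j(-w)=(-1)^je_j(w)$ and would flip the relative class. I will first check this on the rank-one case, where $\PP$ of a line bundle gives $\xi=-k_1=-w_g$. I will then fix the orientation so that $\xi$ restricts to a nef tautological class on each cone. That orientation forces the min cone and the $+e_j$ coefficients simultaneously.
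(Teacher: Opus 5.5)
Your target formula is right, but the step that carries all of the lemma's content is not proved. The lemma calibrates against Larson--Partida's convention. Its only real question is which cone carries the basis-$J$ relation, $\mathcal N_J^{\min}(Q)$ or $\mathcal N_J^{\max}(Q)=-\mathcal N_J^{\min}(Q)$, and with which sign of characters.

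Your realizable check just asserts that the subbundle has characters $w_g$ on the weight-minimizing cone. Your extension to nonrealizable $Q$ assumes that LP26 use ``the same piecewise formula.'' Together these presuppose the conclusion.

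The nefness normalization you propose cannot settle the orientation. The antipodal map $w\mapsto -w$ carries $\mathcal N_J^{\min}(Q)$ to $\mathcal N_J^{\max}(Q)$ and turns $e_j(w_g:g\in J)$ into $(-1)^je_j(w_g:g\in J)$. It also preserves convexity of piecewise-linear functions, and it does not touch $\xi$. Already in rank one, the convention (max cone, $k_1=-w_g$) gives $\xi=-k_1=\max_g w_g$. This is exactly as convex, hence as nef, as $\max_g(-w_g)$ from (min cone, $k_1=w_g$).

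So requiring $+\xi$ to be nef only couples the cone type to the coefficient sign. It does not single out the min cone with $+e_j$, contrary to your final sentence. The two sign changes you list are also not coupled: reflecting $w$ alone does not flip the positive class. In addition, nefness is a global convexity condition, not something checked ``on each cone.''

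The distinction matters downstream. \cref{km-prop:finite-certificate} places each $\sigma$ in $\mathcal N^{\min}_{J_\sigma}(P^*)$, and \eqref{km-eq:ordinary-inverse} needs the roots there to be $+u_g$.

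The missing ingredient is to read the orientation off LP26's own chart, as the paper does:
\begin{itemize}
\item For the standard coordinate torus action, the basis-$J$ chart has the Pl\"ucker ratios $p_{J'}/p_J$ as coordinates, with characters $e_{J'}-e_J$.
\item Its cone is therefore $\{w:\langle e_{J'}-e_J,w\rangle\ge0\}$, which is literally \eqref{km-eq:min-cone}.
\item The tautological subspace there is the coordinate subspace with characters $t_g$, $g\in J$, which gives $k_j=e_j(w_g:g\in J)$.
\end{itemize}
The paper cross-checks this against the conventions of BEST23: minimum normal cones, the minimum basis for the decreasing chamber, and roots $-t_g$ on the complement of a maximum basis, so that $\cQ_{Q^*}^\vee$ has roots $t_g$ on the minimum basis. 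It also notes that on walls the optimal bases differ by equal-weight exchanges, so the local elementary symmetric functions glue. Your proposal omits this well-definedness point as well.

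The relevant fan is the normal fan of the base polytope of $Q$, whose maximal cones are the $\mathcal N_J^{\min}(Q)$. It is not a Bergman fan, and the minimum-on-circuits convention for Kaveh--Manon's Bergman fans is a separate matter. Your pullback step is fine.
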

\begin{proof}
For the standard coordinate torus action
\cite[Introduction and Example~1.1]{LP26}, the basis-$J$ chart has
Pl\"ucker-ratio characters $e_{J'}-e_J$. Its inequalities are
$\langle e_{J'}-e_J,w\rangle\ge0$, exactly \eqref{km-eq:min-cone}.
The tautological subspace has characters $t_g$, $g\in J$, yielding
$e_j(t_g:g\in J)$; the combinatorial definition uses the same
representatives without representability.

This agrees equivariantly with \cite{BEST23}: Remark~3.3 uses minimum
normal cones; Lemma~3.5 sends a decreasing braid chamber to the minimum
basis $B_{\bar\tau}(Q)$ under the unmodified orbit map; Definition~3.9
assigns roots $-t_g$ to the complement of a maximum basis. Thus
$\cQ_{Q^*}^\vee$ has roots $t_g$ on the minimum basis, as in \cite{LP26}.
On walls, optimal bases are connected by equal-weight exchanges in
their common face, so their elementary symmetric functions agree.
\end{proof}

In these conventions, the multiple-projective-bundle theorem of Larson and Partida \cite[Theorem~3.9]{LP26} reads as follows.
\begin{proposition}\label{km-thm:LP}
Let $Q_i$ be matroids on $G_i$ of ranks $s_i\ge2$, with characters $u_{i,g}$
whose lattice maps send each cone of $\Sigma$ into a minimum-basis
cone. For the ordinary coefficient classes of \cref{km-lem:sign},
\[
 B=A[\xi_1,\ldots,\xi_m]/
 (\xi_i^{s_i}+k_{i,1}\xi_i^{s_i-1}+\cdots+k_{i,s_i})_i
\]
has the K\"ahler package for the interior of the cone generated by
$\mathcal K_A$ and the $\xi_i$, with normalization
$\deg_B(a\prod_i\xi_i^{s_i-1})=\deg_A(a)$ for $a\in A^d$.
No matroid representability is required.
\end{proposition}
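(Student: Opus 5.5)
The plan is to obtain the statement as a direct translation of \cite[Theorem~3.9]{LP26}, so the work lies in matching hypotheses and conventions rather than in any new positivity argument. First, I would recall the setting of that theorem: a smooth projective toric variety $X_\Sigma$, finitely many matroids $Q_i$ on ground sets $G_i$, and for each $i$ a piecewise-linear assignment of characters $u_{i,g}$ that, on every cone of $\Sigma$, lands in the normal cone of a basis. To each such datum Larson--Partida attach a ``tautological'' relation of degree $s_i=\rk Q_i$ over $A=A^\bullet(X_\Sigma)_\RR$. Their conclusion is that the resulting multiple projective-bundle ring satisfies Poincar\'e duality, Hard Lefschetz and Hodge--Riemann. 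The positive cone is generated by ample base classes together with the relative classes, and the whole construction is combinatorial, so representability never enters.

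Second, I would use \cref{km-lem:sign} to identify their relation coefficients with ours. On each maximal cone $\sigma$ the lattice map sends $\sigma$ into $\mathcal N^{\min}_{J_\sigma}(Q_i)$. There, by \cref{km-lem:sign}, the coefficient classes are represented by the piecewise polynomials $k_{i,j}|_\sigma=e_j(\langle u_{i,g},\cdot\rangle:g\in J_\sigma)$. On walls, different optimal bases give equal elementary symmetric functions, so these glue to well-defined piecewise polynomials, and passing modulo global linear characters gives classes $k_{i,j}\in A^j$. The same lemma says that the positive relative class is $+\xi_i$ in this sign convention. The relation $\xi_i^{s_i}+k_{i,1}\xi_i^{s_i-1}+\cdots+k_{i,s_i}$ is then exactly the Larson--Partida relation, so the displayed $B$ is their algebra. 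The hypothesis $s_i\ge2$ is the one under which each $\xi_i$ gives an independent degree-one direction, so the interior of the cone spanned by $\mathcal K_A$ and the $\xi_i$ is a nonempty open cone.

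Third, I would check the normalization. The monic relations give the $A$-basis $\xi^\alpha$ with $0\le\alpha_i<s_i$, so $B^{D}$, for $D=d+\sum_i(s_i-1)$, is spanned over $A^d$ by $\prod_i\xi_i^{s_i-1}$. Hence $\deg_B(a\prod_i\xi_i^{s_i-1})=\deg_A(a)$ determines $\deg_B$ uniquely, matching the extraction $\pi_*$ used in \cref{hd-lem:recurrence}. Their degree map is fixed by the same fibre-top normalization, so I would compare it at this single monomial.

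I expect the main obstacle to be the sign and orientation bookkeeping between conventions. Larson--Partida, \cite{BEST23}, and Kaveh--Manon use differing choices: minimum versus maximum bases, roots $t_g$ versus $-t_g$, and subspace versus quotient tautological bundles. A single flip in any of these would turn the relation into the one for $\xi\mapsto-\xi$, placing $\xi_i$ outside the positive cone. That bookkeeping is exactly what \cref{km-lem:sign} settles, so in the proof I would cite it for each factor $i$ and observe that the multi-factor case is the product statement of \cite[Theorem~3.9]{LP26}.
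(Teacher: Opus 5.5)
Your proposal is correct and follows the paper's proof: it applies \cite[Theorem~3.9]{LP26} through the sign calibration of \cref{km-lem:sign}, and it uses $s_i\ge2$ only to make the relative degree-one directions independent. The one point the paper records that you omit is that Larson--Partida's input permits zero-weight coloops. This matters when the proposition is later applied to $Q=P^*$, whose added zero-weight loops become coloops.
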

\begin{proof}
Apply \cite[Theorem~3.9, p.~17]{LP26} with \cref{km-lem:sign}. Since $s_i\ge2$,
the relative degree-one directions are independent.
The input permits zero-weight coloops.
\end{proof}

\subsection{Generating witnesses and dual matroids}
The ray inequalities in the generation condition extend to the entire fan.
\begin{lemma}\label{km-lem:global-witness}
A witness in \eqref{km-eq:ray-witness} satisfies
$\langle u_{\sigma,b},x\rangle\le v_b(x)$ for all $x\in N_\RR$,
with equality on $\sigma$.
\end{lemma}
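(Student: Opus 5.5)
The plan is to reduce the claim to the convexity of a piecewise-linear minimum of linear functions, using the structure of the coordinate functions $v_b$ on the Bergman fan. Fix a maximal cone $\sigma$ with adapted basis $B_\sigma$ and a witness character $u=u_{\sigma,b}$ satisfying \eqref{km-eq:ray-witness}. Equality on $\sigma$ is immediate from \eqref{km-eq:apartment-basis}, so the work is the global inequality $\langle u,x\rangle\le v_b(x)$.

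The first step is to show that the inequality holds on every cone $\tau$ of $\Sigma$, not only at rays. Since $\Sigma$ is complete, every $x\in N_\RR$ lies in some maximal cone $\tau$. There we can write $x=\sum_{\rho\in\tau(1)}a_\rho v_\rho$ with $a_\rho\ge0$, because $\Sigma$ is smooth, hence simplicial. On $\tau$ the function $v_b$ is given by \eqref{km-eq:apartment-basis} or \eqref{km-eq:apartment-circuit} relative to $B_\tau$. If $b\in B_\tau$, then $v_b$ is linear on $\tau$, and the inequality follows by taking the nonnegative combination of the ray inequalities at the rays of $\tau$.

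The remaining case, $b\notin B_\tau$, is the main obstacle. Here $v_b|_\tau=\min_{c\in C_{B_\tau}(b)\setminus\{b\}}v_c$, which is concave on $\tau$ but not linear, so linear interpolation from rays cannot be applied directly. The fix is to use that the map into the Bergman fan is linear on $\tau$ into an apartment: the image of $\tau$ lies in a single cone of the Bergman fan. On that cone the circuit minimum is attained by a fixed index for all points. By the minimum-on-circuits convention this follows from the apartment description of \cite[Proposition~3.7]{KM24}, in which each cone of the Bergman fan of $\mathsf M$ corresponds to a flag of flats, and on such a cone the order of the coordinate values is fixed.

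With this in hand, $v_b|_\tau$ is linear. Alternatively, one can choose the adapted basis $B_\tau$ to contain $b$ whenever $b$ is not a coloop of the relevant contraction, reducing to the previous paragraph. Either way, linearity on $\tau$ together with the ray inequalities at the rays of $\tau$ gives $\langle u,x\rangle\le v_b(x)$ on $\tau$ by nonnegative combination. Since $\tau$ was arbitrary, the inequality holds on all of $N_\RR$.

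If the linearity claim fails, the fallback is a different argument. Since $v_b|_\tau$ is a minimum of the linear functions $v_c$, it suffices to show $\langle u,x\rangle\le v_c(x)$ on $\tau$ for each $c$ in the circuit. To get this, I would try to prove that $v_c\ge v_b$ at the rays of $\tau$ implies the bound on the whole cone, again by linearity of each $v_c$ on $\tau$.
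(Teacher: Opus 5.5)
Your main argument for the case $b\notin B_\tau$ fails. You claim that the image of $\tau$ lies in a single cone of the Bergman fan, so that the circuit minimum is attained by one fixed index and $v_b|_\tau$ is linear. This is false. A cone of $\Sigma$ is only required to map linearly into an apartment, that is, linearly in the coordinates indexed by $B_\tau$. The remaining coordinates are genuine minima, and the active term can change inside $\tau$. For example, take the tangent bundle of $\PP^2$, whose matroid is $U_{2,3}$ on $e_1,e_2,-e_1-e_2$. On the cone spanned by $e_1,e_2$, the coordinate of $-e_1-e_2$ is $\min(x_1,x_2)$, which is not linear. The same example rules out your alternative of choosing $B_\tau\ni b$: no adapted basis of that cone contains $-e_1-e_2$, because its coordinate is not linear there.

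The obstacle you identified is not actually an obstacle, because concavity gives exactly the bound you need. You want a lower bound for $v_b$. For $x=\sum_\rho a_\rho v_\rho\in\tau$ with $a_\rho\ge0$,
$v_b(x)=\min_c\sum_\rho a_\rho v_c(v_\rho)\ge\sum_\rho a_\rho v_b(v_\rho)\ge\sum_\rho a_\rho\langle u,v_\rho\rangle=\langle u,x\rangle$.
This chain is the paper's one-line proof.

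Your fallback paragraph is the same argument written out, and it is complete once you note one fact. The inequality $v_c(v_\rho)\ge v_b(v_\rho)$ at the rays is automatic, since $v_\rho\in\tau$ and $v_b|_\tau=\min_{c'}v_{c'}$. Hence $v_c(v_\rho)\ge\langle u,v_\rho\rangle$ at every ray of $\tau$, and linearity of each $v_c$ on $\tau$ finishes the proof. So drop the linearity claim and make the fallback the proof. The case split on whether $b\in B_\tau$ is then unnecessary.
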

\begin{proof}
For $x=\sum_{\rho\in\tau(1)}a_\rho v_\rho$ in a cone $\tau$,
$a_\rho\ge0$, the apartment formula is a minimum of linear functions.
Hence
$v_b(x)\ge\sum_\rho a_\rho v_b(v_\rho)
\ge\sum_\rho a_\rho\langle u_{\sigma,b},v_\rho\rangle$.
Equality on $\sigma$ is \eqref{km-eq:apartment-basis}.
\end{proof}

The apartment formula also identifies the adapted basis as a maximum-weight basis.
\begin{lemma}\label{km-lem:maximum-basis}
For $x\in\sigma$, the basis $B_\sigma$ maximizes total $v_e(x)$-weight.
\end{lemma}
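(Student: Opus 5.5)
The plan is to reduce the claim to the standard characterization of maximum-weight bases of a matroid by fundamental circuits. A basis $B$ of a matroid maximizes total weight $\sum_{e\in B}w_e$ if and only if, for every $e\notin B$ and every $b\in C_B(e)\setminus\{e\}$, one has $w_e\le w_b$. Equivalently, no single exchange $B-b+e$ increases the weight. This local-to-global exchange criterion is the greedy or basis-exchange theorem for matroids. I would either quote it from Murota or Oxley, or sketch it: if some basis $B'$ had larger weight, the strong exchange property (or a bijection between $B\setminus B'$ and $B'\setminus B$ compatible with fundamental circuits) would produce an improving single exchange, contradicting the local condition.

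Next I would verify the local condition directly from the apartment formula. Fix $x\in\sigma$ and set $w_e=v_e(x)$. For $e\notin B_\sigma$, equation \eqref{km-eq:apartment-circuit} gives
\[
 w_e=\min_{b\in C_{B_\sigma}(e)\setminus\{e\}}v_b(x)\le v_b(x)=w_b
\]
for every $b\in C_{B_\sigma}(e)\setminus\{e\}$. So every single exchange $B_\sigma-b+e$ is nonincreasing in weight. The exchange criterion then gives that $B_\sigma$ is a maximum-weight basis. No generation hypothesis or witness from \cref{km-lem:global-witness} is needed, since the statement is purely pointwise on $\sigma$.

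The only step with real content is the local-to-global criterion, and the main care is that the criterion uses the fundamental circuit relative to $B_\sigma$ itself, not relative to some other basis. I would prove it by induction on $|B'\setminus B_\sigma|$ for a competing basis $B'$.

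\begin{enumerate}
\item Pick $e\in B'\setminus B_\sigma$.
\item The fundamental circuit $C_{B_\sigma}(e)$ meets $B_\sigma\setminus B'$, since otherwise it would lie in $B'$, which is independent.
\item By symmetric exchange, choose $b\in C_{B_\sigma}(e)\cap(B_\sigma\setminus B')$ such that $B'-e+b$ is also a basis.
\item Then $w(B'-e+b)\ge w(B')$ because $w_b\ge w_e$.
\item The new basis is closer to $B_\sigma$, so induction gives $w(B_\sigma)\ge w(B'-e+b)\ge w(B')$.
\end{enumerate}

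The symmetric exchange in step 3 (for $e\in B'\setminus B$, there is $b\in B\setminus B'$ with both $B-b+e$ and $B'-e+b$ bases) is the matroid input I would cite. With $B=B_\sigma$, the condition that $B_\sigma-b+e$ is a basis places $b$ in $C_{B_\sigma}(e)$, so the circuit inequality applies.
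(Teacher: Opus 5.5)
Your proof is correct. Its first half matches the paper: the apartment formula \eqref{km-eq:apartment-circuit} shows that no single exchange $B_\sigma-b+e$ with $b\in C_{B_\sigma}(e)\setminus\{e\}$ increases the weight.

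The routes differ in the local-to-global step.

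\textbf{The paper's route.} It argues geometrically. The tangent cone of the matroid base polytope at the vertex indexed by $B_\sigma$ is generated by the single-exchange edges. A vertex with no improving edge therefore maximizes the linear functional over the whole polytope.

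\textbf{Your route.} You prove the exchange criterion combinatorially, by induction on $|B'\setminus B_\sigma|$ using symmetric basis exchange. You correctly identify the key point: symmetric exchange supplies $b\in B_\sigma\setminus B'$ with two bases at once. The basis $B'-e+b$ drives the induction. The basis $B_\sigma-b+e$ places $b$ in $C_{B_\sigma}(e)$, so the circuit inequality applies. Your step 2 is redundant, since step 3 already produces such a $b$.

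\textbf{What each buys.} The paper's route is shorter. It also identifies the optimality region with the normal cone of a base-polytope vertex. That is the viewpoint behind $\mathcal N^{\min}_J$ in \cref{km-lem:sign,km-prop:finite-certificate} and the ray test of \cref{km-prop:exchange-test}. However, it presupposes the edge description of matroid base polytopes. Your argument is self-contained apart from the symmetric exchange theorem. It also produces an explicit weight-nondecreasing chain of single exchanges from any competing basis to $B_\sigma$.
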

\begin{proof}
Every exchange $B_\sigma-b+e$ has
$b\in C_{B_\sigma}(e)\setminus\{e\}$, so
$v_e(x)\le v_b(x)$ by \eqref{km-eq:apartment-circuit}. No edge improves
weight. Since the tangent cone of a matroid base-polytope vertex is
generated by its single-exchange edges, a higher-weight vertex would
force an improving edge, a contradiction.
\end{proof}

Take one witness basis for each maximal cone. On
$G_0=\{(\sigma,b):\sigma\in\Sigma(d),\ b\in B_\sigma\}$, set
$p(\sigma,b)=b$, $u_{(\sigma,b)}=u_{\sigma,b}$, and define
\begin{equation}\label{km-eq:label-rank}
 \operatorname{rk}_P(S)=\operatorname{rk}_{\mathsf M}(p(S)).
\end{equation}
This is restriction followed by parallel duplication: the rank axioms
follow from those of $\mathsf M$ and
$p(S\cap T)\subseteq p(S)\cap p(T)$. It has rank $r$, with bases
$I_\sigma=\{(\sigma,b):b\in B_\sigma\}$.
Add two zero-weight loops and call the ground set $G$, so $s=|G|-r\ge2$.

Passing to complements converts these maximum-weight witnesses into the minimum-basis cones required by the projective-bundle theorem.
\begin{proposition}\label{km-prop:finite-certificate}
For $Q=P^*$ and $J_\sigma=G\setminus I_\sigma$, the lattice map
$\varphi_*:x\mapsto(\langle u_g,x\rangle)_{g\in G}$ sends $\sigma$
into $\mathcal N_{J_\sigma}^{\min}(Q)$.
\end{proposition}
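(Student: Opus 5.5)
We must show: for $x\in\sigma$ and every basis $J'$ of $Q=P^*$, $\sum_{g\in J_\sigma}\langle u_g,x\rangle\le\sum_{g\in J'}\langle u_g,x\rangle$. Bases of $P^*$ are exactly complements of bases of $P$. Writing $J'=G\setminus I'$ and subtracting from the common total $\sum_{g\in G}\langle u_g,x\rangle$, the inequality becomes
\[
 \sum_{g\in I'}\langle u_g,x\rangle\le\sum_{g\in I_\sigma}\langle u_g,x\rangle
 \qquad\text{for every basis } I' \text{ of } P.
\]
So the plan is to prove that $I_\sigma$ is a maximum-weight basis of $P$ for the weights $w_g=\langle u_g,x\rangle$. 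The two added loops carry zero weight and lie in no basis of $P$, so they lie in every $J'$ and in $J_\sigma$ and cancel.

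The key step bounds any basis $I'$ of $P$ by comparing with the matroid weights $v_e(x)$. By \eqref{km-eq:label-rank}, $I'$ consists of elements $(\tau,b)$ with distinct labels $p(\tau,b)=b$, and $p(I')$ is a basis of $\mathsf M$. Parallel elements have equal rank, so two elements with the same label cannot both lie in an independent set. For each $g=(\tau,b)\in I'$, \cref{km-lem:global-witness} gives $\langle u_{\tau,b},x\rangle\le v_b(x)$ for all $x\in N_\RR$. Summing over $I'$ yields
\[
 \sum_{g\in I'}w_g\le\sum_{b\in p(I')}v_b(x).
\]
By \cref{km-lem:maximum-basis}, for $x\in\sigma$ the right side is at most $\sum_{b\in B_\sigma}v_b(x)$. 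By the equality case in \cref{km-lem:global-witness}, that sum equals $\sum_{g\in I_\sigma}w_g$. This chain gives the displayed inequality, hence $\varphi_*(x)\in\mathcal N^{\min}_{J_\sigma}(Q)$. Since $\sigma$ is a cone and $\varphi_*$ is linear, this holds on all of $\sigma$. Lower-dimensional cones are faces of maximal cones, so they are covered as well.

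The main obstacle is bookkeeping rather than a deep step. First, one must check that $I_\sigma$ and $J_\sigma$ really are a basis of $P$ and of $P^*$ respectively. The text asserts this, and it holds because $p$ restricted to $I_\sigma$ is a bijection onto the basis $B_\sigma$. Second, one must justify that an independent set of $P$ never contains two elements with the same label. I would verify this directly from $\rk_P(\{g,g'\})=\rk_{\mathsf M}(\{b\})\le1$. Last, the sign convention of \eqref{km-eq:min-cone} must match the complement trick, which the computation above makes explicit.
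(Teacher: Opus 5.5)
Your proposal is correct and follows the paper's proof essentially verbatim. It uses the same chain $\sum_{g\in I'}\langle u_g,x\rangle\le\sum_{e\in p(I')}v_e(x)\le\sum_{b\in B_\sigma}v_b(x)=\sum_{g\in I_\sigma}\langle u_g,x\rangle$ from the global-witness and maximum-basis lemmas, then complementation in $P^*$ and restriction to faces; your extra bookkeeping (injectivity of $p$ on bases, cancellation of the added loops) only makes explicit what the paper states in one line.
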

\begin{proof}
For a basis $H$ of $P$, $p$ is injective on $H$ and $p(H)$ is a basis
of $\mathsf M$. The preceding lemmas give on $\sigma$
\[
 \sum_{g\in H}\langle u_g,x\rangle
 \le\sum_{e\in p(H)}v_e(x)
 \le\sum_{b\in B_\sigma}v_b(x)
 =\sum_{g\in I_\sigma}\langle u_g,x\rangle.
\]
Thus $I_\sigma$ is maximum. Subtracting from the total ground-set weight
makes its complement minimum in $P^*$; restriction covers the faces.
\end{proof}

The dual presentation provides the inverse Chern series with the required positive relative class.
\begin{theorem}\label{km-thm:inverse}
The finite data above give a positive inverse-Chern certificate for a
globally generated KM bundle $\cE$ on its original fan. Its
Larson--Partida series $K(t)=1+k_1t+\cdots+k_st^s$ satisfies
\begin{equation}\label{km-eq:ordinary-inverse}
 K(t)c_t(\cE)=1\quad\text{in }A[t].
\end{equation}
The ring $B_{\cE}=A[\xi]/(\xi^s+k_1\xi^{s-1}+\cdots+k_s)$ has the
K\"ahler package for $L+t\xi$, $L\in\mathcal K_A$, $t>0$, with
$+\xi$ in the cone closure.
\end{theorem}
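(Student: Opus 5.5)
The plan is to feed the finite witness data into \cref{km-thm:LP} with $m=1$, $Q=P^*$ on the ground set $G$, and then check the inverse-Chern identity. This splits into three steps: positivity of the presentation, equivariant identification of the series, and descent to ordinary Chow.

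\emph{Positivity.} First I record that $Q=P^*$ has rank $s=|G|-r\ge2$, because two loops were added. Then \cref{km-prop:finite-certificate} shows that the lattice map $\varphi_*$ sends each maximal cone $\sigma$ into the minimum-basis cone $\mathcal N^{\min}_{J_\sigma}(Q)$. Lower-dimensional cones are faces of maximal ones, so every cone is covered. This is exactly the hypothesis of \cref{km-thm:LP}. That theorem gives the Kähler package on $B_\cE$ for the interior of the cone generated by $\mathcal K_A$ and $\xi$, which contains all classes $L+t\xi$ with $t>0$, and it puts $+\xi$ in the closure. By \cref{km-lem:sign}, the coefficient classes are $k_j=e_j(u_g:g\in J_\sigma)$ on $\sigma$. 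So this is a positive inverse-Chern certificate in the sense of \cref{hd-def:positive-certificate}, once the identity \eqref{km-eq:ordinary-inverse} is established.

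\emph{Equivariant product.} On $\sigma$ the equivariant series multiply as
\[
 K^T(t)\,c^T_t(\cE)\big|_\sigma=\prod_{g\in J_\sigma}(1+u_gt)\prod_{b\in B_\sigma}(1+u_{\sigma,b}t)=\prod_{g\in G}(1+u_gt),
\]
using \eqref{km-eq:chern-local} and $G=J_\sigma\sqcup I_\sigma$ with $u_{(\sigma,b)}=u_{\sigma,b}$. The loops carry zero weight and contribute $1$. The right-hand side does not depend on $\sigma$; it is a product of global linear characters $u_g\in M$. Both factors on the left are genuine piecewise polynomials: $c^T_t(\cE)$ by \cite{KM24}, and $K^T$ by \cref{km-lem:sign} via agreement on walls. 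So this is an identity of piecewise polynomials.

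\emph{Descent to ordinary Chow.} Passing to $A=A^\bullet(X_\Sigma)_\RR$ kills global linear characters, so $\prod_g(1+u_gt)$ maps to $1$. Hence $K(t)c_t(\cE)=1$ in $A[t]$.

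The main obstacle is the compatibility check. I expect to rely on \cref{km-lem:sign} to confirm three things:
\begin{enumerate}
\item the Larson--Partida coefficients for the dual matroid really are the elementary symmetric functions on the minimum basis $J_\sigma$, with the sign that makes $+\xi$ positive;
\item these local expressions glue across walls, since equal-weight exchanges preserve the multiset of weights on the optimal face;
\item the Kaveh--Manon Chern classes use the same characters $u_{\sigma,b}$ as the witness.
\end{enumerate}
The third point is where global generation enters. The witness basis in \eqref{km-eq:ray-witness} is an adapted basis for $\sigma$, so \eqref{km-eq:chern-local} applies to it directly. Any other adapted basis gives the same local Chern polynomial by wall-independence in \cite{KM24}.
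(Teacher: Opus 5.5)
Your proposal is correct and takes the same route as the paper. The paper uses the sign calibration of \cref{km-lem:sign} to get $K^T(t)|_\sigma=\prod_{g\in J_\sigma}(1+u_gt)$, hence $K^T(t)c_t^T(\cE)=\prod_{g\in G}(1+u_gt)$, whose positive-degree coefficients vanish in ordinary Chow; positivity then comes from applying \cref{km-thm:LP} to the cone map of \cref{km-prop:finite-certificate}, with $+\xi$ unchanged. Your additional checks (rank $s\ge2$ from the two loops, faces covered by maximal cones, the witness basis being adapted) are details the paper leaves implicit.
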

\begin{proof}
By sign calibration,
$K^T(t)|_\sigma=\prod_{g\in J_\sigma}(1+u_gt)$, so
\begin{equation}\label{km-eq:equivariant-inverse}
 K^T(t)c_t^T(\cE)=\prod_{g\in G}(1+u_gt).
\end{equation}
Positive-degree coefficients on the right lie in the ideal of global
linear characters and vanish in ordinary Chow. This proves the inverse
identity. Apply \cref{km-thm:LP} using the cone map of
\cref{km-prop:finite-certificate}, leaving $+\xi$ unchanged.
\end{proof}
The dual matroid $P^*$, the nontrivial equivariant product on the right,
and the sign $+\xi$ must all be retained in this comparison.
\subsection{Tropical Hodge theory and Chern inequalities}\label{sec:tropical-hodge-transport}

The multiple-bundle theorem applies to these certificates simultaneously.
\begin{proposition}\label{km-thm:joint-Hodge}
For globally generated KM bundles $\cE_1,\ldots,\cE_m$ on the same
smooth projective fan, the intrinsic algebra $R_C$ with
$C_{i,a}=c_a(\cE_i)$ has the K\"ahler package in formal dimension $d$
for the image of
$\{L+\sum_it_iz_i:L\in\mathcal K_A,\ t_i>0\}$.
For the fixed coefficient algebra $A$ with its degree map,
the moment algebra, degree map, and labelled generators depend
only on the classes $c_a(\cE_i)\in A^a$, and not on the chosen
finite generating witnesses.
For nef classes $H_j\in A^1$, the polynomial
\begin{equation}\label{km-eq:joint-polynomial}
 \sum_{|\alpha|+|\beta|=d}
 \deg_A\left(\prod_ic_{\alpha_i}(\cE_i)\prod_jH_j^{\beta_j}\right)
       \frac{x^\alpha y^\beta}{\alpha!\beta!}
\end{equation}
is Lorentzian, with zero allowed. If ample $H_j$ span $A^1$, its
apolar algebra is $R_C$, under $\partial_{x_i}\mapsto z_i$ and
$\partial_{y_j}\mapsto H_j$, and has the full K\"ahler package.
Finally, for $r_i=\rk\cE_i$ and
$\theta=\prod_ic_{r_i}(\cE_i)\ne0$, the quotient
$A/\Ann_A\theta$, with degree $[a]\mapsto\deg_A(\theta a)$, has
the package in dimension $d-\sum_i r_i$ for the image of the ample cone.
\end{proposition}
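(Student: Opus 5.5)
The plan is to run the abstract machinery of \cref{hd-thm:moment-Hodge,hd-cor:mixed-Chern,hd-thm:terminal} on the simultaneous Larson--Partida presentation built from the finite witnesses of \cref{km-thm:inverse}. First, for each $i$, I fix a witness basis on every maximal cone and form the labelled matroid $P_i$ of \eqref{km-eq:label-rank} on a ground set $G_i$, padded by two zero-weight loops so that $s_i=|G_i|-r_i\ge2$. I then take $Q_i=P_i^*$ with bases $J_{i,\sigma}$. By \cref{km-prop:finite-certificate}, each lattice map $\varphi_{i*}$ sends $\sigma$ into $\mathcal N^{\min}_{J_{i,\sigma}}(Q_i)$. \Cref{km-lem:sign} then identifies the ordinary coefficient classes $k_{i,j}$, and \cref{km-thm:inverse} gives $K_i(t)c_t(\cE_i)=1$ in $A[t]$ for each $i$ separately.

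Second, I apply \cref{km-thm:LP} with all $m$ matroids at once, rather than one bundle at a time. This shows that
\[
 B=A[\xi_1,\ldots,\xi_m]/(\xi_i^{s_i}+k_{i,1}\xi_i^{s_i-1}+\cdots+k_{i,s_i})_i
\]
has the K\"ahler package on the interior of the cone spanned by $\mathcal K_A$ and the $\xi_i$. Each $\xi_i$ and every nef class of $A^1$ lies in the closure of that cone. So $B$ is a positive inverse-Chern certificate in the sense of \cref{hd-def:positive-certificate}, with nef $H_j$ as admissible boundary directions. \Cref{hd-thm:moment-Hodge} then gives $R_C$ the package for the image of the cone, and the image of $L+\sum_it_i\xi_i$ is $L+\sum_it_iz_i$ via \cref{hd-thm:quotient}. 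Independence from the witnesses is formal: \cref{hd-prop:intrinsic-PD,hd-thm:quotient} show that the marked algebra is determined by the moment functional $\Lambda_C$, which sees only the classes $c_a(\cE_i)$.

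Third, the Lorentzian claim \eqref{km-eq:joint-polynomial} follows from \cref{hd-cor:mixed-Chern}, since the moments of $z^\alpha H^\beta$ in $R_C$ are exactly the displayed degrees. For the apolar claim, I note that $R_C$ is generated in degree one, because $A$ is generated by divisor classes for a smooth projective toric variety. If the ample $H_j$ span $A^1$, their images together with the $z_i$ span $R_C^1$, since $A\to R_C$ is injective and surjects onto $A$-part degree one. \Cref{hd-thm:apolar} then identifies the apolar algebra with $R_C$, and it inherits the package. The terminal statement is \cref{hd-thm:terminal}, using $C_{i,a}=0$ for $a>r_i$; that vanishing holds because $c^T_t(\cE_i)$ is locally a product of $r_i$ linear factors by \eqref{km-eq:chern-local}.

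I expect the main obstacle to be the simultaneous application of \cref{km-thm:LP}. I must check that the hypotheses of \cite[Theorem~3.9]{LP26} hold jointly on a single common fan $\Sigma$, with no refinement needed for different bundles: each $\varphi_{i*}$ must be linear on every cone of $\Sigma$ with values in a minimum-basis cone. I must also confirm that nef, not only ample, base classes lie in the closure of the certificate cone, so that the $H_j$ qualify as boundary polarizations. I would first verify cone-by-cone linearity from \eqref{km-eq:apartment-basis}, using that the witness characters are fixed per maximal cone and that restriction handles faces, as at the end of the proof of \cref{km-prop:finite-certificate}. For the nef closure, I would approximate each nef class by ample classes within $\overline{\mathcal K_A}\subseteq\overline{\mathcal K_B}$.
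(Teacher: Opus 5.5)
Your proposal is correct and follows the paper's proof essentially step for step. You build per-bundle certificates from \cref{km-thm:inverse} and apply \cref{km-thm:LP} once, jointly. You then use \cref{hd-thm:moment-Hodge,hd-thm:quotient} for the algebra, the cone, and independence from the witnesses; \cref{hd-cor:mixed-Chern}, that is \cref{hd-cor:degree-polynomial}, for Lorentzianity; \cref{hd-thm:apolar}, via degree-one generation of the toric Chow ring, for the apolar claim; and \cref{hd-thm:terminal} for the terminal quotient.

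The obstacle you flag about applying \cref{km-thm:LP} jointly is lighter than you expect. Each $\varphi_{i*}$ is a global linear lattice map, and \cref{km-prop:finite-certificate} already sends every cone of the original $\Sigma$ into a minimum-basis cone. So no refinement and no cone-by-cone linearity check are needed.
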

\begin{proof}
Construct each inverse certificate by \cref{km-thm:inverse}.
The multiple-bundle input \cref{km-thm:LP} gives the joint monic
presentation, with all $+\xi_i$ boundary and $K_iC_i=1$.
Apply \cref{hd-thm:moment-Hodge,hd-thm:quotient} for the algebra and
cone, and \cref{hd-cor:degree-polynomial} for the scalar polynomial.
When the ample directions span $A^1$, the toric Chow ring is
degree-one-generated, so \cref{hd-thm:apolar} identifies the apolar
algebra. The terminal assertion is \cref{hd-thm:terminal}.
\end{proof}
The spanning hypothesis is essential for the apolar assertion; an
arbitrary smaller base specialization is not covered by that conclusion.

We specialize the joint polynomial to the Chern numbers in the Kaveh--Manon conjecture.
\begin{corollary}\label{km-thm:log-concavity}
Let $\cE$ be a globally generated rank-$r$ tropical toric bundle in the
sense of \cite[Definitions~4.1 and~6.4]{KM24} on a smooth projective
$d$-dimensional toric variety, $d\ge r$. For ample $H$, let
$a_i=\deg_A(c_i(\cE)H^{d-i})$. Then $a_i\ge0$ and
\begin{equation}\label{km-eq:log-concavity}
 a_i^2\ge a_{i-1}a_{i+1}\qquad(1\le i<r).
\end{equation}
With $c_i(\cE)=0$ for $i>r$, the polynomial
\begin{equation}\label{km-eq:binary-polynomial}
 F_{\cE,H}(x,y)=\sum_{i=0}^d
 \deg_A(c_i(\cE)H^{d-i})\frac{x^iy^{d-i}}{i!(d-i)!}
\end{equation}
is nonzero Lorentzian, with an interval of nonzero coefficient indices.
For nef $H$ it remains Lorentzian, with zero allowed.
\end{corollary}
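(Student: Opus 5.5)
The plan is to specialize \cref{km-thm:joint-Hodge} to one bundle, $m=1$, and one nef direction $H_1=H$. The joint polynomial \eqref{km-eq:joint-polynomial} then becomes exactly $F_{\cE,H}$ in \eqref{km-eq:binary-polynomial}. The only change is the indexing: $\alpha=i$ and $\beta=d-i$, with $c_i(\cE)=0$ for $i>r$. That proposition says the polynomial is Lorentzian, possibly zero, for every nef $H$. This settles the last sentence at once.

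For ample $H$, the coefficients of $F_{\cE,H}$ are $a_i/(i!(d-i)!)$. I will obtain their properties in three steps.

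\textbf{Nonnegativity.} A Lorentzian polynomial has nonnegative coefficients by \cref{def:lorentzian}, so $a_i\ge0$.

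\textbf{Log-concavity.} The binary Lorentzian condition, via \cref{thm:BH-criterion} and the Hessians of \eqref{eq:coefficient-hessian}, gives ultra-log-concavity:
\[
 \frac{a_i^2}{\binom di^2}\ge\frac{a_{i-1}}{\binom d{i-1}}\cdot\frac{a_{i+1}}{\binom d{i+1}}.
\]
The binomial ratio $\binom di^2/(\binom d{i-1}\binom d{i+1})$ is at least $1$, so this implies $a_i^2\ge a_{i-1}a_{i+1}$.

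\textbf{Support.} The support of a nonzero Lorentzian polynomial is $M$-convex. In two variables this says the nonzero indices form an interval, once we know the polynomial is nonzero.

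The main point is nonvanishing. Here $a_0=\deg_A(H^d)>0$ because $H$ is ample, so $F_{\cE,H}\ne0$. Alternatively, positivity of $\deg_A(L^d)$ for $L$ in the Lefschetz cone comes from \cref{hd-cor:degree-polynomial}.

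The restriction $1\le i<r$ with $d\ge r$ keeps every index in the range $0\le i-1<i+1\le r\le d$, where the coefficients are defined. The case $r=d$ fits this as well. The result is \cite[Conjecture~1.7]{KM24}.

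One point needs care: \cref{km-thm:joint-Hodge} requires $H$ to be a boundary polarization in the certificate cone. Nef classes lie in the closure of $\operatorname{Amp}(X_\Sigma)_\RR$, which \cref{km-thm:LP} includes in the cone together with the $\xi_i$. So the hypothesis holds, and no further obstacle is expected.
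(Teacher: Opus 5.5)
Your overall route is the paper's: you specialize \cref{km-thm:joint-Hodge} to $m=q=1$, get nonvanishing for ample $H$ from $a_0=\deg_A(H^d)>0$, and read off signs and interval support from the bivariate Lorentzian criterion. Those parts are fine. The log-concavity step, however, rests on a false claim. The inequality you display, $a_i^2/\binom di^2\ge\bigl(a_{i-1}/\binom d{i-1}\bigr)\bigl(a_{i+1}/\binom d{i+1}\bigr)$, is ultra-log-concavity of the sequence $(a_i)$ itself, and Lorentzianity of $F_{\cE,H}$ does not imply it. Take $d=2$ and $a=(1,1,1)$, so $F=(x+y)^2/2$. This is Lorentzian, and it occurs for the generated rank-two quotient bundle on $\PP^2$ with $H=h$. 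Your inequality at $i=1$ would then demand $1/4\ge1$. So the ``weakening by the binomial ratio'' starts from an inequality that is not available.

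The binomials are already absorbed by the divided powers. The bivariate criterion \cite[Example~2.26]{BH20} imposes ultra-log-concavity on the actual coefficients $a_i/(i!(d-i)!)=\binom di a_i/d!$ of $F_{\cE,H}$. For these the binomial factors cancel, giving exactly $a_i^2\ge a_{i-1}a_{i+1}$.

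You can also get this directly from the tools you cite. Since $F_{\cE,H}=\Nrm\bigl(\sum_ia_ix^iy^{d-i}\bigr)$, \eqref{eq:coefficient-hessian} says the Hessian of $\partial_x^{i-1}\partial_y^{d-i-1}F_{\cE,H}$ is $\left(\begin{smallmatrix}a_{i+1}&a_i\\a_i&a_{i-1}\end{smallmatrix}\right)$. A real symmetric $2\times2$ matrix with nonnegative diagonal and at most one positive eigenvalue has nonpositive determinant.

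With that one-line replacement your argument proves the corollary. The inequality you obtain is sharp (equality holds for $a=(1,1,1)$); it is not a weakening of something stronger.
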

\begin{proof}
Apply \cref{km-thm:joint-Hodge}. The inverse certificate gives
\begin{equation}\label{km-eq:moments}
 \pi_*(\xi^{s-1+i})=c_i(\cE),\qquad
 a_i=\deg_{B_{\cE}}(\xi^{s-1+i}H^{d-i}).
\end{equation}
The bivariate criterion \cite[Example~2.26]{BH20} gives the signs,
log-concavity, and interval support. For ample $H$, $a_0=\deg_AH^d>0$,
including rank zero; on the nef boundary zero is permitted.
\end{proof}
This is \cite[Conjecture~1.7]{KM24}. The finite comparison
\cref{km-prop:finite-certificate,km-thm:inverse} joins its generation
hypothesis to Larson--Partida's Hodge theorem; neither external source
is claimed to contain that comparison.

\subsection{Further finite presentations}
The finite presentation can also be used to recover globally generated Chern data.
\begin{definition}\label{km-def:presentation}
A $\Sigma$-adapted weighted matroid presentation is a finite matroid
$P$ on $G$ with characters $u_g\in M$, such that each maximal cone
has a basis $I_\sigma$ maximizing
$\sum_{g\in I}\langle u_g,x\rangle$ simultaneously for every
$x\in\sigma$. Set
$C^T_{P,u}(t)|_\sigma=\prod_{g\in I_\sigma}(1+u_gt)$.
\end{definition}
Equal-weight exchanges in common optimal faces make these polynomial
restrictions agree on intersections. Loops have no effect.

The closure envelope gives a converse at the level of Chern data, without claiming uniqueness of the tropical bundle.
\begin{proposition}\label{km-thm:converse}
Deleting loops from an adapted presentation produces a globally
generated KM bundle with Chern series $C^T_{P,u}$. Thus these
presentations give exactly the Chern data of globally generated KM
bundles, not a classification of bundle isomorphism classes.
\end{proposition}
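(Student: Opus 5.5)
The plan is to build the KM bundle directly from the presentation and then read off its Chern data. Start with an adapted presentation $(P,G,u)$, and delete its loops; loops lie in no basis, so neither the maximizing bases $I_\sigma$ nor the products $C^T_{P,u}$ change. Call the resulting loopless matroid $\mathsf M=P$, of rank $r$. In the Kaveh--Manon description the bundle is a piecewise-linear map $\Phi:N_\RR\to\operatorname{Trop}(\mathsf M)$. For each coordinate $e$ I will take
\[
 v_e(x)=\min_{\sigma\ni x}\ \min_{b\in C_{I_\sigma}(e)\setminus\{e\}}\langle u_b,x\rangle ,
\]
which I read on $\sigma$ through the apartment of $I_\sigma$. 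Concretely, set $v_b(x)=\langle u_b,x\rangle$ for $b\in I_\sigma$, and let $v_e$ for $e\notin I_\sigma$ be given by the circuit formula \eqref{km-eq:apartment-circuit}.

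The first step is to check that these apartment maps agree on overlaps of cones, so that they glue. Fix $x\in\sigma\cap\tau$. Both $I_\sigma$ and $I_\tau$ are maximum-weight bases for the weight $w_g=\langle u_g,x\rangle$. By the face structure of the base polytope, they are joined by equal-weight exchanges inside the optimal face, as in the remark after \cref{km-def:presentation}. The point is that, for a maximum-weight basis $I$, the apartment point is the valuated tropical point determined by $w$ restricted to the optimal face. Equivalently, each coordinate is determined by $w$ and the optimal face alone: the circuit formula returns $\min_{b\in C_I(e)-e}w_b$, and I will show this equals the value at $e$ of the tropical projection onto the face. I expect this to follow from single-exchange invariance. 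If $I'=I-b+e$ with $w_b=w_e$, then the fundamental circuits transform by circuit elimination, and equal weights keep the minima unchanged. This gives a well-defined integral piecewise-linear map, linear on each apartment, and so a KM bundle by \cite[Definition~4.1]{KM24}. Its equivariant Chern series on $\sigma$ is then $\prod_{b\in I_\sigma}(1+u_bt)$ by \eqref{km-eq:chern-local}, which is $C^T_{P,u}$.

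Next comes global generation. I need the ray witness inequality \eqref{km-eq:ray-witness}: $\langle u_b,v_\rho\rangle\le v_b(v_\rho)$ for every ray $\rho$ and every $b\in I_\sigma$. Take a cone $\tau\ni\rho$, so that $v_b(v_\rho)$ is computed in the apartment of $I_\tau$. If $b\in I_\tau$, the value is $\langle u_b,v_\rho\rangle$ by the definition of $v_b$. If $b\notin I_\tau$, the value is $\min_{c\in C_{I_\tau}(b)-b}\langle u_c,v_\rho\rangle$. Maximality of $I_\tau$ at $v_\rho$ gives $\langle u_b,v_\rho\rangle\le\langle u_c,v_\rho\rangle$ for each such $c$, since otherwise the exchange $I_\tau-c+b$ would improve the weight. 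Hence $\langle u_b,v_\rho\rangle\le v_b(v_\rho)$, and \cite[Theorem~6.5]{KM24} applies.

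Finally, the statement claims ``exactly'' the Chern data of globally generated KM bundles. The converse direction is \cref{km-lem:maximum-basis}: for a globally generated bundle, the witness bases $B_\sigma$ with characters $u_{\sigma,b}$, duplicated as in \eqref{km-eq:label-rank}, form an adapted presentation with the same $C^T$. \Cref{km-lem:maximum-basis,km-prop:finite-certificate} give the maximality on the labelled matroid. The non-classification caveat needs nothing further; a remark that different presentations can give equal Chern series suffices.

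The main obstacle is the gluing step, that is, showing that the circuit-formula coordinates are independent of which maximizing basis is used. Everything else is a one-line exchange argument.
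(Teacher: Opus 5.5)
Your proposal is correct. Its outer structure matches the paper: build the tropical map, read off $C^T_{P,u}$ from the local formula, verify the ray witnesses, and cite \cref{km-prop:finite-certificate} with \eqref{km-eq:label-rank} for the reverse direction. The difference lies in how you produce the map into the Bergman fan.

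\textbf{The paper's route.} It never glues apartments. It defines the basis-free closure envelope $v_e(x)=\max_{e\in\cl_P(S)}\min_{g\in S}w_g(x)$, with $w_g=\langle u_g,\cdot\rangle$. Its upper level sets are the flats $\cl_P\{g:w_g\ge a\}$. So $v$ is automatically a well-defined Bergman vector, the least such majorant of $w$, and $v\ge w$ everywhere (take $S=\{e\}$). Ardila's basis formula then shows that $v$ agrees with the apartment formula of \emph{every} maximum-weight basis. This gives the integral apartment description on each cone, and together with $v\ge w$ it gives the witnesses at once.

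\textbf{Your route.} You define $v$ cone by cone from $I_\sigma$, so you must prove basis-independence yourself. Your reduction is sound: the maximum-weight bases are the bases of the optimal face, and they are connected by exchanges $I'=I-b+e$ with $w_b=w_e$.

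In the invariance check, it is the maximality inequalities that do the work, not equal weights alone.
\begin{itemize}
\item At $f=b$ and $f=e$ you need $w_e\le w_c$ for $c\in C_I(e)\setminus\{e\}$.
\item Take $f\notin I\cup I'$ with $b\in C_I(f)$. Circuit elimination gives $C_{I'}(f)\setminus\{f\}\subseteq(C_I(f)\cup C_I(e))\setminus\{b,f\}$. Every weight on the right is at least $m=\min_{C_I(f)\setminus\{f\}}w$, because $w_c\ge w_e=w_b\ge m$ for $c\in C_I(e)\setminus\{b\}$.
\item The reverse inequality follows by symmetry. This uses $e\in C_{I'}(f)$, which holds because otherwise $C_{I'}(f)=C_I(f)\ni b$.
\end{itemize}

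Your witness check, computing $v_b(v_\rho)$ in the apartment of $I_\tau$ and using maximality at $v_\rho$, has the same content as the paper's $v\ge w$.

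\textbf{Comparison.} Your route is elementary and self-contained and avoids the appeal to Ardila, at the cost of a hand-proved gluing lemma. The paper's route gives a canonical, choice-free formula and the least-majorant characterization for free.

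One small point: your first displayed formula uses $C_{I_\sigma}(e)$ for $e\in I_\sigma$, which is undefined. The ``concretely'' definition that follows replaces it, so this does not affect the argument.
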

\begin{proof}
After deleting loops, rank zero is immediate. For
$w_g(x)=\langle u_g,x\rangle$, define
\begin{equation}\label{km-eq:closure-envelope}
 v_e(x)=\max_{\varnothing\ne S\subseteq G,\ e\in\cl_P(S)}
                  \min_{g\in S}w_g(x).
\end{equation}
Its upper level sets satisfy
\begin{equation}\label{km-eq:level-flats}
 \{e:v_e(x)\ge a\}=\cl_P\{g:w_g(x)\ge a\}.
\end{equation}
They are flats, so $v$ is a minimum-on-circuits Bergman vector and is
the least such majorant of $w$. In Ardila's opposite convention,
$v(w)=-(-w)^P$. His basis formula
\cite[\S3, final proposition, pp.~7--8]{Ardila04} gives, for each
maximum basis $I$, $v_b=w_b$ on $I$ and
$v_e=\min_{b\in C_I(e)\setminus\{e\}}w_b$ off $I$.
The fixed optimizer $I_\sigma$ yields integral apartment formulas
on the original fan, while $v_b\ge w_b$ globally with equality on
$\sigma$ gives generating witnesses. The local Chern series is
$C^T_{P,u}$. The reverse direction is
\cref{km-prop:finite-certificate} and \eqref{km-eq:label-rank}.
\end{proof}

Adaptation itself can be checked by finitely many single-exchange inequalities on the rays.
\begin{proposition}\label{km-prop:exchange-test}
For fixed $P,u$ and proposed $I_\sigma$, adaptation is equivalent to
\[
 \langle u_b-u_e,v_\rho\rangle\ge0
 \quad(\rho\in\sigma(1),\ e\notin I_\sigma,
       \ b\in C_{I_\sigma}(e)\setminus\{e\}).
\]
Loops impose no condition.
\end{proposition}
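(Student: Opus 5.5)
The plan is to reduce adaptation to a local optimality statement on each maximal cone and then use the edge structure of the matroid base polytope. By \cref{km-def:presentation}, adaptation of $I_\sigma$ means that for every $x\in\sigma$ and every basis $J$ of $P$,
\[
 \sum_{g\in J}\langle u_g,x\rangle\le\sum_{g\in I_\sigma}\langle u_g,x\rangle .
\]
Since the ray generators $v_\rho$, $\rho\in\sigma(1)$, generate $\sigma$ as a cone, and the inequality is linear in $x$ with nonnegative coefficients in the ray coordinates, it holds on all of $\sigma$ exactly when it holds at each $v_\rho$. So the first step is to fix one ray $\rho$ and the linear weight $w_g=\langle u_g,v_\rho\rangle$. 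The condition then becomes: $I_\sigma$ is a maximum-weight basis of $P$ for the weight $w$.

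The second step characterizes maximum-weight bases by single exchanges, as in \cref{km-lem:maximum-basis}. One direction is immediate. If $I_\sigma$ is maximum, then every exchange basis $I_\sigma-b+e$ with $b\in C_{I_\sigma}(e)\setminus\{e\}$ has weight at most that of $I_\sigma$. This gives $w_b\ge w_e$, which is the displayed inequality. For the converse, assume $w_b\ge w_e$ for all such pairs. The vertex $\mathbf 1_{I_\sigma}$ of the base polytope has a tangent cone generated by the edge directions $e_e-e_b$, and these are exactly the single exchanges $e\notin I_\sigma$, $b\in C_{I_\sigma}(e)\setminus\{e\}$. The linear functional $w$ is nonpositive on every generator, hence nonpositive on the tangent cone, so the vertex maximizes $w$ over the polytope. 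If one prefers to avoid the polytope, the same conclusion follows from the greedy characterization: a basis is maximum-weight exactly when no fundamental-circuit exchange increases weight, via the symmetric exchange property.

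The main obstacle, though a mild one, is to check that every single exchange is covered and that the loop clause holds. A loop $e$ can never enter any basis, and its fundamental circuit relative to $I_\sigma$ is $\{e\}$ itself. The set $C_{I_\sigma}(e)\setminus\{e\}$ is therefore empty, and loops impose no condition. Coloops lie in every basis, so they never occur as $e\notin I_\sigma$. Finally, quantifying over all $\rho\in\sigma(1)$ and all maximal $\sigma$ recovers the simultaneous optimality required in \cref{km-def:presentation}.
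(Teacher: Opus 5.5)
Your proof is correct and follows essentially the paper's argument. Necessity comes from excluding improving single exchanges, and sufficiency comes from the single-exchange tangent-cone criterion of \cref{km-lem:maximum-basis} together with linearity over the cone spanned by the rays; the only difference is order, since the paper extends the ray inequalities to all $x\in\sigma$ before applying the criterion pointwise, whereas you apply it at each ray and then extend maximality by nonnegative combination.
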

\begin{proof}
Necessity excludes improving exchanges. Conversely, extend the ray
inequalities linearly over each cone and apply
\cref{km-lem:maximum-basis}'s exchange criterion. Fundamental circuits
are obtained from an independence or rank oracle.
\end{proof}

Keeping all labels, including loops, makes matroid duality into an involution on these presentations.
\begin{proposition}\label{km-prop:Gale}
The involution $\mathcal G(P,u)=(P^*,-u)$ on full labelled
presentations preserves adaptation and satisfies
\begin{equation}\label{km-eq:Gale}
 C_{\mathcal G(P,u)}(t)=C_{P,u}(-t)^{-1}.
\end{equation}
\end{proposition}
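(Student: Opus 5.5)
We must prove two things about $\mathcal G(P,u)=(P^*,-u)$: it preserves adaptation, and it satisfies \eqref{km-eq:Gale}. Both reduce to complementation of bases.

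\textbf{Preservation of adaptation.} Suppose $(P,u)$ is adapted with optimizers $I_\sigma$. Put $J_\sigma=G\setminus I_\sigma$; it is a basis of $P^*$. For $x\in\sigma$ and any basis $J'$ of $P^*$, the complement $G\setminus J'$ is a basis of $P$. Maximality of $I_\sigma$ gives
\[
 \sum_{g\in I_\sigma}\langle u_g,x\rangle\ \ge\ \sum_{g\in G\setminus J'}\langle u_g,x\rangle .
\]
Subtract both sides from the constant total $\sum_{g\in G}\langle u_g,x\rangle$. This shows that $J_\sigma$ minimizes the $u$-weight among bases of $P^*$, uniformly for $x\in\sigma$. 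Equivalently, $J_\sigma$ maximizes the $(-u)$-weight. The argument is the one in \cref{km-prop:finite-certificate}. Since $P^{**}=P$ and $-(-u)=u$, the map $\mathcal G$ is an involution on full labelled presentations. Loops of $P$ become coloops of $P^*$, which is why all labels must be retained.

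\textbf{The Gale identity.} On each maximal cone $\sigma$, the dual presentation has local series
\[
 C_{\mathcal G(P,u)}(t)|_\sigma=\prod_{g\in J_\sigma}(1-u_gt).
\]
Therefore
\[
 C_{\mathcal G(P,u)}(t)|_\sigma\cdot C_{P,u}(-t)|_\sigma
 =\prod_{g\in J_\sigma}(1-u_gt)\prod_{g\in I_\sigma}(1-u_gt)
 =\prod_{g\in G}(1-u_gt).
\]
The right-hand side is a product of global linear characters and is independent of $\sigma$. This is the same mechanism as \eqref{km-eq:equivariant-inverse}. In ordinary Chow, every positive-degree coefficient of $\prod_{g\in G}(1-u_gt)$ lies in the ideal generated by global linear characters and so vanishes. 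The product is therefore $1$, which gives \eqref{km-eq:Gale}. The inverse $C_{P,u}(-t)^{-1}$ exists because the constant term is $1$.

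Equivariantly, the identity holds only up to the global factor $\prod_{g\in G}(1-u_gt)$. It becomes an exact inverse only in ordinary Chow, so the statement should be read in $A[t]$.

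\textbf{Main obstacle.} The delicate point is well-definedness of the local series on wall intersections. When several bases are optimal on a face, the elementary symmetric functions of their labels must agree. For $P$ this follows from equal-weight exchanges in the common optimal face, as remarked after \cref{km-def:presentation}. Complementation sends an equal-weight exchange in $P$ to an equal-weight exchange in $P^*$, so the same argument applies to the dual presentation.
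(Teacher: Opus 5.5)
Your proposal is correct and follows essentially the same route as the paper. The complement $J_\sigma=G\setminus I_\sigma$ of a maximum $u$-basis is a maximum $(-u)$-basis of $P^*$. The two local series then multiply equivariantly to the global product $\prod_{g\in G}(1-u_gt)$, which reduces to $1$ in ordinary Chow, and double dualization with character negation restores the original presentation.
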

\begin{proof}
The complement of a maximum $u$-basis is a maximum $-u$-basis of
$P^*$. Equivariantly,
$C^T_{P,u}(t)C^T_{P^*,-u}(-t)=\prod_{g\in G}(1+u_gt)$;
ordinary classes give the identity. Dualization and character negation
applied twice restore the presentation.
\end{proof}
Retain loops for the involution: they become coloops recording
weighted trivial summands; delete them only for the loopless envelope.
This is neither a canonical functor on all bundle isomorphism classes
nor a tropical evaluation sequence. Direct sums multiply Chern series,
and compatible integral fan pullback preserves witnesses on smooth
projective source fans. Zero-weight loops change only auxiliary ranks.
Whitney multiplication is not $z\mapsto z_1+z_2$, which introduces
binomial factors.
Chern-flow identities extend to this setting at the level of rank-marked characteristic series.
\begin{proposition}\label{km-prop:characteristic-network}
Let a finite directed graph have rank-marked vertex series $(r_v,C_v)$
and edge series $(q_e,K_e)$ in $A[t]$, with constant term one,
coefficient of $t^j$ in $A^j$, and vanishing above the marked rank.
Assume $q_e=r_{t(e)}-r_{s(e)}\ge0$ and $C_{t(e)}=C_{s(e)}K_e$.
For nonnegative integral multiplicities $a,b,\kappa$ with
$b=a+B_\Gamma\kappa$, suppose
$C_b=\prod_vC_v^{b_v}$ has an adapted presentation of rank
$R=\sum_vb_vr_v$. Its moment algebra has the K\"ahler package.
If $\Theta=\prod_vC_{v,r_v}^{a_v}\prod_eK_{e,q_e}^{\kappa_e}\ne0$,
then $A/\Ann_A\Theta$ has the weighted package of dimension $d-R$.
\end{proposition}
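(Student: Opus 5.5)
The plan is to copy the geometric argument of \cref{ec-thm:relations} at the level of characteristic series, replacing the $K^0$ identity by a series identity in $A[t]$ and the geometric certificate by the adapted presentation, via \cref{km-thm:inverse,km-thm:converse}.

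First I would prove the series identity
\[
 C_b=\prod_vC_v^{a_v}\prod_eK_e^{\kappa_e}\quad\text{in }A[t].
\]
Since every series has constant term one, it is invertible in $A[[t]]$, and the edge condition gives $K_e=C_{t(e)}C_{s(e)}^{-1}$. Raising this to the power $\kappa_e$ and multiplying over edges gives $\prod_vC_v^{(B_\Gamma\kappa)_v}$, with incoming sign positive as in \cref{ec-thm:relations}. Multiplying by $\prod_vC_v^{a_v}$ and using $b=a+B_\Gamma\kappa$ yields the identity. No acyclicity is needed, exactly as in \cref{thm:chern-flow}.

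Second I would compare ranks and top coefficients. The marked ranks satisfy $R=\sum_vb_vr_v=\sum_va_vr_v+\sum_e\kappa_eq_e$, because $q_e=r_{t(e)}-r_{s(e)}$. Each factor on the right vanishes above its marked rank. So the coefficient of $t^R$ takes the top coefficient of every factor, and this coefficient equals $\Theta$. On the left, the adapted presentation of rank $R$ has Chern series $C_b$ by \cref{km-thm:converse}, after deleting loops. Its $t^R$-coefficient is therefore its top Chern class $c_R$, and hence $c_R=\Theta$.

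Third I would apply the Hodge results. By \cref{km-thm:converse}, the presentation is the Chern data of a globally generated KM bundle $\cE$ of rank $R$ with $c_t(\cE)=C_b$. Then \cref{km-thm:inverse} gives a positive inverse-Chern certificate, and \cref{hd-thm:moment-Hodge} gives the Kähler package for its moment algebra. For the weighted statement, I would use \cref{hd-thm:terminal} with $\theta=c_R(\cE)=\Theta\ne0$. This gives $A/\Ann_A\Theta$ the package in formal dimension $d-R$ for the projected ample cone.

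The main obstacle is the second step: the top coefficient of $C_b$ must really be the rank-$R$ Chern class of the presented bundle. This requires that the presentation's rank equal the marked rank $R$, and that the factors vanish above their marked ranks, which the hypotheses supply. I also have to check that loop deletion does not change the series; it does not, by the remark after \cref{km-def:presentation}.
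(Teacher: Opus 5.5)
Your proposal is correct and follows essentially the same route as the paper: multiply the edge identities to get $C_b=\prod_vC_v^{a_v}\prod_eK_e^{\kappa_e}$, match ranks so that the degree-$R$ coefficient is $\Theta$, and pass through \cref{km-thm:converse} to a globally generated KM bundle. The only difference is presentational: the paper cites \cref{km-thm:joint-Hodge} for both the moment-algebra package and the terminal quotient, whereas you unroll that proposition into \cref{km-thm:inverse}, \cref{hd-thm:moment-Hodge}, and \cref{hd-thm:terminal}, which is exactly how that proposition is proved.
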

\begin{proof}
Positive-degree coefficients are nilpotent, so the series are invertible.
Multiplying edge identities gives
$C_b=\prod_vC_v^{a_v}\prod_eK_e^{\kappa_e}$ and
$R=\sum_va_vr_v+\sum_e\kappa_eq_e$. Its degree-$R$ coefficient is
$\Theta$, since every factor must contribute its rank-degree term.
The terminal presentation yields a generated KM bundle by
\cref{km-thm:converse}; apply \cref{km-thm:joint-Hodge}.
\end{proof}
For exact sequences the identities are Whitney multiplicativity.
Abstractly the identities and terminal presentation are hypotheses,
not existence claims for tropical kernels or cokernels; top Chern
classes are never divided.

More generally, for the projective-bundle data covered by
\cite[Theorem~3.9]{LP26} on its specified Lefschetz fan, the inverse
arrays $(1+\sum_jk_{i,j}t^j)^{-1}=\sum_{a\ge0}S_{i,a}t^a$ have
positive moment algebras by \cref{hd-thm:moment-Hodge}. No actual
bundle interpretation is required.

For a loopless rank-$r_0$ matroid $M_0$ on $N_0$ elements, the
permutohedral relation with coefficients $c_j(\cS_{M_0})$ has inverse
array $c_a(\cQ_{M_0})$, zero above $N_0-r_0$. Its terminal quotient
$A(\Sigma_{\mathrm{perm}})/\Ann(c_{N_0-r_0}(\cQ_{M_0}))$ is the
matroid Chow ring \cite[Example~3.12]{LP26}, of formal dimension $r_0-1$.
This is not an independent proof of matroid Hodge theory, already
used by Larson--Partida.

The comparison concerns the objects of \cite{KM24} on the stated
smooth projective toric bases. Without representability it produces
combinatorial Hodge algebras, not actual vector bundles or geometric
volume models. It does not cover nontrivially valuated matroids,
every notion of tropical vector bundle, or arbitrary positive sums
and pushforwards of Hodge algebras.
\section{Chern nonvanishing and coefficient inequalities}\label{sec:chern-nonvanishing}
\label{nv-sec:nonvanishing}
To recover class-valued vanishing from numerical moment support, we first
prove detection by one ample intersection in the geometric and KM settings.

\subsection{Chern products and ample detection}
For generated geometric bundles, effective representatives allow a single ample degree to detect nonvanishing.
\begin{lemma}\label{nv-lem:effective}
Let $X$ be a smooth integral complex projective $d$-fold and let
$\cE_1,\ldots,\cE_m$ be generated bundles of ranks $r_i$.
For $0\le\alpha_i\le r_i$ and $|\alpha|\le d$,
$C_\alpha=\prod_i c_{\alpha_i}(\cE_i)$ has an effective cycle representative.
For every ample line class $H$,
\begin{equation}\label{nv-eq:detection}
 C_\alpha\ne0\text{ in }\CH^{|\alpha|}(X)_\QQ
 \quad\Longleftrightarrow\quad
 \int_X C_\alpha H^{d-|\alpha|}>0.
\end{equation}
\end{lemma}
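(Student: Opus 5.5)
The plan is to realize $C_\alpha$ as a pushforward of an intersection of globally generated divisors on the evaluation-kernel incidence bundle. Then Bertini supplies an effective cycle, and positivity of an ample degree on effective cycles gives \eqref{nv-eq:detection}.

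\emph{Step 1: pushforward presentation.} Choose generating sequences $0\to\cK_i\to W_i\otimes\cO_X\to\cE_i\to0$, adding zero generators so that $s_i=\rk\cK_i\ge1$. Form $p:Z=\prod_{i,X}\PP_X(\cK_i^\vee)\to X$ with tautological classes $\xi_i$, exactly as in the proof of \cref{thm:total-chern-arbitrary-field}. There the $\xi_i$ are classes of globally generated line bundles, and iterating \eqref{eq:projective-bundle-push} with the projection formula gives
\[
 p_*\Bigl(\prod_i\xi_i^{\,s_i-1+\alpha_i}\Bigr)=\prod_i c_{\alpha_i}(\cE_i)=C_\alpha
\]
in $\CH^{|\alpha|}(X)_\QQ$.

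\emph{Step 2: effectivity.} Over $\CC$, choose general sections of the generated line bundles $\cO_i(1)$, taking $s_i-1+\alpha_i$ of them for each $i$. Kleiman--Bertini transversality for base-point-free linear systems makes the successive zero loci intersect properly, and each intersection is either empty or of the expected codimension. The resulting intersection cycle is a nonnegative integral combination of subvarieties of $Z$, and it represents $\prod_i\xi_i^{s_i-1+\alpha_i}$. Proper pushforward of an effective cycle is effective: components with positive-dimensional fibres over their images push forward to zero. Hence $C_\alpha$ has an effective representative $\sum_j n_j[V_j]$ with $n_j>0$ and $\operatorname{codim}V_j=|\alpha|$, possibly the empty sum. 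The bounds $\alpha_i\le r_i$ and $|\alpha|\le d$ only ensure that the degrees are in range; no further input is needed.

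\emph{Step 3: detection.} Write $C_\alpha=\sum_j n_j[V_j]$. Then
\[
 \int_X C_\alpha H^{d-|\alpha|}=\sum_j n_j\deg_H V_j\ge0,
\]
with equality exactly when the sum is empty, since each $\deg_H V_j>0$ for ample $H$. If the integral is positive, $C_\alpha\ne0$ rationally. Conversely, if the integral vanishes, the representing cycle is the zero cycle, so $C_\alpha=0$ in $\CH^{|\alpha|}(X)_\QQ$.

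I expect the main point of care to be Step 2: the transversality statement must produce a genuine cycle rather than merely a class. I would invoke the moving statement for base-point-free systems (Fulton, Example 12.1.11, or Kleiman's theorem applied to the morphism $Z\to\prod_i\PP(W_i^\vee)$ given by the $\cO_i(1)$ and general hyperplanes there), and check that proper intersection of Cartier divisors yields the refined product with nonnegative multiplicities. An alternative I would keep in reserve is to pull back Schubert cycles along the classifying map to $\prod_i\operatorname{Gr}$, using Kleiman transversality for the homogeneous target.
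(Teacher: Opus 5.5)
Your proof is correct, but it reaches effectivity by a different route from the paper's.

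\textbf{The paper's route.} The paper twists instead of using kernels. On $Y=X\times\prod_i\PP^{r_i-\alpha_i}$ it takes the single generated bundle $\cV=\bigoplus_i p_X^*\cE_i\otimes\cO_i(1)$ of rank $R=\sum_i r_i$. The expansion $c_{r_i}(\cE_i\otimes\cO_i(1))=\sum_j c_j(\cE_i)h_i^{r_i-j}$ gives $p_{X*}(c_R(\cV)\cap[Y])=C_\alpha\cap[X]$. It then represents $c_R(\cV)\cap[Y]$ by the zero scheme of one general section. Generic smoothness in characteristic zero makes that zero scheme smooth of codimension $R$, or empty.

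\textbf{Your route.} You reuse the evaluation-kernel bundles of \cref{thm:total-chern-arbitrary-field} and the identity $h_a(\cK_i^\vee)=c_a(\cE_i)$. This writes $C_\alpha$ as the pushforward of a monomial in base-point-free divisor classes. The only positivity input you then need is that a general member of a base-point-free linear system contains no component of a given effective cycle. Kleiman transversality is more than that: it suffices to choose each section so that it vanishes identically on no component of the current cycle. Then $c_1(\cO_i(1))\cap[V]$ is the effective Weil divisor of the restricted section.

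\textbf{Comparison.} Your version avoids generic smoothness and the higher-rank identification of a zero scheme with a top Chern class. It works in any characteristic over an infinite field. The cost is the auxiliary kernel bundles and a representative that carries multiplicities. The paper's version needs no kernels and produces a single smooth subvariety of $Y$. The final detection step, positivity of ample degree on a nonzero effective cycle, is identical in both arguments.
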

\begin{proof}
On $Y=X\times\prod_i\PP^{r_i-\alpha_i}$, the generated bundle
$\cV=\bigoplus_i p_X^*\cE_i\otimes\cO_i(1)$ has rank $R=\sum_i r_i$.
Choose a finite-dimensional generating space $W$ for $\cV$. The
affine incidence $\{(y,s)\in Y\times W:s(y)=0\}$ is the total space
of the evaluation kernel over $Y$, hence is smooth. Its projection to
$W$ is proper because $Y$ is projective. If the projection is not
dominant, a general zero scheme is empty; otherwise generic smoothness
in characteristic zero gives a smooth general zero scheme $Z$ of
codimension $R$. Hence $[Z]=c_R(\cV)\cap[Y]$.
Writing $h_i=c_1(\cO_i(1))$, expand
$c_{r_i}(\cE_i\otimes\cO_i(1))=\sum_jc_j(\cE_i)h_i^{r_i-j}$ and push
forward to obtain $p_{X*}[Z]=C_\alpha\cap[X]$.
This is effective, with lower-dimensional images contributing zero.
Every nonzero effective cycle has positive ample degree. Thus degree
zero forces this representative, and therefore the Chow class, to vanish;
a nonzero degree precludes vanishing.
\end{proof}
For these Chern products, nonvanishing in rational Chow is equivalent
to nonvanishing of the real cycle class: a zero cycle class has zero
ample degree, so the lemma gives vanishing in rational Chow.

A degree-one-generated coefficient algebra admits a numerical substitute for this effectivity argument.
\begin{lemma}\label{nv-lem:abstract-detection}
Let $A$ be a finite-dimensional Poincar\'e-duality algebra of formal
dimension $d$, generated by $A^1$. Fix $H$ in an open cone
$\mathcal A\subset A^1$ spanning $A^1$. Suppose joint Chern--divisor
degrees are nonnegative on a cone $\mathcal N\supseteq\mathcal A$,
and $MH-L\in\mathcal N$ for every $L\in\mathcal A$ and all large $M$.
Then \eqref{nv-eq:detection}, with degree in $A$, holds for these arrays.
\end{lemma}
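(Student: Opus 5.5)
The plan is to prove the two implications in \eqref{nv-eq:detection} separately. The direction from a positive degree to nonvanishing is immediate: if $\deg_A(C_\alpha H^{d-|\alpha|})>0$, then $C_\alpha\ne0$. The work is in the converse, which I would obtain by producing one positive mixed degree through Poincar\'e duality and then transporting it to a pure power of $H$ by positivity.

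Write $k=|\alpha|$ and assume $C_\alpha\ne0$ in $A^k$. Perfect pairing gives some $b\in A^{d-k}$ with $\deg_A(C_\alpha b)\ne0$.

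\begin{itemize}[leftmargin=*]
\item \emph{Spanning.} Since $A$ is generated by $A^1$, the space $A^{d-k}$ is spanned by products of $d-k$ degree-one classes. Since $\mathcal A$ is open and spans $A^1$, every class in $A^1$ is a difference of two elements of $\mathcal A$. Multilinear expansion then shows that $A^{d-k}$ is spanned by products $L_1\cdots L_{d-k}$ with every $L_j\in\mathcal A$.
\item \emph{A positive mixed degree.} Consequently some such product satisfies $\deg_A(C_\alpha L_1\cdots L_{d-k})\ne0$. As $L_j\in\mathcal A\subseteq\mathcal N$, the nonnegativity hypothesis makes this degree strictly positive.
\end{itemize}

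For the transport step, choose $M$ large enough that $N_j:=MH-L_j\in\mathcal N$ for each of the finitely many $j$. Then
\[
 M^{d-k}\deg_A\bigl(C_\alpha H^{d-k}\bigr)
 =\deg_A\Bigl(C_\alpha\prod_{j=1}^{d-k}(L_j+N_j)\Bigr)
 =\sum_{S\subseteq[d-k]}\deg_A\Bigl(C_\alpha\prod_{j\in S}L_j\prod_{j\notin S}N_j\Bigr).
\]
Every summand is a joint Chern--divisor degree with all divisor factors in $\mathcal N$, so each is nonnegative. The summand with $S=[d-k]$ is the positive degree found above. Hence $\deg_A(C_\alpha H^{d-k})>0$. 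If $k=d$ there are no divisor factors, and the claim reduces directly to $\deg_A(C_\alpha)>0$.

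The main point needing care is the reading of the hypothesis ``joint Chern--divisor degrees are nonnegative on $\mathcal N$.'' It must be taken multilinearly: $\deg_A(C_\alpha D_1\cdots D_{d-k})\ge0$ for arbitrary, possibly distinct, $D_i\in\mathcal N$, and not merely for powers of a single class. This mixed form is what the expansion requires, and it is what the degree polynomials of \cref{hd-cor:degree-polynomial,km-thm:joint-Hodge} supply in the intended applications. If only powers of single classes were given, I would first recover mixed nonnegativity by polarization, using Lorentzianity of the joint polynomial in those settings.
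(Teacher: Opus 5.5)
Your proof is correct and is essentially the paper's argument run directly instead of by contraposition: the paper assumes $\deg(C_\alpha H^{d-|\alpha|})=0$ and uses the same expansion of $\prod_j\bigl(L_j+(MH-L_j)\bigr)$ to force every mixed degree $\deg(C_\alpha L_1\cdots L_{d-|\alpha|})$ with $L_j\in\mathcal A$ to vanish. It then concludes $C_\alpha=0$ from spanning and Poincar\'e duality; your multilinear reading of the nonnegativity hypothesis is exactly what that expansion requires, and your difference-of-cone-elements argument supplies the spanning claim that the paper states without proof.
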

\begin{proof}
Put $k=d-|\alpha|$. If $\deg(C_\alpha H^k)=0$, choose
$L_1,\ldots,L_k\in\mathcal A$ and a common $M$ with
$MH-L_j\in\mathcal N$. Expanding $\prod_j(L_j+(MH-L_j))$ gives
$0\le\deg(C_\alpha L_1\cdots L_k) \le M^k\deg(C_\alpha H^k)=0$.
These products span $A^k$, so Poincar\'e duality gives $C_\alpha=0$.
The converse is immediate.
\end{proof}
For generated KM bundles, \cref{km-thm:joint-Hodge} supplies mixed
nonnegativity; the ample and nef cones of the toric Chow algebra have
the required domination property. No representability is needed.

\subsection{Nonvanishing polymatroids and algebraic matroids}
The joint moment support turns these detection results into a complete criterion for products of Chern classes.
The Hall--Rado criterion for nef intersections provides a related geometric
precedent; see \cite[\S2.4]{HuXiao25}. Here the rank is computed from the
Chern classes of each direct sum, rather than from the numerical dimension
of a sum of divisor classes.

\begin{theorem}\label{nv-thm:polymatroid}
In the geometric setting of \cref{nv-lem:effective}, or for generated
KM bundles on a smooth projective toric variety, set
\begin{equation}\label{nv-eq:rank}
 \rho_C(S)=\max\left\{j:c_j\left(\bigoplus_{i\in S}\cE_i\right)\ne0\right\},
 \qquad\rho_C(\varnothing)=0.
\end{equation}
Nonvanishing is in $\CH^*(X)_\QQ$ or the toric Chow algebra, respectively.
Then $\rho_C$ is normalized, monotone, integral, and submodular.
For every $\alpha\in\NN^m$, one has
\begin{equation}\label{nv-eq:HallRado}
 \prod_i c_{\alpha_i}(\cE_i)\ne0
 \quad\Longleftrightarrow\quad
 \alpha(S)\le\rho_C(S)\quad(S\subseteq[m]).
\end{equation}
Out-of-rank Chern classes are zero.
\end{theorem}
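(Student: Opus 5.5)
We reduce both assertions to the support of a single Lorentzian polynomial. Fix an ample class $H$ and consider
\[
 F(x,y)=\sum_{|\alpha|+k=d}\deg\Bigl(\prod_i c_{\alpha_i}(\cE_i)\,H^{k}\Bigr)\frac{x^\alpha y^k}{\alpha!\,k!}.
\]
In the geometric case this is \cref{hd-cor:mixed-Chern} applied via \cref{hd-thm:geometric}, or equivalently \cref{thm:total-chern-arbitrary-field} with $H$ added as the line bundle $\cO(H)$. In the KM case it is \cref{km-thm:joint-Hodge}. Either way $F$ is Lorentzian, and it is nonzero because $\deg H^d>0$. By \cref{thm:BH-criterion} its support $\mathcal S\subseteq\NN^{m+1}$ is $M$-convex. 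By \cref{nv-lem:effective}, or by \cref{nv-lem:abstract-detection} in the toric case, $(\alpha,d-|\alpha|)\in\mathcal S$ if and only if $C_\alpha=\prod_ic_{\alpha_i}(\cE_i)\ne0$, for every $|\alpha|\le d$. When $|\alpha|>d$ the product vanishes for degree reasons. Consequently $N=\{\alpha:C_\alpha\ne0\}$ is the projection of an $M$-convex set along its slack coordinate, hence $M^\natural$-convex. It contains $0$ and is closed under lowering coordinates: if $C_\alpha\ne0$ and $\beta\le\alpha$, pick $\gamma$ with $\beta+\gamma=\alpha$; then $C_\alpha=C_\beta C_\gamma$ is nonzero, so $C_\beta$ is nonzero.

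A down-closed $M^\natural$-convex set containing $0$ is exactly the set of integral independent vectors of an integral polymatroid. Its rank function is $\rho(S)=\max\{\alpha(S):\alpha\in N\}$, and $N=\{\alpha\ge0:\alpha(S)\le\rho(S)\ \forall S\}$. This is the standard identification of integral g-polymatroids with down-closed lattice sets; cite \cite{Murota03}. So $\rho$ is normalized, monotone, integral and submodular, and \eqref{nv-eq:HallRado} holds with $\rho$ in place of $\rho_C$.

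It remains to show $\rho=\rho_C$. Whitney's formula gives
\[
 c_j\Bigl(\bigoplus_{i\in S}\cE_i\Bigr)=\sum_{\alpha\text{ supported on }S,\ |\alpha|=j}C_\alpha.
\]
Hence $\rho_C(S)\le\rho(S)$, since a nonzero sum has a nonzero term. For the reverse inequality we cannot rely on the terms not cancelling, so we use ample degrees. Pair the sum with $H^{d-j}$. Each term $\deg(C_\alpha H^{d-j})$ is nonnegative, by effectivity or mixed nonnegativity. If some $C_\alpha\ne0$ with $|\alpha|=j=\rho(S)$ and $\alpha$ supported on $S$, then the total degree is positive, so $c_j(\bigoplus_S\cE_i)\ne0$. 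This gives $\rho_C(S)\ge\rho(S)$.

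For the direct sum $\bigoplus_S\cE_i$, which is again generated (in the KM case, direct sums of witness presentations), nonvanishing is detected in the same way. The out-of-rank statement is trivial: $c_a(\cE_i)=0$ for $a>r_i$.

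The step we expect to be the main obstacle is the combinatorial one: passing from $M$-convexity of the slack-homogenized support to the polymatroid description, in particular checking down-closure together with $M^\natural$-convexity. Down-closure does not follow from exchange alone; it comes from the multiplicative argument given above, which is why we establish it separately. A second point of care is verifying the detection hypotheses of \cref{nv-lem:abstract-detection} in the toric setting, namely the domination $MH-L$ nef for ample $L$.
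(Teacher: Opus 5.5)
Your overall architecture matches the paper's proof: the slack-homogenized Lorentzian polynomial $F$, $M$-convexity of its support, ample detection to identify support points with nonzero $C_\alpha$, and Whitney's formula plus positivity of ample degrees to rule out cancellation. The gap is the down-closure step. You rightly treat it as essential, but the justification you give is false. The identity $C_\alpha=C_\beta C_\gamma$ for $\beta+\gamma=\alpha$ would require $c_{a+b}(\cE_i)=c_a(\cE_i)c_b(\cE_i)$, and this fails in general (the paper warns about this in \cref{nv-rem:two-insertions}).

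Moreover, your argument never uses global generation, yet down-closure is false without it. For $\cO(1)\oplus\cO(-1)$ on $\PP^2$ one has $c_1=0$, so $c_1^2=0$, while $c_2=-h^2\ne0$. You use down-closure twice:
\begin{itemize}
\item to identify the $M^\natural$-convex set $N$ with $\{\alpha\ge0:\alpha(S)\le\rho(S)\}$;
\item in the step $\rho_C(S)\ge\rho(S)$. There you need a nonzero $C_\alpha$ with $\alpha$ supported on $S$ and $\alpha(S)=\rho(S)$. Such an $\alpha$ comes from restricting a maximizer of $\alpha(S)$ to $S$, which is again down-closure. The paper states this explicitly as $C_\alpha\ne0\Rightarrow C_{\alpha|_S}\ne0$.
\end{itemize}

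The repair is the route you dismissed: down-closure does follow from exchange, once you use a point you already have. Since $\deg H^d>0$, the pure-slack vector $de_0$ lies in the support $\mathcal S$. Suppose $(\alpha,d-|\alpha|)\in\mathcal S$ and $\alpha_i>0$. Then $M$-exchange against $de_0$ must use the slack coordinate, the only one where $de_0$ is larger. Hence $(\alpha-e_i,d-|\alpha|+1)\in\mathcal S$, that is, $C_{\alpha-e_i}\ne0$, and iterating gives down-closure.

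This is exactly the paper's observation that every set containing the slack index has rank $d$ in the polymatroid whose base set is $\mathcal S$. The paper then obtains $N=I(r|_{[m]})$ directly by deleting the slack coordinate from the base set, with no separate down-closure argument. With this replacement, the rest of your proof goes through as written.
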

\begin{proof}
For ample $H$, \cref{hd-cor:mixed-Chern,km-thm:joint-Hodge} make
\[
 F(x,t)=\sum_{|\alpha|\le d}
 \deg(C_\alpha H^{d-|\alpha|})
 \frac{x^\alpha t^{d-|\alpha|}}{\alpha!(d-|\alpha|)!}
\]
Lorentzian, with positive pure-slack coefficient $\deg(H^d)$.
Its support is an integral polymatroid base set with rank function $r$
\cite[Theorem~2.25]{BH20}. Every set containing the slack index $0$
has rank $d$, since $de_0$ is a base. Those inequalities impose nothing
further on nonnegative vectors of total sum $d$. Deleting the slack
coordinate therefore gives
$I(r|_{[m]})=\{\alpha\in\NN^m:\alpha(S)\le r(S)\ (S\subseteq[m])\}$;
see also \cite{Murota03}. The detection lemmas identify this with the
nonzero Chern products.

Whitney's formula expresses $c_j(\bigoplus_{i\in S}\cE_i)$ as the sum
of $C_\beta$ with $|\beta|=j$ supported in $S$. Ample detection prevents
cancellation: each nonzero summand has strictly positive degree.
Moreover, $C_\alpha\ne0$ implies $C_{\alpha|_S}\ne0$.
Thus the largest such $j$ is
$\max_{\alpha\in I(r|_{[m]})}\alpha(S)=r(S)$, identifying $r|_{[m]}$
with \eqref{nv-eq:rank} and proving \eqref{nv-eq:HallRado}.
\end{proof}

\begin{example}
For $\cO(1,0),\cO(1,0),\cO(0,1)$ on $\PP^1\times\PP^1$,
$\rho_C(S)=\mathbf1_{S\cap\{1,2\}\ne\varnothing}+\mathbf1_{3\in S}$.
Distinct rank-one labels can give the same intersection direction;
Chern rank is not the rank of a matroid of generating sections.
\end{example}

After a feasible set of Chern indices has been chosen, the remaining admissible indices are governed by the following residual rank.
\begin{proposition}\label{nv-thm:residual}
Let $\rho$ be an integral polymatroid rank function on $[m]$, and write
$I(\rho)=\{\alpha\in\NN^m:\alpha(S)\le\rho(S)\text{ for }S\subseteq[m]\}$.
For $\alpha\in I(\rho)$, set
\begin{equation}\label{nv-eq:residual}
 \rho_\alpha(S)=\min_{T\supseteq S}(\rho(T)-\alpha(T)).
\end{equation}
Then $\rho_\alpha$ is an integral polymatroid rank function. For every
$\beta\in\NN^m$, one has
$\alpha+\beta\in I(\rho)$ if and only if $\beta\in I(\rho_\alpha)$.
For such a $\beta$ with $\alpha+\beta\in I(\rho)$, moreover,
$(\rho_\alpha)_\beta=\rho_{\alpha+\beta}$.
\end{proposition}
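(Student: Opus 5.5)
This is a statement about polymatroids alone, so I would argue directly from the rank axioms and the Frank–Murota description of independence polytopes. No geometry is needed.

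\emph{Step 1: $\rho_\alpha$ is a rank function.} Taking $T=\varnothing$ in \eqref{nv-eq:residual} gives $\rho_\alpha(\varnothing)\le 0$. Since $\alpha\in I(\rho)$, every term $\rho(T)-\alpha(T)$ is nonnegative, so $\rho_\alpha\ge0$ and hence $\rho_\alpha(\varnothing)=0$. Integrality is clear, and monotonicity holds because enlarging $S$ shrinks the range of the minimum. For submodularity, let $T_1\supseteq S_1$ and $T_2\supseteq S_2$ be minimizers. The set function $g(T)=\rho(T)-\alpha(T)$ is submodular, since $\alpha$ is modular. As $T_1\cup T_2\supseteq S_1\cup S_2$ and $T_1\cap T_2\supseteq S_1\cap S_2$,
\[
 \rho_\alpha(S_1\cup S_2)+\rho_\alpha(S_1\cap S_2)
 \le g(T_1\cup T_2)+g(T_1\cap T_2)\le g(T_1)+g(T_2).
\]
The right-hand side is $\rho_\alpha(S_1)+\rho_\alpha(S_2)$.

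\emph{Step 2: the membership equivalence.} Suppose $\alpha+\beta\in I(\rho)$ with $\beta\ge0$. For $S\subseteq T$,
\[
 \beta(S)\le\beta(T)\le\rho(T)-\alpha(T).
\]
Minimizing over $T\supseteq S$ gives $\beta\in I(\rho_\alpha)$. Conversely, if $\beta\in I(\rho_\alpha)$, take $T=S$ to get $\beta(S)\le\rho(S)-\alpha(S)$, which is $\alpha+\beta\in I(\rho)$. So the equivalence follows directly from the definition and nonnegativity.

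\emph{Step 3: the iteration identity.} Unfolding the definitions,
\[
 (\rho_\alpha)_\beta(S)=\min_{U\supseteq S}\Bigl(\min_{T\supseteq U}\bigl(\rho(T)-\alpha(T)\bigr)-\beta(U)\Bigr).
\]
Reorder this as a minimum over pairs $S\subseteq U\subseteq T$. For fixed $T$, the inner minimum over $U$ is attained at $U=T$, because $\beta\ge0$ makes $-\beta(U)$ smallest when $U$ is largest. This leaves $\min_{T\supseteq S}\bigl(\rho(T)-(\alpha+\beta)(T)\bigr)=\rho_{\alpha+\beta}(S)$.

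The only step with real content is the submodularity argument in Step 1, which uses the lattice closure of the minimizing supersets. I expect no genuine obstacle beyond checking nonnegativity, which is exactly where the hypothesis $\alpha\in I(\rho)$ is used.
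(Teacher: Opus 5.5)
Your proof is correct and follows the paper's argument essentially step for step. Nonnegativity and normalization come from feasibility of $\alpha$. Submodularity comes from minimizing supersets together with submodularity of $\rho-\alpha$. The membership equivalence comes from comparing $\beta(S)\le\beta(T)$ with the choice $T=S$. The iteration identity comes from taking the inner set as large as possible, using $\beta\ge0$. Your opening mention of the Frank--Murota description is superfluous, since you never use it.
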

\begin{proof}
Feasibility gives nonnegativity and normalization; the minimum gives
integrality and monotonicity. For minimizers $T_j\supseteq S_j$,
submodularity of $\rho-\alpha$ yields
\[
 \rho_\alpha(S_1)+\rho_\alpha(S_2)
 \ge(\rho-\alpha)(T_1\cup T_2)+(\rho-\alpha)(T_1\cap T_2)
 \ge\rho_\alpha(S_1\cup S_2)+\rho_\alpha(S_1\cap S_2).
\]
If $\alpha+\beta$ is feasible, then
$\beta(S)\le\beta(T)\le\rho(T)-\alpha(T)$ for $T\supseteq S$.
Conversely, $\beta(S)\le\rho_\alpha(S)\le\rho(S)-\alpha(S)$ gives
feasibility. Finally,
$(\rho_\alpha)_\beta(S)=\min_{S\subseteq T\subseteq U}
(\rho(U)-\alpha(U)-\beta(T))$; for fixed $U$, nonnegativity of $\beta$
makes $T=U$ optimal.
\end{proof}
Deletion restricts $\rho_C$; grouping disjoint blocks $J_1,\ldots,J_s$
by direct sum gives $\rho_{\rm group}(T)=\rho_C(\bigcup_{j\in T}J_j)$.
These are support operations, not identities with fixed Chern insertions.

In the geometric case, polarization realizes the nonvanishing criterion by an algebraic matroid on labelled copies of the bundle indices.
\begin{proposition}\label{nv-thm:clone}
In the geometric setting of \cref{nv-lem:effective}, choose disjoint
blocks $B_i$ of sizes $r_i$. There is a matroid $M_C$ on
$\bigsqcup_iB_i$, algebraic over $\CC$, with
\begin{equation}\label{nv-eq:clone-independence}
 I\text{ independent}\quad\Longleftrightarrow\quad
 \prod_i c_{|I\cap B_i|}(\cE_i)\ne0\text{ in }\CH^*(X)_\QQ.
\end{equation}
It is determined by labelled Chern nonvanishing and has block rank
$r_{M_C}(\bigcup_{i\in S}B_i)=\rho_C(S)$.
For feasible $\alpha$ and independent $I_\alpha$ with counts $\alpha$,
the contraction $M_C/I_\alpha$ has residual block rank $\rho_{C,\alpha}$,
which is algebraic over $\CC$.
\end{proposition}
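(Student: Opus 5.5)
The plan is to obtain $M_C$ from a total-Chern realization followed by full polarization. Then \cref{thm:algebraic-polymatroid-consequence} identifies the support with the bases of an algebraic matroid. Independence is read off by adjoining a slack block for an ample direction.

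Fix an ample integral class $H$ on $X$, write $\cL_H$ for its line bundle, and assume $H$ is very ample after scaling. Consider the bundles $\cE_1,\ldots,\cE_m$ together with $\cE_0=\cL_H^{\oplus d}$; all are generated. The coefficient of $x^\alpha x_0^{k}$ in $T_{X,\cE_\bullet}$ is $\binom dk\int_X C_\alpha H^k$. This binomial factor is harmless for supports. By \cref{thm:total-chern-arbitrary-field}, $\Nrm(T)\in\RVC$, so \cref{thm:algebraic-polymatroid-consequence} applies. After full polarization in blocks $B_1,\ldots,B_m$ of sizes $r_i$ and a slack block $B_0$ of size $d$, the support is the base set of a matroid $\widetilde M$ algebraic over $\CC$. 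The blocks $B_i$ of size $r_i$ dominate the $x_i$-exponents, since $c_a(\cE_i)=0$ for $a>r_i$.

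By \cref{nv-lem:effective}, a set $S\subseteq\bigsqcup B_i\sqcup B_0$ is a base exactly when $|S\cap B_0|=d-|\alpha|$ and $C_\alpha\ne0$, where $\alpha$ records the counts $|S\cap B_i|$. Define $M_C=\widetilde M\setminus B_0$, the deletion of the slack block. Deletion preserves algebraicity: restrict to the subfamily of the fields $K_g$. A set $I\subseteq\bigsqcup_{i\ge1}B_i$ is independent in $M_C$ iff it extends to a base of $\widetilde M$, iff it extends by slack elements alone or by further $B_i$-elements. Extending by slack alone shows that $C_{\alpha(I)}\ne0$ implies independence. For the converse, if $I\subseteq S$ with $S$ a base, then $C_{\alpha(S)}\ne0$ implies $C_{\alpha(I)}\ne0$, because a nonzero product of Chern classes remains nonzero after lowering indices. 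This last point is the downward-closure statement contained in \eqref{nv-eq:HallRado}: lowering $\alpha$ preserves $\alpha(S)\le\rho_C(S)$. This proves \eqref{nv-eq:clone-independence}, and it shows that $M_C$ depends only on labelled nonvanishing.

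The block rank is $\max\{\alpha(S):C_\alpha\ne0\}$, which equals $\rho_C(S)$ by the identification in the proof of \cref{nv-thm:polymatroid}. For the contraction, $J$ is independent in $M_C/I_\alpha$ iff $J\cup I_\alpha$ is independent, that is, iff $\alpha+\beta(J)$ is feasible. By \cref{nv-thm:residual} this holds iff $\beta\in I(\rho_{C,\alpha})$, so the block rank of the contraction is the maximum of $\beta(S)$ over $I(\rho_{C,\alpha})$, which is $\rho_{C,\alpha}(S)$. Contractions of algebraic matroids are algebraic: replace the base field $\CC$ by the compositum $K_{I_\alpha}$ and take its algebraic closure.

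The main obstacle is this last step. Algebraicity in \cref{thm:algebraic-polymatroid-consequence} is measured relative to $\kk=\CC$ by transcendence degree, and the contraction is naturally algebraic over $\overline{\CC(K_{I_\alpha})}$ rather than over $\CC$. I would first invoke the standard fact (Lindström) that algebraic representability over some field of characteristic zero can be transferred to a representation over $\CC$ itself by specializing the transcendental parameters generically.
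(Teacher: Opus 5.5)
Your proposal is correct and follows the paper's proof: the slack bundle $H^{\oplus d}$ feeds the total-Chern realization, full polarization via \cref{thm:algebraic-polymatroid-consequence} gives $\widetilde M$, you delete $B_0$, and then \cref{nv-thm:polymatroid,nv-thm:residual} give the block and residual ranks. The differences are minor. For ``independent $\Rightarrow$ nonzero'' the paper augments $I$ to a basis using only elements of the slack basis $B_0$, so it does not need the downward closure you take from \eqref{nv-eq:HallRado}, though your route is also valid. The algebraicity of the contraction, which you flag as the main obstacle, the paper simply cites from \cite[Corollary~5.6]{GHMSSW25}; it also follows at once because $\overline{\CC(t_{I_\alpha})}\cong\CC$ as abstract fields.
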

\begin{proof}
For $d=0$ take the rank-zero matroid. Otherwise choose very ample $H$
and $\cE_0=H^{\oplus d}$, so $c_j(\cE_0)=\binom djH^j$.
Total-Chern realization gives
\[
 \Nrm\left(\sum_{|\alpha|+j=d}\binom dj
          \deg(C_\alpha H^j)x^\alpha t^j\right)\in\RVC.
\]
Polarize in the blocks $B_i$ and a slack block $B_0$ of size $d$.
By \cref{thm:algebraic-polymatroid-consequence},
the support is the base set of an algebraic matroid $\widetilde M$;
$B_0$ is itself a basis. Put $M_C=\widetilde M\setminus B_0$.
If $C_\alpha\ne0$, ample detection makes every set with counts
$(\alpha,d-|\alpha|)$ a basis of $\widetilde M$, so every clone set
with counts $\alpha$ is independent. Conversely, any independent clone
set extends to a basis using only $B_0$, by repeated augmentation with
that basis. Its coefficient and ample detection give
\eqref{nv-eq:clone-independence}. Deletion forgets coordinates of an
algebraic representation, and \cref{nv-thm:polymatroid} gives the block
rank.

A remaining clone set $J$ is independent in $M_C/I_\alpha$ exactly when
$I_\alpha\cup J$ is independent in $M_C$, or
$C_{\alpha+\beta}\ne0$ for its counts $\beta$.
Apply \cref{nv-thm:residual}, with remaining bounds $r_i-\alpha_i$.
Algebraicity of the contraction follows from
\cite[Corollary~5.6]{GHMSSW25}.
\end{proof}

\begin{remark}\label{nv-rem:two-insertions}
Contraction shifts active indices to $C_{\alpha+\beta}$, whereas fixed
insertion uses $C_\alpha C_\beta$ and \cref{fn-thm:insert}.
Since $c_{a+b}(\cE)\ne c_a(\cE)c_b(\cE)$ in general, coefficient identities
must be computed separately from support operations.
\end{remark}

\subsection{Coefficient inequalities}\label{sec:coefficient-inequalities}
\label{sec:coefficient-consequences}
These consequences apply to the normalized packets and layers of
\cref{thm:main}, and to finite complements of balanced weighted packets.
The coefficients below are those of the underlying unnormalized polynomial.

\label{subsec:hodge-consequences}
Let
\begin{equation}\label{eq:consequence-polynomial}
 F(x)=\sum_{|\alpha|=d}c_\alpha x^\alpha,
 \qquad f:=\Nrm(F)\in\RVC,
\end{equation}
be nonzero and homogeneous, with $c_\alpha=0$ outside $\NN^n$.
The coefficient matrix and its two-by-two minors give the following
inequalities in one step.
\begin{corollary}\label{cor:coeff-HR}
For $|\beta|=d-2$, the matrix
\begin{equation}\label{eq:Hbeta-consequence}
 H_\beta=(c_{\beta+e_i+e_j})_{i,j=1}^n
\end{equation}
has at most one positive eigenvalue, exactly one when nonzero. Thus
for nonempty $I\subseteq[n]$,
\begin{equation}\label{eq:principal-minor-sign}
 (-1)^{|I|-1}\det(H_\beta)_I\ge0.
\end{equation}
In particular, for $|\alpha|=d$, $i\ne j$, and
$\alpha_i,\alpha_j\ge1$,
\begin{equation}\label{eq:root-logconcavity}
 c_\alpha^2\ge
 c_{\alpha+e_i-e_j}c_{\alpha-e_i+e_j}.
\end{equation}
For a packet $P(x,z)=\sum_{k=0}^rP_k(x)z^{r-k}$ with
$P_k\in\RR[x]_{d_0+k}$ and $\Nrm_{x,z}(P)\in\RVC$, put
$c_k(\alpha)=[x^\alpha]P_k$. Then
\begin{equation}\label{eq:cross-layer-logconcavity}
 c_k(\alpha)^2\ge
 c_{k-1}(\alpha-e_i)c_{k+1}(\alpha+e_i),
\end{equation}
with inadmissible indices zero. Along every indicated root line,
including direction $e_i-e_z$, the nonzero coefficients form a
log-concave interval without internal zeros.
\end{corollary}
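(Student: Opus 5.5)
The plan is to reduce everything to Lorentzianity of $f=\Nrm(F)$ and then read off the coefficient inequalities from Hessians of its $(d-2)$nd derivatives. Since $f\in\RVC$, it is Lorentzian by \cite[Theorem~4.6]{BH20}. By \eqref{eq:divided-derivative}, $\partial_x^\beta f$ for $|\beta|=d-2$ is the quadratic form $\tfrac12\sum_{i,j}c_{\beta+e_i+e_j}x_ix_j$, with Hessian $H_\beta$. The derivative $\partial_x^\beta f$ is again Lorentzian (closure under derivatives, or \cref{prop:RV-closure} combined with the complex realization). A Lorentzian quadratic form has Hessian with at most one positive eigenvalue, and this is the first claim. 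If $H_\beta\ne0$, its entries are nonnegative with some positive entry. Either a diagonal entry $c_{ii}>0$, giving $e_i^TH_\beta e_i>0$, or an off-diagonal entry $c_{ij}>0$, giving $(e_i+e_j)^TH_\beta(e_i+e_j)\ge 2c_{ij}>0$. So there is exactly one positive eigenvalue.

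For \eqref{eq:principal-minor-sign}, restrict to the principal submatrix $(H_\beta)_I$. By Cauchy interlacing it still has at most one positive eigenvalue. If it has none, it is negative semidefinite with nonnegative entries, hence zero, and the inequality is trivial. Otherwise its eigenvalues are one positive value and $|I|-1$ nonpositive values, so the determinant has sign $(-1)^{|I|-1}$ or vanishes. Taking $I=\{i,j\}$ and $\beta=\alpha-e_i-e_j$ gives
\[
c_{\alpha+e_i-e_j}c_{\alpha-e_i+e_j}-c_\alpha^2\le 0,
\]
which is \eqref{eq:root-logconcavity}.

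For the packet statement, apply the same argument to $P$ in the $n+1$ variables $(x,z)$; it is homogeneous of degree $d_0+r$. The coefficient of $x^\alpha z^{r-k}$ is $c_k(\alpha)$. Apply \eqref{eq:root-logconcavity} to the exponent $(\alpha,r-k)$ along the direction $e_i-e_z$:
\[
(\alpha,r-k)+e_i-e_z=(\alpha+e_i,\,r-k-1),
\]
and this exponent carries $c_{k+1}(\alpha+e_i)$. The other neighbour carries $c_{k-1}(\alpha-e_i)$. This yields \eqref{eq:cross-layer-logconcavity}. When $\alpha_i=0$ or $r-k=0$, a factor on the right vanishes, matching the convention that inadmissible indices are zero.

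For the interval statement along a root line $\alpha+t(e_i-e_j)$, including $j=z$, combine two inputs. The three-term inequalities give log-concavity. The absence of internal zeros comes from $M$-convexity of the support (\cref{thm:BH-criterion}): if two points on the line lie in the support, exchange between them can only use the coordinates $i,j$. Repeated exchange therefore fills the lattice segment between them, so the support on the line is an interval.

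The main subtle point is the "exactly one when nonzero" clause and the no-internal-zeros clause. Both need nonnegativity of the coefficients and $M$-convexity, not just the eigenvalue bound, and I expect the exchange argument restricted to a line to be the step needing most care.
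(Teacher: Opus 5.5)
Your proposal is correct and follows the paper's own argument: Lorentzianity feeds the Br\"and\'en--Huh Hessian criterion, interlacing handles principal submatrices, the $\{i,j\}$ minor at $\beta=\alpha-e_i-e_j$ gives root log-concavity, the direction $e_i-e_z$ at $(\alpha,r-k)$ gives the cross-layer inequality, and $M$-convexity of the support gives the interval claim. The only differences are cosmetic: you certify the positive eigenvalue with $e_i$ or $e_i+e_j$ rather than the all-ones vector, and you fill the segment on a root line by repeated exchange rather than by citing saturation; both variants are valid.
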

\begin{proof}
Apply \cref{thm:BH-criterion} and interlacing to $H_\beta$ and its
principal submatrices. A nonzero nonnegative symmetric matrix has
positive Rayleigh quotient on the all-ones vector, giving exactly one
positive eigenvalue and the determinant signs. The $i,j$ minor with
$\beta=\alpha-e_i-e_j$ gives \eqref{eq:root-logconcavity}.
Taking the direction $e_i-e_z$ at $(\alpha,r-k)$ gives
\eqref{eq:cross-layer-logconcavity}; at the boundary a neighbor is
zero. Saturated $M$-convex support gives the interval assertions.
\end{proof}

Actual volume realizations give further inequalities beyond the Lorentzian signature condition.

\begin{proposition}\label{thm:reverse-KT}
Let $\beta\in\NN^n$, put $m=d-|\beta|\ge0$, and let
$0\le e\le m$.  For any $i,j,k\in[n]$,
\begin{equation}\label{eq:coefficient-reverse-KT}
 \binom{m}{e}
 c_{\beta+(m-e)e_i+ee_j}
 c_{\beta+ee_i+(m-e)e_k}
 \ge
 c_{\beta+me_i}
 c_{\beta+ee_j+(m-e)e_k}.
\end{equation}
Coefficients with an index outside $\NN^n$ are interpreted as zero.
\end{proposition}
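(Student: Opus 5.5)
The plan is to reduce \eqref{eq:coefficient-reverse-KT} to the classical reverse Khovanskii--Teissier inequality for three nef divisors on an integral projective variety of dimension $m$. The realization is first moved to dimension $m$ by differentiation, which stays inside actual volume models.

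\textbf{Step 1 (reduction to dimension $m$).} Since $f=\Nrm(F)\in\RVC$, \cref{prop:RV-closure} shows that $\partial_x^\beta f\in\RVC$. By \eqref{eq:divided-derivative},
\[
 \partial_x^\beta f=\sum_{|\gamma|=m}c_{\beta+\gamma}\,x^{[\gamma]}.
\]
So there are $q\in\QQ_{\ge0}$, an integral complex projective $m$-fold $X'$, and semiample Cartier classes $D_1',\ldots,D_n'$ on $X'$ with
\[
 c_{\beta+\gamma}=q\int_{X'}D'^{\gamma}\qquad(|\gamma|=m).
\]
If $q=0$, or if $m=0$, the inequality is trivial: for $m=0$ both sides equal $c_\beta^2$. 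Both sides of \eqref{eq:coefficient-reverse-KT} are quadratic in the coefficients, so the factor $q^2$ cancels and we may take $q=1$. Coefficients with an index outside $\NN^n$ do not arise here, because $\beta\in\NN^n$ and $0\le e\le m$.

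\textbf{Step 2 (the inequality on $X'$).} Put $A=D_i'$, $B=D_j'$, $C=D_k'$. These are nef, since semiample classes are nef. The required statement becomes
\[
 \binom{m}{e}\,(A^{m-e}B^{e})\,(A^{e}C^{m-e})\ \ge\ (A^m)\,(B^{e}C^{m-e}).
\]
This is the reverse Khovanskii--Teissier inequality of Lehmann--Xiao and Jiang--Li for nef classes on an $m$-dimensional projective variety. I would pass to a resolution of $X'$ via \cref{prop:RV-closure} so that the smooth statement applies; pullback preserves nefness and all the intersection numbers. The cases where some of $i,j,k$ coincide need no separate treatment, since the inequality holds for arbitrary nef $A,B,C$.

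\textbf{Step 3 (the reverse inequality itself, the main obstacle).} If the reverse Khovanskii--Teissier inequality has to be justified rather than quoted, this is the hard step. It is a genuinely geometric input: it does not follow from the Lorentzian signature condition of \cref{thm:BH-criterion}, which is why the statement is placed after the Lorentzian consequences.

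My first approach is to perturb to the ample case by replacing $A,B,C$ with $A+\varepsilon H$ and so on. By continuity the inequality then passes to the nef limit. For ample classes I would follow the Lehmann--Xiao argument, in three parts:
\begin{enumerate}
\item If $A^m=0$ there is nothing to prove, so normalize $A^m=1$.
\item Bound the class $B^eC^{m-e}$ from above by a multiple of the class $A^e$ using an effective comparison: the coefficient $s$ with $B^e\le s\,A^e$ is controlled by $(A^{m-e}B^e)\binom{m}{e}/(A^m)$, via a Morse-type or Siu-type bigness inequality for $sA-B$.
\item Pair this comparison against the movable class $C^{m-e}$.
\end{enumerate}
If that route proves delicate, the fallback is to cut $X'$ by general members of base-point-free multiples of $A$, which is possible by Bertini in characteristic zero, and to induct on $m$. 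At $e=m$ the inequality is an equality, and at $e=0$ it is trivial. The subtle point in this fallback is keeping integrality after cutting: a reducible intersection does not sum well in a reverse inequality. For that reason I would keep the Lehmann--Xiao route as the primary argument.
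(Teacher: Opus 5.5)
Your proof is correct and follows the paper's argument. You realize $\partial_x^\beta f$ by semiample divisors in dimension $m$, handle the zero case and the endpoints $e=0,m$, identify the four coefficients with mixed intersection numbers up to a common positive factor that cancels, and apply the reverse Khovanskii--Teissier inequality, which the paper simply quotes as \cite[Theorem~1.1]{JiangLi23}. Your Step~3 is therefore unnecessary. As sketched it would not suffice for $e\ge2$ anyway: bigness of the divisor $sA-B$ only gives the divisor-level comparison, that is, the case $e=1$.
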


\begin{proof}
The endpoints $e=0,m$ are identities. Otherwise realize
$\partial_x^\beta f$ by semiample divisors $D_1,\ldots,D_n$; if this
derivative vanishes, all four coefficients are zero. For a nonzero
realization, \cite[Theorem~1.1]{JiangLi23} gives
$\binom me(D_i^{m-e}D_j^e)(D_i^eD_k^{m-e}) \ge(D_i^m)(D_j^eD_k^{m-e})$.
These are the four divided-power coefficients in
\eqref{eq:coefficient-reverse-KT}, up to one common positive factor
which cancels from the quadratic inequality.
\end{proof}

The common homogeneous packet couples different excess layers.  A one-dimensional specialization turns this coupling into a concrete log-concavity statement.

\begin{proposition}\label{prop:layer-mass-log-concavity}
Let $P(x,z)=\sum_{k=0}^rP_k(x)z^{r-k}$ with
$P_k\in\RR[x]_{d+k}$ and $\Nrm_{x,z}(P)\in\RVC$.
For $c_{k,\alpha}=[x^\alpha]P_k$ and $\lambda\in\RR_{\ge0}^n$, set
\[
 M_k(\lambda)=(d+k)!\sum_{|\alpha|=d+k}
                  \frac{c_{k,\alpha}}{\alpha!}\lambda^\alpha.
\]
Then $M_k(\lambda)^2\ge M_{k-1}(\lambda)M_{k+1}(\lambda)$, with
out-of-range terms zero. Its positive indices form an empty set or
an integer interval.
\end{proposition}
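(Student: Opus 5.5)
The plan is to specialize the realized packet along a single ray in the $x$-directions, so that only a bivariate polynomial in $(s,z)$ remains, and then read off log-concavity from the bivariate Lorentzian criterion.

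\textbf{Step 1: the specialized polynomial.} Write $f=\Nrm_{x,z}(P)\in\RVC$. The total degree is $D=d+r$, and the $z$-exponent of the layer $P_k$ is $r-k$. Substitute $x=s\lambda$ with $\lambda\in\RR_{\ge0}^n$. By \cref{prop:RV-closure} this nonnegative linear substitution preserves $\RVC$, so
\[
 g(s,z)=f(s\lambda,z)
\]
is again realizable, hence Lorentzian by \cite[Theorem~4.6]{BH20}. If some $\lambda_i$ are irrational, we can either approximate by rational $\lambda$ and use closedness of the Lorentzian class, or substitute real nonnegative combinations of nef divisors directly.

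\textbf{Step 2: the coefficients of $g$.} Expanding,
\[
 g(s,z)=\sum_k\sum_{|\alpha|=d+k}\frac{c_{k,\alpha}}{\alpha!}\lambda^\alpha s^{d+k}\frac{z^{r-k}}{(r-k)!}
 =\sum_k M_k(\lambda)\,s^{[d+k]}z^{[r-k]}.
\]
So $g$ is the divided-power binary form whose coefficient sequence is $(M_k(\lambda))_k$, indexed by the $s$-degree $d+k$. Note that $g$ may be identically zero.

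\textbf{Step 3: the bivariate criterion.} By \cite[Example~2.26]{BH20}, a bivariate form $\sum_j a_j\, s^{[j]}z^{[D-j]}$ with $a_j\ge0$ is Lorentzian exactly when $(a_j)$ is log-concave, $a_j^2\ge a_{j-1}a_{j+1}$, and has no internal zeros. Apply this to $g$. The indices $j$ with $j<d$ or $j>D$ have $a_j=0$; at those boundary positions the inequality reduces to $M_k^2\ge0$. This gives $M_k(\lambda)^2\ge M_{k-1}(\lambda)M_{k+1}(\lambda)$ with out-of-range terms zero. The no-internal-zeros condition gives that the positive indices form an integer interval, or the empty set when $g=0$.

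\textbf{Main obstacle.} The only delicate point is making sure the paper's Lorentzian convention in \cref{def:lorentzian} matches the bivariate criterion. The interval support comes from $M$-convexity in \cref{thm:BH-criterion}, and the log-concavity comes from the $2\times2$ Hessians $\begin{pmatrix}a_{j+1}&a_j\\a_j&a_{j-1}\end{pmatrix}$, which must have at most one positive eigenvalue. Since the coefficients are nonnegative, that condition is equivalent to a nonpositive determinant, which is exactly $a_j^2\ge a_{j-1}a_{j+1}$.
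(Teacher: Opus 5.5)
Your proposal is correct and follows the paper's proof. Like the paper, you specialize $x=\lambda t$ to obtain the bivariate divided-power form $\sum_k M_k(\lambda)\,t^{[d+k]}z^{[r-k]}$ and then apply the bivariate criterion \cite[Example~2.26]{BH20}. The one small difference is that the paper gets Lorentzianity of the specialization directly from \cite[Theorem~2.10]{BH20} (preservation under nonnegative linear substitutions), which covers irrational $\lambda$ without your detour through rational approximation or real nef combinations.
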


\begin{proof}
Apply the nonnegative real substitution $x_i=\lambda_i t$ to
$\Nrm_{x,z}(P)$. By \cite[Theorem~2.10]{BH20}, the result
\[
 \sum_{k=0}^rM_k(\lambda)t^{[d+k]}z^{[r-k]}
\]
is Lorentzian or zero. The bivariate criterion
\cite[Example~2.26]{BH20} gives log-concavity and the interval of
nonzero coefficients, including all boundary zero patterns of $\lambda$.
\end{proof}

For $\lambda=(1,\ldots,1)$, these are the multinomially weighted total masses of successive excess layers.  The proposition does not assert log-concavity of the unweighted coefficient sums.

\subsection{Equality in the descended pairing}\label{pr-subsec:equality}
In a descended surface pairing, the Hodge-index inequality also gives a quantitative equality criterion.
\begin{proposition}\label{pr-thm:rigidity}
Let $B$ have formal dimension two and the K\"ahler package, and let
$Q(a,b)=\deg_B(ab)$ on $B^1$.  Suppose $H,z\in B^1$ satisfy
$a=Q(H,H)>0$.  Put $b=Q(H,z)$, $c=Q(z,z)$, and
$\Delta=b^2-ac$.  Then
\[
 \Delta=a\bigl(-Q(q,q)\bigr)\ge0,\qquad q=z-(b/a)H,
\]
with equality if and only if $z=(b/a)H$ in $B^1$.  For every $D\in B^1$,
\begin{equation}\label{pr-eq:stability}
 \bigl(aQ(z,D)-bQ(H,D)\bigr)^2
 \le\Delta\bigl(Q(H,D)^2-aQ(D,D)\bigr).
\end{equation}
\end{proposition}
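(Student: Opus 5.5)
The plan is to reduce everything to the Hodge index statement in formal dimension two. Then the two claims follow from Cauchy--Schwarz on the negative definite primitive part. Since $B$ has the K\"ahler package of formal dimension $2$, the form $Q$ on $B^1$ has exactly one positive eigenvalue. Fix $L\in\mathcal K_B$; Hodge--Riemann in degree one says $-Q$ is positive definite on $P_L=\ker(L\cdot:B^1\to B^2)=\{v:Q(L,v)=0\}$. Hard Lefschetz in degree zero gives $Q(L,L)>0$, so $B^1=\RR L\oplus P_L$, and $Q$ has signature $(1,\dim B^1-1)$ and is nondegenerate.

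The first step is to transfer negativity from $L^\perp$ to $H^\perp$ using only $a=Q(H,H)>0$. This is the standard argument. If $v\ne0$ had $Q(H,v)=0$ and $Q(v,v)\ge0$, then $\Span(H,v)$ would be a two-dimensional subspace on which $Q$ is positive semidefinite. Its intersection with $P_L$ would be nonzero, since $\dim P_L=\dim B^1-1$. On that intersection $Q$ is negative definite. This is a contradiction, except that we must rule out the semidefinite-but-degenerate case. I will handle that case by writing $v'=xH+yv\in P_L$ with $Q(v',v')=x^2a+y^2Q(v,v)\ge0$, which forces $v'=0$. Hence $-Q$ is positive definite on $H^\perp=\{v:Q(H,v)=0\}$.

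Now put $q=z-(b/a)H\in H^\perp$. Expanding gives $Q(q,q)=c-b^2/a$, so $\Delta=b^2-ac=a(-Q(q,q))\ge0$. Equality holds iff $Q(q,q)=0$, iff $q=0$ by definiteness on $H^\perp$, iff $z=(b/a)H$ in $B^1$.

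For the stability inequality \eqref{pr-eq:stability}, project $D$ to $p=D-(Q(H,D)/a)H\in H^\perp$. Then $Q(q,p)=Q(z,D)-bQ(H,D)/a$, so the left side is $a^2Q(q,p)^2$. On the other side, $-Q(p,p)=(Q(H,D)^2-aQ(D,D))/a$ and $-Q(q,q)=\Delta/a$. Cauchy--Schwarz for the positive semidefinite form $-Q$ on $H^\perp$ gives $Q(q,p)^2\le Q(q,q)Q(p,p)$. Multiplying by $a^2$ yields exactly $\Delta\bigl(Q(H,D)^2-aQ(D,D)\bigr)$.

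The only step needing care is the transfer from the Lefschetz cone to an arbitrary $H$ with $Q(H,H)>0$. The remainder is routine algebra.
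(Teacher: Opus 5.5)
Your proof is correct and follows the paper's argument. The signature $(1,\dim B^1-1)$ makes $H^\perp$ negative definite whenever $Q(H,H)>0$, and then the projections $q=z-(b/a)H$ and $D-(Q(H,D)/a)H$ together with Cauchy--Schwarz for $-Q$ on $H^\perp$ give both claims. You spell out the signature and the transfer from $L^\perp$ to $H^\perp$, which the paper treats as standard.

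Two small corrections. First, $Q(L,L)>0$ comes from the degree-zero Hodge--Riemann condition; Hard Lefschetz alone only gives $Q(L,L)\ne0$. Second, the degenerate case you flag in the transfer step is already handled by your contradiction: a nonzero $v'\in P_L$ cannot satisfy $Q(v',v')\ge0$.
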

\begin{proof}
The signature of $Q$ is $(1,\dim B^1-1)$.  Since $Q(H,H)>0$,
$H^\perp$ is negative definite.  The vector $q$ lies in this complement,
and expansion gives $\Delta=-aQ(q,q)$, including the equality condition.
For $D_0=D-Q(H,D)H/a$, Cauchy--Schwarz for $-Q$ on $H^\perp$ gives
$Q(q,D_0)^2\le[-Q(q,q)][-Q(D_0,D_0)]$.
Multiply by $a^2$ and substitute the definitions to obtain
\eqref{pr-eq:stability}.
\end{proof}

Applying this criterion to successive Chern moments expresses equality as an annihilator relation.
\begin{corollary}\label{pr-cor:Chern-rigidity}
In a one-variable Chern moment Hodge algebra of formal dimension $d$,
assume that its Chern direction $z$ is a boundary polarization, and let
$H$ be a boundary polarization.
Set $a_i=\deg(z^iH^{d-i})$.  For $1\le i\le d-1$ with $a_{i-1}>0$,
put $\eta_i=z^{i-1}H^{d-i-1}$.  Equality
$a_i^2=a_{i-1}a_{i+1}$ holds if and only if
\begin{equation}\label{pr-eq:Chern-equality}
 \eta_i(a_{i-1}z-a_iH)=0
\end{equation}
in the moment algebra.  In addition, \eqref{pr-eq:stability} applies
to every additional divisor direction after passage to the quotient
by $\Ann(\eta_i)$.
\end{corollary}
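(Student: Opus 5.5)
The plan is to descend to a surface pairing and then apply \cref{pr-thm:rigidity}. Let $R$ be the one-variable moment algebra, of formal dimension $d$, with its K\"ahler package. Since $z$ and $H$ are boundary polarizations, the product $\eta_i=z^{i-1}H^{d-i-1}$ consists of $d-2$ boundary polarizations. We first check that $\eta_i\ne0$: indeed $\deg(\eta_iH^2)=a_{i-1}>0$. Then \cref{hd-thm:descent} applies with this $\eta_i$. It shows that $R_{\eta_i}=R/\Ann(\eta_i)$, with degree $[b]\mapsto\deg(\eta_ib)$, has the K\"ahler package in formal dimension two.

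On $R_{\eta_i}^1$ we use the form $Q(u,v)=\deg(\eta_iuv)$ and compute its three values on the images of $H$ and $z$:
\[
 Q(H,H)=a_{i-1}>0,\qquad Q(H,z)=a_i,\qquad Q(z,z)=a_{i+1}.
\]
\cref{pr-thm:rigidity} then gives
\[
 \Delta=a_i^2-a_{i-1}a_{i+1}\ge0,
\]
with equality exactly when $z=(a_i/a_{i-1})H$ in $R_{\eta_i}^1$. That condition says $a_{i-1}z-a_iH$ lies in $\Ann(\eta_i)$, which is \eqref{pr-eq:Chern-equality}. The stability inequality \eqref{pr-eq:stability} applies to any $D\in R^1$ through its image in the quotient, and this is the final assertion.

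The one point needing care is that \cref{pr-thm:rigidity} only requires $Q(H,H)>0$. It does not require $H$ to lie in the interior of the cone, so boundary $H$ is fine. We also need the descended algebra to have Hodge--Riemann signature $(1,\dim-1)$ in degree one. This is exactly what \cref{hd-thm:descent} gives for nonzero products of boundary classes. The nonvanishing of $\eta_i$ comes from $a_{i-1}>0$, so no further hypothesis is used.
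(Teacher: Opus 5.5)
Your proof is correct and follows the paper's own argument. Both use $\deg(\eta_iH^2)=a_{i-1}>0$ to get $\eta_i\ne0$, then boundary descent to the formal-dimension-two quotient $R/\Ann(\eta_i)$, whose pairing on $(H,z)$ has entries $(a_{i-1},a_i,a_{i+1})$; the surface rigidity proposition then applies, and equality in the quotient is exactly $\eta_i(a_{i-1}z-a_iH)=0$.
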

\begin{proof}
The positive number $a_{i-1}=\deg(\eta_iH^2)$ ensures $\eta_i\ne0$.
Boundary descent gives a degree-two Hodge algebra
$B_i=R_C/\Ann(\eta_i)$.  The three entries of its pairing on $(H,z)$
are $(a_{i-1},a_i,a_{i+1})$.  Apply \cref{pr-thm:rigidity}; equality
in the quotient is exactly \eqref{pr-eq:Chern-equality}.
\end{proof}

For a base divisor $D$, the left side of \eqref{pr-eq:stability} becomes
\[
 \left[
 a_{i-1}\int_X c_i(\cE)H^{d-i-1}D
       -a_i\int_X c_{i-1}(\cE)H^{d-i}D
 \right]^2.
\]
The other factor multiplying $\Delta_i=a_i^2-a_{i-1}a_{i+1}$ is
\[
 \left(\int_Xc_{i-1}(\cE)H^{d-i}D\right)^2
       -a_{i-1}\int_Xc_{i-1}(\cE)H^{d-i-1}D^2.
\]
All exponents are nonnegative in the indicated range.  This is rigidity
of classes in a descended pairing, not a splitting criterion for
bundles.  For example, the rank-two quotient bundle on $\PP^2$ has
$c(\cQ)=1+h+h^2$ and equality sequence $(1,1,1)$, but it cannot split as
$\cO(a)\oplus\cO(b)$: the required equations $a+b=1$, $ab=1$ have no
integral solution.

\appendix

\section{Reciprocal operator calculations}\label{app:operator-conjugation}
Suppress spectator variables, write $x=x_i$, $y=x_{i+1}$, and set
$(R_{M,r}f)(x,y,z)=x^My^Mz^rf(x^{-1},y^{-1},z^{-1})$.
This is an involution on its finite box.

The sign in the reciprocal formulas is already visible for a single divided difference.
\begin{lemma}\label{lem:reciprocal-partial}
For the $x,y$ reciprocal with fixed $z$-cap,
\begin{equation}\label{eq:R-partial-R}
 R_{M,r}\partial_iR_{M,r}=-xy\partial_i.
\end{equation}
\end{lemma}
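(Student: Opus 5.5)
The plan is a direct monomial computation, since both sides are linear and preserve the finite box. Take a monomial $f=x^py^qz^b$ with $0\le p,q\le M$. Then
\[
 R_{M,r}f=x^{M-p}y^{M-q}z^{r-b},
\]
and \eqref{eq:monomial-divided-difference} gives $\partial_i$ of it. For $p<q$ we have $M-p>M-q$, and
\[
 \partial_i\bigl(x^{M-p}y^{M-q}\bigr)
 =\sum_{a=0}^{q-p-1}x^{M-p-1-a}y^{M-q+a}.
\]
Every term lies in the box, with exponents between $M-q$ and $M-p-1$. So we may apply $R_{M,r}$ again, and it sends $z^{r-b}$ back to $z^b$.

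Applying $R_{M,r}$ to each term gives $x^{p+1+a}y^{q-a}$. Reindexing, the result is $xy\sum_{a=0}^{q-p-1}x^{p+a}y^{q-1-a}$. The third case of \eqref{eq:monomial-divided-difference} identifies this sum with $-\partial_i(x^py^q)$. So
\[
 R_{M,r}\partial_iR_{M,r}f=-xy\,\partial_if .
\]

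The case $p>q$ is symmetric: use the first case of \eqref{eq:monomial-divided-difference} on the reciprocal side and the third on the original side. Alternatively, it follows from antisymmetry, since $s_i$ commutes with $R_{M,r}$ and $\partial_i s_i=-\partial_i$. The case $p=q$ gives zero on both sides.

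A cleaner alternative avoids the case split. Write $\bar f=f(x^{-1},y^{-1},z^{-1})$. Since $x^My^M$ is symmetric, $\partial_i(x^My^Mz^r\bar f)=x^My^Mz^r\,\partial_i\bar f$. Also
\[
 \partial_i\bar f=\frac{\bar f-s_i\bar f}{x-y},
\]
and reciprocating the variables turns $x-y$ into $x^{-1}-y^{-1}=(y-x)/(xy)$. Applying $R_{M,r}$ once more therefore yields $xy\cdot\dfrac{f-s_if}{y-x}=-xy\,\partial_if$.

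The only point needing care is that the intermediate expressions are genuine polynomials inside the box, so that $R_{M,r}$ is being applied within its domain. The monomial expansion above settles this. I expect no real obstacle.
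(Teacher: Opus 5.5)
Your proof is correct, and your ``cleaner alternative'' is exactly the paper's argument: factor the symmetric prefactor $x^My^Mz^r$ out of the divided difference, then apply the second reciprocal, which turns the denominator into $x^{-1}-y^{-1}=-(x-y)/(xy)$. Your monomial case analysis is a valid but redundant check. $R_{M,r}$ is already an involution on the whole Laurent ring, as the paper uses, so the identity itself needs no box condition; the box only guarantees that the outputs are polynomials.
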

\begin{proof}
The symmetric prefactor $x^My^Mz^r$ factors out of the divided
difference. A second reciprocal cancels it and replaces the denominator
by $x^{-1}-y^{-1}=-(x-y)/(xy)$.
\end{proof}

Combining this sign with the mixed-cap multiplication identities proves the local conjugations used in the packet constructions.
\begin{proposition}\label{prop:local-reciprocal-identities}
The identities \eqref{eq:local-lascoux-conjugation} and
\eqref{eq:local-D-conjugation} hold.
\end{proposition}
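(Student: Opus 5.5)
The plan is a direct local calculation in the two active variables $x=x_i$, $y=x_{i+1}$, with spectators suppressed. Two tools will be used. The first is \cref{lem:reciprocal-partial}, in the form $R\,\partial_i\,R=-xy\,\partial_i$ at a fixed cap. The second is the elementary mixed-cap multiplication rule: for a monomial factor $x^ay^bz^c$,
\[
 R_{M',r'}\bigl(x^ay^bz^c\,g\bigr)
 =x^{M'-M-a}y^{M'-M-b}z^{r'-r-c}\,R_{M,r}g .
\]
Since the divided difference commutes with symmetric factors, each source operator will be written as $\partial_i$ applied after multiplication by a polynomial with a $z$-linear term. The caps will then be redistributed so that the $z$-exponent bookkeeping matches the shift from $r$ to $r+1$. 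Throughout, $x$-caps stay equal to $M$, which is legitimate by \cref{lem:finite-box}.

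For \eqref{eq:local-lascoux-conjugation}, write
\[
 \mathscr K_i^{(z)}f=z\,\partial_i(xf)+xy\,\partial_i f .
\]
For the first term, $R_{M,r+1}\,z\,\partial_i(x\,R_{M,r}f)$ becomes $R_{M,r}\,\partial_i(x\,R_{M,r}f)$, because the extra $z$ exactly absorbs the cap shift. Applying the lemma gives
\[
 -xy\,\partial_i\bigl(R_{M,r}(x\,R_{M,r}f)\bigr)=-xy\,\partial_i\bigl(x^{-1}f\bigr).
\]
For the second term, $R_{M,r+1}\,xy\,\partial_i\,R_{M,r}f$ becomes $z\,(xy)^{-1}R_{M,r}\,\partial_i\,R_{M,r}f=-z\,\partial_i f$. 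Next expand $\partial_i(x^{-1}f)$ using the twisted Leibniz rule $\partial_i(gh)=\partial_i(g)\,h+s_i(g)\,\partial_i h$ together with $\partial_i(x^{-1})=-(xy)^{-1}$. This yields
\[
 -xy\,\partial_i(x^{-1}f)=f-x\,\partial_i f .
\]
Summing the two terms gives $1-(x+z)\partial_i$.

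For \eqref{eq:local-D-conjugation}, write $\mathscr D_i^{(z)}f=\partial_i(yf)+z\,\partial_i f$ and run the same computation. The $z$-term becomes $-xy\,\partial_i f$. The other term becomes
\[
 z\,R\,\partial_i\bigl(y\,R f\bigr)\text{-type expression}=-xyz\,\partial_i\bigl(y^{-1}f\bigr),
\]
after tracking the caps. Expanding with $\partial_i(y^{-1})=(xy)^{-1}$ and comparing with
\[
 -\mathscr K_i^{(z)}f=-z\,\pi_i f-xy\,\partial_i f
\]
should give the identity, since $\pi_i f=f+y\,\partial_i f$ is what the expansion produces.

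The main obstacle I expect is sign and cap bookkeeping. In particular, the $z$-shift must be charged consistently: either to the reciprocal of the $z$-free term or to the explicit $z$ factor. I also need to confirm that the Leibniz expansion with $x^{-1}$ or $y^{-1}$ produces exactly the claimed $s_i$-twist, namely $\pi_i$ rather than $\partial_i(y\,\cdot)$. I will verify both identities on a test monomial $x^py^qz^b$, using \eqref{eq:monomial-divided-difference}, as a check.
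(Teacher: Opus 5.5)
Your computation is correct and is essentially the paper's proof. Both use \cref{lem:reciprocal-partial} together with the mixed-cap multiplication rule, and your expansions $-xy\,\partial_i(x^{-1}f)=f-x\,\partial_i f$ and $-xyz\,\partial_i(y^{-1}f)=-z\,\pi_i f$ are right; the only difference is that the paper applies the twisted Leibniz rule before conjugating, writing $\mathscr K_i^{(z)}=z+y(x+z)\partial_i$ and $\mathscr D_i^{(z)}=-\mathrm{id}+(x+z)\partial_i$ so that only multiplication operators and $\partial_i$ are conjugated, whereas you conjugate first and expand over $x^{-1},y^{-1}$ afterwards.
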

\begin{proof}
Multiplication operators on the Laurent ring satisfy
\begin{equation}\label{eq:mixed-cap-multiplication}
 R_{M,r+1}xR_{M,r}=z/x,\qquad
 R_{M,r+1}yR_{M,r}=z/y,\qquad R_{M,r+1}zR_{M,r}=1.
\end{equation}
The identities
\begin{equation}\label{eq:pi-expansion}
 \pi_i=1+y\partial_i,\qquad \partial_i(xyf)=xy\partial_i f
\end{equation}
give
\begin{equation}\label{eq:K-expanded}
 \mathscr K_i^{(z)}=z+y(x+z)\partial_i.
\end{equation}
For $g=R_{M,r}f$, \cref{lem:reciprocal-partial} equivalently says
\begin{equation}\label{eq:partial-g-inverted}
 (\partial_i g)(x^{-1},y^{-1},z^{-1})
 =-x^{1-M}y^{1-M}z^{-r}\partial_i f.
\end{equation}
Thus the two terms of \eqref{eq:K-expanded}, after applying
$R_{M,r+1}$, become $f$ and $-(x+z)\partial_i f$, proving
$R_{M,r+1}\mathscr K_i^{(z)}R_{M,r}=1-(x+z)\partial_i$.
Similarly, $\partial_i(yf)=-f+x\partial_i f$ gives
\begin{equation}\label{eq:D-expanded}
 \mathscr D_i^{(z)}=-\mathrm{id}+(x+z)\partial_i.
\end{equation}
Its terms contribute $-zf$ and $-y(x+z)\partial_i f$, respectively.
Hence
\[
 R_{M,r+1}\mathscr D_i^{(z)}R_{M,r}=-\mathscr K_i^{(z)}.\qedhere
\]
\end{proof}

\begin{remark}\label{rem:appendix-monomial-check}
The final conjugations are polynomial identities, not merely identities
in a truncated Chow ring. Intermediate multiplication identities may
be Laurent; the finite-box assumptions ensure nonnegative exponents
in the capped packet expressions.
\end{remark}

\section{Weighted and relative extensions}\label{sec:weighted-relative}
The main construction already allows every balanced nonnegative integral
profile. We record here its explicit balance criterion, reflected-support
consequences, and the initial-tail conditions for relative envelopes.
Writing $\epsilon=\boldsymbol\kappa$, we use the equivalent notation
$P_{u,\boldsymbol\kappa,\mu}$ for the packet in
\eqref{eq:P-operator-word}.

\subsection{Balance and minimal terminal completion}
\label{subsec:weighted-balance}
For the nonnegative multiplicity profile
$\boldsymbol\kappa\in\NN^\ell$, write
$0<t_{j,1}<\cdots<t_{j,m_j}$ for the occurrences of colour $j<n$,
put $t_{j,m_j+1}=\ell+1$, and define
\begin{equation}\label{eq:weighted-interval-inflow}
 I_{j,k}(\boldsymbol\kappa)
 =\sum_{\substack{t_{j,k}<s<t_{j,k+1}\\i_s=j-1}}\kappa_s
 \qquad(1\le k\le m_j).
\end{equation}
\begin{proposition}\label{thm:weighted-balance}
The graph $\mathfrak C(\mathbf i,\boldsymbol\kappa,\mathbf d)$ is
balanced exactly when, for every occurring colour $j<n$,
\begin{align}
 I_{j,k}(\boldsymbol\kappa)&\ge\kappa_{t_{j,k}}
       &&(1\le k<m_j),\label{eq:weighted-internal-balance}\\
 d_j+I_{j,m_j}(\boldsymbol\kappa)&\ge\kappa_{t_{j,m_j}}.
       \label{eq:weighted-terminal-balance}
\end{align}
A completion by terminal root edges exists if and only if the internal
inequalities \eqref{eq:weighted-internal-balance} hold. When they do,
the componentwise least completion is
\[
 d_j^{\min}(\boldsymbol\kappa)
 =\max\{0,\kappa_{t_{j,m_j}}-I_{j,m_j}(\boldsymbol\kappa)\}
\]
for occurring colours, and zero for the remaining ranks, including $n$.
\end{proposition}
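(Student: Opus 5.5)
The plan is to compute, for each non-root vertex of $\mathfrak C(\mathbf i,\boldsymbol\kappa,\mathbf d)$, its incoming multiplicity and its outgoing ordinary multiplicity directly from \cref{def:creation-state-graph}, in the same way as the proof of \cref{thm:reservoir} but keeping track of weights. Initial states $v_{j,0}$ have no outgoing ordinary edge, since every edge $e_t$ leaves $v_{i_t,t}$ with $t\ge1$. They are balanced automatically. The same holds for every rank-$n$ vertex, which has no outgoing edge because colours lie in $[n-1]$. So only the noninitial states $v_{j,t_{j,k}}$ with $j<n$ and $1\le k\le m_j$ need checking. Such a state has exactly $\kappa_{t_{j,k}}$ outgoing parallel edges, namely the copies of $e_{t_{j,k}}$.

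Next comes the incoming count. An ordinary edge $e_s$ enters $v_{i_s+1,\tau_{i_s+1}(s-1)}$. So $e_s$ enters $v_{j,t_{j,k}}$ exactly when $i_s=j-1$ and $\tau_j(s-1)=t_{j,k}$. The second condition means $t_{j,k}\le s-1<t_{j,k+1}$. Since $i_s=j-1\ne j$, this is the same as $t_{j,k}<s<t_{j,k+1}$, with the convention $t_{j,m_j+1}=\ell+1$. Summing the multiplicities $\kappa_s$ gives precisely $I_{j,k}(\boldsymbol\kappa)$. Root edges enter only the final states $v_{j,\lambda_j}=v_{j,t_{j,m_j}}$, and there are $d_j$ of them. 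Balance at $v_{j,t_{j,k}}$ is then \eqref{eq:weighted-internal-balance} for $k<m_j$ and \eqref{eq:weighted-terminal-balance} for $k=m_j$. This proves the first assertion.

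For the completion statement, note that $\mathbf d$ enters only the terminal inequalities, and each terminal inequality involves only its own coordinate $d_j$. Ranks with $m_j=0$ and rank $n$ impose no condition. So a completion exists if and only if the internal inequalities hold. If they hold, the feasible $\mathbf d$ form the product of the rays $d_j\ge\max\{0,\kappa_{t_{j,m_j}}-I_{j,m_j}\}$ over occurring colours and $d_j\ge0$ elsewhere. This product has the stated componentwise minimum.

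The only delicate step is the incoming-edge identification. It needs the right half-open interval and has to handle the cases where $t_{j,k+1}$ is absent or where $s$ coincides with a creation time. Unlike the canonical-row case of \cref{thm:reservoir}, no structure of the word is used here, since the statement is for an arbitrary word $\mathbf i$.
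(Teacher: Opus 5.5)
Your proposal is correct and follows the paper's own proof: outgoing multiplicity $\kappa_{t_{j,k}}$ at each created state, incoming weight $I_{j,k}(\boldsymbol\kappa)$ from the colour-$(j-1)$ steps before the next creation of rank $j$, root edges only at the final state, no outgoing edges at initial states or in rank $n$, and independent lower bounds on each $d_j$. Your derivation of $\tau_j(s-1)=t_{j,k}\iff t_{j,k}<s<t_{j,k+1}$ just writes out the incoming-edge step that the paper states in one line.
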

\begin{proof}
The state created at $t_{j,k}$ has outgoing multiplicity
$\kappa_{t_{j,k}}$. Its incoming ordinary edges are exactly the
colour-$(j-1)$ steps before the next creation of rank $j$, with total
weight $I_{j,k}$. Only the final state receives the $d_j$ root edges.
Initial states and rank $n$ have no outgoing edges. This proves the
criterion and shows that terminal supply cannot repair an internal
deficit. Under the internal inequalities, each $d_j$ is independently
bounded below by zero and the stated terminal deficit.
\end{proof}

Thus the internally admissible profiles form the cone
\[
 \mathfrak F(\mathbf i)=
 \{\boldsymbol\kappa\in\RR_{\ge0}^{\ell}:
    \kappa_{t_{j,k}}\le I_{j,k}(\boldsymbol\kappa)
                 \text{ for all }j<n,\ k<m_j\}.
\]
For canonical-row words both uniform profiles belong to this cone, by
\cref{thm:reservoir}. Each integral point gives a balanced packet by
setting $d_j=d_j^{\min}$ and $\mu_j=\sum_{r=j}^n d_r$ in
\cref{thm:canonical-packet}. The generated bundle is canonical for the
fixed cap and labelled states, not a canonical isomorphism inferred
from equality in $K^0$.

\subsection{Differential operators and reflected supports}

For a weighted packet, its realized finite complement identifies an ordinary differential preserver.
\begin{corollary}
\label{pI-cor:weighted-preserver}
Let $P=P_{u,\boldsymbol\kappa,\mu}$ be a balanced weighted packet.
For every field $\kk$, its ordinary constant-coefficient differential
operator preserves realizable-volume polynomials over $\kk$:
\[
 f\in\RV\quad\Longrightarrow\quad
 P(\partial_{x_1},\ldots,\partial_{x_n},\partial_z)f\in\RV.
\]
Additional spectator variables in $f$ are permitted. A zero output is
included. The derivative symbols here are ordinary derivatives, not
divided differences.
\end{corollary}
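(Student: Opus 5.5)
The realized finite complement of $P$ should serve as the symbol of a differential preserver; we then appeal to the preserver criterion of \cite{GHMSSW25}. Fix a finite box $\mathbf m=(m_1,\ldots,m_n,m_z)$ containing the support of $P=P_{u,\boldsymbol\kappa,\mu}$. Balance is assumed, so \cref{thm:canonical-packet} gives
\[
 g:=\Dcomp_{\mathbf m}\Nrm_{x,z}(P)=\sum_{\gamma\le\mathbf m}[x^\gamma z^{\gamma_z}]P\;x^{[\mathbf m-\gamma]}\in\RV .
\]
The coefficients of $P$ sit in $g$ in reversed divided-power position. This is exactly the Poincar\'e-dual encoding of a constant-coefficient operator on the box $\prod\PP^{m_i}\times\PP^{m_z}$.

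The first option is to quote the symbol characterization of differential preservers, \cite[Theorem~1.3 and Remark~1.14]{GHMSSW25}. It says that $P(\partial)$ preserves $\RV$ over $\kk$ whenever the finite dual of its normalized symbol lies in $\RV$ for some box. Spectator variables and zero outputs are part of that statement.

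If the criterion needs the geometric form, we argue directly. Realize $f$ by semiample $D_j$ on an integral $X$. Form $X\times\widetilde Z$, where $\widetilde Z$ is the incidence variety of \cref{thm:incidence-dual} realizing $g$. Substitute $x_i\mapsto x_i+t_i$ in the product volume polynomial, and extract the $t^{\mathbf m}$-coefficient by $\partial_t^{\mathbf m}$ and the restriction $y=0$. Both operations are allowed by \cref{prop:RV-closure}. The projection formula for the product then identifies the result with
\[
 \sum_\gamma [x^\gamma z^{\gamma_z}]P\;\partial^{\gamma}f=P(\partial)f,
\]
because $\partial_t^{\mathbf m}$ applied to $t^{[\mathbf m-\gamma]}\cdot(\text{Taylor term of }f\text{ in }t)$ selects $\partial_x^\gamma f$.

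The main obstacle is this last identification: the divided-power pairing must give exactly $\partial^\gamma$, with no factorial mismatch. Equation~\eqref{eq:divided-derivative} settles it, since it converts contraction against $x^{[\mathbf m-\gamma]}$ into an ordinary derivative. Changing the box only translates $g$, as in \eqref{eq:box-enlargement-translation}, and translation is reversible by \cref{prop:RV-closure}, so the choice of $\mathbf m$ does not matter. Finally, every construction used is valid over an arbitrary field, so the conclusion holds for every $\kk$.
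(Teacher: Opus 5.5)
Your first option is the paper's proof. With $w=(x_1,\ldots,x_n,z)$, the identity $P(\partial_w)w^{[\mathbf m]}=\Dcomp_{\mathbf m}\Nrm_w(P)$ is just \eqref{eq:divided-derivative}. Together with \cref{thm:canonical-packet}, it makes $P(\partial_w)$ a realizable covolume operator in the sense of \cite[Definition~1.2]{GHMSSW25}. Then \cite[Theorem~1.3 and Remark~1.14]{GHMSSW25} finishes the proof, spectators and zero output included. Your display for $g$ drops the factor $z^{[m_z-\gamma_z]}$, but this is a harmless slip.

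Your fallback ``direct'' argument, however, is wrong as written. The factorial issue you flag is not settled by \eqref{eq:divided-derivative}. Write $P=\sum_\gamma p_\gamma w^\gamma$, so that $f(w+t)=\sum_\beta(\partial^\beta f)(w)\,t^{[\beta]}$ and $g(t)=\sum_\gamma p_\gamma t^{[\mathbf m-\gamma]}$. Products of divided powers carry binomials: $t^{[\gamma]}t^{[\mathbf m-\gamma]}=\prod_i\binom{m_i}{\gamma_i}\,t^{[\mathbf m]}$. Hence
\[
 \partial_t^{\mathbf m}\bigl(f(w+t)\,g(t)\bigr)\big|_{t=0}
 =\sum_\gamma\Bigl(\prod_i\binom{m_i}{\gamma_i}\Bigr)p_\gamma\,\partial^\gamma f(w).
\]
This is a different operator, because the weights depend on $\gamma$. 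The identity \eqref{eq:divided-derivative} governs derivatives of divided powers, not their products. Multiplying two volume polynomials on $X\times\widetilde Z$ therefore realizes a binomially weighted pairing. It does not realize the Poincar\'e-duality pairing $\int_Bh^ah^b=\delta_{a+b,\mathbf m}$ that the covolume criterion encodes.

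A correct geometric version must insert the class $P(h,h_z)$ rather than multiply volumes. For instance, replace the $D_i$ by generated multiples, and run the quotient-flag tower $\pi:Y\to X$ on $\cE=\bigoplus_i\cO_X(D_i)$ with $\cM=\cO_X(D_z)$. The same row and flow bundles then give a generated $\cV$ with $\pi_*\ctop(\cV)=P(D,D_z)$. By the projection formula,
\[
 \int_Y\ctop(\cV)\,\Bigl(\sum_ix_i\pi^*D_i+z\pi^*D_z+\cdots\Bigr)^{\dim Y-\rk\cV}\Big/(\dim Y-\rk\cV)!=P(\partial)f,
\]
and the incidence bundle plus derivative closure give realizability. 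Since your primary route already suffices, drop the fallback or replace it along these lines.
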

\begin{proof}
Write $y=(x_1,\ldots,x_n,z)$ and choose a containing exponent cap
$\mathbf m$. By \cref{thm:canonical-packet},
$P(\partial_y)y^{[\mathbf m]} =\Dcomp_{\mathbf m}\Nrm_y(P)\in\RV$.
Thus $P(\partial_y)$ is a realizable covolume operator in the sense of
\cite[Definition~1.2]{GHMSSW25}. Apply
\cite[Theorem~1.3 and Remark~1.14]{GHMSSW25}.
The zero case is immediate, and the theorem permits spectator variables.
\end{proof}

Although the original weighted coefficients are not asserted to have a volume model, their support is obtained by reflection.
\begin{corollary}
\label{pI-cor:weighted-support}\label{prop:weighted-original-support}
Every nonzero balanced weighted packet $P$ has nonnegative integral
coefficients, $M$-convex support, and a saturated Newton polytope.
More precisely, let $J=\{1,\ldots,n+1\}$ index its variables, let
$\mathbf m$ contain its support, and let $\rho$ be the rank function
of the complemented support
$\mathbf m-\Supp(P)=B(\rho)$. Then
\[
 \Supp(P)=B(\rho_{\mathbf m}^{\dagger}),\qquad
 \rho_{\mathbf m}^{\dagger}(S)
 =\mathbf m(S)-\rho(J)+\rho(J\setminus S).
\]
In particular,
$\Supp(P)=\Newt(P)\cap\ZZ^{n+1}$.
\end{corollary}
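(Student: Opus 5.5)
The plan is to read the support of $P$ off its realized finite complement and then reflect it back through the box. By \cref{thm:canonical-packet}, $\Dcomp_{\mathbf m}\Nrm_y(P)\in\RV$ for every containing cap $\mathbf m$. Since $\Dcomp_{\mathbf m}\Nrm_y(P)=\sum_\gamma [y^\gamma]P\,y^{[\mathbf m-\gamma]}$, its support is exactly $\mathbf m-\Supp(P)$, and normalization preserves support. When $P\ne0$, \cref{thm:algebraic-polymatroid-consequence} gives $\mathbf m-\Supp(P)=B(\rho)$ for an integral polymatroid $\rho$ on $J$.

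For the coefficients, note that the packet is $\rho_*\ctop(\cV)$ for the generated bundle $\cV=\cV_{u,\boldsymbol\kappa,\mu}$. By \eqref{eq:incidence-coeff}, each coefficient $\xi_\gamma$ equals $\int_Z\widetilde\rho^*(h^\alpha)H^M$, which is a mixed intersection of semiample Cartier divisors on an integral projective variety and is therefore a nonnegative integer. Alternatively, the same conclusion follows from the operator side: each $\Gamma_i^{(\epsilon,z)}$ sends monomials to nonnegative integer combinations by \eqref{eq:monomial-divided-difference}, once one checks that $x_{i+1}^\epsilon(x_i+z)$ times a monomial with positive-direction exponents stays sign-positive. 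I would use the geometric argument, since it avoids this sign check.

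For the reflection, $\Supp(P)=\mathbf m-B(\rho)$. Complementing a base set in a box is the standard duality: for $\alpha\in B(\rho)$ and $\beta=\mathbf m-\alpha$, I will verify that $\beta(S)\le\mathbf m(S)-\rho(J)+\rho(J\setminus S)$ is equivalent to $\alpha(J\setminus S)\le\rho(J\setminus S)$. This uses $|\alpha|=\rho(J)$ and $\alpha(S)=\rho(J)-\alpha(J\setminus S)$. The total sums also match: $|\beta|=\mathbf m(J)-\rho(J)=\rho^\dagger_{\mathbf m}(J)$. Next I check that $\rho^\dagger_{\mathbf m}$ is an integral polymatroid rank. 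Normalization is $\rho^\dagger(\varnothing)=0$. Submodularity transfers from $\rho$ under the complement $S\mapsto J\setminus S$, and monotonicity follows from $\rho(J)-\rho(J\setminus S)\le\rho(S)\le\mathbf m(S)$. The last inequality holds because singleton coordinates of $B(\rho)$ are bounded by the box, and hence $\rho(S)\le\mathbf m(S)$ on the relevant bases; if needed, truncate $\rho$ by $\mathbf m$, which does not change $B(\rho)\subseteq[0,\mathbf m]$.

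Finally, $M$-convexity is preserved under the reflection $\alpha\mapsto\mathbf m-\alpha$, since the exchange axiom is symmetric under negation. A base set of an integral polymatroid contains all lattice points of its base polytope \cite{Murota03}, which gives $\Supp(P)=\Newt(P)\cap\ZZ^{n+1}$. The step I expect to need the most care is making the polymatroid monotonicity of $\rho^\dagger_{\mathbf m}$ precise at the box boundary, that is, checking that $\rho$ may be taken bounded by $\mathbf m$. The rest is direct.
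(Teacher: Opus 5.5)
Your proposal is correct and follows essentially the paper's route: realize the finite complement by \cref{thm:canonical-packet}, identify $\mathbf m-\Supp(P)=B(\rho)$ by \cref{thm:algebraic-polymatroid-consequence}, and reflect through the box. The differences are minor. The paper gets integrality from divided differences preserving $\ZZ[x]$ rather than from intersection numbers. It identifies $\rho^\dagger_{\mathbf m}$ by computing $\max_{x\in\Supp(P)}x(S)$ rather than by your direct inequality check. That computation uses the fact $\rho(T)=\max_{\eta\in B(\rho)}\eta(T)\le\mathbf m(T)$, which also makes your truncation hedge unnecessary. Drop the operator-side alternative, though: individual $\Gamma_i^{(\epsilon,z)}$ do not send monomials to nonnegative combinations, e.g.\ $\Gamma_1^{(0,z)}x_2=\partial_1\bigl((x_1+z)x_2\bigr)=-z$.
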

\begin{proof}
Divided differences preserve integrality. The finite complement is
realizable by \cref{thm:canonical-packet}, so its coefficients, and hence
those of $P$, are nonnegative. By
\cref{thm:algebraic-polymatroid-consequence}, the complemented support
$A=\mathbf m-\Supp(P)$ is $M$-convex. Reflection preserves symmetric
exchange: if $x_i>y_i$ in $\mathbf m-A$, apply exchange to
$\mathbf m-y$ and $\mathbf m-x$ and reflect the exchanged pair back;
see also \cite[proof of Proposition~16]{HMMSD22}. Thus $\Supp(P)$ is
$M$-convex and saturated \cite{Murota03}. Finally,
\[
 \max_{x\in\Supp(P)}x(S)
 =\mathbf m(S)-\min_{\eta\in B(\rho)}\eta(S)
 =\mathbf m(S)-\rho(J)+\rho(J\setminus S),
\]
since every base has total size $\rho(J)$.
\end{proof}

The reflected rank is independent of the cap: replacing $\mathbf m$ by
$\mathbf m+\nu$ replaces $\rho$ by $\rho+\nu$, leaving the formula
unchanged. Compare arbitrary caps through their componentwise maximum.

The next refinement concerns the support, not the coefficients. We state
its characteristic-zero reflection principle for algebraic polymatroids,
then apply it to the complemented support of a weighted packet.
The proof retains the field of representation explicitly.

\Needspace{7\baselineskip}
\begin{proposition}
\label{prop:weighted-support-charzero}
Let $\kk$ have characteristic zero, let $\rho$ be an integral polymatroid
rank function on a finite set $J$, algebraic over $\kk$, and let
$\mathbf m\in\NN^J$ satisfy $m_i\ge\rho(\{i\})$ for every $i\in J$.
Then
\[
 \rho_{\mathbf m}^{\dagger}(S)
 :=\mathbf m(S)-\rho(J)+\rho(J\setminus S),\qquad S\subseteq J,
\]
is an integral polymatroid rank function, algebraic over $\kk$ and
linearly representable over a finite algebraic extension of $\kk$.
If $\kk$ is algebraically closed, it is linearly representable over
$\kk$ itself.

In particular, the polymatroid with integral base set $\Supp(P)$ has
these properties for every nonzero balanced weighted packet $P$.
\end{proposition}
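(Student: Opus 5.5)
The first step is to recognize $\rho^\dagger_{\mathbf m}$ as a familiar operation. Write $\rho^*(S)=|S|_{\mathbf m}-\rho(J)+\rho(J\setminus S)$ with $|S|_{\mathbf m}=\mathbf m(S)$. This is exactly the weighted polymatroid dual of $\rho$ with respect to the cap $\mathbf m$. Its base set is $\mathbf m-B(\rho)$, which is the reflection computed in \cref{pI-cor:weighted-support}.

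Submodularity follows because $S\mapsto\rho(J\setminus S)$ is submodular and $\mathbf m(S)$ is modular. Normalization is immediate. Monotonicity reduces to $\rho(J\setminus S)-\rho(J\setminus(S\cup i))\le m_i$, and this holds by submodularity together with the hypothesis $\rho(\{i\})\le m_i$. So $\rho^\dagger_{\mathbf m}$ is an integral polymatroid rank function, and the only real content is its representability.

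For representability I would pass to a linear representation first. In characteristic zero, an algebraic polymatroid is linearly representable over $\overline{\kk}$, in fact over a finite extension of $\kk$. To see this, take the fields $K_i$ realizing $\rho$. The transcendence degree of $K_S$ equals the dimension of the span of the images of $\Omega_{K_i/\kk}$ in $\Omega_{L/\kk}$; this is the differential (Jacobian) criterion, and it is valid because $\operatorname{char}\kk=0$ makes finitely generated extensions separably generated.

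Choose finitely generated subextensions so that all the ranks are attained. Then specialize the resulting derivation matrices at a general $\overline{\kk}$-point of a model variety. This produces subspaces $U_i\subseteq F^N$, with $F$ a finite extension of $\kk$, such that $\dim\sum_{i\in S}U_i=\rho(S)$. The main obstacle is this specialization step. I would handle it by a standard argument: the rank conditions are finitely many nonvanishing minors, so a dense open set of points works, and such a set has points over a finite extension, or over $\kk$ itself when $\kk$ is algebraically closed.

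Given the linear representation $U_i\subseteq V=F^N$, dualize with multiplicities. Replace each $U_i$ by $U_i\oplus F^{m_i-\dim U_i}$ only if needed. The cleaner route is as follows. Put $W=\bigoplus_i F^{m_i}$, surjecting onto $\sum_i U_i$ by choosing spanning maps $F^{m_i}\to U_i$; this is possible since $m_i\ge\rho(i)=\dim U_i$. Let $K$ be the kernel. Then
\[
U_i^\dagger:=\operatorname{im}\bigl((F^{m_i})^\vee\to K^\vee\bigr).
\]
A standard annihilator computation gives $\dim\sum_{i\in S}U_i^\dagger=\mathbf m(S)-\rho(J)+\rho(J\setminus S)$. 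Indeed, that dimension is $\dim K$ minus the dimension of the part of $K$ supported on $J\setminus S$, and the latter equals $\mathbf m(J\setminus S)-\rho(J\setminus S)$. This yields the linear representation of $\rho^\dagger_{\mathbf m}$ over $F$. Algebraicity over $\kk$ then follows from the remark that fields generated by linear forms give algebraic representations, with $\operatorname{trdeg}_\kk=\operatorname{trdeg}_F$ since $F/\kk$ is finite.

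For the final clause, combine the two earlier results. By \cref{thm:canonical-packet} and \cref{thm:algebraic-polymatroid-consequence}, the set $\mathbf m-\Supp(P)$ is $B(\rho)$ for a $\rho$ algebraic over $\kk$, and the cap satisfies $m_i\ge\rho(\{i\})$. By \cref{pI-cor:weighted-support}, $\Supp(P)=B(\rho^\dagger_{\mathbf m})$. Applying the result above to this $\rho$ and $\mathbf m$ gives the stated properties for the polymatroid with base set $\Supp(P)$.
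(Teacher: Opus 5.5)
Your proposal is correct and follows essentially the paper's proof. Both use the same rank-axiom checks, the characteristic-zero differential representation of $\rho$ in $\Omega_{L/\kk}$, the dualization through the kernel of a blockwise surjection $\bigoplus_i L^{m_i}\twoheadrightarrow\sum_i U_i$, specialization at a closed point controlled by finitely many minors, and fields generated by linear forms for algebraicity over $\kk$. The only difference is order: you specialize the representation of $\rho$ first and then dualize over the finite extension, while the paper dualizes over $L$ first and then specializes the dual block matrix; the two orders are interchangeable.
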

\begin{proof}
The empty case is immediate. Complementary submodularity and
$\rho_{\mathbf m}^{\dagger}(\varnothing)=0$ are direct; for $i\notin S$
the marginal is
$m_i-\rho(J\setminus S)+\rho(J\setminus(S\cup\{i\}))
\ge m_i-\rho(\{i\})\ge0$.
Choose fields $K_i$ representing $\rho$. Replacing each by the field
of a finite transcendence basis preserves all subset ranks, since the
original composita are algebraic over the replacements. Their compositum
$L/\kk$ is now finitely generated. In $\Omega_{L/\kk}$ set
$U_i=\Span_L(dK_i)$ and $U=\sum_iU_i$. Then
\begin{equation}\label{eq:weighted-differential-ranks}
 \dim_L\sum_{i\in S}U_i=\operatorname{trdeg}_\kk K_S=\rho(S).
\end{equation}
Indeed, product and quotient rules give $\sum_{i\in S}U_i=\Span_L(dK_S)$.
Extend a transcendence basis of $K_S$ to one of $L$. Characteristic
zero makes its differentials independent, while differentiating separable
minimal polynomials spans $dK_S$ by the first basis; compare
\cite[Theorem~1.7 and \S2]{RST25}.

Choose $A:E=\bigoplus_iL^{m_i}\twoheadrightarrow U$ with
$A(E_i)=U_i$ and put $N=\ker A$. Writing $E_S=\bigoplus_{i\in S}E_i$,
we have $\dim(N\cap E_{J\setminus S})=
\mathbf m(J\setminus S)-\rho(J\setminus S)$, hence
\begin{equation}\label{eq:weighted-block-kernel-rank}
 \dim_L\operatorname{pr}_S(N)
 =\mathbf m(S)-\rho(J)+\rho(J\setminus S)
 =\rho_{\mathbf m}^{\dagger}(S).
\end{equation}
The dual coordinate subspaces $V_i=\operatorname{im}(E_i^*\to N^*)$
therefore represent the reflected rank over $L$.

Take a block-column matrix spanning these subspaces and the nonzero
finitely generated $\kk$-algebra $R\subset L$ of its entries, with one
nonzero maximal-rank minor inverted for every positive-rank block union.
Larger minors vanish in $R$ and chosen minors are units, so specialization
at any maximal ideal preserves all block ranks. Its residue field
$\kk'/\kk$ is finite algebraic by
\cite[Theorem~10.34.1, Tag~00FV]{Stacks}, and equals $\kk$ when $\kk$ is
algebraically closed. This proves the linear-representation assertion.
In independent variables $t_1,\ldots,t_q$, let $K'_i$ be the field over
$\kk'$ generated by the linear forms of the specialized block $i$.
For nonempty $S$, its compositum has transcendence degree over $\kk$
equal to its degree over $\kk'$, namely
$\dim_{\kk'}\sum_{i\in S}V'_i=\rho_{\mathbf m}^{\dagger}(S)$.
For $S=\varnothing$ take the compositum to be $\kk$. This proves
algebraicity over the original field, also in rank zero.

For a packet, \cref{thm:canonical-packet,thm:algebraic-polymatroid-consequence}
give algebraicity of $\mathbf m-\Supp(P)=B(\rho)$ and
$\rho(\{i\})\le m_i$. Apply the result and
\cref{pI-cor:weighted-support}.
\end{proof}

\begin{remark}\label{rem:weighted-complement-support}
The differential-preserver and support conclusions concern completed
balanced packets; they do not make individual divided-difference
factors universal preservers. They also do not prove that $\Nrm(P)$
is realizable or Lorentzian. Incidence directly proves algebraicity
of $\mathbf m-\Supp(P)$ over every field. The separate argument in
\cref{prop:weighted-support-charzero} proves algebraicity of the
original support in characteristic zero. No unrestricted
algebraic-polymatroid duality in positive characteristic is asserted.
\end{remark}

\subsection{Relative row and co-row envelopes}
\label{pI-sec:relative-rows}
The same generation argument applies to nonsplit initial quotient
flags when the relevant initial tails are globally generated.

\begin{proposition}
\label{pI-prop:relative-row}
Let $X$ be an integral projective variety over $\kk$, and let $\cE$
be a globally generated rank-$n$ bundle with a quotient flag
$\cE=\cF_n\twoheadrightarrow\cdots\twoheadrightarrow \cF_0=0$,
$\rk \cF_i=i$. Form the quotient-flag tower of \cref{sec:canonical}
for a word decomposed into increasing consecutive rows
$B_h=(a_h,a_h+1,\ldots,b_h)$, where $1\le a_h\le b_h<n$ and
$a_1\ge\cdots\ge a_s$. Assume that each initial tail
$\ker(\cE\to \cF_{a_h-1})$ is globally generated.
Then the quotient lines created in each row are the successive factors
of a globally generated row bundle $\cG_h$ on the final tower.
One may tensor $\cG_h$ by a separately chosen globally generated line
bundle $\cM_h$ pulled back from $X$.
For any balanced nonnegative integral kernel profile and nonnegative
terminal multiplicities, the product of these twisted row top Chern
classes, kernel-root powers, and terminal top Chern classes is the top
Chern class of an actual globally generated bundle.
\end{proposition}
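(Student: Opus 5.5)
The plan is to rerun the proofs of \cref{prop:row-bundle}, \cref{thm:chern-flow} and \cref{thm:canonical-packet} with the split base replaced by $X$ and the split flag replaced by the given quotient flag. The one place where splitness was used is in identifying the quotient $\cE\to\cF_{a-1}$ present at the start of a row with the original one. That identification is where the hypothesis on the initial tails will enter.

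First I form the tower exactly as in \cref{sec:canonical}. Set $\cK_t=\ker(\cF_{i_t+1}\to\cF_{i_t-1})$, put $X_t=\PP_{X_{t-1}}(\cK_t)$, and take the new quotient as in \eqref{eq:new-Fi}. Nothing in that construction, in \eqref{eq:S-exact}, \eqref{eq:Q-exact}, or in \cref{lem:gysin,lem:time-indexed-pushforward} used splitness. Every $\cF_j^{(t)}$ is a quotient of the pullback of $\cE$, so all prefix bundles are globally generated, and $X_u$ is integral and projective.

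Second comes the key observation. Rows are processed in order $h=1,\dots,s$ with $a_1\ge\cdots\ge a_s$. Before row $h$ begins, every earlier modification has colour at least $a_{h'}\ge a_h$, so it changes only the prefix ranks $\ge a_h$. Hence $\cF_{a_h-1}$ at the start of row $h$ is still the pullback of the original $\cF_{a_h-1}$. I would check this carefully, since equality of starting colours is allowed: a colour-$c$ modification changes only $\cF_c$, and $c\ge a_h>a_h-1$.

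Inside the row, rank $a_h-1$ is never modified, because the colours are $\ge a_h$. So at the end of the row, $\cE\twoheadrightarrow\cF_{b_h}\twoheadrightarrow\cF_{a_h-1}$ is compatible with the original quotient. Restricting to kernels then gives a surjection $\ker(\cE\to\cF_{a_h-1})\twoheadrightarrow\cG_h:=\ker(\cF_{b_h}\to\cF_{a_h-1})$, with the same local-lifting argument as in \cref{prop:row-bundle}. Since the tail is globally generated by hypothesis, so is $\cG_h$, and this persists under pullback. The filtration \eqref{eq:row-filtration-exact} by the created quotient lines holds verbatim. Whitney's formula then gives $c_{\rk}(\cG_h\otimes\cM_h)=\prod(q+c_1(\cM_h))$ for any globally generated line bundle $\cM_h$ pulled back from $X$, and $\cG_h\otimes\cM_h$ remains globally generated. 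I expect this identification of the row-start quotient to be the main obstacle. In particular, one must see that the ordering $a_1\ge\cdots\ge a_s$ is exactly what prevents an earlier row from having replaced $\cF_{a_h-1}$ by a nonsplit modified quotient. Without that, the tail hypothesis on the original $\cE$ would be irrelevant.

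Third, the kernel and terminal factors do not involve splitness at all. The realized edge sequences \eqref{eq:realized-flow-edge} come from \eqref{eq:S-exact} pulled back to the final tower. The vertex bundles $\widehat\cF_{j,r}$ of the creation-state graph are globally generated quotients of $\cE$. The terminal root edges contribute $c_j(\cF_j^{(\ell)})^{d_j}$. For a balanced profile, every divergence $b_v$ is nonnegative, so \cref{thm:chern-flow} writes $\prod_t s_t^{\kappa_t}\prod_j c_j(\cF_j^{(\ell)})^{d_j}$ as $\ctop$ of $\bigoplus_v\widehat\cF_v^{\oplus b_v}$. Taking the direct sum with $\bigoplus_h\cG_h\otimes\cM_h$ and using multiplicativity of top Chern classes yields the asserted actual globally generated bundle.
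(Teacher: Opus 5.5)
Your proposal is correct and takes essentially the same route as the paper's proof. The paper also argues that the ordering $a_1\ge\cdots\ge a_s$ leaves the row-start quotient $\cE\to\cF_{a_h-1}$ equal to the original one. Hence the given tail surjects onto each $\cG_{h,j}$, Whitney's formula gives the twisted row top Chern class, and \cref{thm:chern-flow} applied to the generated prefixes absorbs the kernel and terminal factors before the direct sum with the twisted rows is taken.
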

\begin{proof}
The tower remains integral and projective, and every prefix is a
quotient of pulled-back $\cE$. Previous colours are at least $a$, so
before a row starting at $a$ the quotient $\cE\to \cF_{a-1}$ is unchanged.
After its $j$th step, put
$\cG_{h,j}=\ker(\cF_{a+j-1}^{\rm new}\to \cF_{a-1})$, $\cG_{h,0}=0$.
Compatible quotients give
$0\to \cQ_{h,j}\to \cG_{h,j}\to \cG_{h,j-1}\to0$ and an actual surjection
$\ker(\cE\to \cF_{a-1})\twoheadrightarrow \cG_{h,j}$, with earlier objects
pulled back. Thus the row bundles are generated. For the completed
row, pulled through later stages, Whitney gives
\[
 c_{b_h-a_h+1}(\cG_h\otimes \cM_h)
 =\prod_{j=1}^{b_h-a_h+1}(q_{h,j}+c_1(\cM_h)).
\]
Apply \cref{thm:chern-flow} to the generated prefixes to absorb kernel
and terminal factors, and take the direct sum with these twisted rows.
\end{proof}

For old upper roots, preserving the additional prefix requires strictly decreasing row starts.
\begin{proposition}
\label{pI-prop:relative-corow}
Keep $X$, $\cE$, the initial quotient flag, and the row decomposition
of \cref{pI-prop:relative-row}, but replace its tail and ordering
hypotheses by $a_1>\cdots>a_s$ and that every initial tail
$\ker(\cE\to \cF_{a_h})$ is globally generated.
At the start of row $B_h=(a,\ldots,b)$, the bundles
$\cC_{h,j}=\ker(\cF_{a+j}\to \cF_a)$, $0\le j\le b-a+1$, are globally
generated and filter its old upper roots. Their independently twisted
top Chern classes, together with the terminal prefix top Chern classes,
therefore have an actual globally generated direct-sum envelope.
\end{proposition}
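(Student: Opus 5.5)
The proof will copy the proof of \cref{prop:corow}, with \cref{prop:row-bundle} used as a template. The only change is that the split quotient $\cE\to\bigoplus_{i\le a}\cL_i$ is replaced by the given initial quotient $\cE\to\cF_a$. Its kernel is generated by hypothesis.

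\emph{Step 1: the rank-$a$ prefix is still initial at the start of row $B_h$.} Fix a row $B_h=(a,\ldots,b)$ and let $Y_h$ be the tower stage immediately before it. Every earlier row $B_{h'}$ with $h'<h$ uses colours $\ge a_{h'}>a_h=a$. A colour-$c$ modification changes only the rank-$c$ prefix, so no earlier step creates a rank-$a$ prefix. Hence $\cF_a^{\langle h\rangle}$ on $Y_h$ is the pullback of the initial $\cF_a$. This is exactly where strict decrease of the starts is needed. If $a_{h'}=a$ were allowed, the earlier row would already have modified $\cF_a$, and the initial tail would no longer control it. The compatible surjection $\cE\twoheadrightarrow\cF_a^{\langle h\rangle}$ is therefore the pulled-back initial one.

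\emph{Step 2: filtration and generation.} For $0\le j\le b-a+1$, set $\cC_{h,j}=\ker(\cF_{a+j}^{\langle h\rangle}\to\cF_a^{\langle h\rangle})$. Take kernels along the compatible surjections $\cF_{a+j}\twoheadrightarrow\cF_{a+j-1}\twoheadrightarrow\cF_a$. This gives exact sequences $0\to\cR_{a+j}^{\langle h\rangle}\to\cC_{h,j}\to\cC_{h,j-1}\to0$, with surjectivity by local lifting. Restricting $\cE\twoheadrightarrow\cF_{a+j}^{\langle h\rangle}$ to kernels over $\cF_a$ gives a surjection $\ker(\cE\to\cF_a)\twoheadrightarrow\cC_{h,j}$. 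To see this, lift a local section of $\cC_{h,j}$ to $\cE$; its image in $\cF_a$ is zero, so the lift lies in the tail. The tail is generated, so every $\cC_{h,j}$ is generated. Pullback to the final tower preserves generation and the filtration.

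\emph{Step 3: identifying the old upper roots and assembling the envelope.} The argument of \cref{lem:old-upper} is purely combinatorial, so it transfers unchanged. At the $j$th step of the row, with colour $a+j-1$, the old upper root $\cR_{a+j}^{(e-1)}$ is the pullback of $\cR_{a+j}^{\langle h\rangle}$. This is because the in-row steps before it touch only indices $\le a+j-1$. Choose a generated line bundle $\cM_h$ pulled back from $X$. Whitney's formula applied to the twisted filtration gives
\[
 c_{b-a+1}(\cC_h\otimes\cM_h)=\prod_j\bigl(c_1(\cR_{a+j}^{\langle h\rangle})+c_1(\cM_h)\bigr).
\]
Every terminal prefix $\cF_j^{(\ell)}$ is a quotient of pulled-back $\cE$, hence generated. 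The required envelope is then $\bigoplus_h\widehat{\cC}_h\otimes\cM_h\oplus\bigoplus_j(\cF_j^{(\ell)})^{\oplus d_j}$, as in \cref{thm:atom-top-chern}. Its top Chern class is the stated product, by multiplicativity over direct sums.

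The main obstacle is Step 1: checking carefully that no earlier row, and no step inside the current row before its start, alters $\cF_a$. The other steps are routine transcriptions.
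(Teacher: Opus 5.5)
Your proposal is correct and follows essentially the same route as the paper's proof. Strictly larger preceding row starts leave the initial $\cF_a$ unchanged. The generated tail $\ker(\cE\to\cF_a)$ then surjects onto each $\cC_{h,j}$, with the kernel filtration by $\cR_{a+j}$. The in-row colours before the $j$th step are at most $a+j-2$, so they leave the root of index $a+j$ untouched. Whitney multiplicativity together with the generated terminal prefixes completes the envelope, and your version simply spells out the local-lifting and direct-sum details that the paper leaves implicit.
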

\begin{proof}
Strictly larger preceding row starts leave $\cF_a$ unchanged.
The initial tail $\ker(\cE\to \cF_a)$ surjects onto each $\cC_{h,j}$, and
$0\to \cR_{a+j}\to \cC_{h,j}\to \cC_{h,j-1}\to0$ is exact at row start.
Before the $j$th step, the preceding colours within the row are at most
$a+j-2$, so none changes the root of index $a+j$.
This is precisely the old upper root used at colour $a+j-1$.
Whitney multiplicativity and the globally generated terminal prefixes
complete the construction.
\end{proof}

For either relative envelope $\cV$ on $p:Y\to X$ and any projective
$f:X\to B=\prod_i\PP^{m_i}_{\kk}$, let $\Xi$ be the reduced Chow
representative of $(f\circ p)_*(c_R(\cV)\cap[Y])$, $R=\rk\cV$.
Then \cref{thm:incidence-dual} gives
$\Dcomp_{\mathbf m}\Nrm(\Xi)\in\RV$; if nonzero it has degree
$\dim Y-R$ and a smooth realization in characteristic zero.

\begin{remark}
The strict inequality in the co-row proposition is essential for this
argument. For $\cE=\cO^{\oplus2}$ and the repeated word $(1,1)$, the
first modification gives $\cF_1=\cO_{\PP^1}(1)$; the next co-row tail is
$\ker(\cO^{\oplus2}\to\cO(1))=\cO(-1)$, which is not generated.
The construction with distinct row twists is attached to its specified
row word; it does not assert a braid-independent multivariate family.

Nor does factorial normalization commute with identifying parameter
variables. For instance,
$\left.\Nrm_{u,v}((u+v)^2)\right|_{u=v=z}=3z^2, \qquad \Nrm_z(4z^2)=2z^2$.
To recover a common-twist packet, identify the line twists geometrically
before forming the coefficient normalization, or use its own coefficient
identity. One cannot infer that identity by an unqualified substitution
in a differently normalized multi-parameter polynomial.
\end{remark}

\section{Chern-flow refinements and flow cones}\label{subsec:flow-cone-geometry}
The flow identity has further consequences for characteristic classes and
for the integral cone of admissible weights.
We first recover exterior-power classes and an actual filtered kernel
from the flow identity, then describe the cone of admissible weights.

\subsection{Exterior powers and pathwise exact sequences}
The same $K^0$ identity gives, for
$\lambda_u(\cE)=\sum_j[\wedge^j\cE]u^j$,
\begin{equation}\label{eq:lambda-divergence}
 \lambda_u\!\left(\bigoplus_v\cF_v^{\oplus b_v}\right)
 =\prod_v\lambda_u(\cF_v)^{r_v}
                   \prod_e\lambda_u(\cE_e)^{\kappa_e}.
\end{equation}
Apply the multiplicative total $\lambda$-operation to
\eqref{eq:flow-K0}. Since all displayed bundles have finite rank,
the identity is polynomial and can be evaluated at $u=-1$.
Applying it first to the dual bundles gives the corresponding
$K$-theoretic Euler identity for $e_K(\cE)=\lambda_{-1}(\cE^\vee)$.
These identities do not replace the generated Chow envelope needed
for volume realization.

A path decomposition gives an actual filtered kernel, although it does not make the choice of surjection canonical.
\begin{proposition}
\label{pI-prop:pathwise-sequence}
Under the hypotheses of \cref{thm:chern-flow}, set
$\cU=\bigoplus_v\cF_v^{\oplus b_v}$ and
$\cF(r)=\bigoplus_v\cF_v^{\oplus r_v}$.
There is a generally noncanonical exact sequence
\[
 0\longrightarrow \cK_{\mathcal P}\longrightarrow \cU
   \longrightarrow \cF(r)\longrightarrow0
\]
such that $\cK_{\mathcal P}$ has a filtration with successive factors
$\cE_e$, each repeated $\kappa_e$ times, with zero factors omitted.
No acyclicity assumption is required.
\end{proposition}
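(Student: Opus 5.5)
We construct the sequence edge by edge, by induction on the total edge multiplicity $\sum_e\kappa_e$. The picture is a decomposition of the flow into paths or cycles that each move one unit of kernel multiplicity. There is no need to arrange this globally in advance: we peel off one edge copy at a time and keep a running surjection. The induction statement is the proposition itself, for all data $(\Gamma,\kappa,r)$ with $b\ge0$.

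\emph{Base case.} If $\kappa=0$, then $b=r$. We take $\cU=\cF(r)$ and $\cK_{\mathcal P}=0$.

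\emph{Induction step.} Suppose some $\kappa_e>0$, with $e:v\to w$. Let $\kappa'=\kappa-\mathbf 1_e$. The new divergence is $b'=b-\mathbf 1_w+\mathbf 1_v$, since removing a copy of $e$ lowers the incoming multiplicity at $w$ and the outgoing multiplicity at $v$.

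\emph{The divergence stays nonnegative.} At $v$ the value only increases. At $w$ we need $b_w\ge1$. This is not automatic, so we choose $e$ with some care. If every edge with $\kappa_e>0$ had $b_{t(e)}=0$, trace backwards: starting at such a $w$, the flow balance $0=b_w\ge \sum_{t(e)=w}\kappa_e-\sum_{s(e)=w}\kappa_e$ shows that $w$ has positive outgoing multiplicity whenever it has positive incoming multiplicity. Following outgoing positive edges forward, finiteness gives either a cycle of positive-$\kappa$ edges or a vertex $w'$ with $b_{w'}\ge1$ that is the target of a positive edge. In the second case we use that edge. In the first case we handle a cycle $e_1,\dots,e_k$ separately: removing one copy of each $e_i$ leaves $b$ unchanged. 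For the cycle, the composite
\[\cF_{v_1}\twoheadleftarrow\cF_{v_2}\twoheadleftarrow\cdots\twoheadleftarrow\cF_{v_1}\]
is a surjective endomorphism of a vector bundle of the same rank. It is therefore an isomorphism, which forces each $\cE_{e_i}$ to have rank $0$, that is, to be the zero factor. These are omitted by hypothesis. (Cycles can only occur with zero-rank kernels, because ranks strictly increase along edges of positive kernel rank.) So we may assume we are in the path case, with $b_w\ge1$.

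\emph{Extending the surjection.} By induction there is an exact sequence $0\to\cK'\to\cU'\to\cF(r)\to0$ with $\cU'=\bigoplus\cF_u^{\oplus b'_u}$. Write $\cU=\cU''\oplus\cF_w$ and $\cU'=\cU''\oplus\cF_v$; this is possible because $b_w\ge1$ and $b'_v\ge1$. Compose
\[\cU=\cU''\oplus\cF_w\xrightarrow{\ \mathrm{id}\oplus q_e\ }\cU''\oplus\cF_v=\cU'\twoheadrightarrow\cF(r),\]
where $q_e$ is the given surjection in \eqref{eq:flow-exact}. The composite is surjective. Its kernel $\cK_{\mathcal P}$ contains $\ker(\mathrm{id}\oplus q_e)\cong\cE_e$, with quotient $\cK'$, giving
\[0\to\cE_e\to\cK_{\mathcal P}\to\cK'\to0.\]
Together with the filtration of $\cK'$ from the induction hypothesis, this yields the required filtration with each $\cE_e$ appearing $\kappa_e$ times.

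\emph{Main obstacle and remarks.} The main obstacle is the combinatorial step of finding a removable edge whose target still carries positive divergence. This is handled by the forward-tracing argument and the observation that positive-rank cycles are impossible. Noncanonicity enters through the choice of edge order and of which summand copies are used. No acyclicity assumption is needed, because cycles contribute only zero factors.
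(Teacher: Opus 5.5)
Your proof is correct and is essentially the paper's argument. The paper adds a source with supply $r_v$ and a sink with drain $b_v$ at each vertex, greedily decomposes the resulting integral flow into source-to-sink paths and cycles, discards the cycles because the rank count $\sum_e(\rk\cF_{t(e)}-\rk\cF_{s(e)})=0$ forces their kernels to vanish, and then takes the direct sum over paths of the composite surjections $\cF_w\twoheadrightarrow\cF_v$, each kernel carrying the inverse-image filtration by its edge kernels; your induction unrolls that same decomposition one edge copy at a time (each $\mathrm{id}\oplus q_e$ extends a path from its sink end), with the extension $0\to\cE_e\to\cK_{\mathcal P}\to\cK'\to0$ playing the role of the inverse-image filtration.
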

\begin{proof}
On a directed cycle, the nonnegative kernel ranks sum to
$\sum_e(\rk \cF_{t(e)}-\rk \cF_{s(e)})=0$. Thus every cycle kernel is
zero and its edge maps are isomorphisms.
Add a source with supply $r_v$ and a sink with drain $b_v$ at each
vertex. The divergence identity is conservation of integral flow.
Following positive edges and subtracting the least multiplicity whenever
a cycle closes or the sink is reached decomposes it into paths and
cycles. Each subtraction removes a positive edge; after exhausting
source flow the remainder consists of cycles. Discard these, since
they change no supply or drain and have only zero kernel factors.
A retained path from $v$ to $w$ gives $\cF_w\twoheadrightarrow \cF_v$
whose kernel has the inverse-image filtration by its edge kernels.
Direct sum over paths, with their multiplicities, gives the claimed
sequence; paths with no ordinary edge contribute identity maps.
\end{proof}
The bundle $\cU$ is fixed by the divergence data. The path matching,
surjection, and filtered kernel are not asserted to be canonical.

\subsection{Path cones and primitive generators}

Let $\Gamma=(V,E)$ be a finite acyclic directed multigraph and
$S\subseteq V$ its allowed root-supply vertices. For $x\in\RR^E_{\ge0}$,
write $\operatorname{in}_x(v)=\sum_{t(e)=v}x_e$ and
$\operatorname{out}_x(v)=\sum_{s(e)=v}x_e$, and set
\begin{equation}\label{eq:source-admissible-cone}
 C(\Gamma,S)=\{x\in\RR^E_{\ge0}:\operatorname{in}_x(v)
                  \ge\operatorname{out}_x(v)\text{ for }v\notin S\}.
\end{equation}
A nonempty directed path is $S$-primitive if it starts in $S$ and
has no other nonterminal vertex in $S$. Its incidence vector $\chi_P$
distinguishes parallel edges.

The admissible weights are generated by paths, and their indecomposable integral generators can be determined explicitly.
\begin{proposition}\label{prop:path-cone-geometry}
The cone is generated by paths beginning in $S$, and
\[
 C(\Gamma,S)\cap\ZZ^E
 =\NN\{\chi_P:P\text{ is a directed path beginning in }S\}.
\]
Its extreme rays are exactly $\RR_{\ge0}\chi_P$ for $S$-primitive
paths. These vectors form the unique minimal Hilbert basis.
The integral semigroup is normal, and its algebra over any field is
Cohen--Macaulay.
\end{proposition}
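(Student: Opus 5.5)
The plan is to prove the path-cone statement by flow decomposition, mirroring the argument of \cref{pI-prop:pathwise-sequence}, and then to read off the Hilbert basis and normality from the resulting decomposition.

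\textbf{Step 1: generation by paths.} Each incidence vector $\chi_P$ of a directed path beginning in $S$ lies in $C(\Gamma,S)$. At every interior vertex the path contributes one incoming and one outgoing edge. At its terminal vertex it contributes only an incoming edge. Its initial vertex lies in $S$, where no inequality is imposed. Since $C(\Gamma,S)$ is a cone, every nonnegative combination of such vectors also lies in it.

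\textbf{Step 2: integral decomposition.} Conversely, take an integral $x\in C(\Gamma,S)\cap\ZZ^E$ and induct on $|x|$. Because $\Gamma$ is acyclic, a vertex $v$ with $\operatorname{out}_x(v)>0$ that is minimal in the topological order among such vertices has $\operatorname{in}_x(v)=0$. Balance then forces $v\in S$. From $v$, follow edges with $x_e>0$: whenever the walk enters a vertex $w\notin S$ with $\operatorname{out}_x(w)>0$, balance guarantees an outgoing edge with positive weight, so the walk continues. Acyclicity makes the walk terminate, and we stop at the first vertex $w$ with $\operatorname{out}_x(w)=0$; call the resulting path $P$. It remains to check that subtracting $\chi_P$ preserves the inequalities.
\begin{itemize}
\item At interior vertices of $P$, one unit is removed from both $\operatorname{in}_x$ and $\operatorname{out}_x$, so the inequality is unchanged.
\item At the terminal vertex, only $\operatorname{in}_x$ drops, but $\operatorname{out}_x=0$ there, so the inequality still holds.
\item At the initial vertex nothing needs checking, since it lies in $S$.
\end{itemize}
Hence $x-\chi_P\in C(\Gamma,S)\cap\ZZ^E$, and induction gives the semigroup equality. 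The real cone is the cone over its rational points, and scaling a rational point clears denominators, so the real cone is generated by the same path vectors.

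\textbf{Step 3: extreme rays and Hilbert basis.} A non-primitive path passes through some $s\in S$ at a nonterminal position. Splitting it at $s$ writes $\chi_P$ as a sum of two path vectors, each beginning in $S$, with distinct supports; hence $\chi_P$ is not extreme. For the converse, suppose an $S$-primitive $\chi_P$ is a nonnegative combination of path vectors. Every path in the combination has support contained in $P$, so it is a contiguous subpath of $P$, and it must begin at a vertex of $S$. Along $P$, only the initial vertex lies in $S$ among the nonterminal ones, so every such subpath starts at the start of $P$. The first edge of $P$ has coefficient one in $\chi_P$, which forces the weights to sum to one. Each subsequent edge also has coefficient one, which forces every subpath in the combination to be all of $P$. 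Thus $\chi_P$ spans an extreme ray.

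Since the entries of $\chi_P$ are $0/1$ and its support is a path, an $S$-primitive $\chi_P$ is primitive on its ray. Because the semigroup $C(\Gamma,S)\cap\ZZ^E$ is generated by the primitive vectors on its extreme rays (Step 2 together with the splitting in Step 3), its minimal Hilbert basis consists exactly of these $\chi_P$. Normality holds by construction, since the semigroup is the full lattice-point set of a rational cone. Hochster's theorem then gives Cohen--Macaulayness of the semigroup algebra over any field.

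The main obstacle is the walk in Step 2: one must verify that balance at vertices outside $S$ always supplies an outgoing edge of positive weight. This is where acyclicity and the stopping rule are both used, since together they ensure termination without any vertex $w\notin S$ becoming a positive-outflow source after the subtraction. A secondary point requiring care is the uniqueness argument in Step 3 when $\Gamma$ has parallel edges: the incidence vectors distinguish parallel edges, so every path in a decomposition is forced to use the same parallel copies as $P$.
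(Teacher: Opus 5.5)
Your proof is correct and follows essentially the paper's route: you peel off paths along positive edges from a source of the positive-edge subgraph (which must lie in $S$) to a sink, split at intermediate vertices of $S$, use that $0/1$ path vectors are primitive, and obtain normality from saturation and Cohen--Macaulayness from Hochster's theorem. The differences are minor. You subtract unit paths and recover real generation from rationality of the cone, where the paper subtracts $(\min_{e\in P}x_e)\chi_P$ directly. You prove extremality via generation plus a first-edge counting argument, whereas the paper observes that, for $\chi_P=x+y$, the internal divergence inequalities make each summand's edge weights nonincreasing along $P$, hence constant.
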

\begin{proof}
For $0\ne x\in C(\Gamma,S)$, the positive-edge subgraph is a nonempty
DAG. A source in it has positive outgoing but no incoming flow and
therefore belongs to $S$. Follow positive edges to a sink, obtaining
$P$, and subtract $\lambda\chi_P$, where $\lambda=\min_{e\in P}x_e$.
Internal divergences are unchanged; the initial vertex lies in $S$,
and the terminal vertex has no positive outgoing flow. The remainder
stays in the cone and has fewer positive edges. Iteration decomposes
$x$ into paths, with integral coefficients when $x$ is integral.

A path with an intermediate vertex in $S$ splits there. Conversely,
if $P$ is $S$-primitive and $\chi_P=x+y$ in the cone, both summands
vanish off $P$. Their successive edge weights are nonincreasing, by
the internal divergence inequalities, and sum coordinatewise to one.
They are therefore constant, proving extremality. The decomposition
shows that there are no other extreme rays. Splitting at intermediate
vertices of $S$ proves integral generation by primitive paths; each
is an indecomposable primitive lattice vector on its extreme ray,
so the Hilbert basis is unique. Finally,
$\mathsf S=C(\Gamma,S)\cap\ZZ^E$ is saturated: if
$g\in\operatorname{gp}(\mathsf S)$ and $qg\in\mathsf S$ for $q>0$,
then $g\in\ZZ^E\cap C(\Gamma,S)$. Hochster's theorem gives
Cohen--Macaulayness \cite{Hoch72}.
\end{proof}

For the ordinary-edge creation-state DAG $\Gamma$ and final states
$S=\{\sigma_j(\ell)\}$, the interval inequalities in
\cref{thm:weighted-balance} identify $\mathfrak F(\mathbf i)$ with
$C(\Gamma,S)$. Thus its integral semigroup is normal, with extreme
rays and minimal Hilbert basis given by $S$-primitive paths.
The least terminal completion and an aggregated integral path
decomposition are computed in topological order; at most $|E|$ paths
have positive multiplicity, since each greedy subtraction removes a
positive edge.

With integral upper and lower bounds, total unimodularity gives the corresponding integer-decomposition property.
\begin{proposition}\label{thm:bounded-flow-idp}
Let $A$ be a row submatrix of a node--arc incidence matrix. With integral
bounds, every nonempty bounded polytope
\[
 P=\{x\in\RR^E:\ell_E\le x\le u_E,\ \ell_V\le Ax\le u_V\}
\]
has the integer-decomposition property: each
$x\in mP\cap\ZZ^E$ is a sum of $m$ points in $P\cap\ZZ^E$.
Its Ehrhart semigroup is normal and its semigroup algebra is
Cohen--Macaulay.
\end{proposition}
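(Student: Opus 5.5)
The plan is to use total unimodularity together with the classical Baum--Trotter decomposition argument. A node--arc incidence matrix of a directed multigraph is totally unimodular, and so is any row submatrix. Stacking $A$ with the identity on $E$ gives the constraint matrix $\binom{I}{A}$, and this stacked matrix is also totally unimodular. Consequently every polytope of the form $\{x:\ell'\le x\le u',\ \ell''\le Ax\le u''\}$ with integral bounds is an integral polytope. This includes every dilate $mP$, whose bounds are the integral vectors $m\ell_E,\,mu_E,\,m\ell_V,\,mu_V$.

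The decomposition proceeds by induction on $m$, with $m=1$ trivial. Fix $x\in mP\cap\ZZ^E$ with $m\ge2$, and consider
\[
 Q=\{y: \ell_E\le y\le u_E,\ \ell_V\le Ay\le u_V,\ (m-1)\ell_E\le x-y\le (m-1)u_E,\ (m-1)\ell_V\le A(x-y)\le (m-1)u_V\}.
\]
Equivalently, $Q=P\cap(x-(m-1)P)$. The first task is to show that $Q$ is nonempty, and for this I would use the point $y=x/m$. It lies in $P$ because $x\in mP$. It lies in $x-(m-1)P$ because $x-x/m=\tfrac{m-1}{m}x\in(m-1)P$.

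Next I would check that $Q$ is integral. Rewritten in the variable $y$, the constraints are box constraints and $A$-row constraints, each with two-sided integral bounds. For example, $x-(m-1)u_E\le y\le x-(m-1)\ell_E$ has integral bounds because $x$ is integral. Intersecting the two bounds on each coordinate, and on each row $Ay$, produces a single system of the same totally unimodular form with integral right-hand sides. Hence $Q$ has an integral vertex $y$. Then $y\in P\cap\ZZ^E$ and $x-y\in(m-1)P\cap\ZZ^E$, and induction finishes the decomposition. I expect the nonemptiness step (the $x/m$ trick) and this bookkeeping of combining bounds to be the only points needing care, and neither is a real obstacle.

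For the semigroup statements, let $\mathsf S=\{(x,m):m\ge0,\ x\in mP\cap\ZZ^E\}$ be the Ehrhart semigroup, that is, the lattice points of the cone over $P\times\{1\}$. It is saturated in $\ZZ^{E}\times\ZZ$ by definition, since it consists of all lattice points of a rational polyhedral cone. The integer-decomposition property shows that $\mathsf S$ is generated by its degree-one elements $P\cap\ZZ^E$. Moreover $P$ is a lattice polytope, because its vertices are integral by total unimodularity, so the cone is rational and $\mathsf S$ is a normal affine semigroup. Hochster's theorem \cite{Hoch72} then gives that the semigroup algebra is Cohen--Macaulay over any field, exactly as in \cref{prop:path-cone-geometry}.
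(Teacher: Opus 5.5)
Your proposal is correct and follows the paper's proof essentially step for step. Both use total unimodularity of the stacked system, induct on $m$ through the nonempty polytope $P\cap(x-(m-1)P)\ni x/m$ and pick an integral vertex of it, and then derive normality of the Ehrhart semigroup from saturation and Cohen--Macaulayness from Hochster's theorem. The only difference is cosmetic: you merge the two sets of bounds into one two-sided system, whereas the paper stacks rows of $\pm A$ and $\pm I$.
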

\begin{proof}
Every matrix formed by stacking rows of $\pm A$ and $\pm I$ is totally
unimodular, so $P$ is integral. Induct on $m$. For $m>1$ and
$x\in mP\cap\ZZ^E$, the bounded polytope
$Q_x=\{y\in P:x-y\in(m-1)P\}$ is nonempty since it contains $x/m$.
Besides the bounds for $P$, its inequalities are
\[
 \begin{aligned}
 x-(m-1)u_E&\le y\le x-(m-1)\ell_E,\\
 Ax-(m-1)u_V&\le Ay\le Ax-(m-1)\ell_V.
 \end{aligned}
\]
Its matrix is totally unimodular and its right-hand sides integral,
so it has an integral vertex $y$. Decompose $x-y$ by induction.
The Ehrhart semigroup
$\mathsf E(P)=\operatorname{cone}(\{1\}\times P)\cap(\ZZ\times\ZZ^E)$
is saturated, hence normal; the proved property says it is generated
in degree one. Apply Hochster's theorem \cite{Hoch72}.
\end{proof}

\section{Functoriality and minimal polarized completions}
\label{sec:moment-functoriality}
This appendix separates the algebraic behavior of Chern moments from
positivity certificates. The class-valued Hankel pairing controls
functoriality and nonflat base change. We then treat insertions,
compression, and minimal polarized completions, and conclude with the
distinction between positive moments and a generated bundle on a fixed
surface.

\subsection{Hankel maps and base change}\label{fn-sec:hankel}
Here $A$ is a finite-dimensional commutative graded Poincar\'e-duality
algebra over a characteristic-zero field $k$, with $A^0=k$, formal
dimension $d$, and degree $\lambda_A$, extended by zero. Choose finite
caps $r_i\ge0$ and coefficients $C_{i,a}\in A^a$, $C_{i,0}=1$,
zero outside $0\le a\le r_i$. No positivity is assumed. Define
\begin{equation}\label{fn-eq:truncated}
 S_A=A[z_1,\ldots,z_m]/(z_i^{r_i+1}:1\le i\le m),\qquad \deg z_i=1,
\end{equation}
and the $A$-linear, generally nonmultiplicative map
$T_C(az^\alpha)=a\prod_iC_{i,\alpha_i}$. The finite caps make this
compatible with out-of-range vanishing. Its Hankel map is
\begin{equation}\label{fn-eq:hankel}
 \mathsf H_C:S_A\longrightarrow S_A^*=\operatorname{Hom}_A(S_A,A),
 \qquad \mathsf H_C(f)(g)=T_C(fg).
\end{equation}
The entries are $(\mathsf H_C)_{\alpha,\beta}
=\prod_iC_{i,\alpha_i+\beta_i}$: a class-valued matrix, not a real
positive-semidefinite moment matrix. We also write $\mathcal R_C$ for $R_C$.

The class-valued pairing gives functoriality and identifies exactly the
possible failure of tensor base change.
\begin{proposition}\label{fn-thm:hankel}
The homogeneous ideal $J_C=\ker\mathsf H_C$ defines the scalar moment
algebra $R_C=S_A/J_C$, and
\begin{equation}\label{fn-eq:class-test}
 f\in J_C\quad\Longleftrightarrow\quad
 T_C(fz^\beta)=0\text{ in }A\text{ for all }0\le\beta_i\le r_i.
\end{equation}
For a unital graded map $\phi:A\to A'$ between Poincar\'e-duality
coefficient algebras, possibly of different top degrees, put
$C'_{i,a}=\phi(C_{i,a})$. There is a canonical graded map
$R_C\to R_{C'}$, $a\mapsto\phi(a)$, $z_i\mapsto z_i$, respecting
composition and injective or surjective when $\phi$ is so.
For $Q_C=\coker\mathsf H_C$, the natural surjective algebra map under
tensor base change fits into an exact sequence of graded $A'$-modules
\begin{equation}\label{fn-eq:tor}
 0\longrightarrow\Tor_1^A(Q_C,A')\longrightarrow R_C\otimes_AA'
        \longrightarrow R_{C'}\longrightarrow0.
\end{equation}
It is an isomorphism exactly when this Tor group vanishes, in
particular when $A'$ is flat over $A$.
\end{proposition}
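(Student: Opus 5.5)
The proof has four parts: describe $J_C$ by the class test, build the functorial map, read off base change from the Hankel cokernel, and compare ideals. The key observation is that $T_C$ is $A$-linear and $\lambda_A$ is a perfect pairing on $A$. First we identify $J_C$ with the annihilator $I_C$ of \eqref{hd-eq:intrinsic}, transported to $S_A$. We have $z_i^{r_i+1}\in I_C$ because $C_{i,a}=0$ for $a>r_i$, so $P\to S_A$ induces $R_C=S_A/\overline{I_C}$. For $f\in S_A$, the condition $\Lambda_C(fg)=0$ for all $g\in S_A$ means $\lambda_A(b\,T_C(fz^\beta))=0$ for all $b\in A$ and all $\beta$. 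By Poincar\'e duality on $A$, this is equivalent to $T_C(fz^\beta)=0$ in $A$, which is \eqref{fn-eq:class-test}. Since $\mathsf H_C(f)$ is $A$-linear and determined by its values on the $A$-basis $z^\beta$, this is also $\mathsf H_C(f)=0$. Homogeneity is clear because $T_C$ is graded.

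For functoriality, the map $S_A\to S_{A'}$ given by $a\mapsto\phi(a)$ and $z\mapsto z$ satisfies $T_{C'}\circ\phi=\phi\circ T_C$, since $\phi$ is a ring map. Hence, by the class test, $f\in J_C$ gives $\phi(f)\in J_{C'}$, so we get $R_C\to R_{C'}$; compatibility with composition is immediate. Surjectivity of $\phi$ gives surjectivity of the induced map. For injectivity, suppose $\phi$ is injective and $\phi(f)\in J_{C'}$. Then $\phi(T_C(fz^\beta))=T_{C'}(\phi(f)z^\beta)=0$, so $T_C(fz^\beta)=0$ and $f\in J_C$. Note that only the class test is used, so differing top degrees cause no trouble.

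For base change, $S_A$ is free of finite rank over $A$, so $S_A^*$ is free as well, and $S_A\otimes_AA'=S_{A'}$ and $S_A^*\otimes_AA'=S_{A'}^*$. The Hankel map commutes with base change: $\mathsf H_C\otimes A'=\mathsf H_{C'}$. Write $K=\im\mathsf H_C$. There are exact sequences $0\to J_C\to S_A\to K\to0$ and $0\to K\to S_A^*\to Q_C\to0$, and $R_C\cong K$ as $A$-modules. Tensoring the second sequence with $A'$ gives $0\to\Tor_1^A(Q_C,A')\to K\otimes A'\to S_{A'}^*$, using that $S_A^*$ is free. The image of $K\otimes A'$ in $S_{A'}^*$ is $\im\mathsf H_{C'}\cong R_{C'}$. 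Since $R_C\otimes A'\cong K\otimes A'$ via $S_{A'}\to K\otimes A'$, this yields \eqref{fn-eq:tor}.

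The step needing the most care is checking that the algebra map $R_C\otimes_AA'\to R_{C'}$ really matches the module map $K\otimes A'\to\im\mathsf H_{C'}$. Both are induced from $S_{A'}$, with $f\mapsto\mathsf H_{C'}(f)$, so the square commutes. The isomorphism criterion and the flat case then follow at once.
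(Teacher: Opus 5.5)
Your proposal is correct and follows essentially the paper's argument. You identify $\ker\mathsf H_C$ with the image of $I_C$ by duality in $A$, reducing to the class tests on the monomials $z^\beta$, and obtain functoriality and injectivity/surjectivity from those tests. The exact sequence comes from tensoring $0\to\im\mathsf H_C\to S_A^*\to Q_C\to0$ with $A'$, using that $S_A^*$ is finite free and that $S_{A'}\twoheadrightarrow\im\mathsf H_C\otimes_AA'$ identifies the image with $\im\mathsf H_{C'}=R_{C'}$.
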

\begin{proof}
Associativity makes $J_C$ a homogeneous ideal contained in the scalar
annihilator $I_C$. Conversely, for $f\in I_C$ and $a\in A,g\in S_A$,
$\lambda_A(aT_C(fg))=\Lambda_C(fag)=0$. Duality in $A$ gives
$T_C(fg)=0$, so $I_C=J_C$; $A$-linearity reduces tests to monomials.
The class tests commute with $\phi$, proving functoriality. Under an
injection a nonzero test remains nonzero; under a surjection all
coefficients lift.
Finally, $S_{A'}=S_A\otimes_AA'$ and
$\mathsf H_{C'}=\mathsf H_C\otimes_AA'$, using the finite free dual.
Identify $R_C=\im\mathsf H_C$ as a module and tensor
$0\to R_C\to S_A^*\to Q_C\to0$. Since $S_A^*$ is finite free,
the Tor sequence \cite[Tag~00M0]{Stacks} gives the displayed kernel.
Tensoring $S_A\twoheadrightarrow R_C$ identifies the image with
$\im\mathsf H_{C'}=R_{C'}$ and its quotient algebra map.
\end{proof}
Zero-padding the caps adds only annihilated monomials and does not
change the quotient. These algebraic statements do not prescribe
Lefschetz cones.

\begin{example}\label{fn-ex:nonflat}
Use \eqref{hd-eq:P2-example} over $\QQ$, and restrict
$A=\QQ[h]/(h^3)$ to $A'=A/(h^2)$, with $\lambda_{A'}(h)=1$.
Then $C'(t)=1+2ht$ and
$R_{C'}=\QQ[h,z]/(h^2,z-2h),\qquad R_C\otimes_AA'=\QQ[h,z]/(h^2,hz,z^2)$.
Their dimensions are two and three; the extra square-zero kernel is
$\QQ(z-2h)$, so the Tor group has dimension one. Geometrically this
is restriction from $\PP^2$ to a line, not flat base change of moments.
\end{example}
By contrast, for $A'=A[u]/(u^2)$ one has
$R_{C'}=R_C[u]/(u^2)$, with Hilbert vector $(1,3,3,1)$ in this example.
More generally, a monic extension of $A$ is free, and its defining
relation becomes the same relation over $R_C$. Freeness alone gives
no positive cone for the new generator.

\subsection{Insertions and changes of presentation}\label{fn-sec:operations}
Inserting a fixed nonzero base class has a different, exact compatibility that does not require flatness.
\begin{proposition}\label{fn-thm:insert}
For $0\ne\theta\in A^e$, let
$A_\theta=A/\Ann_A\theta$, with degree
$\lambda_\theta[a]=\lambda_A(\theta a)$, and let $\bar C$ be the
image arrays. This is a duality algebra of top degree $d-e$, and
canonically
\begin{equation}\label{fn-eq:insertion}
 \mathcal R_{A_\theta,\bar C}
 \simeq\mathcal R_{A,C}/\Ann_{\mathcal R_{A,C}}(\theta).
\end{equation}
No flatness is required.
\end{proposition}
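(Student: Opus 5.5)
The approach is to compare the two moment functionals directly and use the Hankel class test of \cref{fn-thm:hankel} on both sides. Nothing is flat here, but the insertion is controlled exactly by multiplication by $\theta$.

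The first step is that $A_\theta$ is a duality algebra of top degree $d-e$. It is graded because $\Ann_A\theta$ is homogeneous. For $[a]\ne0$ in $A_\theta^j$ we have $\theta a\ne0$, so duality in $A$ gives $b\in A^{d-e-j}$ with $\lambda_A(\theta ab)\ne0$. This yields perfect pairings; degree zero is $k$ since $\theta\ne0$, and $\lambda_\theta$ is nonzero on $A_\theta^{d-e}$. The image arrays $\bar C_{i,a}=[C_{i,a}]$ keep the caps $r_i$ and satisfy $\bar C_{i,0}=1$.

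The central identity compares the two scalar moment functionals. On $S_A$, composing with $S_A\to S_{A_\theta}$,
\[
 \Lambda_{\bar C}(az^\alpha)=\lambda_\theta\Bigl[a\prod_iC_{i,\alpha_i}\Bigr]
 =\lambda_A\Bigl(\theta a\prod_iC_{i,\alpha_i}\Bigr)=\Lambda_C(\theta az^\alpha).
\]
The degree conventions match because $\lambda_\theta$ vanishes outside degree $d-e$ exactly when $\Lambda_C(\theta\,\cdot)$ vanishes outside total degree $d-e$. Hence, for $f\in S_A$, the image $\bar f$ lies in $I_{\bar C}$ iff $\Lambda_C(\theta fg)=0$ for every $g\in S_A$ (using surjectivity of $S_A\to S_{A_\theta}$). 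That condition says $\theta f\in I_C$, i.e.\ $[f]\in\Ann_{\mathcal R_{A,C}}(\theta)$.

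Therefore the surjection $S_A\to S_{A_\theta}\to\mathcal R_{A_\theta,\bar C}$ has kernel equal to the preimage of $\Ann_{\mathcal R_{A,C}}(\theta)$. This gives the graded isomorphism \eqref{fn-eq:insertion}, fixing $z_i$ and the image of $A$. The identity $\Lambda_{\bar C}=\Lambda_C(\theta\,\cdot)$ shows it carries the degree $\lambda_\theta$-moment to the weighted degree $[p]\mapsto\Lambda_C(\theta p)$ on the right. Canonicity follows from the uniqueness part of \cref{hd-prop:intrinsic-PD}, since both sides are generated over the image of $A$ by the marked $z_i$ with the same moments.

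I expect the main obstacle to be bookkeeping rather than ideas. One point is that elements of $\Ann_A\theta$ pass to zero compatibly on both sides; this holds since $\theta a=0$ makes $\Lambda_C(\theta a\,\cdot)=0$. The other is to note explicitly that no Tor term appears. Unlike \eqref{fn-eq:tor}, the right side here is a duality quotient rather than a tensor product, so the annihilator absorbs the non-flat kernel.
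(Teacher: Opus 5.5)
Your proposal is correct and follows essentially the paper's route: both proofs show that the kernel of $S_A\to\mathcal R_{A_\theta,\bar C}$ is exactly the preimage of $\Ann_{\mathcal R_{A,C}}(\theta)$, after first checking duality of $A_\theta$ through $\theta a\ne0$. The only difference is cosmetic. You test membership with the scalar identity $\Lambda_{\bar C}=\Lambda_C(\theta\,\cdot\,)$, whereas the paper uses the class-valued test $T_C(fz^\beta)\in\Ann_A\theta\Leftrightarrow T_C(\theta fz^\beta)=0$ from \cref{fn-thm:hankel}; these are interchangeable because $I_C=J_C$.
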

\begin{proof}
Duality identifies the annihilator of $a\mapsto\lambda_A(\theta a)$
with $\Ann_A\theta$. A polynomial vanishes in the left side precisely
when $T_C(fz^\beta)\in\Ann_A\theta$ for every $\beta$, equivalently
$T_C(\theta fz^\beta)=0$ for every $\beta$. By the class test this
means $\theta[f]=0$ in $R_C$. Surjectivity follows from
\cref{fn-thm:hankel}; the weighted degrees agree by definition.
\end{proof}
For $I=\Ann_A\theta$, ordinary tensor base change instead gives
$R_C/(IR_C)$, and $IR_C\subseteq\Ann_{R_C}\theta$. The discrepancy
is the Tor kernel, not radicalization; for $\theta=h$ in
\cref{fn-ex:nonflat}, insertion imposes $z=2h$. When $\theta$ is a
nonzero product of boundary polarizations, descent also gives the
Hodge package. Fixed insertion and active-index shift have different
moments, $\deg_A(\theta a\prod_iC_{i,\beta_i})$ and
$\deg_A(a\prod_iC_{i,\alpha_i+\beta_i})$; no identity
$c_{a+b}(\cE)=c_a(\cE)c_b(\cE)$ is used.

For separate arrays on coefficient algebras $A_1,A_2$ over $k$,
with tensor product degree, there is also a natural identification
\[
 \mathcal R_{A_1\otimes A_2,(C^{(1)},C^{(2)})}
 \simeq\mathcal R_{A_1,C^{(1)}}\otimes_k\mathcal R_{A_2,C^{(2)}}.
\]
Indeed, the functional is a tensor product, and the tensor product of
the two duality quotients has a perfect pairing, leaving no further
annihilator. The coefficient algebra here is $A_1\otimes A_2$, not in
general the full real diagonal Hodge ring of a product of varieties.
Marked uniqueness was proved in \cref{hd-prop:intrinsic-PD}.
None of these algebraic maps automatically preserves a chosen
Lefschetz cone.

The auxiliary projective-bundle ranks can be reduced without changing the descended moments.
\begin{proposition}\label{fn-prop:compression}
Let $A$ have formal dimension $d$, set $s_0=\max\{2,d\}$, and write
$K(t)=1+k_1t+\cdots+k_dt^d$. For $S\ge s\ge s_0$, put
$p_s(\xi)=\xi^s+k_1\xi^{s-1}+\cdots+k_d\xi^{s-d}$ and
$B_s=A[\xi]/(p_s)$. There is a canonical isomorphism
\[
 B_S/\Ann_{B_S}(\xi^{S-s})\simeq B_s,
\]
preserving the weighted degree $\deg_{B_S}(\xi^{S-s}\,\cdot)$ and
the fibre-top normalization. If $B_S$ is positive with $\xi$ boundary,
$B_s$ has the induced K\"ahler package. For our geometric and KM
certificates, every auxiliary rank can thus be replaced by $s_0$
at the certificate-algebra level.
\end{proposition}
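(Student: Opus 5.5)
The plan is to reduce the statement to the recurrence of \cref{hd-lem:recurrence} together with the weighted-quotient identification of \cref{hd-thm:quotient}. The key observation is that $p_S=\xi^{S-s}p_s$ as polynomials, since both equal $\xi^S+k_1\xi^{S-1}+\cdots+k_d\xi^{S-d}$ and $S-d\ge s-d\ge0$. Consequently there is a natural surjection $B_S\twoheadrightarrow B_s$ of $A$-algebras fixing $\xi$, because $(p_S)\subseteq(p_s)$.

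First I would compare degree functionals. Let $\pi^{(s)}_*$ extract the coefficient of $\xi^{s-1}$ in the monic basis of $B_s$. By \cref{hd-lem:recurrence}, applied to the inverse array $C(t)=K(t)^{-1}$ truncated at degree $d$, one has $\pi^{(s)}_*(\xi^{s-1+a})=C_a=\pi^{(S)}_*(\xi^{S-1+a})$. It follows that
\[
 \deg_{B_S}(\xi^{S-s}\xi^{j}a)=\deg_{B_s}(\xi^{j}a)
\]
for every $a\in A$ and $j\ge0$. This holds when $j\ge s-1$ by the moment formula with $a'=j-s+1$. For smaller $j$ both sides vanish, because $\xi^j$ with $j<s-1$ is a basis monomial below the fibre top in $B_s$, and $\xi^{S-s+j}$ with $S-s+j<S-1$ is likewise below the fibre top in $B_S$. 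So the functional $b\mapsto\deg_{B_S}(\xi^{S-s}b)$ on $A[\xi]$ equals $\deg_{B_s}$ composed with the projection.

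Next I would identify the kernels. By Poincar\'e duality in $B_S$ (\cref{hd-lem:recurrence}), an element $b\in B_S$ lies in $\Ann_{B_S}(\xi^{S-s})$ exactly when $\deg_{B_S}(\xi^{S-s}bc)=0$ for all $c$. By the equality of functionals, this says that the image of $b$ pairs to zero with all of $B_s$. By duality of $B_s$ (again \cref{hd-lem:recurrence}), that happens exactly when $b\mapsto0$ in $B_s$. Hence $B_S/\Ann_{B_S}(\xi^{S-s})\simeq B_s$ canonically, with the weighted degree preserved. Fibre-top normalization is the case $a\in A^d$, $j=s-1$.

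For positivity, $\xi^{S-s}$ is a nonzero product of boundary polarizations, since it is a basis monomial. \cref{hd-thm:descent} therefore gives the quotient the K\"ahler package, with the image cone, and we transport it through the isomorphism. For the certificate remark, I would note that the geometric certificates of \cref{hd-thm:geometric} and the KM certificates of \cref{km-thm:inverse} have inverse arrays with $k_j\in A^j$, so $k_j=0$ for $j>d$. Their relations therefore have the form $p_S$, and we apply the statement with $s=s_0$.

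The main obstacle I expect is the vanishing of low moments in $B_S$. One must check that $\xi^{S-s+j}$ for $j<s-1$ pushes to zero even when it is multiplied by base classes. This is immediate from the monic $A$-basis, but only if one uses that the relation has no terms below $\xi^{S-d}$; that is precisely where the hypothesis $s\ge d$ enters.
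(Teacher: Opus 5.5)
Your argument for the isomorphism is correct, but it identifies the annihilator differently from the paper. The paper does not use duality at that step. Since $\xi$ is a non-zero-divisor in $A[\xi]$, one has $\xi^{S-s}f\in(p_S)=(\xi^{S-s}p_s)$ if and only if $f\in(p_s)$, so $\Ann_{B_S}(\xi^{S-s})=(p_s)/(p_S)$ directly. The degree statement is then the separate observation that $\xi^{S-s}$ carries the basis $1,\ldots,\xi^{s-1}$ of $B_s$ onto the top part of the basis of $B_S$.

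You instead first match the functionals $\deg_{B_S}(\xi^{S-s}\,\cdot)$ and $\deg_{B_s}$ on $A[\xi]$ via \cref{hd-lem:recurrence}. You then invoke Poincar\'e duality of both $B_S$ and $B_s$ to identify the kernel of $B_S\twoheadrightarrow B_s$ with the annihilator. This is valid, and it gives degree preservation in the same step. The paper's cancellation is more elementary and works over an arbitrary coefficient ring, which is what makes the several-variable case immediate. Your positivity step, descent by the nonzero basis monomial $\xi^{S-s}$, is the same as the paper's.

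Your treatment of the last sentence of the statement is incomplete in three respects.
\begin{enumerate}
\item The ranks produced by \cref{hd-thm:geometric} or \cref{km-thm:inverse} are only required to satisfy $s_i\ge2$, and they can be smaller than $d$. For example, $\cO(1,0,0)$ on $(\PP^1)^3$ has a kernel of rank one, padded to rank two, while $s_0=3$. So you cannot read off the relation as $p_S$ with $S\ge s_0$. The paper first enlarges: zero generators add trivial kernel summands, and zero-weight coloops enlarge the dual matroid. Both leave $K(t)$ unchanged and are again positive certificates of the same type. Only then does it compress.
\item For joint certificates you must compress one $\xi_i$ at a time, keeping the other monic relations in the coefficient ring. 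The bound that matters there is $\deg K_i\le d=\dim A$, not the formal dimension of the enlarged coefficient ring. Read literally, your formulation would demand the larger bound.
\item Your closing remark misplaces the role of $s\ge d$. Vanishing of the low moments follows automatically from the monic basis. What $s\ge d$ actually does is make $p_s$ a polynomial: it gives the factorization $p_S=\xi^{S-s}p_s$ and lets \cref{hd-lem:recurrence} apply to $B_s$.
\end{enumerate}
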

\begin{proof}
Since $p_S=\xi^{S-s}p_s$ and multiplication by the formal monomial
is injective in $A[\xi]$, one has
$\xi^{S-s}f\in(\xi^{S-s}p_s)\Longleftrightarrow f\in(p_s)$.
Multiplying the basis $1,\xi,\ldots,\xi^{s-1}$ by $\xi^{S-s}$
identifies fibre-top extraction. The descended class is nonzero since
$S-s<S$, so boundary descent gives positivity.

Evaluation presentations enlarge by zero generators, adding trivial
kernel summands; KM certificates enlarge by zero-weight coloops of
the dual matroid. Both preserve $K(t)$ and allow $S\ge s_0$.
Compress one variable at a time; the other monic relations preserve
freeness and boundary membership descends. No actual evaluation
kernel of rank $s_0$ is asserted.
\end{proof}

Inserting relative monomials instead shifts the active Chern indices; maximal shifts reduce to a base quotient.
\begin{proposition}\label{fn-prop:corners}
If $C$ has a positive inverse-Chern certificate and $z^\alpha\ne0$,
then $R_C/\Ann(z^\alpha)$ has the K\"ahler package of formal
dimension $d-|\alpha|$, with moments
$az^\beta\mapsto\deg_A(a\prod_iC_{i,\alpha_i+\beta_i})$.
If $\alpha$ is coordinatewise maximal among indices with
$\prod_iC_{i,\alpha_i}\ne0$ in $A$, then
\[
 R_C/\Ann(z^\alpha)\simeq
 A/\Ann_A\left(\prod_iC_{i,\alpha_i}\right)
\]
with weighted degree and the induced Hodge structure.
\end{proposition}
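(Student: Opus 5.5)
The plan is to deduce both assertions from the general descent machinery already in place, namely \cref{hd-thm:descent}, \cref{hd-prop:intrinsic-PD}, \cref{fn-thm:insert}, and the terminal argument of \cref{hd-thm:terminal}.

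\emph{Descent step.} By \cref{hd-thm:moment-Hodge}, $R_C$ has the K\"ahler package in formal dimension $d$, and every $z_i$ is a boundary polarization. Hence $z^\alpha$ is a product of $|\alpha|$ boundary polarizations. Since $z^\alpha\ne0$, \cref{hd-thm:descent} applies with $\eta=z^\alpha$. It shows that $R_C/\Ann(z^\alpha)$, with degree $[p]\mapsto\Lambda_C(z^\alpha p)$, has the package for the image cone in formal dimension $d-|\alpha|$.

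\emph{Moment computation.} The moments follow directly from \eqref{hd-eq:moment}. For $a\in A$ one has
\[
\Lambda_C(z^\alpha\, a z^\beta)=\deg_A\Bigl(a\prod_iC_{i,\alpha_i+\beta_i}\Bigr).
\]
This is the claimed shifted moment functional. No identity $C_{i,a+b}=C_{i,a}C_{i,b}$ is used (compare \cref{nv-rem:two-insertions}).

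\emph{Maximal case.} Put $\theta=\prod_iC_{i,\alpha_i}$. I would imitate the proof of \cref{hd-thm:terminal}.
\begin{itemize}
\item First, show each $z_j$ acts as zero in the descended algebra $R_C/\Ann(z^\alpha)$. This means $z^{\alpha+e_j}\,a z^\beta$ pairs to zero for all $a$ and $\beta$, that is, $\deg_A\bigl(a\prod_iC_{i,\alpha_i+\beta_i+\delta_{ij}}\bigr)=0$ for all $a$ and $\beta$.
\item By Poincaré duality in $A$, this amounts to vanishing of the class $\prod_iC_{i,\gamma_i}$ for every $\gamma\ge\alpha+e_j$.
\item Given that, $A$ surjects onto the quotient. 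The kernel consists of those $a$ with $\deg_A(\theta ab)=0$ for all $b$, namely $\Ann_A\theta$, and the degree is $[a]\mapsto\deg_A(\theta a)$.
\item The induced Hodge structure transfers along this isomorphism, and the cone is the projection of the certificate cone.
\end{itemize}

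\emph{Main obstacle.} The hardest step is the vanishing in the second bullet. The hypothesis only says $\prod_iC_{i,\alpha_i+\delta_{ij}}=0$, but we need vanishing of every larger product, not just the one-step extension. Nonzero Chern products need not form a down-closed set for arbitrary arrays, so positivity must be used. I would argue inside the Hodge algebra. If $\prod_iC_{i,\gamma_i}\ne0$ for some $\gamma\ge\alpha+e_j$, then $z^\gamma\ne0$ in $R_C$, because $\prod_iC_{i,\gamma_i}$ pairs nontrivially with some $a$. Since $R_C$ is commutative, $z^\gamma=z^{\alpha+e_j}\cdot z^{\gamma-\alpha-e_j}$. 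Therefore $z^{\alpha+e_j}\ne0$ in $R_C$, so $z^{\alpha+e_j}$ has a nonzero pairing with some $a z^\beta$.

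This still does not immediately give $\prod_iC_{i,\alpha_i+\delta_{ij}}\ne0$ as a class in $A$, because the pairing involves shifted indices $\alpha+e_j+\beta$. To close the gap I would use the nonvanishing polymatroid structure. The degree polynomial of the $z_i$ together with an interior base direction $H$ is Lorentzian by \cref{hd-cor:mixed-Chern}, so its support is M-convex with a slack coordinate, as in the proof of \cref{nv-thm:polymatroid}. That support is down-closed in the $z$-coordinates, so support at $\gamma$ forces support at $\alpha+e_j$. This step needs a detection statement like \cref{nv-lem:abstract-detection}, saying that a nonzero class has positive degree against powers of $H$. I expect checking its hypotheses in the abstract certificate setting to be the real difficulty. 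Failing that, I would interpret ``coordinatewise maximal'' as maximal in the upward-closed sense, which is exactly the condition needed.
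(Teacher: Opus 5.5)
Your proposal is correct and follows essentially the paper's route: descend $R_C$ by the boundary classes making up $z^\alpha$, read off the shifted moments from $\Lambda_C(z^\alpha\,az^\beta)$, then show every $z_j$ acts as zero, so that $A$ surjects onto the quotient with kernel $\Ann_A\theta$ by duality in $A$. The obstacle you flag is not real, because ``coordinatewise maximal'' in the poset sense already means $\prod_iC_{i,\gamma_i}=0$ for every $\gamma>\alpha$, not only for $\gamma=\alpha+e_j$; this is exactly the paper's step ``every moment with an additional relative power is zero,'' so your fallback reading is simply the literal hypothesis, and the Lorentzian/detection detour is unnecessary (it would in any case need extra hypotheses, such as degree-one generation of $A$, that the abstract certificate setting does not supply).
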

\begin{proof}
Descend by the boundary classes $z_i$; the degree formula is the moment
identity. At a maximal index every moment with an additional relative
power is zero. Hence all $z_i$ act as zero after descent, and $A$
surjects onto the quotient. Its kernel is the set of $a$ with
$\deg_A(ab\prod_iC_{i,\alpha_i})=0$ for every $b\in A$, namely the
stated annihilator by duality. The Chern product is nonzero by hypothesis.
\end{proof}

\subsection{Minimal polarized completions}\label{pr-sec:reduction}

Let $R=\bigoplus_{p=0}^dR^p$ have the K\"ahler package, and let
$S\subseteq R$ be a graded unital subalgebra containing
$L\in S^1\cap\mathcal K_R$. All duality subalgebras below have formal
dimension $d$ and the restricted degree. For $v\in P_L^p$ and
$0\le j\le d-2p$, the lowering operator is
\begin{equation}\label{pr-eq:lowering}
 \Lambda_L(L^jv)=j(d-2p-j+1)L^{j-1}v.
\end{equation}
It is linear, not generally a derivation. Set $S_0=S$ and let
$S_{n+1}$ be the graded algebra generated by $S_n\cup\Lambda_L(S_n)$.

Closure under the lowering operator constructs the smallest duality subalgebra containing the prescribed classes.
\begin{proposition}\label{pr-thm:hull}
This sequence stabilizes at the unique smallest graded
Poincar\'e-duality subalgebra $\mathcal H_R(S)$ containing $S$.
It has the K\"ahler package for
$\mathcal H_R(S)^1\cap\mathcal K_R$, is independent of the chosen
$L$, and needs at most $\dim R-\dim S$ strict enlargement steps.
\end{proposition}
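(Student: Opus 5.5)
The plan is to identify $\mathcal H_R(S)$ with the graded subalgebra generated by the $\mathfrak{sl}_2$-orbit of $S$. Then uniqueness, the K\"ahler package and independence of $L$ all follow from the Lefschetz decomposition of $R$.

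First I would fix the $\mathfrak{sl}_2$ structure. Hard Lefschetz for $L$ gives the decomposition $R=\bigoplus_p\bigoplus_j L^jP_L^p$. With the raising operator $L$, the lowering operator $\Lambda_L$ of \eqref{pr-eq:lowering} and the degree operator $H=(p-d/2)$ on $R^p$, one checks $[L,\Lambda_L]=H$ up to normalization. I expect this to be the standard $\mathfrak{sl}_2$-triple. The key lemma is then the following claim.

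\emph{Claim.} A graded subspace $V\subseteq R$ containing $L$-multiples, i.e.\ with $LV\subseteq V$, is stable under $\Lambda_L$ if and only if $V=\bigoplus_{p,j}L^j(V\cap P_L^p)$. Moreover, such a subalgebra is a Poincar\'e-duality subalgebra with the restricted degree.

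For the forward direction of the claim, I would use that a finite-dimensional $\mathfrak{sl}_2$-submodule is spanned by its primitive vectors and their $L$-strings. For duality, the pairing $(L^jv,L^{j'}v')$ is nonzero only when $j+j'=d-2p$, and on primitives it reduces to the Hodge--Riemann form $(-1)^p\deg(vv'L^{d-2p})$. That form is definite on $P_L^p$, hence definite on $V\cap P_L^p$. This gives both the perfect pairing and the Hodge--Riemann relations for $L$ on $V$.

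Conversely, any Poincar\'e-duality subalgebra $T$ containing $L$ satisfies Hard Lefschetz for $L$. This is because $L^{d-2p}\colon T^p\to T^{d-p}$ is injective (it is injective on $R$) between spaces of equal dimension, by duality in $T$. The Lefschetz primitive decomposition of $T$ is then a sub-decomposition of that of $R$. So $T$ is $\Lambda_L$-stable, which is the step I expect to need care: one must show $T\cap P_L^p=P^p_{L,T}$ and that the $L$-strings match the normalization \eqref{pr-eq:lowering}.

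Given the claim, the chain $S_n$ increases and lies inside every duality subalgebra containing $S$. Each strict step raises the dimension by at least one inside $R$, giving the bound $\dim R-\dim S$. The limit is a $\Lambda_L$-stable algebra, hence a duality subalgebra by the claim, and it is therefore the unique smallest one.

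Independence of $L$ follows because the smallest duality subalgebra containing $S$ is characterized without reference to $L$: any other $L'\in S^1\cap\mathcal K_R$ produces a hull that is again the smallest such subalgebra. For $L'\in\mathcal H_R(S)^1\cap\mathcal K_R$, the same claim applied with $L'$ gives the full package, since Hard Lefschetz and Hodge--Riemann for $L'$ restrict in exactly the same way.

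The main obstacle is the Hodge--Riemann positivity in the claim for mixed primitive pairings. This needs orthogonality of different $L$-strings, and I would derive it from $\Lambda_L$ being adjoint to $L$ up to the scalar normalization. Establishing that adjunction, using the definiteness of each primitive piece, is the step I would check first.
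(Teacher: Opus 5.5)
Your proposal is correct and follows essentially the same route as the paper. The stabilized algebra is $\mathfrak{sl}_2$-stable, hence a sum of primitive $L$-strings of $R$, and it inherits duality, Hard Lefschetz and Hodge--Riemann by restriction. Any duality subalgebra $T\supseteq S$ gets Hard Lefschetz from injectivity plus equal dimensions, so uniqueness of the Lefschetz decomposition forces $\Lambda_L(T)\subseteq T$ and hence $S_n\subseteq T$. The cone assertion, the independence of $L$ and the step bound then follow as you describe.

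One small correction to your last paragraph concerns orthogonality between strings of different primitive degrees $p<p'$. It comes directly from $L^{d-2p'+1}v'=0$, as in your own earlier formula and in the paper, and not from $\Lambda_L$ being adjoint to $L$. In fact, with respect to the Poincar\'e pairing, $\Lambda_L$ is self-adjoint, just like $L$.
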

\begin{proof}
Finite dimension gives stabilization. The stable algebra is preserved
by $L$, $\Lambda_L$, and grading, hence by the Lefschetz
$\mathfrak{sl}_2$ action. Complete reducibility decomposes it into
primitive strings of $R$. Restricted primitive forms remain definite;
strings of different primitive degrees are orthogonal, since the
complementary pairing contains a power of $L$ annihilating the higher
primitive degree. Strings of the same primitive degree pair through
$\deg_R(vwL^{d-2p})$. Thus the restriction has duality, Hard Lefschetz,
and Hodge--Riemann for $L$.

If a duality subalgebra $T$ contains $S$, ambient Hard Lefschetz makes
$L^{d-2p}:T^p\to T^{d-p}$ injective, and equal dimensions make it
bijective. Its primitive spaces lie in the ambient ones and inherit
the signs. Uniqueness of the Lefschetz decomposition implies
$\Lambda_L(T)\subseteq T$, so every $S_n\subseteq T$. This proves
minimality. The same argument for any interior $L'$ in the stable
algebra proves the cone assertion. Two initial choices give mutually
containing minimal algebras, hence the same hull. Each strict step
raises dimension by at least one.
\end{proof}

\begin{example}\label{pr-ex:hull}
Take
\[
 R=\RR[h_1,h_2,h_3,h_4]/(h_i^2),
\]
with top degree
$\deg_R(h_1h_2h_3h_4)=1$, $L=\sum_ih_i$, $D=h_3+2h_4$, and
$S=\RR[L,D]$. Squarefree expansion gives Hilbert vector
$(1,2,3,3,1)$; in degree three, $L^3,L^2D,LD^2$ are independent and
$D^3=0$. Here $\Lambda_L$ removes and sums factors, so
$\Lambda_L(D^2)=4(h_3+h_4)$ recovers $h_3,h_4$ together with $D$,
and then $a=h_1+h_2$. Thus
$\mathcal H_R(S)=\RR[a,h_3,h_4]\simeq\RR[a,b,c]/(a^3,b^2,c^2)$:
this containing duality algebra has $\deg_R(a^2bc)=2$ and Hilbert
vector $(1,3,4,3,1)$, of dimension $12<16$.
\end{example}
The hull is relative to $(R,S)$. Without an interior class in $S^1$,
one must first be chosen and adjoined. The hull need not be
degree-one-generated, so ordinary apolar reconstruction still requires
\cref{hd-thm:apolar}'s generation hypothesis.

Duality gives a useful projection onto the hull. More generally, for a
graded duality subalgebra $T\subseteq R$ of the same formal dimension,
containing an interior polarization, there is a unique graded map
$\Pi_T:R\to T$ characterized by
\begin{equation}\label{fn-eq:projection}
 \deg_R(\Pi_T(a)t)=\deg_R(at)\qquad(t\in T).
\end{equation}
Perfect pairings in $T$ give existence and uniqueness. Testing against
$t'\in T$ proves $\Pi_T(ta)=t\Pi_T(a)$, hence
$R=T\oplus T^\perp$ and $\Pi_T^2=\Pi_T$. Grading and
$L$-linearity preserve primitive strings, so $\Pi_T$ commutes with
$\Lambda_L$ for $L\in T^1\cap\mathcal K_R$.
For $T\subseteq U\subseteq R$, uniqueness gives
$\Pi_T^R=\Pi_T^U\Pi_U^R$.
The projection need not be multiplicative: in
$\RR[h_1,h_2]/(h_1^2,h_2^2)$, for $T=\RR[L]$ and $L=h_1+h_2$,
one has $\Pi_T(h_1)=L/2$ but $\Pi_T(h_1^2)=0\ne L^2/4$.

\subsection{Intrinsic positivity and a fixed source}
Even in dimension two, positivity of the moment algebra is weaker than the existence of a generated bundle with prescribed Chern classes.
\begin{proposition}\label{ip-thm:surface}
Let $A=\RR[h]/(h^3)$ with $\deg(h^2)=1$ and rank-marked data
$C(t)=1+ah\,t+bh^2t^2$. Its moment algebra has the K\"ahler package
for the image of $\{uh+vz:u,v>0\}$ if and only if
\[
 a\ge0,\qquad 0\le b\le a^2.
\]
\end{proposition}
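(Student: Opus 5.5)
The plan is to compute the intrinsic moment algebra $R_C$ of \cref{hd-prop:intrinsic-PD} explicitly, since it is tiny. Then check the K\"ahler package of \cref{hd-def:package} directly in formal dimension $d=2$. No certificate will be used.

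\emph{Step 1: structure of $R_C$.} By \eqref{hd-eq:moment}, the only nonzero moments are in total degree two:
\[
 \Lambda_C(h^2)=1,\qquad \Lambda_C(hz)=a,\qquad \Lambda_C(z^2)=b .
\]
By \cref{hd-prop:intrinsic-PD}, $R_C$ is a Poincar\'e-duality algebra of formal dimension $2$. Its pieces are $R_C^0=\RR$ and $R_C^2=\RR$, the latter spanned by the image of $h^2$ with degree $1$. The degree-one part $R_C^1$ is the quotient of $\Span(h,z)$ by the kernel of the Gram matrix
\[
 G=\begin{pmatrix}1&a\\ a&b\end{pmatrix}.
\]
Since the $(h,h)$ entry is $1$, $G$ has rank at least one. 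If $\det G=b-a^2\ne0$, then $\dim R_C^1=2$. If $b=a^2$, then $z=ah$ in $R_C$ and $\dim R_C^1=1$.

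\emph{Step 2: translate the package.} Put $L=uh+vz$ with $u,v>0$. The package requires two things.
\begin{itemize}
\item In degree $0$: $\deg(L^2)=u^2+2auv+bv^2>0$.
\item In degree $1$: the primitive space $\ker(L:R_C^1\to R_C^2)$ is the $G$-orthogonal complement of $L$, and $\deg(x^2)$ must be negative definite there. Given $\deg L^2>0$, this is equivalent to $G$ (restricted to $R_C^1$) having at most one positive eigenvalue. In the rank-one case the primitive space is zero. In the rank-two case the condition is $\det G<0$. So in both cases it amounts to $b\le a^2$.
\end{itemize}
Hard Lefschetz in degree $0$ is $\deg L^2\ne0$, and in degree $1$ the map $L^0$ is the identity, so it is automatic. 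The image of the open quadrant is open and convex in $R_C^1$. In the rank-one case it is the ray $(u+av)h$, which needs $u+av>0$; this is implied by the degree-zero condition. Hence the package is equivalent to
\[
 b\le a^2\qquad\text{and}\qquad q(u,v)=u^2+2auv+bv^2>0\ \text{for all }u,v>0.
\]

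\emph{Step 3: the binary quadratic, which is the only real content.} The task is to show that, under $b\le a^2$, positivity of $q$ on the open quadrant is equivalent to $a\ge0$ and $b\ge0$.
\begin{itemize}
\item Sufficiency is immediate, since then every coefficient of $q$ is nonnegative and the $u^2$ term is positive.
\item Necessity when $b<0$: fix $v$ and let $u\to0^+$; then $q\to bv^2<0$.
\item Necessity when $a<0$ and $0\le b\le a^2$: the root $u/v=-a+\sqrt{a^2-b}$ is strictly positive, so $q$ vanishes at an interior point of the quadrant.
\end{itemize}
Combining the three steps gives exactly $a\ge0$ and $0\le b\le a^2$.

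I expect the main care to be Step 2 in the degenerate case $b=a^2$. There one must use the intrinsic quotient to see that $R_C^1$ collapses to one dimension, so the primitive subspace vanishes and the condition $\det G\le0$ is not violated.
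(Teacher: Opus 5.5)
Your proof is correct and follows essentially the same route as the paper: compute the degree-one pairing $G$ with entries $1,a,b$, observe that degree-one Hodge--Riemann forces $b\le a^2$ (with $z=ah$ in $R_C$ when $b=a^2$), and reduce the remaining content to positivity of $u^2+2auv+bv^2$ on the open quadrant. The only difference is bookkeeping. You fold the case $b=a^2$ into the quadratic analysis by using the positive root $-a+\sqrt{a^2-b}$, whereas the paper treats that case separately via the image ray $(u+av)h$ and uses the test point $u=-av$ when $b<a^2$.
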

\begin{proof}
The degree-one pairing is
$G=\left(\begin{smallmatrix}1&a\\a&b\end{smallmatrix}\right)$,
with $hz=ah^2$ and $z^2=bh^2$. If $b>a^2$, its two positive directions
contradict Hodge--Riemann. If $b<a^2$, it has signature $(1,1)$, so
in formal dimension two the package is equivalent to
$u^2+2auv+bv^2>0$ for all $u,v>0$. Letting $u/v\to0$ forces $b\ge0$;
if $a<0$, the choice $u=-av$ gives $(b-a^2)v^2<0$. Conversely,
$a,b\ge0$ ensure positivity. When $b=a^2$, the radical identifies
$z=ah$, giving $\RR[h]/(h^3)$. The prescribed cone consists of
$(u+av)h$ and avoids zero exactly when $a\ge0$.
\end{proof}

The distinction persists for integral data with nonnegative Schur classes.
\begin{proposition}\label{ip-prop:separation}
The data $a=3$, $b=1$ satisfy \cref{ip-thm:surface}, admit an integral
rank-two $K^0(\PP^2)$ lift, and have nonnegative Schur classes in degrees
at most two, but are not the Chern classes of a globally generated
rank-two bundle on $\PP^2$.
\end{proposition}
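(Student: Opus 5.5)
The plan is to check the three positive assertions directly and then rule out global generation by the Serre correspondence (Hartshorne–Serre construction).

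\emph{Step 1: the positive assertions.} For $a=3$, $b=1$ we have $a\ge0$ and $0\le1\le9=a^2$, so \cref{ip-thm:surface} applies. For the $K^0$ lift, recall that $K^0(\PP^2)$ is free on $[\cO],[\cO(-1)],[\cO(-2)]$. Every integral triple $(r,c_1,c_2)$ arises, since Riemann--Roch integrality imposes no condition on $\PP^2$. Concretely, solve for $x[\cO]+y[\cO(1)]+w[\cO(2)]$ with $x+y+w=2$, $y+2w=3$ and second Chern class $1$. Use $c(\cO(k))=1+kh$ and the multiplicativity $c(\alpha-\beta)=c(\alpha)c(\beta)^{-1}$ truncated at $h^3$. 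One explicit solution is $[\cO(1)]+[\cO(2)]-[\cO(1)]+[\cO(1)]$ adjusted by multiples of $[\cO]-2[\cO(1)]+[\cO(2)]$, which shifts $c_2$ by $\pm1$ and fixes rank and $c_1$; I would simply record the resulting integer combination. For the Schur classes of a rank-two class with $c_1=3h$ and $c_2=h^2$, the classes in degree at most two are $s_{1}=c_1=3h$, $s_{2}=c_1^2-c_2=8h^2$ and $s_{11}=c_2=h^2$. All are nonnegative.

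\emph{Step 2: the zero scheme of a general section.} Suppose $\cE$ is a globally generated rank-two bundle on $\PP^2$ with $c_1=3h$ and $c_2=h^2$; we argue over $\CC$, or any algebraically closed field of characteristic zero. By the generic-smoothness argument of \cref{nv-lem:effective}, a general section $s$ has zero scheme $Z$ that is empty or smooth of codimension two, with $[Z]=c_2(\cE)=h^2$. Hence $Z$ is a single reduced point $p$. The Koszul complex then gives
\[
 0\longrightarrow\cO\xrightarrow{\;s\;}\cE\longrightarrow\mathcal I_p(3)\longrightarrow0,
\]
using $\det\cE=\cO(3)$.

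\emph{Step 3: the Cayley--Bacharach contradiction.} By the Serre correspondence, for a local complete intersection $Z\subset\PP^2$ such an extension has locally free middle term only if $Z$ satisfies Cayley--Bacharach for $\omega_{\PP^2}(3)=\cO$. That is, every section of $\cO$ vanishing on all but one point of $Z$ must vanish on the remaining point. For $Z=\{p\}$, "all but one point" is empty, and the constant section $1$ does not vanish at $p$, so the condition fails. Equivalently, $\operatorname{Ext}^1(\mathcal I_p(3),\cO)\simeq H^1(\mathcal I_p)^\vee$ is zero, since $H^0(\cO)\to H^0(\cO_p)$ is onto. The only extension is then split, and $\cO\oplus\mathcal I_p(3)$ is not locally free. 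This contradicts local freeness of $\cE$, so no globally generated rank-two bundle with these Chern classes exists.

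The main obstacle is Step 3, specifically stating the Serre correspondence with the correct twist $\omega\otimes\det\cE=\cO(-3)\otimes\cO(3)=\cO$, and confirming that a nonsplit extension would be required. The $\operatorname{Ext}^1$ vanishing computation settles this directly, so I would present that version rather than cite Cayley--Bacharach abstractly.
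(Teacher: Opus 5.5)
Your proposal is correct and follows essentially the paper's argument: a general section vanishes at a single reduced point, giving $0\to\cO\to\cE\to\mathcal I_p(3)\to0$, and the vanishing of $\operatorname{Ext}^1(\mathcal I_p(3),\cO)$ forces this extension to split, which contradicts local freeness. The only differences are cosmetic. You obtain that vanishing from Serre duality, $\operatorname{Ext}^1(\mathcal I_p(3),\cO)\simeq H^1(\mathcal I_p)^\vee=0$, whereas the paper applies $\operatorname{Hom}(-,\cO)$ to the resolution $0\to\cO(1)\to\cO(2)^{\oplus2}\to\mathcal I_p(3)\to0$. Your $K^0$ adjustment, adding one copy of $[\cO]-2[\cO(1)]+[\cO(2)]$ to $[\cO(1)]+[\cO(2)]$, yields exactly the paper's class $[\cO]-[\cO(1)]+2[\cO(2)]$; you should write that class down explicitly rather than the garbled intermediate expression.
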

\begin{proof}
The degree-one eigenvalues are $4,-2$. The rank-two class
$[\cO]-[\cO(1)]+2[\cO(2)]$ has total Chern class
$(1+2h)^2/(1+h)=1+3h+h^2$; the Schur classes are
$s_1=3h$, $s_{11}=h^2$, and $s_2=8h^2$.
If a generated $\cE$ had these classes, a general section would vanish
at one reduced point $p$, giving
$0\longrightarrow\cO\longrightarrow \cE\longrightarrow I_p(3) \longrightarrow0$.
Apply $\operatorname{Hom}(-,\cO)$ to
$0\to\cO(1)\to\cO(2)^{\oplus2}\to I_p(3)\to0$.
Since $H^0(\cO(-1))=H^1(\cO(-2))=0$, one gets
$\operatorname{Ext}^1(I_p(3),\cO)=0$. The extension splits, contradicting
local freeness at $p$. This pair also lies in the classification of
\cite{Ell12}.
\end{proof}
The obstruction fixes $\PP^2$, $h$, and the rank; it does not exclude a
realization of the scalar degree polynomial on another source.

\section*{Use of generative-AI tools}
Generative-AI tools assisted with language editing, consistency checks,
computational testing, and the development and revision of proofs.
Responsibility for the mathematical content, attribution, and final
verification rests with the authors.

\end{document}